\newtheorem{theorem}{Theorem}[section] \newtheorem{philosophy}{Statement}[theorem]
\newtheorem{lemma}[theorem]{Lemma}
\newtheorem{corollary}[theorem]{Corollary} 
\newtheorem{prop}[theorem]{Proposition}
\newtheorem{claim}[theorem]{Claim}
\newtheorem*{claim*}{Claim}
\newtheorem{fact}[theorem]{Fact}
\newtheorem*{theorem*}{Theorem} 
\newtheorem*{corollary*}{Corollary}
\theoremstyle{definition}
\newtheorem{example}[theorem]{Example}
\newtheorem{remark}[theorem]{Remark}
\newtheorem{definition}[theorem]{Definition}
\newtheorem*{remark*}{Remark}
\newtheorem*{definition*}{Definition}
\newtheorem*{example*}{Example}
\newtheorem*{namedtheorem}{\theoremname}
\newcommand{\theoremname}{testing}
\theoremstyle{remark}
\newcommand{\BC}{\mathbb C} \newcommand{\BH}{\mathbb H}
\newcommand{\BR}{\mathbb R} 
\newcommand{\BN}{\mathbb N} 
 \newcommand{\BZ}{\mathbb Z}
\newcommand{\CA}{\mathcal A} 
 \newcommand{\CT}{\mathcal T}
\newcommand{\actson}{\curvearrowright}
\DeclareMathOperator{\PSL}{PSL} 
\DeclareMathOperator{\length}{length}
\DeclareMathOperator{\dist}{d}
\DeclareMathOperator{\Mod}{Mod}
\newcommand{\note}[1]{\marginpar{\tiny \raggedright \color{red} #1 }}
\newcommand{\PML}{{\mathcal P \mathcal M \mathcal L}}
\newcommand{\into}{\hookrightarrow}
\newcommand{\comment}[1]{}
\title{Homoclinic leaves, Hausdorff limits, and homeomorphisms}
\author{Ian Biringer}
\address{\hskip-\parindent
        Department of Mathematics \\
         Boston College\\
       140  Commonwealth Avenue \\
        Chestnut Hill, MA 02467 \\
        USA}
\email{biringer@bc.edu}
\author{Cyril Lecuire}
\address{\hskip-\parindent
        Toulouse Mathematics Institute \\
        Universit\'e Paul Sabatier\\
		118, route de Narbonne\\
		F-31062 Toulouse Cedex 9\\
        France}
\email{cyril.lecuire@math.univ-toulouse.fr}
\begin{document}

\begin{abstract}
We show that except for one exceptional case, a lamination on the boundary of a $3$-dimensional handlebody $H$ is a Hausdorff limit of meridians if and only if it is commensurable to a lamination with a `homoclinic leaf'. This is a precise version of a philosophy called Casson's Criterion, which appeared in unpublished notes of A.\ Casson. Applications include a characterization of when a non-minimal lamination is a Hausdorff limit of meridians, in terms of properties of its minimal components, and a related characterization of which reducible self-homeomorphisms of $\partial H$ have powers that extend to subcompressionbodies of $H$. \end{abstract}

\maketitle


\section{Introduction}

Let $H$ be a $3$-dimensional handlebody\footnote{The body of this paper is written in greater generality, with the pair $(H,S)$ replaced by a compact, orientable, $3$-manifold $M$ with hyperbolizable interior, together with an essential connected subsurface $S\subset \partial M$ such that the multi-curve $\partial S$ is incompressible in $M$. However, everything we do is just as interesting in the handlebody case.} with genus $g \geq 2$ and let $S:=\partial H$. A simple closed curve $m $ on $S$ is called a \emph {meridian} if it bounds an embedded disk in $H$ but not in $S$. Equip $S$ with an arbitrary hyperbolic metric, and consider the set of geodesic laminations on $S$ with the Hausdorff topology. We refer the reader to \cite{Bleilerautomorphisms} for more information on laminations.

\subsection{Homoclinic leaves} In J.P.\ Otal's thesis \cite{Otalcourants} the following is stated; it is  attributed to an unpublished manuscript of A. Casson. 

\begin{philosophy}['Casson's Criterion']\label{ccr}
	A geodesic lamination on $S$ is a Hausdorff limit of meridians if and only if it has a homoclinic leaf.
\end{philosophy}

We call it a `statement' here instead of a theorem because it is not true as written, as we'll see later on. However, the connection between homoclinic leaves and meridians has been well studied, partly motivated by this statement, see for example the papers \cite{Otalcourants,Lecuireplissage,Lecuireextension,Kleineidamalgebraic,namazi2012non-realizability,jeon2014primitive} and Long's earlier paper \cite{long1986discs}.

To define homoclinic, let $\tilde H$ be the universal cover of $H$, which is homeomorphic to a thickened infinite tree. A path $\ell : \BR \longrightarrow \tilde H$ is called \emph {homoclinic} if there are sequences $s_i,t_i\in \BR$ such that $$|s_i-t_i|\to \infty, \ \text{and} \  \sup_i d_{\tilde H}(\tilde \ell(s_i),\tilde \ell(t_i))<\infty.$$
Here, distance in $\tilde H$ is measured using the lift of any Riemannian metric on $H$. Since $H$ is compact, the choice of metric does not matter. A path $\ell : \BR \longrightarrow S$ is called \emph{homoclinic} if it has a lift to $\partial \tilde H$ that is homoclinic as above, and a complete geodesic on $S$ is called \emph{homoclinic} if it has a (possibly periodic, if the geodesic is closed) arc length parametrization that is homoclinic.

As an example, any geodesic meridian $m$ on $S$ is homoclinic, for as $m$ lifts to a simple closed curve in $\partial \tilde H$, any lift of a periodic parameterization of $m$ is also periodic, and therefore homoclinic. On the other hand, if $\gamma$ is an essential simple closed curve that is not a meridian, any periodic parameterization of $\gamma$ lifts to a properly embedded biinfinite path in $\partial \tilde H$ that is invariant under a nontrivial deck transformation, and is readily seen to be non-homoclinic.

We should mention that the definition introduced above is not quite what Casson and Otal called `homoclinic' (in French, `homoclinique'), but rather what Otal calls `faiblement homoclinique', or `weakly homoclinic'. However, the definition above has been adopted in most subsequent papers.  While some of the discussion below is incorrect if we use Casson's definition, it can all be modified to apply. In particular, the same counterexamples show that Statement \ref{ccr} is still false using Casson's original definition. See \S \ref{alternate}.

\medskip

\begin{figure*}
	\centering
	\includegraphics{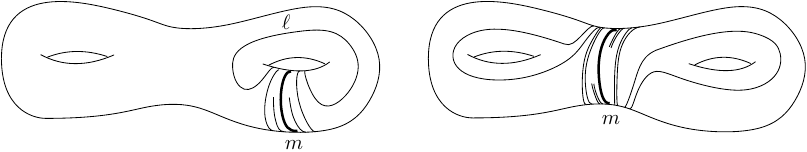}
	\caption{Laminations on the boundary of a genus two handlebody that have a meridian $m$ as a leaf, but are not Hausdorff limits of meridians.}\label {hlim}
\end{figure*}

  In his thesis \cite{Otalcourants}, Otal showed that any Hausdorff limit of meridians has a homoclinic leaf. (This statement was later extended by Lecuire \cite{lecuire2004structures} from handlebodies to more general $3$-manifolds.)  However, the converse is not true. First of all, any Hausdorff limit of meridians is connected, and there are disconnected laminations on $S$ that have homoclinic leaves, e.g.\ the union of two disjoint simple closed curves, one of which is a meridian. There are also connected laminations with homoclinic leaves that are not Hausdorff limits of meridians. For example, let $\lambda = m \cup \ell$ be a lamination with two leaves, where $m$ is a nonseparating meridian and the two ends of $\ell$ spiral around $m$ in the same direction, but from opposite sides, as on the left in Figure \ref{hlim}. Then $\lambda$ has a homoclinic leaf, but it is not a Hausdorff limit of simple closed curves: any simple geodesic that approximates $\ell$ closely is trapped and forced to spiral forever around $m$, so cannot be closed. 
  
  As another example, let $\lambda$ be the lamination on the right in Figure \ref{hlim}, which has three leaves, a meridian $m$ and two leaves spiraling onto it. Then $\lambda$ is a Hausdorff limit of simple closed curves, but it is not a limit of meridians. Indeed, given a simple closed curve $\mu$ on $S$, an arc $\alpha \subset \mu$ is called an \emph{$m$-wave disjoint from $m$} if it is homotopic rel endpoints in $H$ to an arc of $m$ and $int(\alpha) \cap m=\emptyset$. Any meridian $\mu$ that intersects $m$ has an $m$-wave disjoint from $m$: one looks for an `outermost' arc of intersection with a disk bounded by $\mu$ in a disk bounded by $m$. However, no simple closed curve that is Hausdorff-close to $\lambda$ has an $m$-wave disjoint from $m$, so $\lambda$ is not a limit of meridians. See the discussion after the statement of Theorem \ref{hlimits} for more details. 
  
  \begin{figure}[t]
  	\centering
  	\includegraphics{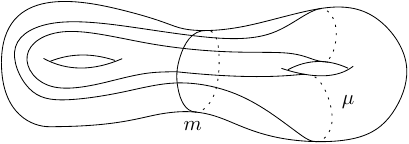}
  	\caption{Two meridians on a genus two handlebody. The two arcs of $\mu$ that lie on the right side of $m$ are $m$-waves.} 
  \end{figure}

In both of these examples, the problem lies with the spiraling isolated leaves. One way to address this is as follows. We say that two laminations $\mu_1,\mu_2$ on $S$ are \emph {commensurable} if they contain a common sublamination $\nu$ such that  for both $i$,  the difference $\mu_i \setminus \nu$  is the union of finitely many isolated leaves.  We say $\mu_1,\mu_2$ are \emph {strongly commensurable} if they contain a common $\nu$ such that for both $i$, the difference $\mu_i \setminus \nu$  is the union of finitely many isolated leaves, none of which are simple closed curves. 

So, is Casson's Criterion at least true up to strong commensurability? It turns out the answer is still no: the lamination $\lambda$ in Figure \ref{counterex} contains a meridian as a leaf, but it is not strongly commensurable to a limit of meridians. Indeed, suppose that $\lambda'$ is a Hausdorff limit of meridians that contains $\lambda$. In each component $T \subset S\setminus m$, there is a \emph{unique} homotopy class rel $m$ of $m$-waves in $T$. So $\lambda'$ contains a leaf $\ell$ that either intersects $T$ in an arc in this homotopy class, or is contained in $T$ and is obtained by spinning an arc in this homotopy class around $\mu$. In either case, $\ell$ intersects transversely the component of $\lambda$ contained in $T$, a contradiction. One can also make similar examples of laminations that contain a meridian but are not even \emph{commensurable} to a Hausdorff limit of meridians, by replacing the two curves on either side of $m$ in Figure \ref{counterex} with minimal laminations that fill the two components of $S \setminus m$. 
\begin{figure}[b]
	\centering
	\includegraphics{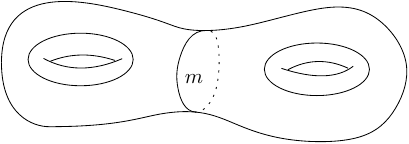}
	\caption{A lamination that contains a meridian as a leaf, but is not strongly commensurable to a Hausdorff limit of meridians.}\label {counterex}
\end{figure}

It turns out, though, that this is basically the only counterexample. Let's say that a lamination $\lambda$ on $S$ is \emph{exceptional} if $S$ has genus $2$, there is a separating meridian $m$ on $S$ that is either disjoint from $\lambda$ or is a leaf of $\lambda$, and there are minimal sublaminations of $\lambda$ that fill the two components of $S \setminus m$.

\begin{theorem}[A weak Casson Criterion, see Theorem \ref{hlimits}]\label{hlimitsintro}
If $\lambda$ is a geodesic lamination on $S$ that is not exceptional, then $\lambda$ is strongly commensurable to a Hausdorff limit of meridians if and only if it is strongly commensurable to a lamination with a homoclinic leaf.
\end{theorem}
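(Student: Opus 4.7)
The forward direction is essentially immediate from Otal's theorem cited in the excerpt: any Hausdorff limit of meridians has a homoclinic leaf, so if $\lambda$ is strongly commensurable to such a limit $\mu$, then $\mu$ itself witnesses the right-hand side. The content of the theorem therefore lies in the converse. Suppose $\lambda$ is strongly commensurable to some $\mu$ containing a homoclinic leaf $\ell$; since strong commensurability is an equivalence relation, I replace $\lambda$ by $\mu$ and assume $\ell \subset \lambda$. My plan is to translate the homoclinic data into a sequence of meridians $m_i$ on $S$ whose Hausdorff limit is strongly commensurable to $\lambda$.

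Fix a lift $\tilde\ell : \BR \to \D \tilde H$ and parameters $s_i, t_i$ with $|s_i - t_i| \to \infty$ and $\dist_{\tilde H}(\tilde\ell(s_i), \tilde\ell(t_i))$ uniformly bounded. Joining $\tilde\ell(s_i)$ to $\tilde\ell(t_i)$ by a bounded-length arc $\alpha_i$ in $\tilde H$ and closing up by $\tilde\ell|_{[s_i,t_i]}$ gives a loop that bounds a disk in the simply connected $\tilde H$; projecting to $H$ produces a closed curve on $S$ that bounds a disk in $H$. After pushing $\alpha_i$ onto $S$ where possible and applying disk-exchange and surgery moves — the role I expect the Bounded Diameter Lemma and the chain machinery of Proposition 2.2 to play — I would extract a simple meridian $m_i$ that coincides with $\ell|_{[s_i,t_i]}$ outside a region of bounded diameter, and is therefore Hausdorff-close to the sublamination of $\lambda$ that $\ell$ accumulates on.

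Let $\Lambda$ be a Hausdorff limit of the $m_i$. Because $|s_i - t_i| \to \infty$ while the surgeries have bounded diameter, every minimal sublamination of $\lambda$ met recurrently by $\ell$ is contained in $\Lambda$, and the extra leaves introduced by the closing arcs form a finite collection of isolated leaves that, after further surgery if needed, can be arranged to contain no simple closed curves. This gives strong commensurability between $\Lambda$ and the part of $\lambda$ seen by $\ell$. To pick up minimal sublaminations of $\lambda$ that $\ell$ misses, I would use the commensurability $\lambda \sim \mu$ and re-run the same argument for each such component — or find that each already carries a homoclinic leaf of its own — then splice the resulting meridian approximations together by band sums in $S$ routed away from the minimal components.

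The splicing step is the main obstacle, and the exceptional hypothesis is precisely what makes it feasible. When $S$ has genus $2$ with a separating meridian $m$ of the exceptional type, the $m$-wave argument reviewed in the introduction shows that no Hausdorff limit of meridians can carry filling minimal laminations on both sides of $m$, so splicing there is genuinely impossible. In every other configuration I expect enough topological room on $S$ to route the splicing arcs without forcing intersections with the minimal components — either because $S$ has higher genus, because no separating meridian is in play, or because minimal sublaminations live on only one side of any relevant separator. Carrying out the splicing compatibly with Hausdorff convergence, and verifying that non-exceptional is exactly the hypothesis needed to do so, will be the most technical piece of the argument.
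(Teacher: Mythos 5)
Your forward direction is fine and matches the paper (Otal/Lecuire: a Hausdorff limit of meridians has a homoclinic leaf). The converse, however, rests on a construction that does not exist in general. The closing arc $\alpha_i$ joining $\tilde\ell(s_i)$ to $\tilde\ell(t_i)$ is an arc in the $3$-manifold $\tilde H$, not on $\partial\tilde H$; the points $\ell(s_i),\ell(t_i)$ are close in $H$ but typically very far apart on $S$, so the loop $\tilde\ell|_{[s_i,t_i]}\cup\alpha_i$ does not project to a curve on $S$, and there is no surgery yielding a simple meridian that ``coincides with $\ell|_{[s_i,t_i]}$ outside a region of bounded diameter.'' The model case is an essential interval bundle $B\subset H$: the two ends of $\ell$ enter the two horizontal boundary components, the bounded arcs $\alpha_i$ are essentially fibers of $B$, and the approximating meridians are not surgeries of $\ell$ at all but the curves $m(c)=c_-\cdot\alpha\cdot c_+^{-1}\cdot\alpha^{-1}$ of Claim \ref{mc}, built from curves on the base of $B$ and a compression arc. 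The missing structural input is Theorem \ref{travelers} and Corollary \ref{travelers-biinfinite}: the ends of a homoclinic leaf either are asymptotic (producing a disjoint meridian), or limit onto an intrinsic limit of meridians, or limit onto laminations that are homotopic through an essential interval bundle admitting a compression arc disjoint from $\lambda$. Without that trichotomy (whose proof occupies the hyperbolic-geometric core of the paper, via Theorem \ref{windowsthm} and the surgery arguments in case (D)) you have no control over what your surgered curves Hausdorff-converge to. The closest the paper comes to your construction is Lemma \ref{tightcuts}, where waves are closed up by short arcs \emph{on $S$}, and even there the output is only $i(\gamma_i,\lambda)\to 0$, far from Hausdorff convergence to $\lambda$. (Also, the ``Bounded Diameter Lemma'' and ``Proposition 2.2'' you invoke are not results of this paper.)

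The second gap is the splicing step, which is where the actual construction of the converging meridians lives. The paper does not splice by band sums routed away from the minimal components; it proves Lemma \ref{promotinghlimitslem} and Proposition \ref{promotinghlimits}, which upgrade a sequence of meridians converging to the components $\nu$ seen by the homoclinic structure to one converging to all of $\lambda=\nu\cup\eta$ by composing with high powers of Dehn twists that extend to nested subcompression bodies $C\subset C'\subset M$ (twists about multicurves approximating $\eta$ and about curves bounding essential annuli in $C$). A band sum by itself does not force the remaining components $\eta$ into the Hausdorff limit. Finally, your reading of the exceptional case is off: the obstruction there is not to splicing but to the limit itself --- in a compressible punctured torus there is a unique homotopy class of wave rel the separating meridian, so any Hausdorff limit of meridians must contain a leaf transverse to every filling lamination on that side; in the non-exceptional separating case the paper instead finds a second disjoint separating meridian and runs the annulus-twisting argument through the resulting compression bodies.
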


See Theorem \ref{hlimits} for a more general statement and for the proof. In our view, this is the strongest version of Casson's criterion that is likely to be true for arbitrary geodesic laminations. It may be, though, that the original Casson Criterion is true for \emph{minimal} laminations. Our methods only work up to strong commensurability, though. For instance, if $\lambda$ is minimal filling on $S$ and contains a homoclinic leaf, to prove that $\lambda$ is a limit of meridians one would have to ensure that the meridians produced in Lemma \ref{tightcuts} do not run across any diagonals of the ideal polygons that are components of $S \setminus \lambda$.

The main tool in the proof of Theorem \ref{hlimitsintro} is a complete characterization of the minimal laminations onto which the two ends of a  homoclinic simple geodesic on $S$ can accumulate.

\begin{theorem}[Limits of homoclinic geodesics, see Corollary \ref{travelers-biinfinite}]\label{travelersintro}
	Suppose that $h$ is a homoclinic simple biinfinite geodesic on $S$ and that the two ends of $h$ limit onto minimal laminations $\lambda_-,\lambda_+ \subset S$. Then either
	\begin{enumerate}
		\item the two ends of $h$ are asymptotic on $S$,
		\item one of $\lambda_-,\lambda_+$ is an intrinsic limit of meridians, or 
		\item $\lambda_-,\lambda_+$ are contained in incompressible subsurfaces $S_-,S_+ \subset S$ that bound an essential interval bundle $B\subset H$ through which $\lambda_-$ and $\lambda_+$ are homotopic.
	\end{enumerate}
\end{theorem}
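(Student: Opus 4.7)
The plan is to exploit the homoclinic condition by lifting $h$ to a path $\tilde h\subset\partial\tilde H$ and studying sequences $s_i,t_i\in\BR$ with $|s_i-t_i|\to\infty$ and $d_{\tilde H}(\tilde h(s_i),\tilde h(t_i))$ bounded. After extracting subsequences, I would join $\tilde h(s_i)$ and $\tilde h(t_i)$ by paths $\gamma_i\subset\tilde H$ of uniformly bounded length, each crossing an eventually constant finite pattern of lifted meridian disks; the combinatorics of these crossings together with the sign behavior of $(s_i,t_i)$ drives a case split. Up to reversing $h$, there are two essentially different configurations: (A) $s_i\to -\infty$ and $t_i\to +\infty$, or (B) both $s_i,t_i\to +\infty$.

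In configuration (A), the arcs $h|_{[s_i,t_i]}$ have halves accumulating onto $\lambda_-$ and $\lambda_+$ respectively, and the connecting arcs $\gamma_i$ descend to homotopies in $H$ matching points near $\lambda_-$ with points near $\lambda_+$. If any of these homotopies can be deformed to expose a compressing disk meeting $h|_{[s_i,t_i]}$, the tight-cut surgery of Lemma \ref{tightcuts} converts it into a meridian; iterating on $i$ produces meridians accumulating onto $\lambda_+$ or $\lambda_-$, giving alternative (2). Otherwise, the limiting family of homotopies organizes itself into an embedded essential $I$-bundle $B\subset H$ whose horizontal boundary components $S_\pm\subset S$ contain $\lambda_\pm$; since $\lambda_\pm$ are minimal and persistently matched under the bundle identification, they fill $S_\pm$ and are homotopic through $B$, giving alternative (3).

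In configuration (B), the arcs $h|_{[s_i,t_i]}$ accumulate onto $\lambda_+$ alone and are closed up in $\tilde H$ by the short bounded paths $\gamma_i$. If some $\gamma_i$ crosses a lifted meridian disk, the tight-cut construction again yields an actual meridian near $\lambda_+$; passing to a subsequence and appealing to minimality of $\lambda_+$ gives a sequence of meridians whose Hausdorff limit contains $\lambda_+$, so alternative (2) holds. If instead every $\gamma_i$ stays within a single chamber of the tree decomposition of $\tilde H$, then $\tilde h(s_i)$ and $\tilde h(t_i)$ are already uniformly close in $\partial\tilde H$ with $|s_i-t_i|\to\infty$, which forces $\tilde h$ to be recurrent in $\partial\tilde H$. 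The symmetric analysis for $s_i,t_i\to -\infty$ gives the analogous recurrence on the negative end, and together these identify the forward and backward ends of $\tilde h$ in $\partial\tilde H$, so their projections are asymptotic on $S$, which is alternative (1).

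The main obstacle will be the interval-bundle dichotomy in configuration (A): promoting a limiting collection of homotopies in $H$ to an honest embedded essential $I$-bundle requires equivariant organization of the connecting arcs $\gamma_i$, simultaneous control of self-intersections of $h|_{[s_i,t_i]}$, and a clean criterion for when a compression is hidden inside the homotopy rather than absent. This is the point at which minimality of $\lambda_\pm$ and the planar combinatorics of $\partial\tilde H$ are decisive, since they rule out the borderline situations where a product structure and a compression could coexist without either (2) or (3) being witnessed outright.
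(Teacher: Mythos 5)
Your case split between ``the two parameters go to opposite infinities'' (mutually homoclinic ends) and ``both go to $+\infty$'' (a single homoclinic ray) matches the paper's first reduction, and your use of tight position and wave surgery (Lemma \ref{tightcuts}, Fact \ref{tightquasi}) to produce meridians is the right mechanism for alternative (2). But the proposal has two genuine gaps. First, the step you yourself flag as ``the main obstacle'' --- promoting the bounded connecting arcs in configuration (A) to an embedded essential interval bundle --- is exactly where the content of alternative (3) lives, and no mechanism is supplied. The paper does not organize a ``limiting family of homotopies'' combinatorially; it realizes $H$ (more generally $M$) as the convex core of a geometrically finite hyperbolic $3$-manifold, lifts $S(\lambda_\pm)$ to convex pieces $\tilde S_\pm\subset\partial CH(\Lambda)$, shows the mutually homoclinic lifts $\tilde h_\pm$ are quasigeodesic with a common endpoint $\xi\in\Lambda_-\cap\Lambda_+\subset\partial\BH^3$, and then invokes Theorem \ref{windowsthm} (the identity $\Lambda_-\cap\Lambda_+=\Lambda_\Delta$ plus the precise-invariance analysis of the convex hulls) together with the Jaco--Shalen/Johannson characteristic submanifold to extract $B$. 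Without something of this strength, the ``product structure versus hidden compression'' dichotomy you describe is not established.

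Second, the proposal omits the hardest case entirely: $\lambda_-=\lambda_+=\lambda$ not a simple closed curve with $S(\lambda)$ compressible (case (D) of Theorem \ref{travelers}). Here no interval bundle is available (the windows argument needs $S(\lambda_\pm)$ incompressible), and it is far from automatic that meridians accumulate onto $\lambda$; the paper must prove the unlinking statement (Claim \ref{unlinked}) using planarity of $cl(\partial\tilde M)$, then run the surgery of Claim \ref{affclaim} to produce \emph{embedded} essential annuli with small intersection with $\lambda$, and finally apply Lemma \ref{annualilimit} to upgrade these to meridians. Your configuration-(A) dichotomy does not cover this situation. Relatedly, your claim that configuration (B) can yield alternative (1) is backwards: a single geodesic ray lying in an incompressible subsurface cannot be homoclinic (Fact \ref{incompressible-qg}), so configuration (B) always forces (2); the asymptotic alternative (1) arises within configuration (A), when the mutually homoclinic lifts turn out to be asymptotic on $\partial\tilde H$.
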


\begin{figure}
	\centering
	\includegraphics{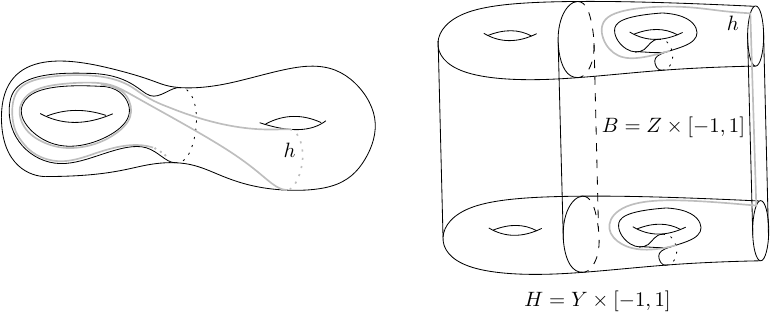}
	\caption{Examples of homoclinic geodesics (in grey) satisfying (3) in Theorem \ref{travelersintro}.}
	\label{condition3}
\end{figure}

Here, a minimal lamination  $\lambda \subset S$ is \emph{an intrinsic limit of meridians} if it is strongly commensurable to the Hausdorff limit of a sequence of meridians that are contained in the smallest essential subsurface $S(\lambda) \subset S$ containing $\lambda$, see Proposition \ref{intrinsiclimits} for a number of equivalent definitions. We refer the reader to Theorem \ref{travelers} and Corollary \ref{travelers-biinfinite} for more precise and more general versions of the above that apply both to homoclinic biinfinite geodesics, and also to pairs of `mutually homoclinic' geodesic rays on $S$. 

Examples of (3) are shows in Figure \ref{condition3}. On the left, $\lambda_-,\lambda_+$ are simple closed curves that bound an embedded annulus $A$ in $H$ and $B$ is a regular neighborhood of $A$. The geodesic $h$ is homoclinic since the annulus $A$ lifts to an embedded infinite strip $\BR\times [-1,1]\subset \tilde H$ and the two ends of a lift of $h$ are asymptotic to $\BR_+ \times \{-1\}$ and $\BR_+ \times \{1\}$, respectively. On the right, we write $H=Y\times [-1,1]$ where $Y$ is a genus two surface with one boundary component. The laminations $\lambda_\pm$ are minimal (in the picture they are drawn as `train tracks') and fill $Z\times \{\pm 1\}$, where $Z\subset Y$ is a torus with two boundary components. Here, $B=Z\times [-1,1]$.
Interval bundles are essential to the study of meridians on handlebodies, and it is no surprise that they appear in Theorem \ref{travelersintro}. For example, subsurfaces bounding such interval bundles are the `incompressible holes' studied by Masur-Schleimer \cite{masur2013geometry}, and interval bundles appear frequently in Hamenst\"adt's work on the disk set, see e.g.\ \cite{hamenstadt2016hyperbolic,hamenstadt2019asymptoticI,hamenstadt2019asymptoticII}. We note that the interval bundles $B$ appearing in Theorem \ref{travelersintro} may be twisted interval bundles over non-orientable surfaces, in which case $\lambda_-=\lambda_+$ and $S_-=S_+$. See \S\ref{intervalbundle} for background on interval bundles.

\subsection{Hausdorff limits via their minimal sublaminations} The previous two theorems suggest that if a lamination $\lambda$ that is a Hausdorff limits of meridians, one might expect to see minimal sublaminations $\lambda$ that are intrinsic limits of meridians, or pairs of components that are homotopic through essential interval bundles in $H$. In fact, we show the following.

\begin{theorem}[see Theorem \ref{hlimits}]\label{hlimits2intro}
Suppose that $\lambda \subset S$ is a nonexceptional geodesic lamination that is a finite union of minimal components. Then $\lambda $ is strongly commensurable to a Hausdorff limit of meridians if and only if either
\begin{enumerate}
\item $\lambda$ is disjoint from a meridian on $S$,
\item some component of $\lambda $ is an intrinsic limit of meridians, or
\item there are components $\lambda_\pm \subset \lambda$ that fill incompressible subsurfaces $S_\pm \subset S$, such that $S_\pm$ bound an essential interval bundle $B\subset H$, the laminations $\lambda_\pm$ are essentially homotopic through $B$, and there is a compression arc $\alpha$ for $B$ that is disjoint from $\lambda $.
\end{enumerate}
\end{theorem}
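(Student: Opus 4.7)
The plan is to derive both directions of the equivalence from the weak Casson Criterion (Theorem~\ref{hlimitsintro}) together with the structural result for homoclinic geodesics (Theorem~\ref{travelersintro}); these combine to reduce the question to a case analysis on the minimal laminations onto which the two ends of a single homoclinic leaf accumulate.

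For necessity, suppose $\lambda$ is strongly commensurable to a Hausdorff limit of meridians. Since $\lambda$ is nonexceptional, Theorem~\ref{hlimitsintro} produces a lamination $\lambda^\ast$, strongly commensurable to $\lambda$, that contains a homoclinic simple biinfinite leaf $h$. Strong commensurability only allows isolated non-closed-curve leaves as differences, so the minimal components of $\lambda^\ast$ coincide with those of $\lambda$; in particular the minimal sublaminations $\lambda_-,\lambda_+$ onto which the two ends of $h$ accumulate are components of $\lambda$. I then apply Theorem~\ref{travelersintro}. If the two ends of $h$ are asymptotic on $S$, they spiral from opposite sides onto a common simple closed geodesic $c$; closing up the two spirals by a short arc gives a loop freely homotopic to a power of $c$, and the homoclinic hypothesis forces this loop to be nullhomotopic in $H$, so $c$ is a meridian. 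If $c$ is not a component of $\lambda$ this yields~(1), and if $c$ is a component then the constant sequence $m_n=c$ realizes $c$ as an intrinsic limit of meridians, yielding~(2). The intrinsic limit case of Theorem~\ref{travelersintro} is exactly~(2). The interval bundle case supplies the data $S_\pm$, $B$, and the homotopy of $\lambda_-$ to $\lambda_+$ through $B$ required by~(3); what remains is to produce the compression arc for $B$ disjoint from $\lambda$.

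Producing this compression arc is the step I expect to be the main obstacle. The idea is that, after straightening in $H$, a long subarc of $h$ connecting the $\lambda_-$ end to the $\lambda_+$ end can be homotoped into a nearly vertical arc of $B$, and the homoclinic bound on distance between the lifted endpoints lets one close it up by a short arc in $\partial_v B$ to obtain an essential arc $\alpha'\subset B$ cobounding a disk in $H$. The other minimal components of $\lambda$ are disjoint from the filling laminations $\lambda_\pm$, hence lie outside $S_\pm$ and do not meet $\alpha'$. The delicate point is disjointing $\alpha'$ from $\lambda_\pm$ themselves: because $\lambda_\pm$ are minimal and matched by the interval bundle structure, complementary regions of $\lambda_\pm$ in $\partial_h B$ correspond, and the endpoints of $\alpha'$ can be pushed across these regions into the complement of $\lambda$. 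The nonexceptionality hypothesis enters precisely here, since in the exceptional case of Figure~\ref{counterex} the lamination $\lambda$ already fills both sides of the separating meridian, leaving no room for such a push-off.

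For sufficiency, each of (1), (2), (3) admits a band-sum construction. In~(1), given a meridian $m$ disjoint from $\lambda$, I take a short arc $\beta_n\subset m$ and replace it with a long simple arc $\alpha_n\subset S\setminus m$ that tracks $\lambda$ more and more closely; the resulting simple closed curves are homotopic to $m$ in $H$, hence meridians, and Hausdorff-converge to $\lambda\cup m$ together with finitely many isolated spiralling leaves, which is strongly commensurable to $\lambda$. Case~(2) applies the same construction to the intrinsic approximating meridians for a component $\lambda'\subset\lambda$, band-summed with long arcs tracking the other minimal components. Case~(3) uses the compression arc $\alpha$ and the $B$-homotopy to thread arcs that track $\lambda_+$ along one horizontal boundary, cross $B$ vertically, track $\lambda_-$ along the other, close up through $\alpha$, and then band-sum with approximants of the remaining components of $\lambda$.
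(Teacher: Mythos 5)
Your overall architecture matches the paper's --- necessity via the structure theorem for homoclinic leaves, sufficiency via explicit meridian sequences --- but both halves have gaps that sink the argument as written. The most serious is in sufficiency for case (1), which is exactly where nonexceptionality must be used: in the exceptional case $\lambda$ \emph{is} disjoint from a meridian yet is \emph{not} strongly commensurable to a Hausdorff limit, so any proof of $(1)\Rightarrow$ Hausdorff limit that never invokes the hypothesis cannot be correct. Your construction also has two concrete defects: replacing a subarc $\beta_n\subset m$ by a long tracking arc $\alpha_n$ changes the class in $\pi_1 H$ by the loop $\alpha_n\cup\beta_n^{-1}$, which is generally essential in $H$, so the resulting curves are not meridians; and even if they were, their Hausdorff limit contains the closed curve $m$, and a difference containing a simple closed curve is by definition not allowed in strong commensurability. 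The paper's Lemma \ref{promotinghlimitslem} instead produces meridians by applying high powers of twists that extend to the compression body obtained by compressing $m$ (or genuine band sums $\partial N(\mu\cup\alpha)$, which are commutators with $\mu$ and hence nullhomotopic in $H$); when $\mu$ is separating it must first locate a second disjoint separating meridian, and that is precisely the step that fails on a closed genus two surface. Cases (2) and (3) likewise require the full strength of Proposition \ref{promotinghlimits}, not just ``the same construction.''

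In the necessity direction, your asymptotic case is wrong: when the two ends of $h$ are asymptotic on $S$ their common limit need not be a simple closed curve, and when it is, the ends spiral in the \emph{same} direction and the closed-up loop need not be nullhomotopic in $H$ --- see Figure \ref{case5}, where $\lambda$ is the $(2,1)$-curve on a solid torus summand, homotopic to the square of the core rather than to a meridian. The paper handles this with the finer dichotomy of Corollary \ref{travelers-biinfinite}: either an embedded ideal triangle closes $h([-s,s])$ up into a meridian disjoint from $\lambda$ (using that mutually homoclinic lifts are asymptotic in $\partial\tilde M$, case (2)(a) of Theorem \ref{travelers}), or $\lambda_\pm$ is a closed curve and a boundary component of a neighborhood of $h\cup\lambda_\pm$ is a meridian. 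Finally, the compression arc you flag as the main obstacle in case (3) is obtained in the paper simply as the subarc of the homoclinic leaf $h$ lying outside $S_-\cup S_+$ (Corollary \ref{travelers-biinfinite}, via Fact \ref{compressionfact}); since $h$ is a leaf of a lamination whose minimal components are those of $\lambda$, this subarc is automatically disjoint from $\lambda$, and nonexceptionality plays no role there.
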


In (3), a \emph {compression arc} for $B$ is an arc from $\partial S_-$ to $\partial S_+$ that is homotopic in $H$, keeping its endpoints in $\partial S_\pm$, to a fiber of the interval bundle $B$. See \S \ref{compressionsec} and Figure \ref{spanningfig} for more explanation.

Note that Theorem \ref{hlimits2intro} does not say anything interesting about which {minimal filling} laminations $\lambda$ on $S$ are strongly commensurable to Hausdorff limits of meridians, just that that happens if and only if (2) holds, which is trivial. Indeed, for minimal filling laminations, it is not clear that there {should} be an easy way to `identify' Hausdorff limits of meridians. The point of Theorem \ref{hlimits2intro}, though, is that it reduces the characterization of Hausdorff limits of meridians to the minimal filling case. We note that it should be possible to replace the part of the proof of Theorem \ref{hlimits2intro} that references homoclinic geodesics with arguments similar to those used in Masur-Schleimer's paper \cite{masur2013geometry}.

\subsection{Extension of reducible maps to compression bodies}
As another application of our techniques, we consider extension properties of  homeomorphisms $f : S \longrightarrow S$. To motivate this, recall that a  \emph{subcompression body} of $H$ is a $3$-dimensional submanifold $C \subset H$ with $S \subset \partial C$ that is obtained by choosing a finite collection $\Gamma $ of disjoint meridians on $S$, taking a regular neighborhood of $S$ and a collection of discs in $H$ with boundary $\Gamma$, and adding in any complementary components that are topological $3$-balls. We say $C$ is obtained by \emph{compressing} $\Gamma$. We usually consider subcompression bodies only up to isotopy, and we allow the case that $\Gamma=\emptyset$, in which case we recover the \emph{trivial subcompression body}, which is just a regular neighborhood of $S$. 

Two examples where $\Gamma$ contains a single meridian are drawn in Figure \ref{cbodyfig}. On the left, we compress a separating meridian $m_1$ and obtain a subcompression body of $H$ that has two `interior' boundary components contained in $int(H)$; these are the tori drawn in gray. On the right, we compress a nonseparating meridian $m_2$ and obtain a subcompressionbody with a single torus interior boundary component. Note that compressing $\Gamma=\{m_1,m_2\}$ gives the same subcompression body as compressing $m_2$, because we fill in complementary components that are balls. See \S \ref{cbodysec} for details.

\begin{figure}
	\centering
	\includegraphics{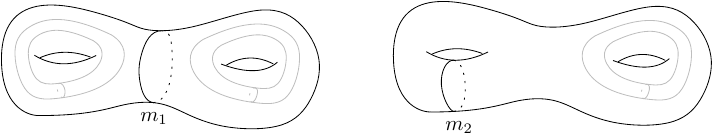}
	\caption{Compression bodies inside a genus $2$ handlebody.}
	\label{cbodyfig}
\end{figure}

Biringer-Johnson-Minsky \cite{Biringerextending} showed\footnote{Their condition was really that the measured lamination $\lambda_+ $ lies in the `limit set' of the handlebody, i.e.\ that it is a limit of meridians in $\mathcal{PML}(S)$, but that is equivalent to being strongly commensurable to a Hausdorff limit of meridians. See \S \ref{laminationssec} for more information about the limit set.} that the attracting lamination $\lambda_+$ of a pseudo-Anosov map $f : S \longrightarrow S$ is strongly commensurable to a Hausdorff limit of meridians if and only if $f$ has a nonzero power that extends to a homeomorphism of some subcompression body of $H$. Both statements are `genericity' conditions on $f$ with respect to the structure of $H$ and both were previously studied in the literature, see e.g.\ \cite{Namaziheegaard} and \cite{Lackenbyattaching}.

Here, we show that extension of powers of a homeomorphism $f : S \longrightarrow S$ to subcompression bodies can be detected by looking at extension of powers of its components in the Nielsen-Thurston decomposition. More precisely, recall that $f$ is \emph{pure} if there are disjoint essential subsurfaces $S_i \subset S$,  such that $f=id$ on $S_{id}:=S \setminus \cup_i S_i$, and where for each $i$, if we set  $f_i: = f | _{S_i}$, then either
\begin {enumerate}
\item $S_i $ is an annulus and $f_i $ is a power of a Dehn twist, or
\item $f_i $ is a pseudo-Anosov map on $S_i$.
\end {enumerate}
It follows from the Nielsen-Thurston classification  \cite{Farbprimer}, that every homeomorphism of $S$ has a power that is isotopic to a pure homeomorphism.

\begin{theorem}[Partial extension of reducible maps, see Theorem \ref{extension}]\label{extensionintro}
Let $f : S \longrightarrow S$ be a pure homeomorphism. Then $f$ has a power that extends to a nontrivial subcompressionbody of $H$ if and only if either: 
\begin{enumerate}
	\item there is a meridian in $S_{id}$, 
	\item for some $i$, the map $f_i : S_i \longrightarrow S_i$ has a power that extends to a nontrivial subcompressionbody of $H$ that is obtained by compressing a set of meridians in $S_i$, or
	\item there are (possibly equal) indices $i,j$ such that $S_i,S_j$ bound an essential interval bundle $B$ in $H$, such that some power of $f|_{S_i \cup S_j}$ extends to $B$, and  there is a compression arc $\alpha$ for $B$ whose interior lies in $S_{id}$. \end{enumerate}
\end{theorem}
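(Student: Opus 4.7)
The backward direction is a direct construction in each case. A meridian $m\subset S_{id}$ is fixed pointwise by $f$, so $f^n$ extends trivially across the compression disk bounded by $m$, giving (1). For (2), the given extension of $f_i^n$ over a subcompressionbody obtained by compressing meridians in $S_i$ pastes together with $f^n$ on $S\setminus S_i$ and with the identity across any new interior boundary components. For (3), the extension of $f^n|_{S_i\cup S_j}$ to $B$, combined with the identity action on $\alpha\subset S_{id}$, extends across the compression disk built from $\alpha$ and a fiber of $B$, producing an extension to a nontrivial subcompressionbody of $H$.

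For the forward direction, assume $f^n$ extends to a nontrivial subcompressionbody $C$ obtained by compressing a system of meridians $\Gamma$. Passing to a further multiple of $n$, we may assume $f^n$ fixes each $m\in\Gamma$ up to isotopy and fixes each complementary region of $\Gamma$ in $S$. Fix $m\in\Gamma$ in minimal position with respect to $\bigcup_i\partial S_i$. If $m\subset S_{id}$ we are in case (1); if $m$ is the core of a Dehn twist annulus $S_i$, then since a Dehn twist around the boundary of a disk extends trivially across that disk, case (2) holds. Arcs of $m$ essentially crossing a twist piece $S_i$ would have to be preserved by a nontrivial power of a Dehn twist, which forces them to be boundary-parallel (contradicting minimality), and arcs crossing a pseudo-Anosov piece $S_i$ would have to be preserved by the pseudo-Anosov action of $f_i^n$ on the arc complex of $S_i$. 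This last possibility is impossible unless the arcs are compatible with an essential interval bundle $B\subset H$ whose horizontal boundary meets $S_i$; in that case, the complementary arcs of $m$ in $S_{id}$ supply the compression arc promised by (3), and the other Nielsen-Thurston piece $S_j$ is determined by the other horizontal boundary component of $B$.

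The main obstacle is matching the interval bundle structure with the Nielsen-Thurston decomposition. A clean way to handle this is to iterate a compressible curve $\gamma$ essentially intersecting a pseudo-Anosov piece $S_i$: since $f_i$ is pseudo-Anosov, a Hausdorff subsequential limit of $f^{kn}(\gamma)$ contains the attracting lamination $\lambda_+(f_i)$, while being a limit of curves compressible in $C$ it is strongly commensurable to a Hausdorff limit of meridians. Applying Theorem \ref{hlimits2intro} to this limit yields a trichotomy for its minimal components; combined with $f$-invariance and the fact that $\lambda_+(f_i)$ fills $S_i$, this translates into our (1), (2), or (3), with the subsurfaces $S_\pm$ of Theorem \ref{hlimits2intro} matching the pseudo-Anosov pieces $S_i,S_j$. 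The hardest step is then verifying that a power of $f|_{S_i\cup S_j}$ genuinely extends to $B$ (using that the horizontal lamination of $B$ is preserved by the pseudo-Anosov $f_i$) and that the compression arc can be chosen inside $S_{id}$ rather than merely outside $S_i\cup S_j$; handling the exceptional case of Theorem \ref{hlimits2intro}, and reducing the general situation to the ``finite union of minimal components'' hypothesis needed there, will require additional bookkeeping.
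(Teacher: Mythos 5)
Your backward direction matches the paper's in outline (the paper builds the characteristic compression body of the smallest essential subsurface containing $S_i\cup S_j\cup\alpha$ and extends $f^k$ piece by piece over $B$, a solid torus or thickened disk, and trivial interval bundles), and your overall strategy for the forward direction --- iterate a curve under $f$, take a Hausdorff limit strongly commensurable to the attracting lamination $\lambda$, and feed it into the characterization of Hausdorff limits of meridians --- is exactly the paper's. But there are genuine gaps. First, the opening step of your forward direction is false: extending $f^n$ to $C$ means $f^n$ preserves the disk set $\mathcal D(C,S)$ setwise, not that any power fixes each $m\in\Gamma$ up to isotopy; if $m$ essentially crosses a pseudo-Anosov piece then $f^{kn}(m)\to\lambda_+(f_i)$ and no power fixes $m$. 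You effectively abandon this route, but the replacement also needs the paper's minimality reduction (pass to a minimal $f$-invariant $S'\subset S$ such that $f|_{S'}$ extends to a nontrivial subcompression body of $(M,S')$), which is what guarantees that every meridian of $C$ meets every component of $\lambda$, so that the Hausdorff limit of $f^{kn}(m)$ is strongly commensurable to all of $\lambda$ and the trichotomy applies to the right lamination.

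The most serious gap is the step you defer at the end: showing that a power of $f|_{S_i\cup S_j}$ actually extends to $B$. Knowing that $\lambda_i,\lambda_j$ are essentially homotopic through $B$ (equivalently, $\sigma(\lambda_j)$ is isotopic to $\lambda_i$ for the canonical involution $\sigma$) only says that $f_i$ and $\sigma\circ f_j\circ\sigma^{-1}$ are pseudo-Anosovs with the same invariant laminations, hence lie on a common Teichm\"uller axis modulo a finite group; it does \emph{not} say they are the same power of the primitive pseudo-Anosov. The example in the introduction with $F$ (which is $f\times\mathrm{id}$ on both ends of $Y\times[-1,1]$) versus $G$ (which is $f$ on one end and $f^2$ on the other) shows this is exactly where the statement can fail: $G$ preserves the horizontal laminations of the interval bundle but no power of $G$ extends. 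The paper closes this gap with a quantitative argument: since $S_i,S_j$ are holes for the disk set of $C$, Masur--Schleimer's Lemma 12.20 bounds the arc-complex distance between $f_i^k(m\cap S_i)$ and $\sigma\circ f_j^k\circ\sigma^{-1}(\sigma(m\cap S_j))$ uniformly in $k$, forcing equal stable translation lengths and hence equal powers of $\phi$; a separate Teichm\"uller-axis and finite-order argument handles the twisted-bundle case. Without an argument of this kind your proof establishes only a weaker conclusion than (3).
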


In (3), note that if (for simplicity) $B$ is a trivial interval bundle, then $f|_{S_i\cup S_j}$ extends to $B$ exactly when $f_i,f_j$ become isotopic maps when $S_i,S_j$ are identified through  $B$. More generally, a power of $f|_{S_i\cup S_j}$ extends to $B$ when $f_j$ is obtained from $f_i$ by multiplying by a periodic map that commutes with $f_i$.

For the proof of Theorem \ref{extensionintro}, it is necessary to extend the theorem of Biringer-Johnson-Minsky \cite{Biringerextending} referenced above to the case of pseudo-Anosovs on essential subsurfaces of $S$. More precisely, we show that (2) above holds exactly when the attracting lamination of $f_i$ is an intrinsic limit of meridians. This is done in Theorem \ref{BJM}; the proof is basically the same as theirs, although we reorganize it into separate topological and geometric arguments in a way that makes it clearer than the original version. 

In contrast to the pseudo-Anosov case, one cannot determine when a pure homeomorphism $f: S \longrightarrow S $ has a power that extends by looking at its attracting lamination. Here, the `attracting lamination' of $f$ is the union of all its twisting curves and all the attracting laminations of its pseudo-Anosov components. Indeed, set $H=Y \times [-1,1]$, where $Y$ is a surface with boundary, and let $f : Y \longrightarrow Y$ be a pseudo-Anosov map such that $f=id$ on $\partial Y$. Then $$S = Y \times \{-1\} \cup \partial Y \times [-1,1] \cup Y \times \{+1\}$$
and if we let $F : S\longrightarrow S$ be $f\times id$ on $Y\times \{\pm 1\}$ and the identity map on the rest of $S$, while we let $G : S \longrightarrow S$ be $f\times id$ on $Y \times \{1\}$, and $f^2\times id$ on $Y \times \{-1\}$, and the identity map on the rest of $S$, then $F,G$ have the same attracting lamination, but $F$ extends to a homeomorphism of $H$, while $G$ does not. In fact, it follows from Theorem \ref{extensionintro} that no power of $G$ extends to a nontrivial subcompression body of $H$. 

\subsection{Other results of interest}

There are two other theorems in this paper that we should mention in the introduction.  

In \S\ref{largesmallsec} we study the \emph{disk set} $\mathcal D(S,M)$ of all isotopy classes of meridians in an essential subsurface $S \subset \partial M$ with $\partial S$ incompressible, where here $M$ is a compact, irreducible $3$-manifold with boundary. We show in Proposition~\ref{discsetcurve} that either $\mathcal D(S,M)$ is \emph{small}, meaning that it is either empty, has a single element, or has a single non-separating element and infinitely many separating elements that one can explicitly describe, or $\mathcal D(S,M)$ is \emph{large}, meaning that it has infinite diameter in the curve complex $\mathcal C(S)$. This result will probably not surprise any experts, but we have never seen it in the literature.

In \S\ref{windowssec} we show how essential interval bundles in a compact $3$-manifold with boundary $M$ can be seen in the limit sets in $\partial \BH^3$ associated to hyperbolic metrics on $int(M)$. This picture was originally known to Thurston \cite{Thurstonhyperbolic3}, and was studied previously under more restrictive assumptions by Walsh \cite{walsh2014bumping} and Lecuire \cite{lecuire2004structures}. We need a more general theorem in the proof of Theorem \ref{travelers}: in particular, we need a version that allows accidental parabolics. This is Theorem \ref{windowsthm}. Our proof is also more direct and more elementary than those of \cite{walsh2014bumping,lecuire2004structures}. See \S\ref{windowssec}  for more context and details.

\subsection{Outline of the paper}

Section \ref{prelims} contains all the necessary background for the rest of the paper. We discuss the curve complex, the disc set, compression bodies, interval bundles, the Jaco-Shalen and Johannson characteristic submanifold theory, compression arcs, and geodesic laminations. \S 3 and \S 4 are described in the previous subsection. \S 5 contains a discussion of homoclinic geodesics, intrinsic limits of meridians, and some of their basic properties. The main point of \S 6 is Theorem  \ref{travelers}, which is the more precise and general version of Theorem \ref{travelersintro} above. \S 7 is devoted to Theorem \ref{hlimits}, which characterizes Hausdorff limits of meridians and combines Theorems \ref{hlimitsintro} and \ref{hlimits2intro} above. \S 8 contains our extension of Biringer-Johnson-Minksy \cite{Biringerextending} to partial pseudo-Anosovs, and \S 9 contains the proof of Theorem \ref{extension}, which generalizes Theorem \ref{extensionintro} above.

  \vspace{1mm}
  
    \subsection{Acknowledgements}
  
The authors would like to thank Jeff Brock, Juan Souto and Sebastian Hensel for useful conversations.  The first author was partially supported by NSF grant DMS-1308678.

%


\section {Preliminaries}
\label{prelims}
\subsection{Subsurfaces with geodesic boundary}
\label{subsurfacessec}
Suppose $S $ is a  finite type hyperbolic surface with geodesic boundary. A \emph{connected subsurface with geodesic boundary} in $S$ is by definition either
\begin{enumerate}
	\item a simple closed geodesic $X$ on $S$, which is the degenerate case, or 
	\item an immersed surface $X\longrightarrow S$ such that the restriction to $int(X)$ and to each component of $\partial X$ is an embedding, and where each component of $\partial X$ maps to a simple closed geodesic on $S $.
\end{enumerate} 
In (2), the point is that our surface is basically an embedding, except that we allow two boundary components of $X$ to map to the same geodesic in $S$. We will usually suppress the immersion and write $X \subset S$, abusing notation. We consider  $X,Y\subset S$ to be \emph{equal} if they are either the same simple closed geodesic, or if they are both immersions as in (2) and the interiors of their domains have the same images. We say $X,Y$ are \emph{essentially disjoint} if either:
\begin{itemize}
\item $X,Y$ are disjoint simple closed geodesics, 
\item $X$ is a simple closed geodesic, $Y$ is not, and $X$ is disjoint from $int(Y)$, or vice versa with $X,Y$ exchanged, or
\item $X,Y$ have nonempty disjoint interiors.
\end{itemize} 
More generally, we define a (possibly disconnected) \emph{subsurface with geodesic boundary} in $S$ to be a finite union of essentially disjoint connected subsurfaces with geodesic boundary.

\medskip

Any connected essential subsurface $T \subset S $ that is not an annulus homotopic into a cusp of $S$ determines a unique connected subsurface with geodesic boundary $X$ such that the images of $\pi_1 T$ and $\pi_1 X$ in $\pi_1 S$ are conjugate. Here, we say that $X$ is obtained by \emph{tightening} $T$. More generally, we can tighten a  disconnected $T$ to a disconnected $X$ by tightening all its components. 

Tightening is performed as follows. If $T$ is an annulus, then we let $X$ be the unique simple closed geodesic homotopic to the core curve of $T$. Otherwise, we obtain $X$ by homotoping $T$ so that every component of $\partial T$ is either geodesic or bounds a cusp in $S \setminus T$, and then adding in any components of $S \setminus T$ that are cusp neighborhoods.
 Alternatively, let $\tilde T$ be a component of the pre-image of $T$ in the universal cover $\tilde S$, which is isometric to a convex subset of $\BH^2$, let $\Lambda_T \subset \partial \BH^3$ be the set of limit points of $\tilde T$ on $\partial_\infty \BH^2$, and let $\tilde X$ be the convex hull of $\Lambda_T$ within $\tilde S$. Then $\tilde X$ projects to an $X$ as desired.

\medskip

Conversely, suppose $X$ is a subsurface with geodesic boundary in $S$. Then there is a compact essential subsurface $T \hookrightarrow S$, unique up to isotopy and called a \emph{resolution} of $X$, that tightens to $X$. When $X$ is a simple closed geodesic, we take $T$ to be a regular neighborhood of $X$. Otherwise, construct $T$ by deleting half-open collar neighborhoods of all boundary components of $X$, and deleting open neighborhoods of all cusps of $T$. 

Note that subsurfaces with geodesic boundary $X,Y$ are essentially disjoint if and only if they admit disjoint resolutions.

\subsection {The curve complex}
\label {sec:cc}

Let $S $ be a compact orientable surface, possibly with boundary, and assume that $S$ is not an annulus.

\begin {definition}The \emph {curve complex} of $S $, written $\mathcal C (S )$, is the graph whose vertices are homotopy classes of nonperipheral, essential simple closed curves on $S $ and whose edges connect homotopy classes that intersect minimally.  \end {definition}

When $S$ is a 4-holed sphere, minimally intersecting simple closed curves intersect twice, while on a punctured torus they intersect once.  Otherwise, edges in $\mathcal C (S )$ connect homotopy classes that admit disjoint representatives.  

Masur-Minsky \cite {Masurgeometry1} have shown that the curve complex is Gromov hyperbolic, when considered with the path metric in which all edges have unit length.  Klarreich \cite {Klarreichboundary} (see also \cite {Hamenstadttrain}) showed that the Gromov boundary $\partial_\infty \mathcal C (S)$ is homeomorphic to the space of \emph {ending laminations} of $S$: i.e.\ filling, measurable geodesic laminations on $S$ with the topology of Hausdorff super-convergence.

\subsection{The disc set}

Suppose that $S\subset \partial M$ is an essential subsurface of the boundary of a compact, irreducible $3$-manifold $M $, and that $\partial S$ is incompressible in $M$.  An essential simple closed curve $\gamma $ on $M$ is called a \emph {meridian} if it bounds an embedded disc in $M $.  By the loop theorem, $\gamma $ is a meridian if and only if it is homotopically trivial in $M $.

\begin {definition}
The \emph {disc set} of $S$ in $M$, written $\mathcal D (S,M) $, is the (full) subgraph of  $\mathcal C (S) $ whose vertices are the meridians of $S$ in $M$.
\end {definition}

When convenient, we will sometimes regard $\mathcal D (S,M) $ as a subset of the space of projective measured laminations $\PML(S)$, instead of as a graph. 

\vspace{2mm}

The following is an extension of a theorem of Masur-Minsky \cite [Theorem 1.1]{Masurquasi-convexity}, which they prove in the case that $S $ is an entire component of $\partial M$.

\begin {theorem}[Masur-Minsky]\label {quasi-convexity}
The subset $\mathcal D (S,M) $ of $\mathcal C (S) $ is quasiconvex.
\end {theorem}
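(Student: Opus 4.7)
The plan is to follow Masur-Minsky's original argument, checking that every step can be carried out keeping all disks and surgeries inside $S$, where the hypothesis that $\partial S$ is incompressible in $M$ plays the decisive role.

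The central tool is a standard \emph{disk-replacement} operation. Suppose $D_1, D_2$ are essential compressing disks in $M$ with $\partial D_1, \partial D_2 \subset S$ meeting transversely, and assume (after isotopy, using the irreducibility of $M$) that $D_1 \cap D_2$ has no circle components. Take an outermost arc $\alpha \subset D_1 \cap D_2$ in $D_1$: it cobounds a subdisk $\delta \subset D_1$ with a subarc $\beta \subset \partial D_1$, with $\mathrm{int}(\delta) \cap D_2 = \emptyset$. The arc $\alpha$ splits $D_2$ into two subdisks, which one caps off with pushoffs of $\delta$ on either side, producing two new embedded disks $E', E''$ in $M$. Their boundaries $c', c''$ are each obtained from $\partial D_2$ by deleting a subarc and attaching a copy of $\beta$ pushed slightly off along $\partial M$. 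The standard argument shows that at least one of $c', c''$ is essential in $\partial M$, and both are disjoint from $\partial D_1 \setminus \beta$, giving controlled combinatorial distance to $\partial D_1$ and $\partial D_2$ in the curve complex.

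Two checks are needed to confirm the operation lives inside $S$. First, $c', c'' \subset S$: since $\partial D_1, \partial D_2$ are simple closed curves in $S$ disjoint from $\partial S$, the pushoff of $\beta \subset \partial D_1$ along $\partial M$ can be chosen to lie in $S$. Second, if $c'$ is essential in $\partial M$ then it is essential in $S$: it cannot be null-homotopic in $S$, since then it would be null-homotopic in $\partial M$; and it cannot be peripheral in $S$, since otherwise $E'$ together with the annulus in $S$ between $c'$ and a component $c \subset \partial S$ would provide a compressing disk in $M$ for $c$, contradicting the incompressibility of $\partial S$. Hence $c'$ represents a meridian in $\mathcal{D}(S, M)$, and the replacement operation is available in our setting.

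With the replacement operation available, the remainder of Masur-Minsky's proof requires no further modification: iterating it along a geodesic in $\mathcal{C}(S)$ between two meridians, and invoking the Gromov hyperbolicity of $\mathcal{C}(S)$, yields the desired uniform quasiconvexity constant for $\mathcal{D}(S,M) \subset \mathcal{C}(S)$. The only real obstacle in adapting their proof is verifying the second check above, which is immediate from the hypothesis that $\partial S$ is incompressible in $M$.
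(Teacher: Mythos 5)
Your proposal is correct and follows essentially the same route as the paper: both defer the quasiconvexity machinery to Masur--Minsky's surgery/curve-replacement argument, and both identify the only new point as verifying that the surgered curves are non-peripheral essential curves in $S$, which you rule out exactly as the paper does, via the incompressibility of $\partial S$ (a peripheral meridian would compress a boundary component of $S$). One minor caveat: the concluding step in Masur--Minsky is that the well-nested curve replacement sequence of meridians is itself a quasi-geodesic by their Theorem 1.2, rather than an appeal to hyperbolicity along a geodesic, but this does not affect the substance of your adaptation.
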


To prove Theorem \ref{quasi-convexity} as stated above, one follows the outline of \cite {Masurquasi-convexity}: given $a,b \in \mathcal D (S,M) $, the goal is to construct a \emph {well-nested curve replacement sequence} from $a=a_1,\ldots,a_n=b $ consisting of meridians, which must be a quasi-geodesic by their Theorem 1.2.  The sequence $(a_i)$ is created by successive surgeries along innermost discs, and the only difference here is that one needs to ensure that none of the surgeries create peripheral curves.  However, the surgeries create meridians and $S$ has incompressible boundary.

\medskip

\subsection{Compression bodies}\label{cbodysec}
We refer the reader to \S 2 of \cite{Biringerautomorphisms} for a more detailed discussion of compression bodies, and state here only a few definitions that will be used later on.

A \textit{compression body} is a compact, orientable, irreducible $3$-manifold $C$ with a $\pi_1$-surjective boundary component $\partial_+ C$, called the \emph {exterior boundary} of $C$. The complement $\partial C \smallsetminus \partial_+ C$ is called the \emph {interior boundary}, and is written $\partial_- C$. Note that the interior boundary is incompressible.  For if an essential simple closed curve on $\partial_- C$ bounds a disk $D \subset C$, then $C \smallsetminus  D$ has either one or two components, and in both cases, Van Kampen's Theorem implies that $\partial_+ C$, which is disjoint from $D$, cannot $\pi_1$-surject.

Suppose $M$ is a compact irreducible $3$-manifold with boundary, let $\Sigma$ be a component of $\partial M$ and let $S \subset \Sigma$ is an essential subsurface. A \emph{subcompression body of $(M,S)$} is a compression body $C\subset M$ with exterior boundary $\Sigma$ that can be constructed as follows. Choose a set $\Gamma$ of disjoint, pairwise nonhomotopic simple closed curves on $S$ that are all meridians in $M$. Let $C'\subset M$ be the union of $\Sigma$ with a set of disjoint disks in $M$ whose boundaries are the components of $\Gamma$, and define $C\subset M$ to be the union of a regular neighborhood of $C' \subset M$ together with any components of the complement of this neighborhood that are topological $3$-balls. Here, we say that $C\subset M$ is obtained \emph{by compressing $\Gamma$}. Note that the irreducibility of $M$ implies that no component of $\partial C$ is a $2$-sphere, and hence that $C$ is irreducible, and therefore a compression body. See \cite[\S 2]{Biringerautomorphisms}  for details about constructing compression bodies via compressions.

When the set $\Gamma$ above is empty, we obtain the \emph {trivial subcompression body} of $(M,S)$, which is just a regular neighborhood of $\Sigma \subset \partial M$. At the other extreme, we can compress a maximal $\Gamma$, which gives the `characteristic compression body' of $(M,S)$, defined via the following fact.

\begin{fact}[The characteristic compression body]\label{charcomp}
Suppose $M$  is an  irreducible  compact $3$-manifold, that $\Sigma$ is a component of $\partial M$ and that $S \subset \Sigma$ is an essential subsurface such that the multicurve $\partial S$ is incompressible in $M$. Then  there is a unique (up to isotopy) subcompression body $$C:=C(S,M) \subset M$$ of $(M,S)$, called the \emph {characteristic compression body} of $(M,S)$, such that a curve $\gamma$ in $S$ is a meridian in $C$ if and only if it is a meridian in $M$.

Moreover, $C$ can be constructed by compressing any  maximal set of disjoint, pairwise nonhomotopic meridians in $S$.\end{fact}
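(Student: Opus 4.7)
The plan is to construct $C$ by compressing a maximal meridian system, verify the meridian-detection property for that $C$, and then use this property to deduce uniqueness.

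Choose a collection $\Gamma = \{m_1, \dots, m_k\} \subset S$ of pairwise disjoint, pairwise non-homotopic meridians in $M$ that is maximal. Such a $\Gamma$ is finite since the number of disjoint, pairwise non-homotopic essential simple closed curves on $S$ has a universal bound depending only on the complexity of $S$. Let $C \subset M$ be the subcompression body obtained by compressing $\Gamma$, and let $\Delta = \Delta_1 \sqcup \cdots \sqcup \Delta_k \subset C$ be the family of compressing disks with $\partial \Delta_i = m_i$. By construction $\partial_+ C = \Sigma$, so $C$ is a subcompression body of $(M, S)$.

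The main content is the claim that a simple closed curve $\gamma \subset S$ is a meridian in $C$ if and only if it is a meridian in $M$. The ``only if'' direction follows from $C \subset M$. For the converse, let $\gamma$ be a meridian in $M$, bounded by an embedded disk $D \subset M$. Put $D$ transverse to $\Delta$ and reduce $D \cap \Delta$ via the standard innermost-circle and outermost-arc arguments: innermost circles on $D$ cobound $2$-spheres with subdisks of $\Delta$, which bound $3$-balls by irreducibility of $M$ and can be isotoped away; an outermost arc $a \subset \Delta_i$ on $D$ cobounds a subdisk $D' \subset D$ with a subarc $b$ of $\gamma$ whose interior is disjoint from $\Gamma$, and combining $D'$ with one of the two subdisks of $\Delta_i$ cut off by $a$ produces a surgery replacing $\gamma$ with a simple closed curve $\gamma'$ freely homotopic to $\gamma$ in $M$ and having strictly fewer intersections with $\Gamma$. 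After finitely many such surgeries, we reach a meridian $\gamma_0 \subset S$ of $M$ disjoint from $\Gamma$. By maximality of $\Gamma$, $\gamma_0$ is either inessential in $S$, peripheral in $S$, or parallel to some $m_j$. The first possibility is excluded since meridians are essential; the second contradicts the incompressibility of $\partial S$ in $M$; hence $\gamma_0$ is parallel to some $m_j$ and bounds the disk $\Delta_j \subset C$. A further reduction of $D$ against $\partial_- C$, using that $\partial_- C$ is incompressible in $M \setminus \inte(C)$ by the maximality of $\Gamma$, isotopes $D$ itself into $C$, showing $\gamma$ is a meridian in $C$.

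For uniqueness, let $C'$ be any subcompression body of $(M, S)$ satisfying the meridian-detection property. Then the set of isotopy classes of meridians in $S$ that bound disks in $C'$ coincides with the corresponding set for $C$, so by the standard uniqueness of subcompression bodies with prescribed disk set (see \cite[\S 2]{Biringerautomorphisms}), $C$ and $C'$ are isotopic in $M$. Applying this to any $C'$ arising from compression of a maximal meridian system in $S$ yields the ``moreover'' clause. The main technical obstacle is the last step of the meridian-detection argument: after reducing the intersection with $\Delta$, one still needs to realize the bounding disk for $\gamma$ inside $C$ rather than inside a component of $M \setminus C$. This requires the incompressibility of $\partial_- C$ in $M \setminus \inte(C)$, which is where the maximality of $\Gamma$ really plays its role: if some component of $\partial_- C$ were compressible on the outside, attaching the corresponding compression would enlarge $C$ to a strictly larger subcompression body containing a meridian in $S$ disjoint from $\Gamma$, contradicting maximality.
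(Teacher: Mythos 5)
Your scaffolding (compress a maximal system $\Gamma$, prove the meridian-detection property, deduce uniqueness from equality of disk sets) matches the paper's, and your uniqueness paragraph is essentially the paper's argument. The genuine gap is in the detection step. Your endgame rests on the claim that $\partial_- C$ is incompressible in $M \setminus \inte(C)$, justified by saying an outside compression would contradict maximality of $\Gamma$. But a compressing disk for $\partial_- C$ in $M \setminus \inte(C)$ only yields a meridian of $M$ on $\Sigma$ disjoint from $\Gamma$; nothing forces that meridian to be isotopic into $S$, so the maximality of $\Gamma$ \emph{as a system of meridians in $S$} is not contradicted. Indeed the claim is false in general. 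Let $M$ be the compression body obtained from a genus $3$ surface $\Sigma$ by compressing one nonseparating curve $\delta$, let $a$ be a curve with $i(a,\delta)=1$ (so $a$ is incompressible, since the only nonseparating meridian is $\delta$), and let $S=\Sigma\setminus N(a)$. Then $\partial S$ is incompressible, the meridians of $M$ isotopic into $S$ are exactly the band sums $\partial N(\delta\cup\beta)$ whose once-punctured torus contains $a$, any two disjoint such curves are isotopic, so $\Gamma$ is the single separating curve $\mu_0=\partial N(\delta\cup a)$; the torus component of $\partial_- C$ is then compressed in $M\setminus\inte(C)$ by a disk bounded by $\delta$. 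So your innermost-circle reduction of $D$ against $\partial_- C$ cannot be carried out, even though the Fact itself still holds in this example.

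The first half of your argument cannot rescue this, because your outermost-arc surgeries replace $\gamma$ by curves $\gamma'$ freely homotopic to $\gamma$ only \emph{in $M$}: the homotopy passes through the subdisk $D'\subset D$, which need not lie in $C$. Hence knowing that the terminal curve $\gamma_0$ bounds a disk in $C$ tells you nothing about $\gamma$ being nullhomotopic in $C$. The paper's proof avoids both problems by staying two-dimensional and inducting on $|m\cap\Gamma|$: for a minimal counterexample $m$, an outermost arc $\alpha$ of a component of $\Gamma$ splits $m$ into $\beta'\cup\beta''$ and produces two $M$-meridians $m'=\alpha\cup\beta'$ and $m''=\alpha\cup\beta''$ in $S$ meeting $\Gamma$ fewer times; by minimality these are $C$-meridians, so $\beta'$ and $\beta''$ are each homotopic to $\alpha$ rel endpoints \emph{in $C$}, whence $m=\beta'\cup\beta''$ is nullhomotopic in $C$ and the Loop Theorem applied in $C$ supplies the embedded disk. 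If you want to keep your three-dimensional approach, you must first establish incompressibility of $\partial_- C$ in the complement, which requires working with the characteristic compression body of $(M,\Sigma)$ rather than of $(M,S)$ and is itself a theorem needing proof.
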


This is a version of a construction of Bonahon \cite{Bonahoncobordism}, except that he only defines the characteristic compression body when $S$ is an entire boundary component of $M$. In that case, the interior boundary components of $M$ are incompressible in $M$, so Bonahon's construction can be used to reduce problems about $3$-manifolds to problems about compression bodies and about  $3$-manifolds with incompressible boundary. 

The reader can also compare the fact to Lemma 2.1 in \cite{Biringerautomorphisms}, which is the special case of the fact where $M$ is a compression body and $S$ is its exterior boundary, so that  $C=M$ is obtained by compressing any maximal set of disjoint, nonhomotopic meridians in $M$. 

\begin{proof} Let $\Gamma$ be a maximal set of disjoint, pairwise nonhomotopic $M$-meridians on $S$, and define $C$ by compressing $\Gamma$. We have to check that any curve in $S$ that is an $M$-meridian is also a $C$-meridian. Suppose not, and take an $M$-meridian $m\subset S$ that is not a $C$-meridian, and that intersects $\Gamma$ minimally. Since $\Gamma$ is maximal, $m$ intersects some component $\gamma\subset \Gamma$. Then there is an arc $\alpha \subset \gamma$ with endpoints on $m$ and interior disjoint from $m$, that is homotopic rel endpoints in $M$ to the arcs $\beta',\beta'' \subset m$ with the same endpoints. (Here $\alpha$ is an `outermost' arc of intersection on a disk bounded by $\gamma$, where the intersection is with the disk bounded by $m$, see e.g.\ Lemma 2.8 in \cite{Biringerautomorphisms}.) Since $m$ is in minimal position with respect to $\Gamma$, the curves $m' = \alpha \cup \beta'$ and $m''=\alpha\cup \beta''$ are both essential, and are $M$-meridians in $S$ that intersects $\Gamma$ fewer times than $m$. So by minimality of $m$, both $m',m''$ are $C$-meridians, implying that $\alpha$ is homotopic rel endpoints to $\beta'$ and $\beta''$ in $C$. This implies $m$ is a $C$-meridian, contrary to assumption. 

For uniqueness, suppose we have two subcompression bodies $C_1,C_2$ of $(M,S)$ in which all curves in $S$ that are meridians in $M$ are also meridians in $C_1,C_2$. Since $C_1,C_2$ are subcompression bodies of $(M,S)$, the kernels of the maps
$$\pi_1 \Sigma \longrightarrow \pi_1 C_i$$
induced by inclusion are both normally generated by the set of all elements of $\pi_1 \Sigma$ that represent simple closed curves in $S$ that are merdians in $M$. Hence, the disk sets $\mathcal D(\Sigma,C_i)$ are the same for $i=1,2$. It follows that $C_1,C_2$ are isotopic in $M$, say by Corollary 2.2 of \cite{Biringerautomorphisms}.
\end{proof}

\subsection{Interval bundles}\label{sec:ibundle}

In this paper, an \emph{interval bundle} always means a fiber bundle $B \longrightarrow Y$, where $Y $ is a compact surface with boundary, and where all fibers are closed intervals $I$. Regarding the fibers as `vertical', we call the associated $\partial I$-bundle over $Y$ the \emph {horizontal boundary} of $B$, written $\partial_H B$. An interval bundle that is isomorphic to $Y \times[-1,1]$ is called \emph{trivial}, and we often call nontrivial interval bundles \emph{twisted}.

All $3$-manifolds in this paper are assumed to be orientable, but even when the total space $B$ of an interval bundle is orientable, the base surface $Y$ may not be. Indeed, let $Y$ be a compact non-orientable surface and let $\pi: \hat Y \longrightarrow Y$ be its orientation cover. Then the mapping cylinder $$B := \hat Y \times [0,1] / \sim, \ \ (x,1)\sim(x',1) \iff \pi(x)=\pi(x')$$ 
is orientable, and is a twisted interval bundle over $Y$, where the fiber over $y\in Y$ is obtained by gluing together the two intervals $\{x\}\times [0,1]$ and $\{x'\}\times [0,1]$ along $(x,1)$ and $(x',1)$, where $\pi^{-1}(y)=\{x,x'\}$. The horizontal boundary $\partial_H B$ here is $\hat Y \times \{0\}$, which is homeomorphic to the orientable surface $\hat Y$. Note that $B$ is double covered by the trivial interval bundle $\hat Y \times [-1,1]$.

\begin{fact}
Suppose that $B\longrightarrow Y$ is an interval bundle and $B$ is orientable. If $Y$ is orientable, then $B$ is a trivial interval bundle. If $Y$ is nonorientable, then $B$ is isomorphic to the mapping cylinder of the orientation cover of $Y$.
\end{fact}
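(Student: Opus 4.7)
The plan is to classify interval bundles over $Y$ via their monodromy homomorphism, derive an orientability criterion, and in the nonorientable case identify the bundle with the stated mapping cylinder. Since $\mathrm{Homeo}([-1,1])$ deformation retracts onto $\mathbb{Z}/2$, generated by the endpoint-swapping flip $\tau : t \mapsto -t$, every interval bundle $B \to Y$ has structure group $\mathbb{Z}/2$ and is classified by a single monodromy homomorphism $\rho : \pi_1(Y) \to \mathbb{Z}/2$. Concretely $B$ may be written as the associated bundle $\tilde Y \times_{\pi_1(Y)} [-1,1]$, where $\pi_1(Y)$ acts on the universal cover $\tilde Y$ by deck transformations and on $[-1,1]$ via $\rho$.

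With this in hand, I can read off orientability. On the cover $\tilde Y \times [-1,1]$, the deck action of $\gamma \in \pi_1(Y)$ preserves the product orientation exactly when its actions on the two factors have the same sign, i.e.\ when $\rho(\gamma) = w_1(Y)(\gamma) \in \mathbb{Z}/2$. Thus $B$ is orientable if and only if $\rho = w_1(Y)$ as homomorphisms $\pi_1(Y) \to \mathbb{Z}/2$. When $Y$ is orientable this forces $\rho = 0$, so $B \cong Y \times [-1,1]$, giving the first claim of the fact.

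When $Y$ is nonorientable, $\rho = w_1(Y)$ factors through the orientation double cover $\pi : \hat Y \to Y$ with deck involution $\sigma$, so the pullback $\pi^* B$ has trivial monodromy and is isomorphic to $\hat Y \times [-1,1]$. Consequently
$$B \;\cong\; (\hat Y \times [-1,1]) \big/ \bigl[(x, t) \sim (\sigma x, -t)\bigr].$$
To identify this with the mapping cylinder $M_\pi$ described in the statement, I would use the map $\Phi : \hat Y \times [0,1] \to B$ sending $(x, s)$ to the class of $(x, 1-s)$; since $[x, 0] = [\sigma x, 0]$ in $B$, it descends to the quotient $(x, 1) \sim (\sigma x, 1)$, and a short verification (every class $[y, t]$ with $t \ge 0$ comes from $(y, 1-t)$ and every class with $t < 0$ from $(\sigma y, 1+t)$) shows that it is a fiber-preserving homeomorphism. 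The main obstacle is really just the classification step itself --- a careful justification of the structure-group reduction for $I$-bundles; thereafter the argument is only orientation bookkeeping and the explicit identification above.
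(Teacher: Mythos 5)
Your proof is correct, and it takes a more systematic route than the paper's. The paper argues directly: for the orientable case it observes that the associated line bundle of $B\to Y$ is orientable, hence trivial, hence so is $B$; for the nonorientable case it identifies the horizontal boundary $\partial_H B$ as a connected orientable double cover of $Y$ (so necessarily \emph{the} orientation cover) and builds the mapping-cylinder identification from that. You instead classify $I$-bundles by their monodromy $\rho:\pi_1(Y)\to\BZ/2$, read off orientability as the condition $\rho=w_1(Y)$ from the deck action on $\tilde Y\times[-1,1]$, and in the nonorientable case pull back to the orientation cover and exhibit $B$ as the explicit quotient $(\hat Y\times[-1,1])/[(x,t)\sim(\sigma x,-t)]$, from which your map $\Phi$ to the mapping cylinder is a clean and complete identification. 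The two arguments rest on the same orientation bookkeeping ($w_1(B)=w_1(Y)+w_1(L)$ in the paper's language versus your sign comparison on the cover), but your version makes the nonorientable case fully explicit where the paper says ``it's easy to construct the desired isomorphism.'' The one step you leave as a black box --- that $\mathrm{Homeo}([-1,1])$ deformation retracts onto $\{\mathrm{id},\tau\}$ (the increasing homeomorphisms form a convex, hence contractible, set) and that this reduces the structure group so that $B$ is the associated bundle of a $\BZ/2$-representation of $\pi_1(Y)$ --- is standard bundle theory and no less rigorous than what the paper itself assumes, so I don't regard it as a gap.
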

\begin{proof}
If $Y$ and $B$ are orientable, so is the line bundle, so the bundle is trivial. If $Y$ is nonorientable, the horizontal boundary $\partial_H B\subset \partial B$ is an orientable surface that double covers $Y$, and from there it's easy to construct the desired isomorphism to the mapping cylinder of the projection $\partial_H B \longrightarrow Y$.
\end{proof}

An interval bundle $B\longrightarrow Y$ comes with a \emph{canonical involution} $\sigma$, which is well defined up to isotopy, and which is defined as follows. If $B\cong Y \times [0,1]$ is a trivial interval bundle, we define $$\sigma : Y \times [-1,1] \longrightarrow Y \times [-1,1], \ \  \sigma(y,t)=(y,-t).$$
And if $B$ is the twisted interval bundle $B \cong \hat Y \times [0,1] / \sim$ above, we define $$\sigma : \hat Y \times [0,1] / \sim \longrightarrow \hat Y \times [0,1] / \sim, \ \ \sigma(\hat y,t) = (\iota(\hat y),t)$$ 
where $\iota$ is the nontrivial deck transformation of the orientation cover. Note that $\sigma$ is always an orientation reversing involution of $B$, so in particular, when we give the surface $\partial_H B$ its boundary orientation, the restriction $\sigma|_{\partial_HB}$ is also orientation reversing.

\medskip

We also recall the following well-known fact.

\begin{fact}
If $S$ is a compact, orientable surface with nonempty boundary, The trivial interval bundle $S \times [-1,1]$ is homeomorphic to a handlebody.
\end{fact}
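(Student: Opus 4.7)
The plan is to promote a $2$-dimensional handle decomposition of $S$ to a $3$-dimensional handle decomposition of $S \times [-1,1]$ whose structure makes it manifestly a handlebody.

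First, I would recall that any compact orientable surface $S$ with nonempty boundary admits a handle decomposition consisting of a single $0$-handle $D$ (a disk) together with $n = 2g + b - 1$ bands $B_1,\dots,B_n$ attached along pairs of disjoint sub-arcs of $\partial D$, where $g$ is the genus and $b$ the number of boundary components of $S$. One way to establish this is by induction on $-\chi(S)$: if $S$ is not a disk, choose a properly embedded essential arc $\alpha \subset S$, apply the inductive hypothesis to the (possibly disconnected) surface obtained by cutting along $\alpha$, and then extend the resulting decomposition by attaching a band dual to $\alpha$. An essential arc exists whenever $S$ is not a disk, since then $S$ has either positive genus or more than one boundary component.

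Next I would take the product with $[-1,1]$. The thickened $0$-handle $D \times [-1,1]$ is a $3$-ball, and each thickened band $B_i \times [-1,1]$ is another $3$-ball, attached to $D \times [-1,1]$ along two disjoint disks in its boundary --- precisely the definition of a $3$-dimensional $1$-handle attachment. Hence $S \times [-1,1]$ is obtained from a $3$-ball by attaching $n$ $1$-handles, and is therefore a handlebody of genus $n = 2g+b-1$.

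The only substantive input is the existence of the surface handle decomposition in the first step, which is entirely standard. The rest of the argument is essentially tautological: no verification of attaching-map data is required, since by definition any compact $3$-manifold built from a $3$-ball by successively attaching $1$-handles is a handlebody, and orientability of $S \times [-1,1]$ guarantees that the attaching maps of the thickened bands are compatible with any fixed orientation.
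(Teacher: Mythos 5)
Your proof is correct and follows essentially the same route as the paper, which regards $S$ as a polygon (one $0$-handle) with bands attached and observes that thickening yields a ball with $1$-handles. You have simply filled in the standard details (the inductive construction of the handle decomposition and the count $n=2g+b-1$) that the paper leaves as an exercise.
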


It's a nice topology exercise to visualize the homeomorphism. Regard $S$ as the union of a polygon and a collection of bands (long, skinny rectangles), each of which is glued along its short sides to two sides of the polygon. Thickening, the picture becomes a ball with $1$-handles attached. 

Note that if $S=S_{g,b}$ has genus $g$ and $b$ boundary components, then the handlebody $S\times [-1,1]$ has genus $2g+b-1$, since that is the rank of the free group $\pi_1(S \times [-1,1])\cong \pi_1 S$. 

Finally, suppose $\pi : B \longrightarrow Y$ is an interval bundle and $f : \partial_HB \longrightarrow \partial_HB$ is a homeomorphism. We say that $f$ \emph{extends to $B$} if there is a homeomorphism $F:B \longrightarrow B$ such that $F|_{\partial_HB}=f$. We leave the following to the reader. 

\begin{fact}\label{extendfact}
The following are equivalent:
	\begin{enumerate}
		\item $f$ extends to $B$,
		\item $f \circ \sigma$ is isotopic to $f$ on $\partial_HB$, 
		\item after isotoping $f$, there is a homeomorphism $\bar f : Y \longrightarrow Y$ such that $\pi \circ f = \bar f \circ \pi$,
		\item there is a homeomorphism from $B$ to either $$Y \times [-1,1] \ \ \text{or} \ \ \hat Y \times [0,1]/\sim,$$ taking horizontal boundary to horizontal boundary, such that $f=F|_{\partial_HB}$, and where either
		$$F : Y \times [-1,1] \longrightarrow Y \times [-1,1], \ \ F(y,t)=(\bar f(y),t),$$
		for some homeomorphism $\bar f :Y\longrightarrow Y$, or 
			$$F : \hat Y \times [0,1]/\sim \ \longrightarrow \hat Y \times [0,1]/\sim, \ \ F(y,t)=(\bar f(y),t),$$
			for some homeomorphism $\bar f :\hat Y\longrightarrow \hat Y$ commuting with the deck group of $\hat Y\longrightarrow Y$, and hence covering a homeomorphism of $Y$.
	\end{enumerate}
\end{fact}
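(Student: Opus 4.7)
The plan is to prove the cycle $(4) \Rightarrow (1) \Rightarrow (3) \Rightarrow (4)$, together with the separate equivalence $(2) \iff (3)$. The key structural observation is that $\pi|_{\partial_HB} \colon \partial_HB \to Y$ is a two-fold covering map whose nontrivial deck involution is exactly $\sigma|_{\partial_HB}$: in the trivial case this is the projection $Y \sqcup Y \to Y$ that swaps the two sheets, and in the twisted case it is the orientation double cover $\hat Y \to Y$ with deck transformation $\iota$.

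First I would dispatch the easy implications. $(4) \Rightarrow (1)$ is immediate since the homeomorphism $F$ constructed in $(4)$ is itself an extension of $f$. For $(3) \Rightarrow (4)$, given $\bar f \colon Y \to Y$ with $\pi \circ f = \bar f \circ \pi$, I would define $F$ by an explicit fiberwise formula. In the trivial case, simply set $F(y,t) = (\bar f(y), t)$. In the twisted case, any lift $\tilde f \colon \hat Y \to \hat Y$ of $\bar f$ automatically commutes with $\iota$, since lifts of a homeomorphism to a regular cover normalize the deck group and the deck group here is $\BZ/2$; then $F(\hat y, t) = (\tilde f(\hat y), t)$ descends unambiguously to the mapping cylinder $\hat Y \times [0,1]/\sim$, and one verifies that after composing with the chosen trivialization, $F|_{\partial_HB} = f$ (possibly after replacing $\tilde f$ by $\iota \circ \tilde f$, which corresponds to exchanging the two sheets over $Y$).

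The heart of the proof is the implication $(1) \Rightarrow (3)$, which rests on the classical uniqueness of the $I$-bundle structure on a compact, orientable, irreducible $3$-manifold. Concretely, any self-homeomorphism $F$ of $B$ is isotopic to one that takes fibers of $\pi$ to fibers of $\pi$; this is essentially due to Waldhausen and is a special case of Johannson's characteristic submanifold theory. Such a fiber-preserving homeomorphism automatically descends to a base homeomorphism $\bar f \colon Y \to Y$ intertwining $\pi$. Because the ambient isotopy on $B$ can be taken to preserve the decomposition $\partial B = \partial_HB \cup \partial_V B$, it restricts to an isotopy of $f = F|_{\partial_HB}$, after which $\pi \circ f = \bar f \circ \pi$ holds as required.

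Finally, $(2) \iff (3)$ is the principle that a homeomorphism of a regular cover descends to the base exactly when it commutes with the deck group: condition $(3)$ asserts $\langle \sigma|_{\partial_HB}\rangle$-equivariance, equivalent to the strict commutation $f \circ \sigma = \sigma \circ f$, and the natural interpretation of $(2)$ as an isotopy commutation is upgraded to strict commutation by a standard equivariant isotopy rectification, using the freeness of $\sigma|_{\partial_HB}$. I expect the main technical obstacle in the whole proof to be the uniqueness of the $I$-bundle structure invoked in $(1) \Rightarrow (3)$; everything else reduces to covering-space bookkeeping and explicit fiberwise formulas.
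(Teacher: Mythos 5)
The paper offers no proof of this fact---it is stated with ``We leave the following to the reader''---so there is nothing to compare your argument against; judged on its own terms, your proof is essentially correct and identifies the right technical inputs. You correctly read (2) as the commutation condition $f\circ\sigma \simeq \sigma\circ f$ (as literally printed, ``$f\circ\sigma$ isotopic to $f$'' would force $\sigma\simeq\mathrm{id}$, and in the trivial case would force $f$ to swap the two horizontal boundary components; the paper's own application of the fact in the proof of Theorem \ref{extension} confirms the commutation reading). Your two nontrivial inputs are the right ones: $(1)\Rightarrow(3)$ rests on uniqueness up to isotopy of the $I$-fibration of the \emph{pair} $(B,\partial_HB)$ (Waldhausen/Johannson; note that $Y$ always has boundary here, so $B$ is a handlebody and one must quote the version for pairs, which is what rules out exotic fibrations), and the upgrade from isotopy-commutation to strict commutation in $(2)\Rightarrow(3)$ is, in the twisted case, exactly the Birman--Hilden property of the unbranched double cover $\hat Y \to Y$---the paper itself invokes Theorem 3 of \cite{birman1973isotopies} for precisely this rectification in the proof of Theorem \ref{extension}, and you should cite that rather than gesture at ``standard equivariant isotopy rectification.'' One genuine imprecision: in $(3)\Rightarrow(4)$ for the trivial bundle you set $F(y,t)=(\bar f(y),t)$, but if $f$ interchanges the two components of $\partial_HB$ (e.g.\ $f=\sigma|_{\partial_HB}$, which satisfies (1)--(3)), then no level-preserving $F$ of this form restricts to $f$; either (4) should be read as allowing $F(y,t)=(\bar f(y),\pm t)$, or one restricts throughout to $f$ preserving each component of $\partial_HB$, which is the only case the paper ever uses.
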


\subsection{The characteristic submanifold of a pair} 
\label {sec:characteristic} Suppose that $M $ is a compact, orientable $3$-manifold and that $S \subset\partial M$  is an incompressible subsurface. In the late 1970s, Jaco-Shalen \cite{Jacoseifert} and Johannson \cite{Johannsonhomotopy} described a `characteristic' 
submanifold of $(M,S)$ that contains the images of all nondegenerate maps from interval bundles and Seifert fibered spaces. 

\begin{theorem}[see pg 138 of Jaco-Shalen \cite{Jacolectures}]\label{charsubthm}
There is a perfectly embedded Seifert pair $(X,\Sigma) \subset (M,S)$, unique up to isotopy and called the \emph {characteristic submanifold} of $(M,S)$,  such that any nondegenerate map $(B,F)\longrightarrow (M,S)$ from a Seifert pair $(B,F)$  is homotopic as a map of pairs into $(X,\Sigma)$. 
\end{theorem}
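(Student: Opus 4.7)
The plan is to follow the classical Jaco--Shalen--Johannson strategy: build $(X,\Sigma)$ as a maximal disjoint union of essentially embedded Seifert pairs in $(M,S)$, and then prove an ``enclosing theorem'' showing that every nondegenerate map from a Seifert pair can be homotoped inside $X$. Uniqueness will then follow formally from the enclosing property applied in both directions.

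For existence, I would first consider the collection of all perfectly embedded Seifert pairs $(X_\alpha,\Sigma_\alpha)\subset(M,S)$ and show that any two members can be isotoped either to be essentially disjoint or to share a compatible fibering on their intersection. The engine here is a cut-and-paste argument on the frontier surfaces: put $\partial X_1$ and $\partial X_2$ in general position, discard trivial intersections by innermost-disk and outermost-arc moves (using incompressibility of $\partial S$ and irreducibility of $M$), and use the Annulus and Torus Theorems to identify the remaining intersection annuli and tori as essential in each Seifert piece, so they can be isotoped to be vertical in both fiberings simultaneously. Compactness of $M$, together with a complexity bound coming from the Euler characteristics of horizontal boundary components, then yields a maximal such union $(X,\Sigma)$.

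For the enclosing property, given a nondegenerate map $f\colon(B,F)\to(M,S)$ from a Seifert pair, I would first homotope $f$ into a standard form (transverse to $\partial X$, with no disks or bigons of intersection), and then analyze the preimage $f^{-1}(\partial X)\subset B$. Because $B$ is Seifert fibered, essential annuli and tori in $B$ can be isotoped to be vertical, so $f^{-1}(\partial X)$ is (after homotopy) a union of vertical annuli and tori in $B$. Maximality of $(X,\Sigma)$ then forces the image of $B$ to lie entirely inside $X$: otherwise one could enlarge $X$ by adjoining a component of $f(B)\setminus X$, contradicting maximality. The subtle case is when $B$ is an $I$-bundle rather than a genuine Seifert fibered space, where one has to argue separately using the classification of essential annuli in $I$-bundles and the hypothesis that $\partial S$ is incompressible.

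The uniqueness part is then a soft consequence: if $(X,\Sigma)$ and $(X',\Sigma')$ are both characteristic, the enclosing property applied to the inclusion $(X',\Sigma')\hookrightarrow(M,S)$ isotopes $X'$ into $X$, and symmetrically; a Waldhausen-type cobordism argument for Haken manifolds upgrades this two-sided inclusion to an ambient isotopy. The main obstacle in the whole program is the enclosing step, which relies essentially on the PL Annulus Theorem and a careful analysis of how essential annuli in Seifert pairs interact with the frontier of $X$. Since this is precisely the content of Jaco--Shalen \cite{Jacoseifert} and Johannson \cite{Johannsonhomotopy}, I would cite their work for the hardest technical lemmas rather than reprove them.
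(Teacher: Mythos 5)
This theorem is not proved in the paper at all: it is quoted verbatim from Jaco--Shalen (pg.\ 138 of \cite{Jacolectures}), so the only ``proof'' the paper offers is the citation, and your final move of deferring the enclosing theorem and the hard cut-and-paste lemmas to Jaco--Shalen \cite{Jacoseifert} and Johannson \cite{Johannsonhomotopy} is exactly the paper's treatment. Your outline of the classical strategy is a fair summary, but be aware that actually executing it (in particular the enclosing step) is the content of two book-length works, so in this context the citation is the proof.
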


A \emph{Seifert pair} is $3$-manifold pair that is a finite disjoint union of interval bundle pairs $(B,\partial_H B)$ and $S^1$-bundle pairs. Here, an \emph{$S^1$-bundle pair} $(B,F)$ is a $3$-manifold $B$ fibered by circles, where $F \subset \partial B$ is a compact subsurface saturated by fibers. A Seifert pair $(X,\Sigma) \subset (M,S)$ is \emph{well embedded} if $X \cap \partial M =\Sigma \subset S$ and the frontier of $X$ in $M$ is a $\pi_1$-injective surface, and is \emph{perfectly embedded} if it is well embedded, no component of the frontier of $X$ in $M$ is homotopic into $S$, and no component of $X$ is homotopic into another component.

When $(B,F)$ is a connected Seifert pair, a map $f: (B,F)\longrightarrow (M,S)$ is \emph{essential} if it is not homotopic as a map of pairs into $S$. Notice that this only depends on the image of $f$ and not on $f$ itself. One says $f$ is \emph{nondegenerate} if it is essential, its $\pi_1$-image is nontrivial, its $\pi_1$-image is non-cyclic when $F=\emptyset$, and no fiber of $B$ is nullhomotopic in $(M,S)$. For disconnected $(B,F)$, one says $f$ is nondegenerate if its restriction to every component is nondegenerate.

\medskip

The following is very well known.

\begin{fact} If $int(M)$ is hyperbolizable and $(B,F)$ is an $S^1$-bundle pair that is perfectly embedded in $(M,S)$, then either 
\begin{enumerate}
\item $(B,F)$ is a `fibered solid torus', i.e.\ $B$ is an $S^1$-bundle over a disk, and $F \subset \partial B \cong T^2$ is a collection of fibered parallel annuli, or
\item $(B,F)$ is a `thickened torus', i.e.\ $B$ is an $S^1$-bundle over an annulus, so is homeomorphic to $T^2 \times [0,1]$, and each component of $F$ is either a torus or a fibered annulus.
\end{enumerate}\label{comps of char}
\end{fact}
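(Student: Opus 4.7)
The plan is to analyze the possible base surfaces $Y$ of the $S^1$-bundle $\pi \colon B \to Y$, using that the hyperbolizability of $\mathrm{int}(M)$ forces $M$ to be irreducible and atoroidal. Since $B$ is orientable, each component of $\partial B = \pi^{-1}(\partial Y)$ is a torus rather than a Klein bottle, and since $F$ is saturated by fibers of $\pi$, each component of $F$ is either an entire boundary component of $B$ or a fibered annulus in some boundary torus. Moreover, $Y$ must have nonempty boundary, for otherwise $B$ would be closed and a component of $M$, forcing $\mathrm{int}(M)$ to be Seifert fibered rather than hyperbolic. In particular, once we show that $Y$ is a disk or an annulus, the conclusions on $F$ in (1) and (2) will follow immediately.

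The heart of the argument is to show that $Y$ is a disk or an annulus. Suppose not. Then either $\chi(Y)<0$, in which case $Y$ contains a two-sided essential simple closed curve $\gamma$ that is not boundary-parallel, or $Y$ is a M\"obius band. In the first case, $T := \pi^{-1}(\gamma)$ is a torus in $\mathrm{int}(B)$, and the long exact sequence of the fibration (using that $\gamma$ is essential in $Y$) shows that $T$ is $\pi_1$-injective in $B$. I would then argue that $T$ is $\pi_1$-injective in $M$ by producing a parallel copy of $T$ sitting inside the frontier of $B$ (for instance by splitting $B$ along a fibered annulus adjacent to $\gamma$) and invoking the $\pi_1$-injectivity of the frontier given by perfect embedding. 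Since $T \subset \mathrm{int}(B)$ while $B \cap \partial M = F \subset S$ is a subsurface of $S$, the torus $T$ cannot be isotopic into $\partial M$, so $T$ is essential in $M$, contradicting atoroidality. The M\"obius band case reduces to the previous one by passing to the orientation double cover of $Y$ (which is an annulus) and analyzing the corresponding double cover of $B$, where the preimage of the core of the annulus gives an essential torus that descends to an essential surface in $M$.

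In the two surviving cases the conclusion is immediate. If $Y$ is a disk, then $B$ is a solid torus whose boundary is a torus fibered by circles; each fibered annulus in $F$ has such a fiber as its core curve, so all components of $F$ are parallel in $\partial B$, yielding case (1). If $Y$ is an annulus, then $B \cong T^2 \times [0,1]$, and each component of $F$ is either a full boundary torus or a fibered annulus in one of the boundary tori, yielding case (2).

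The main obstacle will be the torus argument in the second paragraph: one must promote the essential torus $T$ in $B$ to an essential torus in $M$, using only that the frontier of $B$ (which need not contain $T$) is $\pi_1$-injective in $M$. Handling this carefully, and in particular ruling out the possibility that $T$ becomes compressible or boundary-parallel once passed from $B$ to $M$ (which a priori could happen via a compression lying outside $B$), is the crux of the argument and the reason one uses the full strength of the perfect embedding hypothesis rather than just $\pi_1$-injectivity of $B$ in $M$.
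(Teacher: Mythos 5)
Your overall strategy (rule out $\chi(Y)<0$ and the M\"obius band by finding an essential torus) is different from the paper's, which is purely algebraic: the regular fiber generates an infinite cyclic normal subgroup of $\pi_1 B$, the frontier being $\pi_1$-injective makes $\pi_1 B$ a subgroup of the Kleinian group $\pi_1 M$, and any subgroup of a torsion-free Kleinian group with an infinite cyclic normal subgroup is elementary (fixed-point analysis on $\partial_\infty \BH^3$); hence $\pi_1 B$ is cyclic or $\BZ^2$, forcing $Y$ to be a disk or an annulus in one stroke. Your route could in principle work, but as written it has genuine gaps precisely at the step you identify as the crux.

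First, your mechanism for promoting $\pi_1$-injectivity of $T=\pi^{-1}(\gamma)$ from $B$ to $M$ --- ``producing a parallel copy of $T$ sitting inside the frontier of $B$'' --- cannot work: a vertical torus over a curve $\gamma$ is parallel to a frontier component of $B$ only when $\gamma$ is boundary-parallel in $Y$, which is exactly what you arranged against. The correct statement is that $\pi_1$-injectivity of the frontier makes $B$ itself $\pi_1$-injective in $M$ (a standard innermost-disk/transversality argument), and then $T$ is $\pi_1$-injective in $M$ because it is so in $B$. Second, and more seriously, your argument that $T$ is essential --- ``since $T\subset int(B)$ \ldots\ $T$ cannot be isotopic into $\partial M$'' --- is not valid: an isotopy of $T$ into a torus boundary component of $M$ has no reason to stay inside $B$, and hyperbolizable manifolds do contain $\pi_1$-injective tori (they are all peripheral). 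Ruling this out requires an actual argument, e.g.\ that if $T$ were parallel to a boundary torus $T_0$, the product region $T^2\times I$ between them would contain a vertical piece $\pi^{-1}(Y_1)$ for a component $Y_1$ of $Y\setminus\gamma$, forcing $\pi_1 Y_1$ to be cyclic and eventually contradicting $\chi(Y)<0$; alternatively one falls back on the normalizer/fixed-point argument, which is the paper's proof. Finally, the M\"obius band case is only gestured at: the restriction of the bundle to the core of the M\"obius band is a one-sided Klein bottle (not a torus), and ``descends to an essential surface in $M$'' needs to be replaced by a real argument --- the cleanest being that $\pi_1 B$ is then the Klein bottle group, which does not embed in a torsion-free Kleinian group.
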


So in particular, the components of the characteristic submanifold of $(M,S)$ are either interval bundles, solid tori, or thickened tori.

\begin{proof}
Suppose that  $(B,F)$ is a perfectly embedded $S^1$-bundle pair in $M$. Then $B \longrightarrow Y$ is an $S^1$-bundle, where $Y$ is a compact $2$-orbifold, and the cyclic subgroup $Z \subset \pi_1 B$ corresponding to a regular fiber is normal in $\pi_1 B$. In a hyperbolic $3$-manifold, any subgroup of $\pi_1$ that has a cyclic normal subgroup is elementary, say by a fixed point analysis on $\partial_\infty \BH^3$. So, $\pi_1 B $ is either cyclic or isomorphic to $\BZ^2$. It follows that $Y$ is a disc, in which case $B$ is a fibered solid torus, or $Y$ is an annulus, in which case $B$ is a thickened torus.
\end{proof}

In this paper we will mostly be interested in interval bundles. For brevity, we'll use the following terminology, which differs slightly from the terminology above used by Jaco-Shalen.

\begin{definition}
An \emph{essential interval bundle} in $(M,S)$ is an essential, well-embedded interval bundle pair $(B,\partial_H B)\hookrightarrow (M,S)$. 
\end{definition}

Note that the horizontal boundary of any essential interval bundle is an incompressible subsurface of $S$.

The definition above differs from a well embedded interval bundle pair in that we are excluding boundary-parallel interval bundles over annuli, and differs from a perfectly embedded interval bundle pair in that we are allowing components of the frontier of an interval bundle over a surface that is not an annulus to be boundary parallel. For instance, if $Y$ is a surface with boundary and $Y'\subset Y$ is obtained by deleting collar neighborhoods of the boundary components, and we set $M=Y\times [-1,1]$, which is a handlebody, then $(Y'\times [-1,1],Y'\times \{-1,1\})$ is an essential interval bundle in $(M,\partial M)$, but is not perfectly embedded. However, note that any essential interval bundle $(B,\partial_H B)\hookrightarrow (M,S)$ is perfectly embedded in $(M,\partial_H B)$.

 \subsection{Compression arcs}
\label{compressionsec}
Suppose $(B,\partial_H B) \subset (M,S)$ is an essential interval bundle. An arc $\alpha \subset S $ with endpoints on $\partial (\partial_H B)$ and interior disjoint from $\partial_H B$ is called  a \emph{compression arc} if it is homotopic in $M$ to a fiber of $B$, while keeping its endpoints on  $\partial(\partial_HB)$. See Figure~\ref{spanningfig}. 
To link this definition with more classical ones, it is easy to see that there is a compression arc for $B$ if and only if $\overline{{\mathrm Fr}(B)}$ is boundary compressible, see \cite[pp.36--37]{Jacolectures} for a definition.

Write our interval bundle as $\pi : B \longrightarrow Y$. Let $\alpha$ be a compression arc for $B$. After isotoping the bundle map $\pi$, we can assume that $\alpha$ is homotopic rel endpoints to a fiber $\pi^{-1}(y),$ where $y\in Y.$ Suppose $c$ is an oriented, two-sided, essential, simple closed loop $Y$ based at $y$, and suppose that either $c$ is nonperipheral in $Y$, or that $Y$ is an annulus or M\"obius band. Write $\pi^{-1}(c)=c_-\cup c_+$, where $c_\pm$ are disjoint simple closed oriented loops in $X$ based at $y_\pm$, and where the orientations of $c_\pm$ project to that of $c$.

\begin{claim}\label{mc}
The concatenation $m(c):=c_- \cdot \alpha \cdot c_+^{-1} \cdot \alpha^{-1}$ is homotopic to a meridian on $S$.
\end{claim}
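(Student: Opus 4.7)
The plan is to realize $m(c)$ as (the boundary of) a disk in $M$. Since $c\subset Y$ is two-sided, the preimage $A:=\pi^{-1}(c)\subset B$ is a properly embedded annulus whose boundary is $c_-\sqcup c_+\subset\partial_HB\subset S$. The fiber $f:=\pi^{-1}(y)$ is a properly embedded arc in $A$ from $y_-\in c_-$ to $y_+\in c_+$; cutting $A$ open along $f$ yields an embedded disk $D\subset B\subset M$ whose boundary, based at $y_-$ and appropriately oriented, is exactly the loop $c_-\cdot f\cdot c_+^{-1}\cdot f^{-1}$.

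By hypothesis, after isotoping the bundle map we may take $\alpha$ to be homotopic rel endpoints to $f$ in $M$. Substituting $\alpha$ for $f$ inside the boundary word of $\partial D$ then gives a based homotopy in $M$ from $\partial D$ to $m(c)$, so $m(c)$ is null-homotopic in $M$.

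Next I would exhibit $m(c)$ as a simple closed curve up to free homotopy on $S$. Take a small regular neighborhood $N$ of $c_-\cup\alpha\cup c_+$ in $S$. Because $c_-,c_+$ are disjoint simple closed curves and $\alpha$ is an embedded arc joining them whose interior lies in $S\setminus\partial_HB$, the neighborhood $N$ is a pair of pants whose three boundary components are parallel to $c_-$, to $c_+$, and to the band sum $\mu:=c_-\#_\alpha c_+$. An inspection of the picture shows that $m(c)$ is freely homotopic on $S$ to $\mu$, and by the previous paragraph $\mu$ is nullhomotopic in $M$; the loop theorem then produces an embedded disk in $M$ bounded by $\mu$, provided $\mu$ is essential in $S$, in which case $\mu$ is a meridian as required.

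The remaining step — essentiality of $\mu$ in $S$ — is where the precise hypotheses on $c$ (nonperipheral in $Y$, or $Y$ an annulus or M\"obius band) are used. In the nonperipheral case, $c_\pm$ are essential and nonperipheral in $\partial_HB$, and combining the incompressibility of $\partial_HB$ in $M$ with the incompressibility of $\partial S$ in $M$ prevents $\mu$ from being nullhomotopic or peripheral in $S$. The two remaining cases — $Y$ an annulus or a M\"obius band — are handled by a direct verification, since there one can read off $\mu$ explicitly from the bundle structure. I expect this essentiality bookkeeping to be the only real obstacle; the disk-bounding argument in the first two paragraphs is the content of the lemma.
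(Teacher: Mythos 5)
Your first two paragraphs (nullhomotopy in $M$ via cutting the annulus $\pi^{-1}(c)$ along the fiber, then substituting $\alpha$ for the fiber) and your pair-of-pants identification of $m(c)$ with the band sum $\mu$ match what the paper does; the paper phrases the first step as "$m(c)$ projects to $c\cdot c^{-1}$" and the second as "checking orientations," but the content is the same, and your explicit disk is a clean way to see it.

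The gap is in the step you defer, which is in fact where the paper puts all the work: essentiality of $m(c)$ in $S$. Your proposed dichotomy (nonperipheral $c$ handled by incompressibility; $Y$ an annulus or M\"obius band handled by direct inspection) does not isolate the actual difficulty. Note that $m(c)$ is nullhomotopic in $S$ exactly when $c_-$ and $c_+$ are freely homotopic in $S$ as oriented curves, and the natural incompressibility argument would be to show this cannot happen when $c_\pm$ are nonperipheral in $\partial_H B$. But it \emph{can} happen: if $c$ is nonperipheral in $Y$ but bounds a M\"obius band in $Y$ (so $Y$ is nonorientable and $B$ is twisted), the two lifts $c_-,c_+$ cobound an embedded annulus inside $\partial_H B$ itself and are freely homotopic there as oriented curves, so no incompressibility of $\partial_H B$ or $\partial S$ can contradict their being homotopic in $S$. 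This is precisely why the paper's case division is on whether $c_\pm$ are freely homotopic as oriented curves on $\partial_H B$ (equivalently, whether $c$ bounds a M\"obius band in $Y$), not on peripherality of $c$. In that case one argues differently: $\mu$ is then the boundary of the once-punctured torus obtained by gluing your pair of pants to the annulus cobounded by $c_\pm$, i.e.\ a commutator of two simple closed curves meeting once, hence essential because $S$ is not a torus (if $\mu$ bounded a disk, $S$ would be closed of genus one). In the remaining case, where $c_\pm$ are \emph{not} freely homotopic on $\partial_H B$, a free homotopy in $S$ forces $c_\pm$ to be peripheral in $\partial_H B$ and to cobound an annulus in $S\setminus\partial_H B$, which reduces to $Y$ an annulus and contradicts essentiality of $B$. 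So your strategy can be completed, but as written the "nonperipheral" branch would fail on the M\"obius sub-case, and the annulus/M\"obius-base branch is not where that sub-case lives.
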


So, a compression arc $\alpha$ allows one to make compressible curves on $S$ from essential curves on $Y$. See Figure \ref{spanningfig}.

\begin{proof}
Since $\alpha$ is homotopic rel endpoints to the fiber $\pi^{-1}(y)$, the curve $m(c)$ is homotopic in $M$ to a curve in $B$ that projects under $\pi$ to $c \cdot c^{-1}$, and hence $m(c)$ is nullhomotopic in $M$. Checking orientations, one can see that $m(c)$ is homotopic to a simple closed curve on $S$.  So, we only have to prove that $m(c)$ is homotopically essential on $S$.

Suppose that $c_-,c_+$ are freely homotopic on $\partial_H B$ as oriented curves. (This happens exactly when the curve $c \subset Y$ bounds a M\"obius band in $Y$.) Then $m(c)$ is homotopic to the commutator of two essential simple closed curves on $S$ that intersect once, and hence is essential since $S$ is not a torus.

We can now assume that that $c_\pm$ are not freely homotopic in $\partial_H B$ as oriented curves. If $m(c)$ is inessential, then $c_\pm$ \emph{are} freely homotopic on $S$, so $c_\pm$ are homotopic in $\partial_H B$ to boundary components $c_\pm' \subset \partial_H B$ that bound an annulus in $S \setminus \partial_HB$. In this case $c_\pm$ are peripheral, so we may assume that $Y$ is either an annulus or a M\"obius band. If $Y$ is a M\"obius band, we are in the situation of the previous paragraph and are done. So, $Y$ is an annulus, and $\partial_HB$ is a pair of disjoint annuli on $S$, where $c_\pm'$ lie in different components of $\partial_HB$. Since $c_\pm'$ bound an annulus in $S\setminus \partial_HB$, the interval bundle $B$ is inessential, contrary to our assumption.
\end{proof}

In fact, more is true.

\begin{fact}[Arcs that produce meridians]\label{compressionfact}
Suppose $(B,\partial_H B) \subset (M,S)$ is an essential interval bundle and let $\alpha \subset S $ be an arc with endpoints on $\partial_H B$ and interior disjoint from $\partial_H B$. Let $X \subset S$ be a regular neighborhood of $\alpha \cup \partial_H B$ within $S$. Then there is a meridian in $X$ if and only if we have either:
\begin{enumerate}
	\item the endpoints of $\alpha$ lie on the same component $c$ of $\partial (\partial_HB)$, and there is an arc $\beta\subset c$ such that $\alpha\cup\beta$ is a meridian, or
	\item $\alpha$ is a compression arc.
\end{enumerate}
\end{fact}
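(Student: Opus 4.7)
The ``if'' direction will be immediate. In case (1), $\alpha\cup\beta\subset\partial_HB\cup\alpha\subset X$ is a meridian by hypothesis. In case (2), I will apply Claim~\ref{mc} to any essential, two-sided, oriented simple loop $c$ in the base surface $Y$ based at the point $y$ where $\alpha$ is homotopic in $M$ rel $\partial(\partial_HB)$ to $\pi^{-1}(y)$, producing the meridian $m(c)=c_-\cdot\alpha\cdot c_+^{-1}\cdot\alpha^{-1}$ on $\partial_HB\cup\alpha\subset X$. Such a $c$ exists whenever $Y$ is not a disk, which is guaranteed by essentiality of $B$.

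For the converse, I will fix a meridian $m\subset X$ and an embedded compressing disk $D\subset M$ with $\partial D=m$, choosing $(m,D)$ to minimize $|m\cap\alpha|$. The case $|m\cap\alpha|=0$ is ruled out because $m$ would then isotope into $\partial_HB$, which is incompressible by essentiality of $B$. Otherwise, I put $D$ in general position with the frontier $\partial_VB$ of $B$; since $\partial_VB$ is $\pi_1$-injective by well-embeddedness, standard innermost-disk arguments remove all circles in $D\cap\partial_VB$, leaving only arcs, whose endpoints lie on $m\cap\partial(\partial_HB)$. An innermost such arc $e$ bounds in $D$ a subdisk $D'$ with $\partial D'=e\cup a'$, $a'\subset m$, and $D'$ lies either in $B$ or in $\overline{M\setminus B}$. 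In the former subcase $a'\subset\partial_HB$, and the bundle projection sends $D'$ to a half-disk in $Y$ bounded by $\pi(a')\cup\pi(e)\subset\pi(a')\cup\partial Y$, forcing $a'$ to be homotopic rel endpoints in $\partial_HB$ to an arc in $\partial(\partial_HB)$; together with minimality of $|m\cap\alpha|$, this will give $|m\cap\alpha|=1$ and $m\simeq\alpha\cup\beta$ for some $\beta\subset\partial(\partial_HB)$, yielding (1). In the latter subcase $a'$ sits in the $\alpha$-band of $X$ and $D'$ realizes $a'$ as homotopic in $M$ to $e\subset\partial_VB$; minimality forces $a'$ to contain exactly one copy of $\alpha$, and since each component of $\partial_VB$ is an annulus or M\"obius band fibered by fibers of $B$, I can slide the homotopy along $\partial_VB$ to exhibit $\alpha$ as homotopic in $M$ rel $\partial(\partial_HB)$ to a fiber of $B$, giving (2).

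\textbf{Main obstacle.} The hardest step will be the $\overline{M\setminus B}$ subcase, where I must promote ``$\alpha$ is homotopic to an arc on $\partial_VB$'' to ``$\alpha$ is homotopic to a fiber of $B$.'' This will rely on the fibered structure of each component of $\partial_VB$ together with the freedom to slide endpoints along $\partial(\partial_HB)$; concretely, any arc on a fibered annulus or M\"obius band with endpoints on $\partial(\partial_HB)$ is isotopic on $\partial_VB$ to a fiber. A secondary concern will be certifying the existence of the loop $c$ in the ``if'' direction of (2) when the base $Y$ is an annulus or M\"obius band, which will follow from the structural description of interval bundles in \S\ref{sec:ibundle}.
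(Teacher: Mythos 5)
Your overall strategy is the same as the paper's: handle the ``if'' direction via Claim \ref{mc}, and for the converse run an outermost-arc analysis on $D \cap \mathrm{Fr}(B)$ for a compressing disk $D$ with $\partial D \subset X$. But the case analysis at the end has two genuine problems. First, you minimize the wrong quantity. In your subcase $D' \subset B$, the arc $a'$ lies in $\partial_H B$ and is therefore entirely disjoint from the band of $\alpha$; deducing that $a'$ is boundary-parallel in $\partial_H B$ says nothing about $|m \cap \alpha|$, so it neither contradicts minimality of $|m\cap\alpha|$ nor yields ``$|m\cap\alpha|=1$ and $m \simeq \alpha\cup\beta$.'' This subcase does not produce conclusion (1); it has to be \emph{eliminated}, and the way to do that is to minimize $|D \cap \mathrm{Fr}(B)|$ (as the paper does): boundary-parallelism of $a'$ in $\partial_H B$ lets you isotope $D$ to remove the arc $e$, contradicting that minimality. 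With your choice of minimization there is no contradiction available and the argument stalls.

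Second, your subcase $D' \subset \overline{M\setminus B}$ cannot always end in conclusion (2). What the outermost disk gives you is that $\alpha$ is homotopic rel endpoints in $M$ to the arc $e$ on an annulus component $A$ of $\mathrm{Fr}(B)$. If the two endpoints lie on \emph{distinct} components of $\partial(\partial_H B)$, then $e$ is a spanning arc of $A$, isotopic to a fiber, and $\alpha$ is a compression arc --- that is your argument, and it is fine. But if both endpoints lie on the \emph{same} component $c$ of $\partial(\partial_H B)$, then $e$ is boundary-parallel in $A$, so $\alpha$ is homotopic rel endpoints in $M$ into an arc $\beta \subset c$, and (since compression arcs necessarily join distinct components of $\partial(\partial_H B)$) the correct conclusion is case (1), with $\alpha\cup\beta$ the meridian. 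So conclusion (1) arises precisely in this same-component branch of the $\overline{M\setminus B}$ subcase, not in the $D' \subset B$ subcase where you placed it. Fixing the proof requires (i) switching the minimization to $|D\cap\mathrm{Fr}(B)|$ so the $D'\subset B$ subcase dies, and (ii) splitting the remaining subcase according to whether the endpoints of $\alpha$ lie on the same or different components of $\partial(\partial_H B)$.
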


Note that in the second case the endpoints of $\alpha$ lie on distinct components of $\partial (\partial_HB)$, so in particular the two cases are mutually exclusive.

The reason we say $X$ `contains a meridian' instead of `is compressible' is that $X$ may not be an essential subsurface of $S$, and we want to emphasize that the essential curve in $X$ that is compressible in $M$ is actually essential in $S$. For example, let  $Y$ be a compact surface with boundary, $Y' \subset Y$ be obtained by deleting a collar neighborhood of $\partial Y$, set $B=Y' \times [-1,1]$ and $M=Y \times [-1,1]$, and let $\alpha$ be a spanning arc of $B$ in $\partial M$.

\begin{figure}
	\centering
	\includegraphics{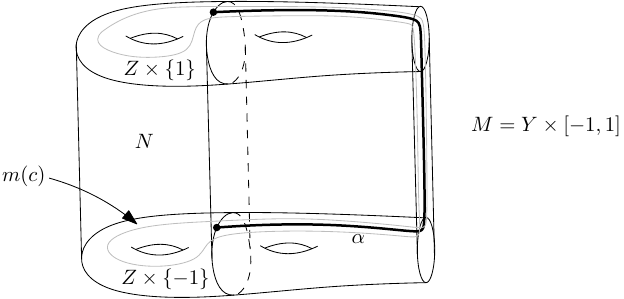}
	\caption{ $Z \subset Y$ is a compact surface, the interval bundle $B=Z \times [-1,1]$ embeds in $M=Y \times [-1,1]$, and $\alpha$ above is a compression arc. Also pictured in light gray is a meridian as described in Claim \ref{mc}.}\label{spanningfig}
\end{figure}

\begin{proof}
The `if' direction is immediate: in case (1) we are essentially given a meridian in $X$, and in case (2) we can appeal to Claim \ref{mc}.

We now work on the `only if' direction. Write our regular neighborhood of $\partial_H B  \cup \alpha$ as $X=\partial_H B \cup R$ where $R$ is a rectangle with two opposite `short' sides on the boundary of $\partial_H B$. Let $D \subset M$ be an essential disc whose boundary is contained in $X$, and where $D$ intersects the frontier $\mathrm{Fr}(B)\subset M$ in a minimal number of components. Let $a \subset D \cap \mathrm{Fr}(B)$ be an arc that is `outermost' in $D$, i.e.\ there is some arc $a'\subset \partial D$ with the same endpoints as $a$ such that $a,a'$ bound an open disk in $D$ that does not intersect $\mathrm{Fr}(B)$. 
 
We claim that $a' \subset R$. If not, then $a'\subset \partial_H B$, and bounds a disk in $B$ with the arc $a \subset \partial B$. Writing the interval bundle as $\pi: B\longrightarrow Y$, the projection $\pi(a \cup a')$ in $Y$ is then also nullhomotopic, so $\pi(a')$ is homotopic rel endpoints into $\partial Y$.  Lifting this homotopy through the covering map $\partial_H B \longrightarrow Y$ we get that $a'$ is inessential in $\partial_H B$, i.e.\ is homotopic in $\partial_H B$ rel endpoints into $\partial(\partial_H B)$. Lifting this homotopy through the covering map $\partial_H Y \longrightarrow Y$ $\pi(a)\subset \partial Y$, it follows that $\pi(a')$ is an inessential arc in $Y$. We can then decrease the number of components of $D \cap \mathrm{Fr}(B)$, contradicting that this number is minimal. 

So, $a' \subset R$. Again by minimality of the intersection, the endpoints of $a'$ lie on opposite short sides of $R$, so $\alpha$ is homotopic to $a'$ through arcs in $R$ with endpoints on $\mathrm{Fr}(B)$. Since $a'$ is homotopic rel endpoints to $a \subset \mathrm{Fr}(B)$, it follows that $\alpha$ is homotopic rel endpoints into $\mathrm{Fr}(B)$. If the two endpoints of $\alpha $ lie on the same component of $\partial(\partial_H B)$, we are in case (1), and otherwise we are in case (2).
  \end{proof}

\subsection{Laminations} We assume the reader is familiar with geodesic and measured laminations on finite type hyperbolic surfaces. See e.g.\ \cite{Bleilerautomorphisms,Kapovichhyperbolic}.
\label{laminationssec}

Suppose $\lambda $ is a connected geodesic lamination on a finite type hyperbolic surface $S $ with geodesic boundary. We say that $\lambda$ \emph{fills} an essential subsurface $T \subset S$ if $\lambda \subset T$ and $\lambda$ intersects every essential, non-peripheral simple closed curve in $T$.

\begin{fact}
For every connected $\lambda$, there is a unique subsurface with geodesic boundary (as in \S\ref{subsurfacessec}) that is filled by $\lambda$, which we denote by $S(\lambda)$. It is the minimal subsurface with geodesic boundary in $S$ that contains $\lambda$.
\end{fact}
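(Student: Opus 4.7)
The plan is to construct $S(\lambda)$ as a convex-hull quotient, verify that it contains $\lambda$ and is minimal among subsurfaces with geodesic boundary containing $\lambda$, and then check the filling property and deduce uniqueness.

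If $\lambda$ is a simple closed geodesic, set $S(\lambda):=\lambda$ (the degenerate case of \S\ref{subsurfacessec}); both conditions are tautological. Otherwise, pass to the universal cover $\tilde S\subset\BH^2$ and pick a component $\tilde\lambda_0$ of the preimage of $\lambda$, which is connected because $\lambda$ is. Let $\Lambda_0\subset\partial_\infty\BH^2$ be the set of endpoints of all leaves of $\tilde\lambda_0$, let $\tilde X\subset\tilde S$ be the convex hull of $\Lambda_0$ in $\tilde S$, and let $G_0<\pi_1 S$ be the stabilizer of $\tilde\lambda_0$. Then $\tilde X$ is $G_0$-invariant, and exactly as in the construction reviewed at the end of \S\ref{subsurfacessec} it projects into $S$ as a connected subsurface with geodesic boundary $X:=S(\lambda)$. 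Since each leaf of $\tilde\lambda_0$ is a complete geodesic with both endpoints in $\Lambda_0$, it lies in $\tilde X$, so $\lambda\subset X$. Minimality is direct: if $Y$ is a subsurface with geodesic boundary and $\lambda\subset Y$, then the component $\tilde Y$ of its preimage containing $\tilde\lambda_0$ is itself the convex hull in $\tilde S$ of its own limit set on $\partial_\infty\BH^2$, which contains $\Lambda_0$, so $\tilde X\subset\tilde Y$ and hence $X\subset Y$.

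The main obstacle is showing that $\lambda$ fills $X$. Suppose $\gamma\subset X$ is an essential non-peripheral simple closed geodesic disjoint from $\lambda$, and lift it to $\tilde\gamma\subset\tilde X$, with endpoints $p,q\in\partial_\infty\BH^2$. Here connectedness of $\lambda$ enters crucially: being connected and disjoint from $\tilde\gamma$, the preimage $\tilde\lambda_0$ must lie entirely in one of the two closed half-planes $\overline{H^\pm}$ into which $\tilde\gamma$ separates $\BH^2$. Consequently $\Lambda_0$ is contained in the closure of a single arc of $\partial_\infty\BH^2\setminus\{p,q\}$ together with $\{p,q\}$, forcing $\tilde X$ into that closed half-plane with $\tilde\gamma$ on its frontier. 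This makes $\gamma$ a boundary component of $X$, contradicting that it is non-peripheral.

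Uniqueness of $S(\lambda)$ then follows formally from minimality together with filling: any other subsurface with geodesic boundary $X'$ filled by $\lambda$ satisfies $\lambda\subset X'$, so by minimality $X\subset X'$, and a strict containment would produce a boundary geodesic of $X$ lying in the interior of $X'$ as an essential non-peripheral curve yet disjoint from $\lambda$, violating that $\lambda$ fills $X'$.
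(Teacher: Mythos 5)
Your overall architecture — build $S(\lambda)$ as the projection of a convex hull of endpoints at infinity, get minimality from convexity, get filling from a separation argument, and deduce uniqueness formally — is exactly the construction the paper indicates in the sentence following the Fact (the paper gives no further proof). The minimality and uniqueness steps are fine. But there is a genuine gap in the way you set up the object $\tilde\lambda_0$, and it infects the one step where you say connectedness "enters crucially."

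The components of the preimage $p^{-1}(\lambda)\subset\BH^2$ of a connected geodesic lamination are the \emph{individual leaves}, not connected sets carrying all of $\lambda$. (For instance, if $\lambda$ is minimal and filling with ideal-triangle complementary regions, then removing a single open complementary triangle already splits $\BH^2$ into three disjoint closed half-planes; iterating over complementary regions separates any two distinct leaves, so every component of $p^{-1}(\lambda)$ is a single leaf. A connected set is never equal to its preimage under a nontrivial covering restricted to a component in the way you assert — already the preimage of a simple closed geodesic is a disjoint union of lines.) So "pick a component $\tilde\lambda_0$ of the preimage, which is connected because $\lambda$ is" either produces a single leaf — in which case $\Lambda_0$ is two points, $\tilde X$ is a geodesic, and the construction collapses — or, if you secretly mean the full collection of relevant lifts, that collection is \emph{not} connected, and your filling argument ("being connected and disjoint from $\tilde\gamma$, the preimage $\tilde\lambda_0$ must lie in one closed half-plane") has no justification as written. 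The standard repair: since $\lambda$ is connected and compact, a small $\epsilon$-neighborhood $N$ of $\lambda$ is connected; define $\tilde\lambda_0$ to be $p^{-1}(\lambda)$ intersected with a fixed component $\tilde N_0$ of $p^{-1}(N)$ (this is independent of small $\epsilon$), and take $\Lambda_0$ to be the endpoints of those leaves. For the filling step, choose $N$ disjoint from the geodesic $\gamma$; then the \emph{connected} set $\tilde N_0$ misses $p^{-1}(\gamma)$ and hence lies in one closed half-plane bounded by $\tilde\gamma$, which carries $\tilde X$ with it and exhibits $\gamma$ as peripheral. (Equivalently, and more simply, run the separation argument downstairs: $\lambda$ is connected and disjoint from $\gamma$, so it lies in one component $W$ of $X\setminus\gamma$, and the tightening of $W$ is a subsurface with geodesic boundary containing $\lambda$ and properly contained in $X$, contradicting the minimality you have already established.) With this adjustment the rest of your argument goes through.
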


Here, $S(\lambda)$ can be constructed by taking a component $\tilde \lambda \subset \tilde S \subset \BH^2$ of the preimage of $\lambda$, letting $C\subset \BH^2$ be the convex hull of the set of endpoints of leaves of $\tilde \lambda$ in $\partial \BH^2$, and projecting $C$ into $S$.

Suppose that $M $ is a compact, orientable irreducible $3$-manifold let $S \subset \partial M$ be an essential subsurface. The \emph{limit set} of $(S,M)$ is the closure 
$$\Lambda(S,M)=\overline {\{\text {meridians } \gamma \subset S\}} \subset \PML(S),$$ where $\PML(S)$ is the space of projective measured laminations on $S$.
The limits set was first studied by Masur \cite{Masurmeasured} in the case that $M$ is a handlebody, with $S$ its entire boundary. In this case, Kerckhoff \cite{Kerckhoffmeasure} later proved that the limit set has measure zero in $\PML(S)$, although a mistake in his argument was later found and fixed by Gadre \cite{gadre2011limit}. 

In some ways, $\Lambda(S,M)$ acts as a dynamical limit set. For instance, let $\mathrm{Map}(S)$ be the mapping class group of $S$, and let $\mathrm{Map}(S,M) \subset \mathrm{Map}(S)$ be the subgroup consisting of mapping classes represented by restrictions of homeomorphisms of $M$. Then we have:

\begin{fact}
	\begin{enumerate}
		\item If $\Lambda(S,M)$ is nonempty, it is the smallest nonempty closed subset of $\PML(S) $ that is invariant under $\Mod (S,H) $.
		\item If $\mathrm{Map}(S,M)$ contains a pseudo-Anosov map on $S$, then $\Lambda(S,M)$ is the closure of the set of the attracting and repelling laminations of pseudo-Anosov elements of $\mathrm{Map}(S,M)$.
	\end{enumerate}
\end{fact}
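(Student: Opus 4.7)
\noindent\emph{Plan.} For (1), the set $\Lambda(S,M)$ is closed by definition and is $\Mod(S,M)$-invariant because any homeomorphism of $M$ sends meridians to meridians. The substance is minimality: fix a nonempty closed $\Mod(S,M)$-invariant $K\subset\PML(S)$ and aim to show $\Lambda(S,M)\subset K$. The key tool is that for every meridian $m$, the Dehn twist $T_m$ extends to a twist along the compressing disk, so $T_m\in\Mod(S,M)$; and its dynamics on $\PML(S)$ satisfy $T_m^n(\nu)\to m$ projectively whenever $i(\nu,m)>0$.

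\medskip

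The argument then splits into two stages. First, extract a single meridian into $K$. Pick $\mu\in K$: if $\mu$ meets some meridian $m$ transversely, iterating $T_m$ places $m\in K$. In the alternative case $i(\mu,m)=0$ for every meridian $m$, Otal's half of Casson's Criterion forces $\mu$ to already lie in $\Lambda(S,M)$, and a further step is needed to produce an actual meridian in $K$. I plan to handle this by applying a pseudo-Anosov element of $\Mod(S,M)$ (for instance a suitable product of Dehn twists about meridians) to reduce to the generic case, or equivalently by invoking the minimality of the $\Mod(S,M)$-action on its own limit set. In the second stage, once some meridian $m\in K$ has been produced, I would propagate to every meridian $m'$ by chaining $m$ to $m'$ through a finite sequence of pairwise transversely intersecting meridians (using connectivity of the corresponding intersection graph on the disc set) and applying the Dehn-twist convergence inductively. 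Since meridians are dense in $\Lambda(S,M)$, closing up then gives $\Lambda(S,M)\subset K$. The main obstacle is the alternative case above, which amounts essentially to proving that $\Mod(S,M)$ acts minimally on $\Lambda(S,M)$.

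\medskip

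Part (2) follows cleanly from (1). Let $\Pi\subset\PML(S)$ denote the closure of the set of attracting and repelling laminations of pseudo-Anosov elements of $\Mod(S,M)$. Then $\Pi$ is nonempty by hypothesis, closed by definition, and $\Mod(S,M)$-invariant, since conjugating a pseudo-Anosov $\phi$ by $g\in\Mod(S,M)$ produces another pseudo-Anosov whose attracting and repelling laminations are $g(\lambda_\pm(\phi))$. By (1), $\Lambda(S,M)\subset\Pi$. For the reverse inclusion, fix any pseudo-Anosov $\phi\in\Mod(S,M)$ and any meridian $m$; because $\phi$ acts on $\PML(S)$ with source-sink dynamics and a simple closed curve cannot coincide with the filling lamination $\lambda_-(\phi)$, we have $\phi^n(m)\to\lambda_+(\phi)$ projectively. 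Each $\phi^n(m)$ is a meridian, so $\lambda_+(\phi)\in\Lambda(S,M)$; the same argument applied to $\phi^{-1}$ places $\lambda_-(\phi)\in\Lambda(S,M)$, and taking closures yields $\Pi\subset\Lambda(S,M)$, completing the proof.
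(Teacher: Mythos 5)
Your core mechanism is exactly the paper's: for (1), twist about a meridian $m$ with $i(\mu,m)>0$ so that $T_m^n(\mu)\to m$, and for (2), observe that the closure $\Pi$ of the set of pseudo-Anosov fixed laminations is nonempty, closed and invariant, while $\phi^{\pm n}(m)\to\lambda_\pm(\phi)$ gives $\Pi\subset\Lambda(S,M)$. (The paper gets $\Lambda(S,M)\subset\Pi$ slightly differently, by conjugating one pseudo-Anosov by $T_m^i$ so that its attracting laminations converge to $m$, but your route through part (1) is equivalent.) Part (2) of your proposal is complete and correct.

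The obstacle you flag in (1) --- the case where $i(\mu,m)=0$ for every meridian $m$ --- is genuine (the paper's own one-line proof silently assumes $i(\lambda,m)>0$), but your proposed escapes do not all work. The appeal to Otal's criterion is a non sequitur: a lamination disjoint from every meridian need not lie in $\Lambda(S,M)$, and even if it did, that would not place any meridian in $K$. ``Invoking the minimality of the $\Mod(S,M)$-action on its own limit set'' is circular, since that minimality is precisely statement (1). The pseudo-Anosov idea is the right fix, but it is only available when $\mathcal D(S,M)$ is large in the sense of Proposition \ref{discsetcurve}: then some $\phi\in\Mod(S,M)$ is pseudo-Anosov on $S$, its repelling lamination fills $S$ and meets some meridian, so $\mu\neq\lambda_-(\phi)$ and $\phi^n(\mu)\to\lambda_+(\phi)\in K$; since $\lambda_+(\phi)$ meets every meridian, the twist argument then puts every meridian into $K$ directly, making your ``second stage'' of chaining through transversally intersecting meridians unnecessary (and the connectivity of that intersection graph is not established anywhere and would need proof). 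In the small cases the statement itself is delicate: if, say, $\mathcal D(S,M)$ consists of a single separating meridian $m_0$ cutting $S$ into one-holed tori $T_1,T_2$, then the orbit closure of a curve $\mu\subset T_1$ is a nonempty closed invariant set whose elements are supported in $T_1\cup T_2$, and limits of such curves in $\PML(S)$ never equal $m_0$; so the fact must be read with the implicit hypothesis that the disc set is large, or the degenerate cases of Proposition \ref{discsetcurve} need separate treatment.
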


Note that $\mathrm{Map}(S,M)$ contains a pseudo-Anosov map on $S$ if and only if the disk set $\mathcal D(S,M)$ has infinite diameter in the curve complex $\mathcal C(S)$, where the latter condition was discussed earlier in Proposition \ref{discsetcurve}. See also \cite{Biringerextending,Lecuireextension}.

\begin{proof}
For the first part just note that Dehn twist $T_m$ around meridians $m\subset S$ are in $\mathrm{Map}(M,S)$, so if $A \subset\PML(S)$ is nonempty and invariant, $\lambda\in A$ and  $m$ is a meridian, then $m=\lim_i T^i_m(\lambda)$ is also in $A$, implying $\Lambda(M,S)\subset A$.

For the second part, take a pseudo-Anosov $f\in \mathrm{Map}(M,S)$ with attracting lamination $\lambda_+,$ say. If $m$ is a meridian in $S$, then $T_m^i \circ f \circ T^{-i}_m$ are pseudo-Anosov maps on $S$ and their attracting laminations converge to $m$, and then the argument finishes as before.
\end{proof}  

\subsection{Laminations on interval bundles}
\label{laminterval}
Suppose that $Y$ is a compact hyperbolizable surface with boundary, and that $B\longrightarrow Y$ is an interval bundle over $Y$. Endow $Y$ and the horizontal boundary $\partial_H B$ with arbitrary hyperbolic metrics such that the boundary components are all geodesic.

Suppose we have two geodesic laminations $\lambda_\pm$ on $\partial_H B$.

\begin{definition} 
 We say that $\lambda_\pm$ are \emph{essentially homotopic through $B$} if there is a lamination $\lambda$ and a homotopy
$h_t : \lambda  \longrightarrow B, \ t\in [-1,1]$
such that $h_{\pm 1}$ is a homeomorphism onto $\lambda_\pm$, and where $(h_t)$ is not homotopic into $\partial_H B$.
\end{definition}

When $B$ is a trivial interval bundle, $\lambda_\pm$ are essentially homotopic through $B$ if and only if we can write $B\cong Y \times [0,1]$ in such a way that $\lambda_\pm=\lambda \times\{\pm 1\}$ for some  geodesic lamination on $Y$. This is an easy consequence of the fact that on a surface, homotopic laminations are isotopic. In general:

\begin{fact}\label{homotopy involutoin}
Suppose that $\lambda_\pm$ are disjoint or equal geodesic laminations on $\partial_HB $. Then the following are equivalent.
	\begin{enumerate}
		\item $\lambda_\pm$ are essentially homotopic through $B$.
		\item $\lambda_\pm$ is isotopic on $\partial_H B$ to $\sigma(\lambda_\mp)$, where $\sigma$ is the canonical involution of $B$ discussed in \S\ref{sec:ibundle}.
	\end{enumerate}
Moreover, (1) and (2) imply
\begin{enumerate}
	\item[$(3)$] There is a geodesic lamination $\bar \lambda$ on $Y$ such that $\lambda_-\cup \lambda_+$ is isotopic on $\partial_HB$ to the preimage $ (\pi|_{\partial_HB})^{-1}(\bar \lambda)$.
\end{enumerate}
\end{fact}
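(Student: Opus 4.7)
The plan is to work in the universal cover $\tilde B \cong \tilde Y \times [-1,1]$, in which $\partial_H \tilde B = \tilde Y \times \{-1\} \sqcup \tilde Y \times \{+1\}$ and the product projection $\tilde \pi : \tilde B \to \tilde Y$ is a deformation retract. In the trivial case each component of $\partial_H B$ is the quotient of one side of $\partial_H \tilde B$; in the twisted case $\partial_H B = \hat Y$ double covers $Y$, and the two sides of $\partial_H \tilde B$ are interchanged by the non-trivial coset of $\pi_1 \hat Y$ in $\pi_1 Y = \pi_1 B$. Write $\tilde \lambda_\pm \subset \tilde Y \times \{\pm 1\}$ for the lifts of $\lambda_\pm$.

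For (1) $\Rightarrow$ (3), I would lift the essential homotopy $h_t$ to a homotopy $\tilde h_t$. For each leaf $\ell$ of $\lambda$, its track starts at a leaf of $\tilde \lambda_-$ and, by essentiality, must end at a leaf of $\tilde \lambda_+$: a lift staying in $\tilde Y \times \{-1\}$ would project to a homotopy that is rel-endpoints homotopic into $\partial_H B$. Since $h$ is uniformly continuous on the compact $\lambda \times [-1,1]$, the distance $d_{\tilde B}(\tilde h_{-1}(x), \tilde h_{+1}(x))$ is uniformly bounded in $x$. Projecting by $\tilde \pi$, the two lifted leaves paired by the homotopy stay within bounded Hausdorff distance in $\tilde Y$, and hence share the same pair of endpoints in $\partial_\infty \tilde Y$; their geodesic representatives therefore agree. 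Doing this equivariantly yields $\tilde \pi(\tilde \lambda_-) = \tilde \pi(\tilde \lambda_+)$ as geodesic laminations on $\tilde Y$. This common lamination is $\pi_1 \hat Y$-invariant from each side separately, and the non-trivial coset of $\pi_1 \hat Y$ in $\pi_1 Y$ swaps $\tilde \lambda_-$ and $\tilde \lambda_+$, so it is $\pi_1 Y$-invariant and descends to a geodesic lamination $\bar \lambda$ on $Y$ with $\lambda_- \cup \lambda_+$ isotopic to $(\pi|_{\partial_H B})^{-1}(\bar \lambda)$.

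The implication (3) $\Rightarrow$ (2) is immediate because $\sigma$ permutes fibers of $\pi$: in the trivial case $\sigma$ swaps $(y,-1)$ with $(y,+1)$, and in the twisted case $\sigma|_{\partial_H B}$ is the non-trivial deck transformation of $\hat Y \to Y$; in either case $\sigma$ preserves $(\pi|_{\partial_H B})^{-1}(\bar \lambda)$ and sends $\lambda_-$ to a lamination isotopic to $\lambda_+$. For (2) $\Rightarrow$ (1), I would build $h_t$ by sliding along the fibers of $B$: parametrize each fiber by $[-1,1]$ so that $t=-1$ is on $x$ and $t=+1$ is on $\sigma(x)$, producing a homotopy from $\lambda_-$ to $\sigma(\lambda_-)$ through $B$, which I then compose with the assumed isotopy from $\sigma(\lambda_-)$ to $\lambda_+$ inside $\partial_H B$. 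Any lift of this homotopy crosses from $\tilde Y \times \{-1\}$ to $\tilde Y \times \{+1\}$, which precludes a homotopy rel $\lambda_- \times \{-1,1\}$ into $\partial_H B$ and so witnesses essentiality.

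The main obstacle I anticipate is the endpoint-matching step in (1) $\Rightarrow$ (3): intermediate images $h_t(\ell)$ need not be geodesic, and a bi-infinite leaf is not properly embedded in the compact manifold $B$, so some care is needed to ensure that bounded motion in $B$ really passes through to bounded motion along the whole leaf in $\tilde Y$. I plan to extract this by lifting path-by-path, using unique path lifting together with the facts that continuous paths in $B$ of uniformly bounded length lift to paths in $\tilde B$ of the same bounded length, and that two bi-infinite curves in $\tilde Y$ within bounded Hausdorff distance share the same pair of endpoints on $\partial_\infty \tilde Y$.
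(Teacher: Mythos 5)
There is a genuine gap: your argument is organized as the cycle $(1)\Rightarrow(3)\Rightarrow(2)\Rightarrow(1)$, but the link $(3)\Rightarrow(2)$ is false, and the paper explicitly warns about this in the remark immediately following the Fact (which is why the Fact is stated as ``(1) $\iff$ (2), and these imply (3)'' rather than as a three-way equivalence). Knowing that $\lambda_-\cup\lambda_+$ is isotopic to $(\pi|_{\partial_HB})^{-1}(\bar\lambda)$ only tells you that $\sigma$ preserves the \emph{union} up to isotopy; it does not tell you that $\sigma$ carries $\lambda_-$ to $\lambda_+$, because the components of the preimage may be partitioned incorrectly between $\lambda_-$ and $\lambda_+$. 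Concretely, take $B=Y\times[-1,1]$, let $c_1,c_2$ be disjoint non-isotopic simple closed curves on $Y$, and set $\lambda_-=c_1\times\{-1\}\cup c_1\times\{+1\}$ and $\lambda_+=c_2\times\{-1\}\cup c_2\times\{+1\}$: then (3) holds with $\bar\lambda=c_1\cup c_2$, but $\sigma(\lambda_+)=\lambda_+$ is not isotopic to $\lambda_-$, and $\lambda_\pm$ are not essentially homotopic through $B$. So as written your proof does not establish $(1)\Rightarrow(2)$.

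The good news is that your $(1)\Rightarrow(3)$ argument already contains the fix: the lifted homotopy gives a leaf-by-leaf bijection between $\tilde\lambda_-$ and $\tilde\lambda_+$ in which paired leaves have the same endpoints in $\partial_\infty\tilde Y$ after applying $\tilde\pi$. If you record this pairing (equivariantly) rather than only its image, you get directly that $\sigma(\lambda_\mp)$ and $\lambda_\pm$ have identical lifts, i.e.\ $(1)\Rightarrow(2)$; the observation that $\sigma$ permutes the fibers of $\pi$ is then the correct ingredient for $(2)\Rightarrow(3)$, not $(3)\Rightarrow(2)$. Your $(2)\Rightarrow(1)$ fiber-sliding construction is fine. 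For comparison, the paper's route is much shorter: it observes that the trivial-bundle case follows from the fact that homotopic geodesic laminations on a surface are isotopic, and handles the twisted case by lifting everything to the orientation double cover, where the canonical involution of the trivial bundle covers that of $B$; your universal-cover/endpoints-at-infinity argument is a legitimate alternative but must be restructured as above to avoid the broken implication.
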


Here, (3) does not always imply (1,2), since it could be that $\bar \lambda$ has two components, $ (\pi|_{\partial_HB})^{-1}(\bar \lambda)$ has four, and these components are incorrectly partitioned into the two laminations $\lambda_\pm$. However, that's the only problem, so for instance if $\lambda_\pm$ are minimal then (1) - (3) are equivalent.

While we have phrased things more generally in the section, we can always assume in proofs that our hyperbolic metrics have been chosen so that the covering map $\pi|_{\partial_HB}: \partial_HB \longrightarrow Y$ is locally isometric. Here, we're using the fact that given two hyperbolic metrics with geodesic boundary on a compact surface, a geodesic lamination with respect to one metric is isotopic to a unique geodesic lamination with respect to the other hyperbolic metric. In this case, we can remove the word `isotopic' from (2) and (3).

\begin {proof}
The fact is trivial when $B$ is a trivial interval bundle. When $B$ is nontrivial, lift the homotopy to the trivial interval bundle $B'\longrightarrow B$ that double covers $B$, giving homotopic laminations $\lambda_\pm' \subset \partial_HB'$. (1) $\iff$ (2) follows since the canonical involution on $B'$ covers that of $B$. For (2) $\implies$ (3), note that since $\lambda_\pm$ are disjoint or equal and differ by $\sigma$, their projections $\pi(\lambda_\pm)\subset Y$ are the same, and are a geodesic lamination $\bar\lambda$ on $Y$.
\end{proof}

%
%

\section{Large and small disk sets and compression bodies}
\label{largesmallsec}

Suppose that $S\subset \partial M$ is an essential subsurface of the boundary of a compact, irreducible $3$-manifold $M $, and that $\partial S$ is incompressible in $M$. The following is probably known to some experts, but we don't think it appears anywhere in the literature, so we give a complete proof.

\begin{prop}[Diameters of disk sets]\label {discsetcurve} With $M,S$ as above, either
	\begin {enumerate}
	\item $\mathcal D(S,M) $  has infinite diameter in $ \mathcal C(S)$,
	\item $S$ has one nonseparating meridian $\delta$, and every other meridian is a band sum of $\delta$,
	\item $S$ has a single meridian, which is separating, or
	\item $\mathcal D(S,M) =\emptyset.$ 
	\end {enumerate}
\end{prop}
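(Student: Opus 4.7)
The plan is to reduce to a compression body, note that Dehn twists about meridians extend to $M$, and then dichotomize on whether $\mathcal D(S,M)$ contains a pair of meridians filling $S$. The empty case gives (4). For the reduction, Fact~\ref{charcomp} says a curve in $S$ is an $M$-meridian iff it is a $C(S,M)$-meridian, so $\mathcal D(S,M) = \mathcal D(S,C(S,M))$ and we may assume $M$ is itself a compression body with $S\subset \partial_+M$. Moreover, for any $m\in\mathcal D(S,M)$ the Dehn twist $T_m$ lies in $\Mod(S,M)$: the disk $D\subset M$ bounded by $m$ has a $3$-ball neighborhood $N\cong D\times[-1,1]$, and $T_m$ extends as the twist along $D\times\{0\}$ inside $N$, and the identity outside.

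The filling case yields (1) immediately. If $m_1,m_2\in \mathcal D(S,M)$ together fill $S$, then by Thurston's pseudo-Anosov construction $\phi := T_{m_1}^n T_{m_2}^{-n}$ is pseudo-Anosov on $S$ for $n$ large, and $\phi\in \Mod(S,M)$. Since $\phi$ preserves $\mathcal D(S,M)$ and acts on $\mathcal C(S)$ with positive asymptotic translation length, the orbit $\{\phi^k(m_1)\}_{k\in\BZ}$ witnesses case (1).

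The content is the non-filling case. Assuming no pair in $\mathcal D(S,M)$ fills $S$, choose a maximal collection $\Delta \subset \mathcal D(S,M)$ of pairwise disjoint, pairwise non-isotopic meridians. I would first show $|\Delta|=1$: given two disjoint non-isotopic $m_1,m_2\in\Delta$, band-summing them across a carefully chosen arc in a complementary component produces a meridian $m'$ such that $m'\cup m_1$ fills $S$, contradicting the hypothesis. Once $\Delta=\{\delta\}$, every meridian $m\in\mathcal D(S,M)$ either equals $\delta$ or intersects $\delta$ by maximality, and applying outermost-arc surgery on a disk bounded by $m$ against one bounded by $\delta$ produces two new meridians $m^+,m^-$ that cobound a pair of pants with $\delta$, each intersecting $\delta$ fewer times. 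Iterating, I would argue that if $\delta$ is separating then the surgery outputs are eventually all isotopic to $\delta$ and $m$ itself is forced to be isotopic to $\delta$, giving $\mathcal D=\{\delta\}$ (case 3); if $\delta$ is nonseparating, then $m$ is realized as an iterated band sum of $\delta$ with itself, giving case (2).

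The main obstacle is this non-filling analysis. Two delicate points require care: in the $|\Delta|\geq 2$ step one must use maximality of $\Delta$ (so that the components of $S\setminus N(\Delta)$ carry no further meridians) to locate an arc whose band sum $m'$ really does fill in conjunction with $m_1$; in the $|\Delta|=1$ step one must verify that the surgery outputs remain essential and non-peripheral in $S$ so that the induction on $i(m,\delta)$ actually terminates, and that in the separating case the pair-of-pants configurations together with the non-filling hypothesis preclude any meridian other than $\delta$. Both amount to a careful surgery analysis in the spirit of the Masur-Minsky argument for quasiconvexity (Theorem~\ref{quasi-convexity}), adapted to the subsurface setting with incompressible $\partial S$.
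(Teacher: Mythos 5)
Your reduction to the characteristic compression body, the observation that twists about meridians extend to $M$, and the step ``filling pair of meridians $\Rightarrow$ infinite diameter'' via Thurston's construction are all fine and consistent with the paper's toolkit. The problem is the non-filling case, which is where all the content lives, and your key mechanism there is false. A band sum $m'$ of two disjoint meridians $m_1,m_2$ along an arc $\alpha$ is the boundary of a regular neighborhood of $m_1\cup m_2\cup\alpha$, so $m'$ is \emph{disjoint} from $m_1$; hence $m'\cup m_1$ can never fill $S$, since a disjoint pair of simple closed curves always misses some essential non-peripheral curve whenever $\mathcal C(S)$ is nontrivial. So you cannot contradict the non-filling hypothesis this way, and the conclusion you are driving at, $|\Delta|=1$, is not what actually happens: a genus-three handlebody has many disjoint, pairwise non-isotopic meridians, and its disk set has infinite diameter. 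The genuine difficulty is to show that two meridians that do \emph{not} fill $S$ (for instance two disjoint separating meridians) nevertheless force infinite diameter; no single surgery or band sum produces a filling pair of meridians directly.

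The paper's route around this is Claim \ref{partialpaclaim}: from a meridian $\gamma$ and a second meridian $\delta$ lying in a component of $S\setminus\gamma$, with $\gamma$ not a band sum of $\delta$, one builds a pseudo-Anosov supported on a \emph{subsurface} (using two meridians that bound pairs of pants with $\gamma$ and $\delta$ and fill that subsurface), and this partial pseudo-Anosov extends to $M$ because it is a product of twists about meridians. Doing this for two meridians, neither of which is a band sum of the other, yields partial pseudo-Anosovs supported on subsurfaces that together fill $S$; by \cite[Theorem 6.1]{Claygeometry} a composition of high powers is pseudo-Anosov on all of $S$ and lies in $\Mod(S,M)$, which gives case (1). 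The remaining cases are then sorted by innermost-disk surgery roughly as you suggest, but it is the band-sum obstruction that delimits cases (2) and (3), not a termination argument on $i(m,\delta)$: in the nonseparating case the paper shows that a meridian intersecting $\delta$ cannot exist at all (surgery would force it into a punctured torus bounded by a band sum), rather than expressing it as an iterated band sum --- and note that case (2) asserts every other meridian is a single band sum of $\delta$, which an iterated band sum would not give. To repair your proof you would need to import the partial pseudo-Anosov composition argument or an equivalent substitute.
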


In case (1), we will say that $\mathcal D(S,M) $ is \emph{large}, and in cases (2)--(4), we will say that $\mathcal D(S,M)$ is \emph{small}. Similarly, if $C(S,M)$ is the characteristic compression body defined in Fact \ref{charcomp}, then $C(S,M)$ is said to be large or small depending on whether $\mathcal D(S,M)$ is large or small. See also the discussion of small compression bodies in \S 3 of \cite{Biringerautomorphisms}.

\medskip


Here,  recall that a \emph{band sum} of a meridian $\delta$ is  the  boundary of a regular neighborhood of $\delta \cup \beta$, where  $\beta$ is a simple closed curve on $S $ that intersects  $\delta $ once. Any such band sum must be a meridian:  for instance,  as an element of $\pi_1 M$ it is a commutator with a trivial element.  Also, (3)  includes the case when $M$ is a solid torus and $S=\partial M$, in which case  there is  only one (nonseparating) meridian.  When $M$  is not a solid torus, though, every nonseparating curve has  infinitely many band sums.

Before beginning the proof, we first establish the following:
%

\begin {claim}\label{partialpaclaim}
Suppose $S$ is not a torus, $\gamma \subset S$  is a meridian on $S$ and $\delta$ is a meridian that lies in a component $T \subset S \setminus\gamma$. If $\gamma$ is not a band sum of $\delta $, there is a pseudo-Anosov $f: T\longrightarrow T$ that extends to a homeomorphism of $M $.
\end {claim}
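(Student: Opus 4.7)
The plan is to apply Thurston's construction: I will find two meridians $m_1, m_2 \subset T$ that fill $T$, so that for large $n, k$ the composition $\tau_{m_1}^n \tau_{m_2}^k$ of the Dehn twists about them is pseudo-Anosov on $T$. Since Dehn twists about meridians extend to $M$ (by twisting around the bounding compression disks), any such composition will automatically lie in $\Map(T, M)$. Taking $m_1 = \delta$, the entire argument reduces to producing a suitable $m_2$.

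To build $m_2$, I first arrange that $D_\gamma \cap D_\delta = \emptyset$, which is possible by innermost-disk surgery inside the irreducible $M$ since $\gamma \cap \delta = \emptyset$ on $S$. Then choose a simple arc $b \subset T$ from $\delta$ to $\gamma$ and join $D_\delta$ to $D_\gamma$ by a band $R \subset M$ lying over $b$; the boundary of the resulting embedded disk is the band-sum $\delta \#_b \gamma$, which (after pushing its $\gamma$-parallel portion off $\gamma$ into the $T$-side) yields a meridian $m \subset T$. The hypothesis that $\gamma$ is not a band-sum of $\delta$ is precisely what will guarantee, for a suitable choice of $b$, that $m$ is essential and non-peripheral in $T$: were every such cross band-sum peripheral or trivial regardless of $b$, an innermost-disk argument inside the subcompressionbody $C_\delta$ would allow the family of bounding disks to be reassembled into a band-sum expression for $\gamma$ in terms of $\delta$, contradicting the hypothesis. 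Verifying this essentialness is the main obstacle of the argument.

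The two meridians $\delta$ and $m$ will be disjoint in $T$ by construction, so one more step is needed to produce intersecting, filling meridians. Iterate the band-sum once more, forming $m' := m \#_{b'} \gamma$ with $b' \subset T$ chosen to cross $\delta$ transversely, so that $i(m', \delta) > 0$. Then apply a high power of the meridian twist $\tau_\delta \in \Map(T, M)$: by the standard Masur--Minsky estimate on Dehn-twist orbits in the curve complex, $d_{\mathcal C(T)}(m', \tau_\delta^n(m')) \to \infty$ as $n \to \infty$, so for $n$ sufficiently large the pair $\{m', \tau_\delta^n(m')\}$ fills $T$. Thurston's construction then produces a pseudo-Anosov in $\langle \tau_{m'},\, \tau_\delta^n \tau_{m'} \tau_\delta^{-n} \rangle \subset \Map(T, M)$, which is a pseudo-Anosov of $T$ extending to $M$, as required.
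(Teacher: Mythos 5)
Your argument breaks at the final, filling step, and the gap is not repairable by the tool you invoke. First, the cited ``Masur--Minsky estimate'' is the unbounded-orbit statement for \emph{pseudo-Anosov} maps; Dehn twist orbits in $\mathcal C(T)$ are \emph{bounded}: since $\tau_\delta$ fixes $\delta$, we have $d_{\mathcal C(T)}(m',\tau_\delta^n(m'))\le d(m',\delta)+d(\delta,\tau_\delta^n(m'))=2\,d(m',\delta)$ for all $n$ (the quantity that grows like $n$ is the subsurface projection to the annulus about $\delta$, not curve-complex distance). More fatally, $\{m',\tau_\delta^n(m')\}$ can fill $T$ for large $n$ only if $\{m',\delta\}$ \emph{already} fills $T$: any essential curve $c$ disjoint from both $\delta$ and $m'$ satisfies $i(c,\tau_\delta^n(m'))=i(\tau_\delta^{-n}(c),m')=i(c,m')=0$ for every $n$. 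But your $m'$ lives in a regular neighborhood of $\gamma\cup\delta\cup b\cup b'$, which is a proper subsurface of $T$ as soon as $T$ has any complexity, so $\{\delta,m'\}$ does not fill and the pair $\{m',\tau_\delta^n(m')\}$ never will. (Separately, your verification that the band sum $m$ is essential and non-peripheral in $T$ is only sketched; this is exactly where the low-complexity exceptions live, and it needs to be done.)

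For comparison, the paper's proof does not try to fill $T$ with curves built from a single arc. It varies the arc joining $\gamma$ to $\delta$ to produce a whole family of pants-curve meridians, two of which fill the component $R$ of $T\setminus(\gamma\cup\delta)$ adjacent to both; Thurston's construction then yields only a \emph{partial} pseudo-Anosov supported on $R$. To promote this to a pseudo-Anosov on all of $T$ one needs a second non-peripheral meridian $\delta'\subset T$, a second partial pseudo-Anosov on the corresponding $R'$, and the Clay--Leininger--Mangahas theorem that high powers of partial pseudo-Anosovs on filling subsurfaces compose to a pseudo-Anosov. The hypothesis that $\gamma$ is not a band sum of $\delta$ is consumed precisely at that stage: the only way $T$ can fail to contain a second meridian is for $T$ to be a punctured torus whose unique meridian is $\delta$, in which case $\gamma$ \emph{is} a band sum of $\delta$. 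If you want to salvage your outline, you must replace the single twist $\tau_\delta^n$ by this two-stage filling argument (or otherwise produce two meridians that genuinely fill $T$), and you must handle the nonseparating case of $\gamma$, where the paper instead uses band sums of $\gamma$ itself to fill $T=S\setminus\gamma$ directly.
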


The condition that $\gamma $ is not a band sum is necessary. For if $M$ is a handlebody with $S=\partial M$, and $\gamma$ is a separating meridian that bounds a compressible punctured torus $T \subset S$, then $T$  has only a single meridian $\delta$. This $\delta$ is non-separating and $\gamma$  is a band sum of $\delta$. Any map $T\longrightarrow T$ that extends to a homeomorphism of $M$
must then fix $\delta$,  so cannot be pseudo-Anosov.

Similarly, if $S $ is a torus and $\gamma$ is a meridian, the  complement of $\gamma$ is an annulus, which does not admit any pseudo-Anosov map.

\begin{proof}[Proof of Claim \ref{partialpaclaim}]Suppose first that $\gamma$ is not separating. Any  simple closed curve that intersects $\gamma$  once can be used  to create a band sum.  Now $S$ is not a torus, and cannot be a punctured torus either, since then its  boundary would be compressible.  So, there are a pair   $\alpha ,\beta$ of band sums of $\gamma$ that fill $S \smallsetminus \gamma$.  By  a theorem of Thurston \cite[III.3 in 13]{Fathitravaux}, the composition of twists $T_\alpha \circ T_\beta^{-1}$  is pseudo-Anosov. Each twist extends to $M $, because  twist about meridians can be extended to twists along the disks they bound.

Now suppose $\gamma$ separates $S $, and suppose that $R$ is the component of $T \setminus (\gamma \cup \delta)$ adjacent to $\gamma$ and $\delta$.   Any curve in $R$ that bounds a pair of pants with $\gamma $ and $\delta$  is also a meridian.  Such curves are constructed as the boundary of a neighborhood of the union of $\gamma ,\delta$ and any arc in $R$  joining the two.  Therefore,  there is a pair $\alpha,\beta$ of such curves that fills $R$.  As before,  $f=T_\alpha \circ T_\beta^{-1}$ is a pseudo-Anosov on $R$  that extends to $M$. 

\begin{figure}
	\centering
	\includegraphics{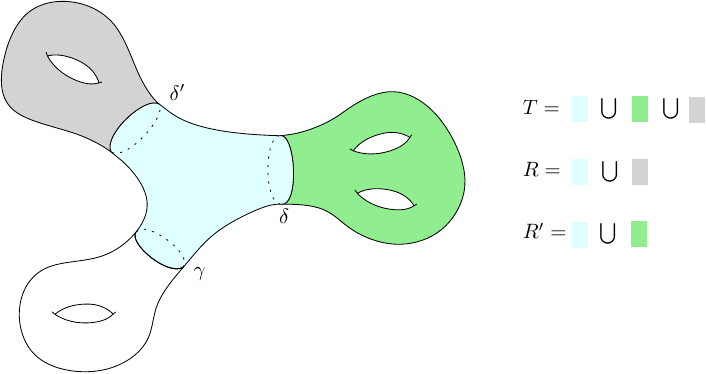}
	\caption{The surfaces $R$ and $R'$ fill $T$.}\label{curves}
\end{figure}

However, there was nothing special about $\delta$ in the above construction. So if there is some (non-peripheral) meridian $\delta' \subset T$ with $\delta\neq \delta'$, there is also a pseudo-Anosov $f'$ on the corresponding surface $R'$, such that $f'$ extends to $M$.  Since $R$ and $R'$ fill $T$,  \cite[Theorem 6.1]{Claygeometry} says  that for  large $i$  the composition $f^i (f')^i$ is a  pseudo-Anosov on $T$. See Figure \ref{curves}.

The  only case left to consider is when  $\delta $ is the only (non-peripheral) meridian in $T$. Since new meridians usually can be created by joining $\delta$ and $\gamma$  with an arc and taking a regular neighborhood, the only possibility here is that $T$  is a punctured torus, in which case this construction always just produces $\gamma $ again.  But then $\gamma$  is a band sum of $\delta$. \end{proof}

\begin{proof}[Proof of  Proposition \ref{discsetcurve}] When $S$  is a torus, distinct curves have nonzero  algebraic intersection number, so either there are no meridians or there is a single meridian.  So, we assume $S\neq T^2$  below.

We first claim that if there are two meridians in $S $, neither of which is a band sum of the other, then  $\mathcal D(S,M)$  has infinite diameter in the curve complex. To see this, suppose  $\gamma_1,\gamma_2$ are such meridians. Claim \ref{partialpaclaim} gives two pseudo-Anosov maps $f_1,f_2$,  each defined on the component of $S \setminus\gamma_i$ that contains $\gamma_j$, where $i\neq j$. Since the component of $S\setminus \gamma_1$ containing $\gamma_2$ and the component of $S\setminus \gamma_2$ containing $\gamma_1$ together fill $S$, for large $k$  the composition $f^kg^k$ is a pseudo-Anosov map on  the entire surface $S$, by \cite[Theorem 6.1]{Claygeometry}.  Any such composition extends to $M$, so maps meridians to meridians. As pseudo-Anosovs act  with unbounded orbits  on the curve complex \cite{Masurgeometry1}, this implies  that the set of meridians has infinite diameter in $\mathcal C(S)$. 

Starting now with the proof of the proposition, suppose there are \emph{no nonseparating meridians} in $S$. If $\gamma,\delta$ are distinct (separating) meridians, then an innermost disk surgery produces another separating meridian $\gamma_2$ disjoint from $\gamma_1 $, see \cite[Lemma 2.8]{Biringerautomorphisms}. By the  previous  paragraph, $\mathcal D(S,M)$ has  infinite diameter in the curve complex. So, the only other options are if  $\mathcal D(S,M) = \emptyset$, or if  the only meridian is a single separating curve.

Suppose now that there is a non-separating meridian $\gamma$ in $S$.  By Claim \ref{partialpaclaim}, unless the disc set has infinite diameter in the curve complex, any meridian  disjoint from $\gamma$ must be  a band sum of $\gamma$. So,  either we are in case (2) of the proposition, or there is some meridian $\delta$  that intersects $\gamma$. Any  innermost disk surgery of  $\delta $ along $\gamma$  must produce a band sum $\beta$ of $\gamma$. However, this $\beta$ must then bound a punctured torus $T$ containing $\gamma$, and $\delta$  is then forced to lie inside $T$,  which gives a contradiction, see Figure \ref{snake}.
\end {proof}

\begin{figure}
	\centering
	\includegraphics{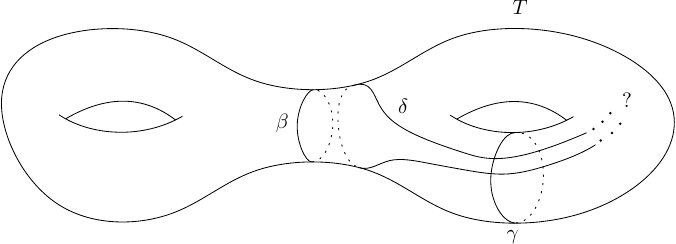}
	\caption{A surgery of a curve $\delta$ along a non-separating $\gamma$  cannot produce a curve $\beta$ that is a band sum of $\gamma$.}
	\label{snake}
\end{figure}

\section{Windows from limit sets}
\label{windowssec}
Let $N=\Gamma \backslash \BH^3$ be an orientable, geometrically finite hyperbolic $3$-manifold, let $\Lambda \subset \partial \BH^3$ be the limit set of $\Gamma$, and let $$CC(N) := \Gamma \backslash CH(\Lambda) \subset N$$ be the convex core of $N$. Equip $ \partial CC(N)$  with  its intrinsic length metric, which is hyperbolic, see for instance \cite [Prop 8.5.1]{Thurstongeometry}.

Let $S_\pm$ be (possibly degenerate) incompressible subsurfaces with geodesic boundary in $\partial CC(N)$ that are either equal or are essentially disjoint, as in \S \ref{subsurfacessec}. Let $$\tilde S_\pm \subset \partial CH(\Lambda) \subset \BH^3$$ be lifts of $S_\pm$, where if $S_-=S_+$, we require that $\tilde S_-\neq \tilde S_+$. Let $\Gamma_\pm \subset \Gamma$ be the stabilizers of $\tilde S_\pm$, let $\Lambda_\pm \subset \partial \BH^3$ be their limit sets and $\Delta=\Gamma_+\cap\Gamma_-$.

The lift $\tilde S_\pm$ is isometric to a convex subset of $\BH^2$. Let $\partial_\infty S_\pm \subset \partial \BH^2$ be the boundary of $\tilde S_\pm$. By \cite[Theorem 5.6]{mj2017cannon}, say, the inclusion $\tilde S_\pm \hookrightarrow \BH^3$ extends continuously to a $\Gamma_\pm$-equivariant quotient map $$\iota_\pm : \partial_\infty \tilde S_\pm \longrightarrow \Lambda_\pm \subset  \partial \BH^3.$$

\begin{theorem}[Windows from limit sets]\label{windowsthm} 
$\Lambda_- \cap \Lambda_+ =\Lambda_\Delta$. Next, suppose $\Delta$ is nonempty and is not a cyclic group acting parabolically on either $\tilde S_-$ or $\tilde S_+$, and let $\tilde C_\pm \subset \tilde S_\pm$ be the convex hulls of the subsets $\iota_\pm^{-1}(\Lambda_\Delta) \subset \partial_\infty \tilde S_\pm.$ Then $\tilde C_\pm$ are $\Delta$-invariant, the quotients $C_\pm :=\Delta \backslash \tilde C_\pm $ are (possibly degenerate) subsurfaces with geodesic boundary in $S_\pm$, and there is an essential homotopy from $C_-$ to $C_+$ in $CC(N)$ that is the projection of a homotopy from $\tilde C_- $ to $\tilde C_+$.
\end{theorem}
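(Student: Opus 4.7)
The plan is to establish the limit-set equality first, and then produce the homotopy by routing both lifts $\tilde C_\pm$ through the common intermediate convex hull $\CH(\Lambda_\Delta)\subset\BH^3$.

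For $\Lambda_+\cap\Lambda_-=\Lambda_\Delta$, the inclusion $\supset$ is immediate from $\Delta\subset\Gamma_\pm$. For the reverse inclusion I would appeal to Anderson's theorem on limit sets of intersections of geometrically finite Kleinian groups, which gives $\Lambda(G_1\cap G_2)=\Lambda(G_1)\cap\Lambda(G_2)$. Here $N$ is geometrically finite by hypothesis, and the $\Gamma_\pm\cong\pi_1 S_\pm$ are finitely generated surface groups corresponding to incompressible subsurfaces of $\partial\CC(N)$, hence topologically tame and so geometrically finite. Anderson's argument handles conical limit points directly; the delicate case is a parabolic fixed point $\xi\in\Lambda_+\cap\Lambda_-$, where one uses that $\mathrm{Stab}_\Gamma(\xi)$ is virtually abelian, so after passing to a common finite-index subgroup the parabolic stabilizers in $\Gamma_+$ and $\Gamma_-$ intersect in a parabolic subgroup of $\Delta$ fixing $\xi$, putting $\xi\in\Lambda_\Delta$.

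Next, for the subsurface structure, since $\iota_\pm$ is $\Gamma_\pm$-equivariant and hence $\Delta$-equivariant, the preimage $\iota_\pm^{-1}(\Lambda_\Delta)$ is a $\Delta$-invariant subset of $\partial_\infty\tilde S_\pm$, and so is its convex hull $\tilde C_\pm$ in $\tilde S_\pm$. The hypothesis that $\Delta$ is not cyclic acting parabolically on $\tilde S_\pm$ ensures $\Lambda_\Delta$ has at least two points, so $\tilde C_\pm$ is at worst a single geodesic (the degenerate case) and otherwise a genuine two-dimensional convex subset of $\tilde S_\pm\cong\BH^2$. Since $\Delta$ acts as a finitely generated Fuchsian group on $\tilde S_\pm$ with $\tilde C_\pm$ its hyperbolic convex core, the quotient $C_\pm:=\Delta\backslash\tilde C_\pm$ has finite type; pushing forward through the covering $\Delta\backslash\tilde S_\pm\to S_\pm$ realizes it as a (possibly immersed, possibly degenerate) subsurface with geodesic boundary in $S_\pm$ in the sense of \S\ref{subsurfacessec}.

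For the homotopy, the key observation is that both lifts $\tilde C_\pm$ have the same ideal boundary set $\Lambda_\Delta\subset\partial_\infty\BH^3$. Let $r\colon\BH^3\to\CH(\Lambda_\Delta)$ denote nearest-point retraction, which is continuous and $\Delta$-equivariant. The straight-line homotopy moving each $p$ to $r(p)$ along the unique $\BH^3$-geodesic is $\Delta$-equivariant, remains inside $\CH(\Lambda)$, and carries $\tilde C_\pm$ into $\CH(\Lambda_\Delta)$. Inside the convex set $\CH(\Lambda_\Delta)$ I then interpolate by a $\Delta$-equivariant straight-line homotopy to a $\Delta$-equivariant homeomorphism $f\colon\tilde C_-\to\tilde C_+$, where $f$ is determined (up to isotopy) by the compatibility of the two Cannon--Thurston identifications: $\iota_\pm$ restrict to $\Delta$-equivariant quotient maps $\partial_\infty\tilde C_\pm\to\Lambda_\Delta$ compatible with the cyclic order on $\partial_\infty\BH^2$, and this combinatorial data together with the Fuchsian action of $\Delta$ pins $f$ down. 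Concatenating the three pieces gives a $\Delta$-equivariant homotopy in $\CH(\Lambda)$ from $\tilde C_-$ to $\tilde C_+$; descending through $\Gamma$ yields the required homotopy in $\CC(N)$, and it is essential because the induced map $\pi_1 C_-\to\pi_1\CC(N)$ has nontrivial image $\Delta$, so the homotopy cannot be pushed into $\partial\CC(N)$. The main obstacle will be the parabolic analysis in the first step and ensuring that the middle $\Delta$-equivariant interpolation within $\CH(\Lambda_\Delta)$ truly terminates at a homeomorphism onto $\tilde C_+$ rather than some arbitrary $\Delta$-equivariant map, which is exactly where the hypothesis on $\Delta$ being non-cyclic-parabolic is used to ensure the ideal boundary data on both sides matches up.
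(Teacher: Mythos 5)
There are genuine gaps at the two places where the real work of this theorem happens. First, the claim that $\Delta\backslash \tilde C_\pm$ is a subsurface with geodesic boundary in $S_\pm$ is not just a matter of ``pushing forward through the covering'': in the paper's terminology (\S\ref{subsurfacessec}) this requires the \emph{interior} of $\tilde C_\pm$ to be \emph{precisely invariant} under $\Delta\subset\Gamma_\pm$, i.e.\ $g(int(\tilde C_+))\cap int(\tilde C_+)=\emptyset$ for all $g\in\Gamma_+\setminus\Delta$, so that the projection to $S_+$ is an embedding on the interior rather than merely an immersion. You assert the quotient is a ``(possibly immersed)'' subsurface, but an immersed image would not satisfy the theorem's conclusion. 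Proving precise invariance is the heart of the argument: the paper does it by showing that a transverse intersection $g(\partial\tilde C_+)\cap\partial\tilde C_+$ would produce two simple closed curves on the sphere $cl(\partial CH(\Lambda_\Gamma))\subset\BH^3\cup\partial\BH^3$ (each built from a geodesic in $\tilde S_+$, its partner in $\tilde S_-$ with matching $\iota_\pm$-images, and two points of $\Lambda_\Delta$) meeting transversely in exactly one point — a contradiction. Care is needed to choose these geodesics with endpoints that are not accidental parabolic fixed points, so that the curves really are simple. Nothing in your outline substitutes for this step; indeed the paper's footnote points out that precisely this omission occurs in the earlier treatments of Walsh and Lecuire.

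Second, your parabolic argument for $\Lambda_-\cap\Lambda_+=\Lambda_\Delta$ does not work at rank-two cusps. If $\xi$ is a rank-two parabolic fixed point of $\Gamma$, then $\Gamma_\pm(\xi)$ are infinite cyclic subgroups of $\mathrm{Stab}_\Gamma(\xi)\cong\BZ^2$, and two such subgroups can intersect trivially (e.g.\ $\langle(1,0)\rangle$ and $\langle(0,1)\rangle$); passing to finite-index subgroups does not help, and Anderson's theorem as usually stated leaves exactly such parabolic points as a possible exceptional set. Excluding them requires $3$-manifold topology, not group theory: the paper shows $\gamma_\pm$ are both homotopic into the same torus boundary component, hence into a solid-torus or thickened-torus piece of the characteristic submanifold of $(M,S_-\cup S_+\cup\partial_{\chi=0}M)$, whose annuli force $\gamma_-$ and $\gamma_+$ to be conjugate and therefore (as they fix the same $\xi$) equal. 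A related issue affects your final step: you need a $\Delta$-equivariant \emph{homeomorphism} $\tilde C_-\to\tilde C_+$, but homotopy-equivalent compact surfaces with boundary need not be homeomorphic (a one-holed torus and a three-holed sphere share fundamental group $F_2$), so the ``combinatorial data'' of the two Fuchsian actions of $\Delta$ does not by itself pin down $f$; the paper again invokes the characteristic submanifold to see that $C_-$ and $C_+$ cobound an interval bundle (or are annuli) and hence are homeomorphic. Once that is in place, the homotopy itself is simpler than your three-stage construction: one just runs the $\BH^3$-geodesic from $x$ to $\tilde f(x)$, which is automatically $\Delta$-equivariant and stays in $CH(\Lambda)$ by convexity.
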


Above, $C_\pm$ are (possibly degenerate) subsurfaces with geodesic boundary in $S_\pm$, as defined in \S \ref{subsurfacessec}, but it follows from the above and Theorem \ref{charsubthm} that there are `resolutions' (see \S \ref{subsurfacessec}) $C_\pm'\subset S_\pm$ such that $C_\pm'$ bound an interval bundle in $CC(M)$. So informally, the theorem says that the intersection $\Lambda_- \cap \Lambda_+$ is exactly the limit set of the fundamental group of some essential interval bundle in $(CC(M),S_-'\cup S_+')$. The term `window' comes from Thurston \cite{Thurstonhyperbolic3} and refers to interval bundles; for example, one can `see through' a trivial interval bundle from one horizontal boundary component to the other.

The assumption that $\Delta$ is not cyclic and acting parabolically on either $\tilde S_\pm$ is just for convenience in the statement of the theorem. (Just to be clear, note that an element $\gamma\in \Delta$ can act parabolically as an isometry of $\BH^3$, but hyperbolically on the convex subsets $\tilde S_\pm \subset \BH^2$.) If $\Delta$ is cyclic and acts parabolically on $\tilde S_+$ the subset $\tilde C_+$ in the statement of the theorem will be empty. However, using the same proof one can construct a homotopy from a simple closed curve on $S_+$ bounding a cusp of $S_+$ to some simple closed curve on $S_-$.

As mentioned in the introduction, a version of Theorem \ref{windowsthm} was known to Thurston, see his discussion of the Only Windows Break Theorem in \cite{Thurstonhyperbolic3}. Precise statements for geometrically finite $N$ without accidental parabolics were worked out in Lecuire's thesis~\cite{lecuire2004structures} and by Walsh~\cite{walsh2014bumping}; note that Walsh uses the conformal boundary instead of the convex core boundary, but the two points of view are equivalent. However, for our applications in this paper, we need to allow accidental parabolics in $\tilde S_\pm$, which are not allowed in those theorems. Also, our proof is more direct and natural\footnote{In both  \cite{walsh2014bumping,lecuire2004structures}, the authors focus on proving that the boundary components of $\tilde C_\pm$ project to simple closed curves in $S_\pm$, but that isn't sufficient to say that $\tilde C_\pm$ projects to a subsurface with geodesic boundary in $S_\pm$, which is what they then claim. E.g.\ in \cite{walsh2014bumping} it is stated that under a covering map, the boundary of a subset goes to the boundary of the image, but this isn't true.}  than those in \cite{walsh2014bumping,lecuire2004structures}, despite the extra complication coming from parabolics. 

Finally, the assumption that $N$ is geometrically finite is not really essential for the theorem statement. With a bit more work dealing with degenerate ends, one can prove the theorem for all finitely generated $\Gamma$. Essentially, the point is to use Canary's covering theorem \cite{Canarycovering} to show that degenerate NP-ends in the covers $N_\pm := \Gamma_\pm \backslash \BH^3$ have neighborhoods that embed in $N$, and then to use this to prove that geodesic rays in $\BH^3$ that converge to points in $\Lambda_- \cap \Lambda_+$ cannot exit degenerate ends in $N_\pm$. After showing this, the proof of Claim~\ref{intersectlimit} extends to the general case. However, we don't have an application for that theorem in mind, so we'll spare the reader the details. 

\subsection*{Proof of Theorem \ref{windowsthm}.} 
We first focus on proving that $\Lambda_-\cap \Lambda_+ = \Lambda_\Delta$. For each $\xi \in \partial \BH^3$, let $\Gamma_\pm(\xi) \subset \Gamma_\pm$ be the stabilizer of $\xi$.

\begin{claim}\label{bothnontrivial}
	Let $\xi \in \partial \BH^3$ and suppose that $\Gamma_-(\xi) $ and $\Gamma_+(\xi)$ are both nontrivial. Then they are equal.
\end{claim}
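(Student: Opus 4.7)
Plan: I aim to show that for any nontrivial $\gamma \in \Gamma_-(\xi)$ one has $\gamma \in \Gamma_+$; by symmetry this yields $\Gamma_-(\xi) = \Gamma_+(\xi)$. Since $\Gamma$ is discrete and torsion-free, the stabilizer $\Gamma_\xi$ of a point in $\partial\BH^3$ is abelian---either infinite cyclic (generated by a loxodromic or parabolic), or rank-two abelian parabolic. In particular $\gamma$ commutes with the given nontrivial $\gamma_+\in\Gamma_+(\xi)$, and since a loxodromic commuting with a parabolic would force the parabolic to fix both axis endpoints of the loxodromic, the two elements are of the same parabolic/loxodromic type. The commutation gives
\[\gamma_+\bigl(\gamma(\tilde S_+)\bigr)\;=\;\gamma(\gamma_+(\tilde S_+))\;=\;\gamma(\tilde S_+),\]
so $\gamma(\tilde S_+)$ is a component of $\partial CH(\Lambda)$ preserved by $\gamma_+$, and the problem reduces to showing $\gamma(\tilde S_+)=\tilde S_+$.

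In the parabolic case, I fix a horoball $H$ centered at $\xi$ that is precisely invariant under $\Gamma_\xi$, and work on the horosphere $\partial H$ with its induced Euclidean metric, on which $\Gamma_\xi$ acts by translations. Each component of $\partial CH(\Lambda)$ limiting at $\xi$ meets $\partial H$ in a Euclidean line, and distinct components---being disjoint in $\BH^3$---give disjoint, hence parallel, such lines. A Euclidean translation preserves a line if and only if it is parallel to that line. If $\Gamma_\xi$ has rank one, generated by some $g$, the hypothesis that both $\Gamma_\pm(\xi)$ are nontrivial forces each of $L_\pm:=\tilde S_\pm\cap\partial H$ to be parallel to the translation direction of $g$, so every element of $\Gamma_\xi$---in particular $\gamma$---preserves $L_+$. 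Hence $\gamma(\tilde S_+)$ meets $\partial H$ in the same horocycle $L_+$ as $\tilde S_+$; since two distinct components of $\partial CH(\Lambda)$ cannot share a horocycle (they would meet along it), we obtain $\gamma(\tilde S_+)=\tilde S_+$. If $\Gamma_\xi$ has rank two, then $\xi$ is a bounded parabolic point, small horoballs at $\xi$ lie in the interior of $CH(\Lambda)$, and no component of $\partial CH(\Lambda)$ limits at $\xi$; in this sub-case $\Gamma_\pm(\xi)=\{1\}$ and the hypothesis is vacuous.

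The loxodromic case $\Gamma_\xi=\langle g\rangle$ is handled along the same lines, replacing the invariant horoball with the axis $\ell$ of $g$ from $\xi$ to $\xi'\in\partial\BH^3$---which $g$ does preserve, although it no longer preserves individual horospheres at $\xi$. One exploits the fact that $\gamma_+$ acts on any preserved component as an intrinsic loxodromic with axis whose two endpoints map via the boundary extension to $\xi$ and $\xi'$, together with the induced Euclidean similarity action of $g$ on nearby horospheres fixing the axis puncture $\ell\cap\partial H$; this forces the horocyclic cross sections of any component meeting such a horosphere to pass through this puncture, so that two distinct components preserved by $\gamma_+$ would intersect there and violate disjointness. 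This loxodromic analysis is the main obstacle: the parabolic rank-one argument is the clean prototype, and the hard part is translating ``invariant horosphere'' into ``axis plus similarity action'' without losing the key disjointness conclusion.
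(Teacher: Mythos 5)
Your opening reduction is fine: since $\Gamma(\xi)$ is abelian, any $\gamma\in\Gamma_-(\xi)$ commutes with a nontrivial $\gamma_+\in\Gamma_+(\xi)$, so $\gamma(\tilde S_+)$ is $\gamma_+$-invariant and the problem is to show $\gamma(\tilde S_+)=\tilde S_+$. But the geometry you use after that breaks down in several places. First, the components of $\partial CH(\Lambda)$ are bent pleated surfaces (domes over components of the domain of discontinuity), not totally geodesic planes, so $\tilde S_\pm\cap\partial H$ is in general \emph{not} a Euclidean straight line on the horosphere. Without straightness your argument only gives that $L_+$ is periodic under the translation $\gamma_+$, not that it is invariant under every translation in the same direction; and that distinction is exactly where the content of the claim lives. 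A priori $\Gamma(\xi)=\langle g\rangle$ with $\Gamma_-(\xi)=\langle g^a\rangle$ and $\Gamma_+(\xi)=\langle g^b\rangle$ for different $a,b$, and you must rule this out --- invariance of $L_+$ under $g^b$ says nothing about invariance under $g^a$ once $L_+$ is merely a periodic curve. Second, the rank-two parabolic case is not vacuous: a curve on $S_\pm$ can be an accidental parabolic conjugate into a rank-two cusp subgroup, in which case $\tilde S_\pm$ does accumulate at $\xi$ and $\Gamma_\pm(\xi)$ is a nontrivial cyclic subgroup of $\Gamma(\xi)\cong\BZ^2$; handling exactly this situation is one of the stated reasons the paper proves a more general window theorem than its predecessors. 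Third, the loxodromic case, which you acknowledge is the main one, is sketched toward a conclusion that is false: a loxodromic element can perfectly well preserve two (or more) distinct, disjoint components of $\partial CH(\Lambda)$ --- every nontrivial element of $\Delta=\Gamma_-\cap\Gamma_+$ preserves both $\tilde S_-$ and $\tilde S_+$, and this happens throughout the intended applications --- so no disjointness argument can show that $\gamma_+$ preserves a unique component.

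For comparison, the paper avoids horosphere geometry entirely. It uses tameness to identify $CC(N)$ with a compact $3$-manifold and observes that the generators $\gamma_\pm$ of $\Gamma_\pm(\xi)$ are \emph{primitive} in $\pi_1 S_\pm$ (a power relation inside $\Gamma_\pm$ would produce a larger subgroup fixing $\xi$). Since some powers of $\gamma_-$ and $\gamma_+$ are conjugate, both loops are homotopic into a single component of the characteristic submanifold of $(M,S_-\cup S_+\cup\partial_{\chi=0}M)$, which is a solid torus, thickened torus, or interval bundle; in each of these the primitivity forces $\gamma_-$ and $\gamma_+$ themselves (not just powers) to be conjugate in $\Gamma$, and a conjugating element must fix $\xi$ and hence commute with both. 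If you want to salvage a purely hyperbolic-geometric proof you would need a substitute for this primitivity-plus-characteristic-submanifold step; the horospherical cross-section picture does not supply one.
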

\begin{proof}
 By the Tameness Theorem \cite{Agoltameness,Calegarishrinkwrapping}, we can identify $CC(N)$ topologically with a subset of a $3$-compact manifold with boundary $M$, where \begin{equation}
CC(N) \supset int(M), \ \ CC(N) \cap \partial M= \partial CC(N), \label{boundeq}
\end{equation} and where $\partial CC(N)$ is a collection of essential subsurfaces of $\partial \bar N$. Let $\partial_{\chi=0} M$ be the union of all torus boundary components of $M$, and let $(X,\Sigma)$ be the characteristic submanifold of the pair $(M,S_-\cup S_+ \cup \partial_{\chi=0} M),$ as in \S \ref{sec:characteristic}. 
	
Since $\Gamma_\pm$ are both contained in a discrete group $\Gamma$, both $\Gamma_\pm(\xi)$ are contained in the stabilizer $\Gamma(\xi)$, which is either infinite cyclic, or rank $2$  parabolic.

Suppose first that $\Gamma(\xi)$ is rank $2$ parabolic. The groups $\Gamma_\pm(\xi)$ are both cyclic, since $S_\pm$ are incompressible hyperbolic surfaces, so their fundamental groups do not contain $\BZ^2$ subgroups. So, we can write $\Gamma_\pm(\xi)=\langle \gamma_\pm \rangle$ for closed curves $\gamma_\pm$ on $S_\pm$.  Both $\gamma_\pm$ are homotopic into some fixed component $T \subset \partial_{\chi=0} M$, the component whose fundamental group can be conjugated to stabilize $\xi$. So, there is a component $(X_0,\Sigma_0) \subset (X,\Sigma)$ of the characteristic submanifold such that $\Sigma_0$ intersects $T$ and both $\gamma_\pm$ are homotopic on $S_\pm$ into $\Sigma_0$. Since $M \not \cong T^2 \times [0,x]$, the component $(X_0,\Sigma_0)$ is either an interval bundle over an annulus (so, a fibered solid torus), or an $S^1$-bundle pair, so by Fact \ref{comps of char}, $X_0$ is either a fibered solid torus or a thickened torus. In either case, $\Sigma_0 $ intersects each of $S_\pm$ in a fibered annulus, and these annuli are disjoint, so they are parallel on a torus boundary component of $X_0$, implying that $\gamma_\pm$ are homotopic in $M$, and hence $\Gamma_\pm(\xi)$ are conjugate in $\Gamma$. But since $\Gamma_\pm(\xi)$ have the same fixed point at infinity, the conjugating element must fix $\xi$, and therefore commute with the two groups, implying $\Gamma_-(\xi)=\Gamma_+(\xi)$.

Now assume $\Gamma(\xi)$ is cyclic. Pick a basepoint $p\in S_-$, say, and let $\gamma_- \subset S_-$  be a loop based at $p$ representing a generator of $\Gamma_-(\xi)$. Represent a generator of $\Gamma_+(\xi)$ as $\alpha \cdot \gamma_+ \cdot \alpha^{-1}$, where $\alpha $ is an arc from $p\in S_-$ to a point in $S_+$, and $\gamma_+$ is a loop in $S_+$.  Since $\Gamma_\pm$ stabilize distinct components $\tilde S_\pm$, the arc $\alpha$ is not homotopic into $S_-\cup S_+$. So, $\alpha$ is a spanning arc of an essential map from an annulus, where the boundary components of the annulus map to powers of $\gamma_\pm$. It follows that the loops $\gamma_\pm$ are homotopic on $ S_\pm$ into $\Sigma_0$ for some component $(X_0,\Sigma_0) \subset (X,\Sigma)$.
	
	If $X_0$ is an $I$-bundle with horizontal boundary $\Sigma_0$, then as $\gamma_\pm$ are not proper powers in $\pi_1 S_\pm$, they are both primitive in $\pi_1 X_0$, and hence $\gamma_\pm$ (rather than their powers) are homotopic in $X_0 \subset M$. Similarly, if $(X_0,\Sigma_0)$ is a fibered solid torus, $\Sigma_0$ is a collection of parallel annuli on $\partial X_0$, so since $\gamma_\pm$ are primitive in $\pi_1 S_\pm$, they are homotopic on $S_\pm$ to \emph{simple} closed curves in these annuli, and hence are homotopic to each other in $X_0$. 
	
	It follows that there are generators for $\Gamma_\pm(\xi)$ that are conjugate in $\Gamma$, but since these generators both fix $\xi$, they are equal.
\end{proof}

\begin{claim}\label{intersectlimit}
	For all $\xi \in \Lambda_-\cap \Lambda_+$, we have $\Gamma_-(\xi)=\Gamma_+(\xi)$. Moreover,
	$$\Lambda_\Delta = \Lambda_- \cap \Lambda_+.$$\end{claim}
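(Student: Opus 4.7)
The plan is to split the claim into its two assertions. To set up, I would first record that $\Gamma_\pm$ stabilizes the convex set $\tilde S_\pm\subset\partial\CH(\Lambda)$ and acts on it cocompactly up to cusps, all of which are inherited from parabolic elements of $\Gamma$; this implies that $\Gamma_\pm$ is itself geometrically finite as a Kleinian group in $\BH^3$, so the Beardon--Maskit dichotomy applies: every point of $\Lambda_\pm$ is either a conical or a parabolic fixed point for $\Gamma_\pm$. For the first assertion, it suffices by Claim~\ref{bothnontrivial} to show that $\Gamma_-(\xi)\neq\{1\}$ implies $\Gamma_+(\xi)\neq\{1\}$. Since $\Gamma_-$ acts Fuchsianly on $\tilde S_-$, the group $\Gamma_-(\xi)$ is cyclic, generated by some $g\in\Gamma$ which, as an isometry of $\BH^3$, is either parabolic or hyperbolic.

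In the parabolic case, $\xi$ is a parabolic fixed point of $\Gamma$, so it cannot be conical for the subgroup $\Gamma_+$ (a conical sequence in a subgroup is a conical sequence in the ambient group), and the dichotomy forces $\xi$ to be parabolic for $\Gamma_+$. In the hyperbolic case, let $\rho$ be the geodesic ray along the axis of $g$ terminating at $\xi$; by the same dichotomy, either $\xi$ is parabolic for $\Gamma_+$ (and we are done) or $\xi$ is conical for $\Gamma_+$, giving a sequence $\beta_n\in\Gamma_+$ with $\beta_n p$ staying in a uniform neighborhood of $\rho$ and escaping to infinity along it. After passing to a subsequence so that the nearest-point projection of $\beta_n p$ onto the axis advances by approximately one translation length of $g$ per step, the elements $g^{-n}\beta_n$ keep the basepoint $p\in\BH^3$ in a bounded region, so by discreteness of $\Gamma$ they are eventually constant on a subsequence equal to some $\gamma\in\Gamma$. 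Then $\beta_m\beta_n^{-1}=g^{m-n}\in\Gamma_+$ for all large $m\neq n$, so a nonzero power of $g$ lies in $\Gamma_+(\xi)$.

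For the second assertion, $\Lambda_\Delta\subseteq\Lambda_-\cap\Lambda_+$ is automatic from $\Delta\subseteq\Gamma_\pm$. For the reverse, fix $\xi\in\Lambda_-\cap\Lambda_+$. If the common stabilizer $\Gamma_-(\xi)=\Gamma_+(\xi)$ is nontrivial, it lies in $\Delta$ and fixes $\xi$, placing $\xi$ in $\Lambda_\Delta$. If both stabilizers are trivial, the dichotomies above force $\xi$ to be conical for both $\Gamma_-$ and $\Gamma_+$. Pick a geodesic ray $\rho$ from $p$ to $\xi$ and sequences $\alpha_n\in\Gamma_-$, $\beta_n\in\Gamma_+$ with $\alpha_n p$ and $\beta_n p$ both tracking $\rho$ within a uniform neighborhood. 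After reindexing the $\beta$-sequence so that the two sequences progress along $\rho$ at comparable rates, $d(\alpha_n p,\beta_n p)$ stays bounded, and so $\alpha_n^{-1}\beta_n\in\Gamma$ moves $p$ a bounded amount. By discreteness of $\Gamma$, $\alpha_n^{-1}\beta_n$ is eventually constant on a subsequence, equal to some $\gamma$. Consequently $\alpha_m\alpha_n^{-1}=\beta_m\beta_n^{-1}\in\Gamma_-\cap\Gamma_+=\Delta$ for all large $m,n$, and fixing $n$ the sequence $\delta_m:=\alpha_m\alpha_n^{-1}\in\Delta$ satisfies $\delta_m p=\alpha_m(\alpha_n^{-1}p)\to\xi$, so $\xi\in\Lambda_\Delta$.

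The main delicacy I anticipate is establishing the geometric finiteness of $\Gamma_\pm$ as Kleinian subgroups of $\Gamma$, since finitely generated subgroups of geometrically finite Kleinian groups need not be geometrically finite in general (fibered examples show this). Here it should follow from the explicit cocompact-up-to-cusps action of $\Gamma_\pm$ on $\tilde S_\pm$ with cusps inherited from $\Gamma$, but one has to check that no extra degenerate ends appear in the quotient $\Gamma_\pm\backslash\BH^3$. A smaller technical point is the reindexing in the trivial-stabilizer case of the second assertion, which needs to equalize the rates of progress along $\rho$ of the two conical sequences; this is routine and just requires passing to a diagonal subsequence.
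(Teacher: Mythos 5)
Your argument is correct and follows essentially the same route as the paper: reduce the first assertion to Claim~\ref{bothnontrivial}, split on whether the known stabilizer is parabolic or loxodromic, and in the loxodromic and trivial-stabilizer cases use geometric finiteness of $\Gamma_\pm$ (which the paper also needs, citing the Thurston/Canary result for finitely generated subgroups of geometrically finite groups) to produce orbit points of $\Gamma_\pm$ tracking a ray to $\xi$, then conclude by discreteness that the differences $\gamma_m\gamma_n^{-1}$ land in $\Gamma_+\cap\Gamma_-$. The only real divergence is the parabolic sub-case, where the paper proves a Margulis-lemma comparison of thin parts ($T_-(\epsilon)\cap N_+\subset T_+(\epsilon_0)$) to force $\xi$ into the $\Gamma_+$-thin part, while you instead invoke the Beardon--Maskit conical/bounded-parabolic dichotomy together with the fact that a parabolic fixed point of the geometrically finite ambient group $\Gamma$ cannot be conical for the subgroup $\Gamma_+$; both are valid, and yours is arguably the cleaner packaging of the same underlying fact.
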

\begin{proof}
	Let $N_\pm\subset \BH^3$ be the $1$-neighborhood of the convex hull of $\Lambda_\pm$, and for small $\epsilon>0$, let $T_\pm(\epsilon) \subset \BH^3$ be the set of all points that are translated less than $\epsilon$ by some parabolic element of $\Gamma_\pm$. If $\epsilon$ is at most the Margulis constant $\epsilon_0$, then $T_\pm(\epsilon)$ is a disjoint union of horoballs in $\BH^3$. 
	
	The sets $N_\pm $ and $T_\pm(\epsilon)$ are $\Gamma_\pm$ invariant. Since $\Gamma_\pm$ is a finitely generated subgroup of $
	\Gamma$, which is geometrically finite, $\Gamma_\pm$ is geometrically finite as well by \cite{Canarycovering}. So, the action of $\Gamma_\pm$ on $N_\pm \setminus T_\pm(\epsilon)$ is cocompact, see e.g.\ Theorem 3.7 in \cite{Matsuzakihyperbolic}, implying that either the function $$D_+ : \BH^3 \longrightarrow \BR_{>0}, \ \ D_+(x) = \min\{d(x,\gamma(x)) \ | \ \gamma \in \Gamma_+ \text{ loxodromic}\}$$
	is bounded above on $N_+ \setminus T_+(\epsilon)$ by some $B(\epsilon)>0$, or $\Gamma_+$ is elementary parabolic. A similar statement holds for $-$ instead of $+$.
	With $\epsilon_0$ the Margulis constant, the Margulis Lemma then implies that \emph{if $\epsilon>0$ is sufficiently small with respect to $B(\epsilon_0)$, and $\Gamma_+$ is not elementary parabolic, then} \begin{equation} \label{thin parts} T_-(\epsilon) \cap N_+\subset T_+(\epsilon_0),\end{equation}
	and similarly with $-,+$ exchanged.
	Indeed, if not then we have (say) a point $p \in \BH^3$ that is translated by less than $\epsilon$ by some parabolic $\gamma_- \in \Gamma_-$ and by at most $B$ by some loxodromic $\gamma_+ \in \Gamma_+$. If $\epsilon$ is small with respect to $B$, then both $\gamma_-$ and $[\gamma_+,\gamma_-]$ translates $p$ by at most $\epsilon_0$, so they generate an elementary discrete group by the Margulis lemma applied to $\Gamma$, implying that $\gamma_+$ fixes the fixed point of $\gamma_-$, which contradicts that they generate a discrete group.
	
	\medskip
	
	Fix $\xi \in \Lambda_+ \cap \Lambda_-$. We claim that $\Gamma_-(\xi)=\Gamma_+(\xi)$. By Claim \ref{bothnontrivial} it suffices to show that whenever $\Gamma_-(\xi)$ is nontrivial, say, so is $\Gamma_+(\xi)$.
	
	First, assume that $\Gamma_-(\xi) $ is elementary parabolic. We claim that $\Gamma_+(\xi)$ is elementary parabolic as well. Assume not, and let $\alpha$ be a geodesic ray in $\BH^3$ converging to $\xi$. Then $\alpha(t)$ lies in $T_-(\epsilon) \cap N_+$ for large $t$, and therefore in $T_+(\epsilon_0) $ for large $t$ by \eqref{thin parts}, which implies $\xi$ is a parabolic fixed point of $\Gamma_+$ as well, a contradiction.
	
	Next, suppose that $\Gamma_-(\xi)$ is elementary loxodromic. If $\xi$ is a parabolic fixed point of $\Gamma_+$, we are done, so let's assume this isn't the case. Let $\alpha$ be the axis of $\Gamma_-(\xi)$, parametrized so $\alpha(t)\to \xi$ as $t \to \infty$. Since $\xi$ is not a $\Gamma_+$ parabolic fixed point, there are $t_i\to\infty$ such that $\alpha(t_i) \not \in T_+(\epsilon)$ for all $i$. Since the action of $\Gamma_+$ on $N_+ \setminus T_+(\epsilon)$ is cocompact, if $p\in \BH^3$ is a fixed basepoint, there are elements $\gamma_i^+ \in \Gamma_+$ such that $\sup_i d(\gamma_i^+(p),\alpha(t_i)) < \infty$. Since the action of $\Gamma_-(\xi)$ on $\alpha$ is cocompact, there are then elements $\gamma_i^-\in \Gamma_-(\xi)$ with  $$\sup_i d(\gamma_i^+(p),\gamma_i^-(p))< \infty.$$
	By discreteness of $\Gamma$, after passing to a subsequence we can assume  $\gamma_i^+ =\gamma^-_i\circ g$ for some fixed $g\in \Gamma$. Hence, for all $i$ we have
	$$\gamma_{i}^+ \circ (\gamma_{1}^+)^{-1} =  \gamma_{i}^- \circ (\gamma_{1}^-)^{-1} \in \Gamma_+ \cap (\Gamma_-(\xi)) \subset \Gamma_+(\xi),$$
	so we are done.
	
	\medskip
	
	Finally, we want to show that $\Lambda_- \cap \Lambda_+ = \Lambda_\Delta$. The inclusion $\Lambda_\Delta \subset \Lambda_- \cap \Lambda_+$ is clear. So, take $\xi \in \Lambda_- \cap \Lambda_+$. We can assume that $\Gamma_\pm(\xi)=1$, since otherwise we're in the cases handled above. Let $\alpha$ be a geodesic ray in $\BH^3$ converging to $\xi$. As in the previous case, since $\xi$ is not a parabolic fixed point of $\Gamma_+$, there are $t_i\to\infty$ such that $\alpha(t_i) \not \in T_+(\epsilon_0)$ for all $i$.  Discarding finitely many $i$, we have $\alpha(t_i) \in N_+$, so it follows from \eqref{thin parts} that $\alpha(t_i) \not \in T_-(\epsilon)$. Fixing a base point $p\in \BH^3$, as  $\Gamma_-$ acts cocompactly on  $N_- \setminus T_-(\epsilon)$ and $\Gamma_+$ acts cocompactly on $N_+ \setminus T_+(\epsilon_0)$, there are elements $\gamma_i^\pm \in \Gamma_\pm$ such that $$\sup_i d(\gamma_i^\pm(p),\alpha(t_i))< \infty.$$ So passing to a subsequence, $\gamma_i^+ = \gamma_i^-\circ g$ for some fixed $g\in \Gamma$, and then $$\gamma_{i}^+ \circ (\gamma_{1}^+)^{-1} =  \gamma_{i}^- \circ (\gamma_{1}^-)^{-1} \in \Gamma_+ \cap \Gamma_-=\Delta$$
	for all $i$. But applying this sequence to $p$ and letting $i\to \infty$ gives a sequence of points in the orbit $\Delta(p)$ that converge to $\xi$, so $\xi \in \Lambda_\Delta$.\end{proof}


Now assume that $\Delta \neq 1$. We want to construct the interval bundle $W$ mentioned in the statement of the theorem. After an isotopy on $\partial CC(N)$, let's assume that $S_\pm$ is a subsurface of $\partial CC(N)$ with geodesic boundary. Consequently, we allow degenerate subsurfaces, where $S_\pm$ is a simple closed geodesic, as well as subsurfaces where only the interior is embedded and two boundary components can coincide. As $\partial CC(N)$ may have cusps, we also must allow $S_\pm$ to be noncompact with finite volume, rather than compact.

Recall that $\tilde S_\pm$ is isometric to a convex subset of $\BH^2$, and that if $\partial_\infty S_\pm \subset \partial \BH^2$ is the boundary of $\tilde S_\pm$ the inclusion $\tilde S_\pm \hookrightarrow \BH^3$ extends continuously to a $\Gamma_\pm$-equivariant quotient map $$\iota_\pm : \partial_\infty \tilde S_\pm \longrightarrow \Lambda_\pm \subset  \partial \BH^3.$$
Moreover, if $\xi,\xi' \in \partial_\infty \tilde S_+ $, say, we have $\iota_+(\xi)=\iota_+(\xi')$ if and only if there is an element $\gamma \in \Gamma_+$ that acts hyperbolically on $\tilde S_+ \cup \partial_\infty \tilde S_+$ with fixed points $\xi,\xi'\in \partial_\infty \tilde S_+$, but acts parabolically on $\BH^3$. By discreteness of the action $\Gamma_+ \actson \tilde S_+$, each $\xi \in \partial_\infty \tilde S_+$ has the same image under $\iota_+$ as \emph{at most one} other $\xi'$. Similar statements holds with $-$ instead of $+$. All this is a consequence (for instance) of Bowditch's theory of the boundary of a relatively hyperbolic group \cite{bowditch2012relatively}: since the action $\Gamma_\pm \actson \BH^3$ is geometrically finite, $\Lambda_\pm$ is a model for the Bowditch boundary of the group $\Gamma_\pm$ relatively to its maximal parabolic subgroups, so the statement above follows from Theorem 1.3 of \cite{manning2015bowditch}, say\footnote{See also Theorem 5.6 of \cite{mj2017cannon}, which says that there  is a continuous equivariant extension $\iota_\pm$ of the inclusion $\tilde S_\pm \hookrightarrow \BH^3$ as above. This theorem is stated in a much more general setting, though, and our statement is a trivial case.}.

Let $\iota_\pm^{-1}(\Lambda_\Delta) \subset \partial_\infty \tilde S_\pm$. 
Since $\Delta\neq 1$ and is not cyclic parabolic, $\iota_\pm^{-1}(\Lambda_\Delta)$ has at least two points, so it has a well-defined convex hull $\tilde C_\pm \subset S_\pm$.

\begin{claim}[Convex hulls]\label{precise} One of the following holds.
	\begin{enumerate}
		\item $\Delta$ is cyclic and acts hyperbolically on $\tilde S_+$. The convex hull $\tilde C_+$ is its geodesic axis, which is precisely invariant under $\Delta\subset \Gamma$, so that the quotient $C_+:=\Delta \backslash \tilde C_+$ embeds as a simple closed geodesic in $S_+$.
		\item $\tilde C_+$ is a subsurface of $\tilde S_+$ with geodesic boundary, the interior $int(\tilde C_+)$ is precisely invariant under $\Delta\subset \Gamma$, and the quotient $C_+ := \Delta \backslash \tilde C_+$ is a generalized subsurface of $S_+$ with compact geodesic boundary.
	\end{enumerate}
A similar statement holds with $-$ instead of $+$.
\end{claim}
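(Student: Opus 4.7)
\textbf{Plan for the proof of Claim~\ref{precise}.} The strategy is to first identify the convex hull $\tilde C_+$ in the two cases using the structure of $\Delta$, and then to establish precise invariance using Claim~\ref{intersectlimit}. Note that $\tilde C_+$ is automatically $\Delta$-invariant, since $\iota_+$ is $\Gamma_+$-equivariant and $\Lambda_\Delta$ is $\Delta$-invariant.

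I would first treat the structure of $\tilde C_+$. By hypothesis $\Delta$ is not cyclic parabolic on $\tilde S_+$, so if $\Delta$ is elementary it must be cyclic loxodromic, acting hyperbolically on $\tilde S_+$ with some axis $A\subset\tilde S_+$ and with $\Lambda_\Delta=\{p_1,p_2\}$ the two loxodromic fixed points in $\partial\BH^3$. Since $\iota_+$ only identifies points of $\partial_\infty\tilde S_+$ that are the two fixed points of an element of $\Gamma_+$ acting hyperbolically on $\tilde S_+$ but parabolically on $\BH^3$, and since the $p_i$ are loxodromic in $\BH^3$, the preimage $\iota_+^{-1}(\Lambda_\Delta)$ equals $\partial_\infty A$ and so $\tilde C_+=A$; this is case~(1). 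If $\Delta$ is non-elementary, then $\Lambda_\Delta$ is perfect, so $\iota_+^{-1}(\Lambda_\Delta)$ is a closed subset of $\partial_\infty\tilde S_+$ containing a Cantor set, and $\tilde C_+$ is a two-dimensional convex subset of $\tilde S_+$ whose frontier consists of geodesics in the interior of $\tilde S_+$ together with arcs of $\partial\tilde S_+$; this is case~(2).

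Next I would establish precise invariance. Suppose $g\in\Gamma$ with $g\tilde C_+\cap\tilde C_+\neq\emptyset$ in case~(1) or $g(\mathrm{int}\,\tilde C_+)\cap\mathrm{int}\,\tilde C_+\neq\emptyset$ in case~(2). The first step is to show $g\in\Gamma_+$: two distinct lifts of $S_+$ in $\partial CH(\Lambda)$ have essentially disjoint interiors (since $\mathrm{int}\,S_+$ embeds in $\partial CC(N)$), so the overlap forces $g\tilde S_+=\tilde S_+$. The second step is to show $g\in\Gamma_-$. The overlap, together with $\Gamma_+$-equivariance of $\iota_+$, implies that $\Lambda_\Delta$ and $g\Lambda_\Delta=(g\Lambda_-)\cap\Lambda_+$ either coincide (case~(1), as they are then two-point subsets of $\Lambda_+$ sharing an endpoint of a common convex hull) or share a non-elementary subset (case~(2), from the interlinking of boundaries at infinity of the overlapping convex hulls). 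Writing $\Lambda_\Delta=\Lambda_-\cap\Lambda_+$ and using $g\in\Gamma_+$, we obtain that $g\Lambda_-\cap\Lambda_-$ contains a corresponding non-elementary subset. But if $g\tilde S_-\neq\tilde S_-$, these are two distinct lifts of $S_-$ with essentially disjoint interiors, and applying the first conclusion of Claim~\ref{intersectlimit} to them shows $g\Lambda_-\cap\Lambda_-$ is the limit set of an elementary subgroup of $\Gamma_-\cap g\Gamma_-g^{-1}$, a contradiction. Hence $g\tilde S_-=\tilde S_-$, so $g\in\Gamma_-$. Together, $g\in\Gamma_+\cap\Gamma_-=\Delta$. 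The quotient then embeds as stated, with compactness of the geodesic boundary of $C_+$ in case~(2) following from geometric finiteness of $\Delta$, inherited from $\Gamma$ via Canary's covering theorem as already used in the proof of Claim~\ref{intersectlimit}.

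The main obstacle will be the second step of precise invariance: carefully translating the two-dimensional overlap of $\tilde C_+$ and $g\tilde C_+$ in $\tilde S_+$ into a statement about the perfect sets $\Lambda_\Delta$ and $g\Lambda_\Delta$ in $\Lambda_+$ that is strong enough to force $g\tilde S_-=\tilde S_-$, and gracefully handling the boundary case where $\tilde C_+$ touches $\partial\tilde S_+$ along a boundary geodesic (where one must also rule out overlaps produced by $g$ that identify $\tilde C_+$ with a boundary geodesic of an adjacent lift of $S_+$).
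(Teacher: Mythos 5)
Your setup (identifying $\tilde C_+$ from the structure of $\Delta$, reducing precise invariance to $g\in\Gamma_+\setminus\Delta$, and aiming to conclude $g\in\Gamma_-$) matches the paper's, but the central step of your precise-invariance argument has a genuine gap. You claim that an overlap $g(\mathrm{int}\,\tilde C_+)\cap \mathrm{int}\,\tilde C_+\neq\emptyset$ forces $\Lambda_\Delta$ and $g\Lambda_\Delta$ to share a non-elementary subset (or to coincide in the degenerate case). This implication is false for convex hulls in general: two convex hulls of disjoint Cantor sets in $\partial_\infty\tilde S_+$ can easily have overlapping interiors (e.g.\ two ``bands'' crossing the disk in transverse directions), so the two-dimensional overlap by itself tells you nothing about $\iota_+^{-1}(\Lambda_\Delta)\cap g\bigl(\iota_+^{-1}(\Lambda_\Delta)\bigr)$. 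The same problem appears in your case (1): if $g(A)$ crosses the axis $A$ transversely, the endpoint pairs are four distinct points, so the two-point sets do not coincide. You flagged this translation as ``the main obstacle,'' but the route you propose to get past it does not go through.

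The paper's proof closes exactly this gap with a different mechanism that your proposal is missing: the sphere $cl(\partial CH(\Lambda_\Gamma))\subset \BH^3\cup\partial\BH^3$. One first shows (via Claim \ref{intersectlimit}, as you do) that the fixed points of $g$ avoid $\iota_+^{-1}(\Lambda_\Delta)$, so neither hull can contain its translate and the boundaries must cross transversely. One then picks geodesics $\alpha_+,\beta_+\subset\tilde C_+$ with endpoints in $\Lambda_\Delta$ that are \emph{not} accidental parabolic fixed points (density of loxodromic fixed points of $\Delta$), such that $g(\alpha_+)$ meets $\beta_+$ transversely in one point, and ``doubles'' each across to the geodesic in $\tilde S_-$ with the same $\iota$-images at infinity. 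The resulting closed curves $\alpha,\beta$ are simple on the sphere, and $g(\alpha),\beta$ would intersect exactly once transversely --- impossible. The same sphere argument disposes of the degenerate case (1), including the accidental-parabolic subcase ($\Delta$ hyperbolic on $\tilde S_+$ but parabolic on $\BH^3$), which your identification of $\tilde C_+$ silently skips. If you want to salvage your approach you would need to supply an argument of comparable strength; the Jordan-curve count on the sphere is the idea to import.
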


\begin{proof}
	Let's work with $+$, for concreteness. If $g \in \Delta$, then $g(\Lambda_\Delta)=\Lambda_\Delta$, so $g$ leaves $\iota_+^{-1}(\Lambda_\Delta)$ invariant by equivariance of $\iota_+$. Hence $g$ leaves $\tilde C_+$ invariant.
	
	Let's suppose first that $\tilde C_+$ has nonempty interior, since that is the more interesting case. We'll address the case that $\tilde C_+$ is a biinfinite geodesic at the end of the proof. Let $g\in \Gamma_+\setminus \Delta$.  We want to show $$g(int(\tilde C_+)) \cap int(\tilde C_+)=\emptyset.$$ Assume this is not the case. By Claim \ref{intersectlimit}, the fixed points of $g$ in $\partial_\infty S_+$ lie outside $\iota_+^{-1}(\Lambda(\Delta))$. So, we cannot have $g(\tilde C_+)\subset \tilde C_+$, as then we'd have $g^n(\tilde C_+)\subset \tilde C_+$ for all $n$, contradicting that points of $\tilde S_+$ converge to the fixed points of $g$ under iteration. Considering backwards iterates, we also cannot have $\tilde C_+ \subset g(\tilde C_+)$. Therefore, $\partial \tilde C_+$ and $\partial g(\tilde C_+)$ intersect transversely. 
	
	Since $\tilde C_+$ has nonempty interior, $\Delta$ is nonelementary, and therefore the fixed points of loxodromic isometries of $\Delta$ are dense in $\Lambda_\Delta$. Loxodromic fixed points of $\Delta$ are in particular \emph{not} parabolic fixed points in $\Gamma_\pm$, so any biinfinite geodesic in $\tilde C_+$ is a limit of biinfinite geodesics in $\tilde C_+$ whose endpoints are \emph{not} fixed points of parabolic isometries of $\Gamma_\pm$. By the previous paragraph, there are then  biinfinite geodesics $\alpha_+,\beta_+$ in $\tilde C_+$ such that $g(\alpha_+)$ and $\beta_+$ intersect transversely, and where the endpoints of $\alpha_+,\beta_+$ project under $\iota_+$ to points $\xi_\alpha,\xi_\alpha',\xi_\beta,\xi_\beta'\in \Lambda_\Delta$ that are not parabolic fixed points in $\Gamma_\pm$.
	
	Let $\alpha_-$ be the geodesic in $\tilde S_-$ whose endpoints in $\partial_\infty \tilde S_-$ map to the points $\xi_\alpha,\xi_\alpha'$ under  $\iota_-$.  Define $\beta_-$ similarly. Then
	$$\alpha:= \alpha_+ \cup \{\xi_\alpha,\xi_\alpha'\} \cup \alpha_-, \ \ \beta:= \beta_+ \cup \{\xi_\beta,\xi_\beta'\} \cup \beta_-$$
	are two \emph{simple} closed curves on the closure $cl(\partial CH(\Lambda_\Gamma))\subset \BH^3 \cup \partial \BH^3$, which is homeomorphic to a sphere. For instance, the arcs $\alpha_\pm$ are disjoint and $\xi_\alpha\neq \xi_\alpha'$, since the endpoints of $\alpha_+$ are not parabolic fixed points.
	
	Now consider how the two simple closed curves $g(\alpha),\beta$ intersect. The arcs $\beta_-$ and $g(\alpha_+)$ are disjoint since $\tilde S_-\neq \tilde S_+$. The arcs $g(\alpha_-),\beta_-$ are disjoint since $g\not\in \Gamma_-$ and hence $g(\alpha_-)$ lies on a different translate of $\tilde S_-$ than $\beta_- \subset \tilde S_-$. Moreover, since $g(\alpha_+),\beta_+$ intersect transversely in $\tilde S_+$, the endpoints of $g(\alpha_+)$ and $\beta_+$ are distinct in $\partial_\infty \tilde S_+$, and since none of them are parabolic fixed points, the points $g(\xi_\alpha),g(\xi_\alpha'),\xi_\beta,\xi_\beta'$ are all distinct. But by assumption, $g(\alpha_+)$ intersects $\beta_+$ transversely in a single point! This shows that $g(\alpha)$ and $\beta$ intersect exactly once, transversely, which is a contradiction.
	
		By precise invariance of the action on the interior, the quotient $int(C_+)=\Delta / int(\tilde C_+)$ embeds in the finite volume surface $S_+$, so $C_+$ has finite volume itself.  So if $\partial C_+$ is non-compact, it must have two noncompact boundary components that are asymptotic. Lifting, we get two boundary components $\beta_1,\beta_2$ of $\tilde C_+$ that are asymptotic. Since $\tilde C_+$ is convex, it is contained in the subset of $\BH^2$ bounded by $\beta_1,\beta_2$, and hence the common endpoint of $\beta_1,\beta_2$ is an isolated point of $\Lambda_\Delta$, which is a contradiction since $\Delta$ is not elementary.
		
		\medskip

	The case when $\tilde C_+$ is a biinfinite geodesic is similar. Here, $\Delta$ must be cyclic, acting on $\tilde S_+$ with axis $\tilde C_+$, and acting either parabolically or loxodromically on $\BH^3$. In the parabolic case, $\tilde C_+$ compactifies to a simple closed curve on the sphere $cl(CH(\Lambda_\Gamma))\subset \BH^3 \cup \partial \BH^3$, so no translate $g(\tilde C_+), g\in \Gamma_+$, can intersect $\tilde C_+$ transversely, since if it did we'd get two simple closed curves on the sphere that intersect once. In the loxodromic case, we get a similar contradiction by looking at the simple closed curve $cl(\tilde C_+ \cup \tilde C_-) \subset cl(\partial \tilde M)$ and its $g$-image. So, $\tilde C_+$ is precisely invariant under $\Delta \subset \Gamma$. The quotient $C_+ := \Delta \backslash \tilde C_+$ is obviously compact, and is therefore a simple closed geodesic in $S_+$.
\end{proof}

We claim that $C_-$ and $C_+$ are homeomorphic. If $C_\pm$ are isotopic in $\partial CC(M)$ this is clear, and otherwise we argue as follows. The subgroups $\pi_1 C_\pm$ are both represented by $\Delta$, so are conjugate in $\pi_1 M$. The fact that every curve in $C_-$ is homotopic to a curve in $C_+$ (and vice versa) implies that $C_\pm$ are isotopic to subsurfaces $C_\pm' \subset \Sigma$ in the boundary $\Sigma$ of a component $(X,\Sigma)$ of the characteristic submanifold\footnote{Really, we need to be using resolutions of our subsurfaces with geodesic boundary, as discussed in \S \ref{subsurfacessec}.} of $(CC(M),S_-\cup S_+)$, see \S\ref{sec:characteristic}, and that even within $X$ every closed curve in $C_-'$ is homotopic to a closed curve in $C_+'$, and vice versa. When $X$ is a solid torus or thickened torus, $C_\pm'$ are annuli, while if $X$ is an interval bundle, $C_\pm'$ bound a vertical interval bundle in $X$, and are homeomorphic.

So, let $f : C_- \longrightarrow C_+$ be a homeomorphism, lift $f$ to a $\Delta$-equivariant homeomorphism $\tilde f : \tilde C_-\longrightarrow \tilde C_+$ and let  $$F : \tilde C_-\times [0,1] \longrightarrow CH(\Lambda)$$
where $F(x,\cdot)$ parametrizes the geodesic from $x$ to $f(x)$. Then $F$ is $\Delta$-equivariant, and projects to an essential homotopy from $C_-$ to $C_+$, as desired.

\subsection {An annulus theorem for laminations}
\label {annulussec}
Suppose $M $ is a compact, orientable, hyperbolizable $3$-manifold with nonempty  boundary and let $S = \partial_{\chi<0} M$ be the union of all non-torus boundary components of $M$. When $\alpha,\beta \subset S $ are disjoint simple closed curves that are essential and homotopic in $M $, but not homotopic in $S $, the Annulus Theorem says that there is an essential embedded annulus $A\subset M $ with $\partial A =\alpha \cup \beta $, see Scott \cite{Scottstrong}. 

More generally, equip $S$ with an arbitrary hyperbolic metric. An \emph{essential homotopy} between two geodesic laminations $\lambda_\pm$ on $S$ is a map $$H : (\lambda \times [-1,1],\lambda \times \{-1,1\}) \longrightarrow (M,S) $$ where $\lambda $ is a lamination, such that $H$ maps $\lambda \times \{\pm 1\}$ homeomorphically onto $\lambda_\pm$, and where $H$ is not homotopic rel $\lambda \times \{-1,1\}$ into $\partial M$. 

 Here is an `Annulus Theorem' for minimal laminations.

\begin{prop}[An annulus theorem for laminations]\label {intervalbundle}
Let $\lambda_-,\lambda_+ $ be two minimal geodesic laminations on $S $ that are either disjoint or equal, and assume that $S (\lambda_\pm) $ are incompressible in $ M $. If $\lambda_\pm $ are essentially homotopic in $(M,S)$, there is an essential interval bundle $(B,\partial_H B) \subset (M,S)$ such that $\lambda_\pm$ fill $\partial_H B$, and where $\lambda_\pm$ are essentially homotopic through $B$, as in \S \ref{laminterval}.
\end{prop}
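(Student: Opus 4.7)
The plan is to apply Theorem \ref{windowsthm} to a pair of matched lifted leaves of the essential homotopy, and to extract the desired interval bundle from the characteristic submanifold machinery. The minimality and filling hypotheses will force the convex hulls produced by the Windows theorem to recover all of $S(\lambda_\pm)$, rather than some smaller subsurface.

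Equip $\inte(M)$ with a geometrically finite hyperbolic structure $N = \Gamma \backslash \BH^3$, identify $S$ isotopically with $\partial CC(N)$, and arrange that $\lambda_\pm$ is geodesic for the intrinsic metric, so that $S(\lambda_\pm)$ are subsurfaces with geodesic boundary. Pick a leaf $\ell$ of $\lambda$, set $\ell_\pm := H(\ell \times \{\pm 1\})$, and lift the restriction $H|_{\ell \times [-1,1]}$ to $\BH^3$, obtaining $\tilde\ell_\pm \subset \tilde S_\pm$ with coinciding endpoints on $\partial \BH^3$, where $\tilde S_\pm$ are components of the preimages of $S(\lambda_\pm)$ in $\partial CH(\Lambda_\Gamma)$. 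Essentiality of $H$ forces $\tilde S_- \neq \tilde S_+$, since otherwise the nearest-point retraction $\BH^3 \to \tilde S_\pm$ would push the lifted homotopy into $\partial \tilde M$, hence push $H$ into $\partial M$. Thus $\Lambda_- \cap \Lambda_+$ contains at least two points, and the hypotheses of Theorem \ref{windowsthm} are satisfied.

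Theorem \ref{windowsthm} then produces convex hulls $\tilde C_\pm \subset \tilde S_\pm$ of $\iota_\pm^{-1}(\Lambda_\Delta)$, where $\Delta = \Gamma_-\cap\Gamma_+$, descending to (possibly degenerate) subsurfaces with geodesic boundary $C_\pm \subset S_\pm$, together with an essential homotopy from $C_-$ to $C_+$ in $CC(N)$. Since the endpoints of $\tilde \ell_-$ lie in $\iota_-^{-1}(\Lambda_\Delta)$, convexity of $\tilde C_-$ gives $\tilde\ell_- \subset \tilde C_-$, hence $\ell_- \subset C_-$. By minimality $\lambda_- = \overline{\ell_-}$, and closedness of $C_-$ in $S_-$ then gives $\lambda_- \subset C_-$. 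But $S(\lambda_-)$ is the minimal subsurface with geodesic boundary containing $\lambda_-$, so $C_- = S(\lambda_-) = S_-$; symmetrically $C_+ = S_+$.

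By the remark following Theorem \ref{windowsthm}, combined with Theorem \ref{charsubthm} and Fact \ref{comps of char} (the negative Euler characteristic of $C_\pm$ rules out thickened or solid torus components of the characteristic submanifold), the essential homotopy between $C_-$ and $C_+$ factors through an essential interval bundle $(B, \partial_H B) \subset (M, S)$ whose horizontal boundary is a resolution of $S(\lambda_-) \cup S(\lambda_+)$. Since $\lambda_\pm$ fill $S(\lambda_\pm) = \partial_H B$ by hypothesis, the first conclusion holds. The principal remaining obstacle is to upgrade this to an essential homotopy of $\lambda_\pm$ through $B$ in the sense of \S\ref{laminterval}: by Fact \ref{homotopy involutoin} this reduces to showing that the canonical involution $\sigma$ of $B$ carries $\lambda_-$ to a lamination isotopic to $\lambda_+$ on $\partial_H B$, which follows because both $\sigma(\lambda_-)$ and $\lambda_+$ arise as the $+1$ end of essential homotopies of $\lambda_-$ through $B$, together with uniqueness of geodesic representatives of isotopic minimal filling laminations.
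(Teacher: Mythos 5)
Your proposal is correct and follows essentially the same route as the paper: hyperbolize, lift the homotopy, use Theorem \ref{windowsthm} to see that the convex hulls $C_\pm$ must equal $S(\lambda_\pm)$ (you do this via a single leaf plus minimality, the paper via density of endpoints of the whole lamination — the same idea), extract the interval bundle from the characteristic submanifold, and reduce the last claim to Fact \ref{homotopy involutoin}. The one place you are terser than the paper is the final step: the assertion that $\sigma(\lambda_-)$ and $\lambda_+$ are isotopic because both arise as ``$+1$ ends of essential homotopies of $\lambda_-$ through $B$'' is not quite a proof as stated (the given homotopy lives in $M$, not a priori in $B$, and ``uniqueness of geodesic representatives'' is what needs proving); this is exactly where the paper invokes the $\Delta$-equivariant lift $\tilde\sigma$ of the canonical involution, which extends to the identity on $\Lambda_\Delta$, so that $\tilde\sigma(\tilde\lambda_-)$ and $\tilde\lambda_+$ have the same endpoints at infinity and hence coincide.
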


 Here, $S (\lambda_\pm)$ are the subsurfaces with geodesic boundary filled by $\lambda_\pm$, as in \S \ref{laminationssec}. The assumption that they are incompressible  generalizes the  assumption that $\alpha,\beta$ are homotopically essential in $M$ in the Annulus Theorem.

\begin{proof}
Identify $M\setminus \partial_{\chi=0} M$ with the convex core of a geometrically finite hyperbolic $3$-manifold. Set $S_\pm := S(\lambda_\pm)$. Lift the essential homotopy from $\lambda_-$ to $\lambda_+$ to a homotopy from lifts $\tilde \lambda_-\subset \tilde S_-$ to $\tilde \lambda_+\subset \tilde S_+$ in $\BH^3$. Under the homotopy, which has bounded tracks, corresponding leaves of $\tilde \lambda_\pm$ have the same endpoints in $\partial \BH^3$. The endpoints of $\tilde \lambda_\pm $ are dense in $\partial_\infty \tilde S_\pm$, so this means that the subsurfaces $C_\pm \subset S_\pm$ constructed in Theorem \ref{windowsthm} are just $C_\pm=S_\pm$. Passing to disjoint or equal resolutions $S_\pm'$ of $S_\pm$ and applying Theorem \ref{charsubthm} gives an interval bundle $B$ where $\lambda_\pm$ fill $\partial_H B=S_-'\cup S_+'$. 

We claim that $\lambda_\pm$ are essentially homotopic through $B$. By Fact~\ref{homotopy involutoin}, it suffices to show that if $\sigma$ is the canonical involution of $B$, as described in  \S \ref{intervalbundle}, then $\sigma(\lambda_\pm) $ is isotopic to $\lambda_\mp$ on $S_\mp'$. Using the notation of Theorem \ref{windowsthm}, $\sigma$ lifts to a $\Delta$-equivariant involution $\tilde \sigma$ of $\tilde B$ that exchanges $\tilde S_-'$ and $\tilde S_+'$, where here $\Delta = \Gamma_-\cap \Gamma_+$. By equivariance, $\tilde \sigma$ extends continuously to the identity on $\Lambda_\Delta$, so $\tilde \sigma(\tilde \lambda_-)$ is a lamination on $\tilde S_+$ with all the same endpoints at infinity as $\tilde \lambda_+$, and hence equals $\tilde\lambda_+$. The claim follows.
\end{proof}

\section{Laminations on the boundary}  \label{lamsec}
Suppose that $M $ is a compact, orientable $3$-manifold with hyperbolizable interior and nonempty boundary $\partial M$.  Equip $M$ with an arbitrary Riemannian metric  and lift it to  a Riemannian metric on the universal cover $\tilde M$.  As in the introduction, a biinfinite path or ray $h$ on $\partial \tilde M$ is called \emph{homoclinic} if there are points $s^i,t^i$  with $|s^i-t^i|\to \infty$ such that $$\sup_i d_{\tilde M} (h(s^i),h(t^i)) < \infty.$$ Two rays $h_+,h_- $ on $\partial \tilde M$ are called \emph{mutually homoclinic} if there are  parameters $s_\pm^i \to \infty$  such that $$\sup_i d_{\tilde M} (h_+(s_+^i),h_-(s_-^i)) < \infty.$$
 Here, a \emph{ray}  is a continuous map  from an interval $ [a,\infty) $, and a \emph{biinfinite path} is  a continuous map from $\BR$.  We will also call rays and paths on $\partial M$ (mutually) homoclinic if they have lifts that are (mutually) homoclinic paths on $\partial \tilde M$. We refer the reader to \S \ref{alternate} for some comments on alternate definitions of homoclinic that exist in the literature.

Note that if we divide a biinfinite homoclinic path  into two rays, then  either one of the two rays is  itself homoclinic, or  the two rays are mutually homoclinic.  Also,  these definitions are metric independent: since $M$ is compact, any two Riemannian metrics on $M$ lift to quasi-isometric metrics on $\tilde M$, and a path is homoclinic or mutually homoclinic with respect to one metric if and only if it is with respect to the other metric.

Here are some examples.

\begin{enumerate}
	\item  Suppose that $D$ is a  properly embedded disc in $M$, and $h : \BR \longrightarrow \partial M$ is a path that covers $\partial D\subset\partial M $. Then $h$ is homoclinic: indeed, $D$ lifts homeomorphically to $\tilde M$, so $h$ lifts to a path in $\tilde M$ with compact image.
\item Suppose that $\phi : (S^1 \times [0,1], S^1 \times \{0,1\}) \longrightarrow (M,\partial M)$  is an essential embedded annulus. Then rays covering the two boundary components of the annulus are mutually homoclinic: indeed, $\phi$ lifts to  $$\tilde \phi : \BR \times [0,1] \longrightarrow \tilde M ,$$ and we have $\sup_{t\in \BR} d(\tilde \phi(t,0), \tilde \phi(t,1)) < \infty$, so restricting to $t\in [0,\infty)$ we get two mutually homoclinic rays in $\tilde M$.\end{enumerate}

It will be convenient  below to work with a  particular choice of metric on $M$.  

\begin{example}[An explicit metric on $M$]\label{explicitmetric}
Let $\partial_{\chi<0} M$  be the union of all  components of $\partial M$  that have negative Euler characteristic, i.e.\ are not tori. Thurston's Haken hyperbolization theorem, see \cite {Kapovichhyperbolic}, implies that  there is a hyperbolic $3$-manifold $N = \BH^3 / \Gamma$ homeomorphic to the interior of $M$, where every component of $\partial_{\chi<0} M$  corresponds to a convex cocompact end of $N$. A torus $T \subset \partial M$, on the other hand, determines a cusp of $N$. So, in other words, $N$ is `minimally parabolic': the only parabolics come from torus boundary components of $M$. For each $T$, pick an open neighborhood $N_T \subset N$ of the associated cusp   that is the quotient of a horoball in $\BH^3$ by a $\BZ^2$-action. Then 
\begin{equation}
		M \cong CC(N) \ \setminus \bigcup_{\text {tori } T\subset \partial M} N_T, \label{ident}
\end{equation}
and we will  identify $M$  with the right-hand side everywhere below. Then
\begin{itemize}
	\item $\tilde M \subset \BH^3$ is obtained from the convex hull $CH(\Gamma) \subset \BH^3 $ of the limit set of $\Gamma$ by deleting an equivariant collection of horoballs, and
\item the path metric induced on $\partial_{\chi<0} M $ is   hyperbolic \cite [Proposition 8.5.1]{Thurstongeometry},  and the  path metric induced on every torus $T \subset\partial M$  is Euclidean. 
\end{itemize} 
\end{example}

 We now specialize to the case of paths that are \emph{geodesics} on  $\partial M $.  Recall  from Example (1) above that one can make homoclinic paths by running around the boundaries of disks in $\partial M$. The following shows that discs are essential in such constructions.

\begin {fact}\label{incompressible-qg}
 Suppose that $S \subset \partial M$ is an essential subsurface. Then the inclusion of any lift $\tilde S \subset \partial \tilde M$ is a quasi-isometric embedding into $\tilde M$. Moreover if $S$ is incompressible then any pair of mutually homoclinic infinite rays on $S$ are asymptotic and no biinfinite geodesic $\gamma$ in $S$ is homoclinic.
\end {fact}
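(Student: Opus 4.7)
My plan is to leverage the concrete model of $\tilde M$ as $\CH(\Gamma) \subset \BH^3$ with an equivariant collection of horoballs removed, as in Example~\ref{explicitmetric}, so that $\tilde M$ is a proper subspace of $\BH^3$. Let $\Gamma_S \subset \Gamma$ denote the image of $\pi_1 S$ in $\pi_1 M$, which is the stabilizer of $\tilde S$ and acts cocompactly by isometries on $(\tilde S, d_{\mathrm{int}})$ with quotient $S$. For the QI embedding, first suppose $S$ lies in a non-torus boundary component $\Sigma$. Since $\Sigma$ corresponds to a convex cocompact end of $N$, the image of $\pi_1 \Sigma$ in $\Gamma$ is convex cocompact; and inside this peripheral subgroup, $\Gamma_S$ is a subsurface subgroup, which is quasi-convex because subsurface inclusions are $\pi_1$-quasi-isometric embeddings. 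Thus $\Gamma_S$ is itself convex cocompact in $\Gamma$, acting cocompactly on the geodesically convex set $\CH(\Lambda_{\Gamma_S}) \subset \BH^3$. Applying Milnor--\v{S}varc to the two cocompact isometric actions $\Gamma_S \actson (\tilde S, d_{\mathrm{int}})$ and $\Gamma_S \actson (\CH(\Lambda_{\Gamma_S}), d_{\BH^3})$ shows both are QI to the word metric on $\Gamma_S$, and hence QI to each other on a common orbit; combined with $d_{\BH^3}|_{\tilde S} \leq d_{\tilde M}|_{\tilde S} \leq d_{\mathrm{int}}$, this yields the desired QI embedding. If instead $\Sigma$ is a torus, then $\tilde S$ lies in a horosphere bounding a removed horoball $\tilde N_\Sigma$; since horoballs are $\BH^3$-convex, the $1$-Lipschitz nearest-point projection $\BH^3 \to \tilde N_\Sigma$ fixes $\tilde S$ pointwise, and projecting paths shows that $d_{\tilde M}|_{\tilde S}$ coincides with the intrinsic Euclidean metric on $\tilde S$.

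For the second part, assume $S$ is incompressible so that $\tilde S$ is the universal cover of $S$, which is a convex subset of $\BH^2$ (or the Euclidean plane, if $\Sigma$ is a torus). Given mutually homoclinic geodesic rays $h_\pm$ on $S$ with lifts $\tilde h_\pm$ to $\tilde S$ and parameters $s_\pm^i \to \infty$ satisfying $\sup_i d_{\tilde M}(\tilde h_+(s_+^i),\tilde h_-(s_-^i)) < \infty$, the QI embedding from the first part bounds $d_{\mathrm{int}}(\tilde h_+(s_+^i),\tilde h_-(s_-^i))$ uniformly. In the hyperbolic case the Gromov product $(\tilde h_+(s_+^i) \mid \tilde h_-(s_-^i))_o$ at a fixed basepoint then grows without bound, forcing $\tilde h_\pm$ to converge to a common point of $\partial_\infty \tilde S$; in the Euclidean case, bounded distance at parameters tending to infinity forces the two rays to be parallel and asymptotic. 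Either way $h_\pm$ are asymptotic. The biinfinite-geodesic claim is then immediate: for a lift $\tilde\gamma$ of any geodesic $\gamma$ on $S$, we have $d_{\mathrm{int}}(\tilde\gamma(s),\tilde\gamma(t)) = |s-t|$ since $\tilde\gamma$ is a geodesic in $\tilde S$, and if $\gamma$ were homoclinic the QI embedding would force $|s^i - t^i|$ to remain bounded, contradicting $|s^i - t^i| \to \infty$.

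The main obstacle is verifying the convex cocompactness of $\Gamma_S$ in $\Gamma$ in the non-torus case; this rests on the convex-cocompact-end structure of non-torus boundary components coming from the minimally parabolic hyperbolization in Example~\ref{explicitmetric}, together with the classical quasi-convexity of subsurface subgroups in surface groups. Once these inputs are in hand, everything else is a formal combination of Milnor--\v{S}varc with the standard fact that in a proper Gromov-hyperbolic (or flat) space, two geodesic rays coming close at parameter times going to infinity must be asymptotic.
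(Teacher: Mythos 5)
Your overall strategy matches the paper's: reduce the quasi-isometric embedding of $\tilde S$ to an undistortion statement about $\Gamma_S=\mathrm{im}(\pi_1 S\to\Gamma)$, and then derive the ``moreover'' clause and the non-homoclinicity of biinfinite geodesics formally from that embedding (your Gromov-product argument and the global-geodesic argument in $\tilde S\subset\BH^2$ are fine, and are essentially what the paper does). The gap is in the step you yourself flag as the crux: establishing that $\Gamma_S$ is convex cocompact. Both inputs you propose fail in the generality of the Fact. First, a non-torus boundary component $\Sigma$ can carry essential curves that are homotopic in $M$ into a torus boundary component; such curves are parabolic even in the minimally parabolic structure of Example~\ref{explicitmetric} (the paper explicitly contemplates this phenomenon, e.g.\ in Claim \ref{bothnontrivial} and in the accidental-parabolic discussion around Theorem \ref{windowsthm}). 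Then $\mathrm{im}(\pi_1\Sigma\to\Gamma)$, and possibly $\Gamma_S$ itself, contains parabolics, is not convex cocompact, and does not act cocompactly on the convex hull of its limit set, so your Milnor--\v{S}varc comparison has no second cocompact action to work with. Second, the Fact allows $S$ and $\Sigma$ to be compressible --- indeed the main case is $M$ a handlebody with $S=\partial M$ --- and then $\pi_1 S\to\Gamma_S$ and $\pi_1\Sigma\to\Gamma_\Sigma$ have nontrivial kernels, so the classical quasiconvexity of the subsurface subgroup $\pi_1 S\le\pi_1\Sigma$ says nothing about the inclusion of images $\Gamma_S\le\Gamma_\Sigma$.

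The input that actually closes this step, and which the paper uses, is: $\Gamma_S$ is finitely generated, hence geometrically finite by a result of Thurston (Proposition 7.1 of \cite{morgan1984thurston}), and geometrically finite subgroups of geometrically finite Kleinian groups are undistorted (Corollary 1.6 of \cite{hruska2010relative}). This handles parabolics and compressibility simultaneously, at the cost of replacing your convex-hull Milnor--\v{S}varc comparison with the relative/geometrically finite version of undistortion. With that substitution the rest of your argument goes through; your horosphere-projection treatment of torus boundary components is a correct supplement that the paper's written proof does not spell out.
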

\begin {proof}
Think of $M$ as embedded in a complete hyperbolic $3$-manifold $N$ as in \eqref{ident}, write $N=\Gamma \backslash \BH^3$, and let $\tilde M \subset \BH^3$ be the preimage of $M$, so that $\tilde M$ is obtained from the convex hull $CH(\Gamma)$ be deleting an equivariant collection of horoballs. Fix a subgroup $\Delta < \Gamma$ that represents the conjugacy class associated to the image of the fundamental group of $S \subset M$. To show that $$\tilde S \hookrightarrow \partial \tilde M$$ is a quasi-isometric embedding, it suffices to show that $\Delta$ is undistorted in $\Gamma$. But since $M$ is geometrically finite and $\Delta$ is finitely generated, it follows from a result of Thurston (see Proposition 7.1 in \cite{morgan1984thurston}) that the group $\Delta$ is geometrically finite, and geometrically finite subgroups of (say, geometrically finite) hyperbolic $3$-manifold groups are undistorted, c.f.\ Corollary 1.6 in \cite{hruska2010relative}.

For the `moreover' statement, assume $S$ is incompressible, so that $\tilde S$ is simply connected, and consider a pair of infinite rays $$h^{\pm}:\BR^+\to \tilde S$$ that are geodesic for the induced hyperbolic metric and $t^{\pm}_n\to  +\infty$ such that $d_{\tilde M}(h^+(t_n^+),h^-(t_n^-))$ is bounded. Since $\tilde S \subset \partial \tilde M$ is a quasi-isometric embedding, $d_{\tilde S}(h^+(t_n^+),h^-(t_n^-))$ is also bounded. Since $\tilde S$ is simply connected and hyperbolic, this is possible only if $h^+$ and $h^-$ are asymptotic on $S$. Taking $h^+=h^-$ we get that a geodesic ray on $\tilde S$ can not be homoclinic. Taking $h^+\neq h^-$, we get that any pair of mutually homoclinic infinite rays on $S$ are asymptotic. In particular two disjoint geodesic rays in a homoclinic geodesic should be asymptotic. This is impossible for a geodesic in a simply connected hyperbolic surface.
\end {proof}

Example (2) above shows how embedded annuli in $M$ can be used to create mutually homoclinic rays. In analogy to Fact \ref{incompressible-qg}, one can show that annuli are essential in such a construction. For instance, suppose $M$ is acylindrical. Then work of Thurston, see \cite{Kapovichhyperbolic} and more generally \cite{Lecuireplissage},  says that we can  choose the hyperbolic manifold $N$ so that $\partial CC(N) \cong \partial_{\chi<0} M$  is totally geodesic. Hence,  the preimage of $\partial_{\chi<0} M$ in $\tilde M \subset \BH^3$ is a collection of hyperbolic planes.  Any geodesic ray on $\partial_{\chi<0} M$ then lifts to a geodesic in $\BH^3$, and two geodesic rays on $\partial_{\chi<0} M $ are mutually homoclinic if and only if their geodesic lifts are asymptotic in $\BH^3$, which implies that they were asymptotic on $\partial_{\chi<0} M $.

\medskip


\subsection{Alternate definitions of homoclinic}
\label{alternate} \note{Added this subsection}
Above, we defined a path $$h : I \longrightarrow \partial \tilde M$$ to be homoclinic if there is are $s^i,t^i \in I$  with $|s^i-t^i|\to \infty$ such that $$\sup_i d_{\tilde M} (h(s^i),h(t^i)) < \infty.$$
Some other papers use slight variants of this definition. For example, the definition of \emph{(faiblement) homoclinique} in Otal's thesis \cite{Otalcourants} is almost the same as what is written above, except that distances are computed \emph{in the intrinsic metric on $\partial \tilde M$} instead of in $\tilde M$. This is equivalent to our definition, though: the nonobvious direction follows from Fact \ref{incompressible-qg}, which says that boundary components of $\tilde M$ quasi-isometrically embed in $\tilde M$. And in the definition of \emph{homoclinique} in Lecuire's earlier work \cite{Lecuireplissage}, distances are computed not in $\tilde M$, but within $\BH^3$, with respect to a given identification of $\tilde M$ with the convex core of some minimally parabolic hyperbolic $3$-manifold, as discussed in Example~\ref{explicitmetric}. When $M$ has tori in its boundary, the inclusion $\tilde M \hookrightarrow \BH^3$ is not a quasi-isometric embedding, but the following lemma says that $d_{\BH^3}$ is bounded if and only if $d_{\tilde M}$ is bounded, so Lecuire's earlier definition is equivalent to ours.

\begin{lemma}
Whenever $x,y\in \tilde M$, we have $$d_{\BH^3}(x,y) \leq d_{\tilde M}(x,y)\leq e^{d_{\BH^3}(x,y)/2} d_{\BH^3}(x,y).$$
\end{lemma}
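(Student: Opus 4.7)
The plan is to prove the two inequalities separately. The left inequality $d_{\BH^3}(x,y)\le d_{\tilde M}(x,y)$ is immediate: since $\tilde M\subset\BH^3$, every rectifiable path in $\tilde M$ is also a path in $\BH^3$ of the same length, so taking infima gives the bound.

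For the right inequality I would construct an explicit path from $x$ to $y$ in $\tilde M$. Let $\gamma$ be the $\BH^3$-geodesic from $x$ to $y$, of length $L:=d_{\BH^3}(x,y)$. The deleted horoballs are pairwise disjoint in $\BH^3$, so $\gamma$ meets only finitely many of them, say $B_1,\dots,B_n$, and in each $B_i$ the intersection $\gamma\cap B_i$ is a single sub-arc of length $L_i$ with both endpoints on the horosphere $\partial B_i$ (using convexity of $\gamma$ and that $x,y\in\tilde M$ lie outside the open horoball interiors). I then replace each such sub-arc by the unique horospherical geodesic on $\partial B_i$ joining its endpoints, of length $H_i$ in the induced flat metric. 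The resulting broken path stays in $\tilde M$ and has total length $L-\sum_i L_i+\sum_i H_i$, so the task reduces to controlling each $H_i$ in terms of $L_i$.

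The main step is the single-horoball inequality $H_i\le L_i\,e^{L_i/2}$. Working in the upper half-space model with $\partial B_i=\{z=h\}$, the geodesic $\gamma$ is a Euclidean semicircle of radius $r=h\,e^{d_i}$, where $d_i$ is the maximum hyperbolic depth of $\gamma\cap B_i$. Parametrizing $\gamma$ by $\theta\in[\theta_0,\pi-\theta_0]$ with $\sin\theta_0=e^{-d_i}$ and setting $u:=\cot(\theta_0/2)\ge 1$, a direct computation gives
\begin{equation*}
L_i=2\ln u,\qquad H_i=\frac{u^2-1}{u},\qquad e^{L_i/2}=u,
\end{equation*}
so $H_i\le L_i e^{L_i/2}$ becomes $u^2-1\le 2u^2\ln u$ for $u\ge 1$. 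This is elementary: the function $f(u)=2u^2\ln u-u^2+1$ satisfies $f(1)=0$ and $f'(u)=4u\ln u\ge 0$ for $u\ge 1$, so $f\ge 0$ on $[1,\infty)$.

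Granting this, the right inequality follows by summation:
\begin{equation*}
d_{\tilde M}(x,y)\ \le\ L+\sum_i L_i\bigl(e^{L_i/2}-1\bigr)\ \le\ L+(e^{L/2}-1)\sum_i L_i\ \le\ L\,e^{L/2},
\end{equation*}
where the middle step uses $L_i\le L$ so $e^{L_i/2}-1\le e^{L/2}-1$, and the last uses $\sum_i L_i\le L$. The main obstacle is the explicit single-horoball inequality $H_i\le L_i\,e^{L_i/2}$; everything else is either trivial or elementary summation. The only subtlety worth mentioning is that the horospherical geodesic must lie in $\tilde M$, which holds provided the deleted horoballs are open (otherwise one perturbs slightly, absorbing an $\varepsilon$).
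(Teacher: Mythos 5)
Your proof is correct and follows essentially the same route as the paper: both arguments take the $\BH^3$-geodesic from $x$ to $y$, observe it can only penetrate the deleted horoballs to bounded depth, and replace each incursion by a path on the bounding horosphere. The only difference is that the paper bounds the replacement path via the $e^{d_{\BH^3}(x,y)/2}$-Lipschitz nearest-point projection onto the horosphere, whereas you compute the length of the flat horospherical geodesic explicitly; both yield the stated bound.
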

\begin{proof}
Set $N := \Gamma \backslash \BH^3$, so that $\tilde M$ is obtained from the convex hull $CH := CH(\Lambda(\Gamma))$ of the limit set of $\Gamma$ by deleting horoball neighborhoods around all rank two cusps. Take a $\BH^3$-geodesic $\gamma$ from $x$ to $y$. Then $\gamma$ lies inside $CH$, and it can only penetrate the deleted horoball neighborhoods to a depth of $d(x,y)/2$. Now, whenever $B\supset B'$ are horoballs in $\BH^3$ such that $d_{\BH^3}(\partial B, B') \leq d(x,y)/2$, the closest point projection $$\pi : B \setminus B' \longrightarrow \partial B$$ is well defined and $e^{d(x,y)/2}$-lipschitz. (Indeed, it suffices to take $B$ as the height $1$ horoball in the upper half space model and $B'$ as the height $e^{d(x,y)/2}$ horoball, and then the claim is obvious.) So, the parts of $\gamma$ above that penetrate the deleted horoballs can be projected back into $\partial \tilde M$, and if we do this the resulting path has length at most $e^{d(x,y)/2}d(x,y)$.
\end{proof}

We should mention the version of homoclinic defined in Casson's original unpublished notes. There, $M$ is a handlebody, and if we regard $\partial \tilde M \hookrightarrow \BH^3$ as above, then a simple geodesic $ h : I \longrightarrow \partial \tilde M$ is called \emph{homoclinic} if when we subdivide $ h$ into two rays $h_\pm$, these rays limit onto subsets $A_\pm \subset \BH^3 \cap \partial \BH^3$ such that $A_+\cap A_-\neq \emptyset$. This definition is stronger than all the ones mentioned above: if $A_+\cap  A_-$ contains a point on $\partial \tilde M$, rather than at infinity, then the definition of homoclinic above is obviously satisfied. Otherwise, $h_\pm$ have to have a common accumulation point in $\partial \BH^3$, which corresponds to an end $\xi$ of $\tilde M$, and one can use the treelike structure of the universal cover $\tilde M$ of the handlebody $M$ to say that $h_\pm$ have to both intersect a sequence of meridians $(m_i)$ on $\tilde M$ that cut off smaller and smaller neighborhoods of $\xi$. The times $t^i_pm$ when $h_\pm$ intersects $m_i$ then work in the definition of homoclinic above. In fact, Casson's definition is strictly stronger. For instance, if $h_\pm$ both spiral around disjoint simple closed curves $\gamma_\pm \subset \partial \tilde M$, then $h$ is homoclinic by our definition but not by Casson's. However, Statement \ref{ccr} still fails using Casson's original definition, due to the examples in Figure \ref{hlim}.

 \subsection{Waves, Tight position, and Intrinsic limits} \label{intrinsiclimitssec} 

  As in the previous section, let $M$  be a compact, orientable  hyperbolizable $3$-manifold with  nonempty boundary  $\partial M$,  which we  think of as the convex core  of a hyperbolic $3$-manifold with horoball neighborhoods of its rank $2$ cusps deleted.  

\begin {definition}[Waves  and tight position]\label{wavedef}
 Suppose that $m $ is a meridian multicurve on $\partial M $, and let $\gamma \subset \partial M $ be a simple geodesic ray or a simple biinfinite geodesic.  An \emph {$m$-wave}  is a segment $\beta \subset \gamma$ that has endpoints on $m$, and is homotopic rel endpoints in $M$ to an arc of $m$.  If $\gamma $ has no $m$-waves, and  every infinite length segment of $\gamma$ intersects $m$, then we say that $\gamma $ is in \emph {tight position} with respect to $m$.
\end {definition}
 
Waves and tight position were discussed previously in \cite{Kleineidamalgebraic,Lecuireplissage}, for instance. Note that in our definition, an $m$-wave $\beta$  can intersect $m$ in its interior.  More generally, an $m$-wave of a lamination is an $m$-wave of one of its leaves, and  a lamination is in \emph {tight position}  with respect to $m$ if all of its leaves are.  Note that from this perspective, if  a geodesic $\gamma$ is in tight position with respect to some multicurve $m$ (regarded as a lamination), then it is in tight position with respect to some component of $m$.


 As an example, a meridian $\gamma $ can never be in tight position with respect to  another meridian $m$: taking discs with boundaries $\gamma$ and $m$ that are transverse and intersect minimally, any arc of intersection of these disks terminates in a pair of intersection points of $\gamma$ and $m$  that bound a $m$-wave of  $\gamma$.  

More generally,  we have the following fact.

\begin{fact}[Tight position $\implies$ $\BH^3$ quasi-geodesic] \label {tightquasi} 
Let  $\gamma $  be a simple geodesic ray or biinfinite geodesic on $\partial M$. If $\gamma$  is in tight position with respect to some meridian $m$ then  any lift $\tilde \gamma \subset \partial \tilde M $ of $\gamma$ is a quasigeodesic in $\BH^3$. In particular, $\gamma$ is an $\tilde M$-quasigeodesic, and is not homoclinic.\end{fact}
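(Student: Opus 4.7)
My plan is to realize $M$ as the convex core of a geometrically finite hyperbolic $3$-manifold with horoball neighborhoods of the rank-$2$ cusps deleted, as in Example \ref{explicitmetric}, so that $\tilde M \subset \BH^3$. Each component of $m$ bounds an embedded disk in $M$, and the lifts of these disks form a $\Gamma$-invariant collection $\mathcal D$ of disjoint, properly embedded disks in $\tilde M$, each with uniformly bounded $\BH^3$-diameter. Since $\tilde M$ is orientable and simply connected, every such disk separates $\tilde M$, so $\mathcal D$ organizes $\tilde M$ into a tree-like decomposition. The central observation is that tight position forces $\tilde \gamma$ to cross each disk at most once: if $\tilde \gamma$ crossed some $\tilde D \in \mathcal D$ at two points, then simple-connectedness of $\tilde M$ would make the arc of $\tilde \gamma$ between them homotopic rel endpoints to an arc of $\partial \tilde D$, and projecting to $M$ produces an $m$-wave on $\gamma$.

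Labeling the ordered crossings of $\tilde \gamma$ with $\mathcal D$ as $p_1, p_2, \ldots$, lying on distinct disks $\tilde D_1, \tilde D_2, \ldots$, proper discontinuity of $\Gamma \curvearrowright \tilde M$ together with compactness of $M$ yield a $\delta > 0$ such that any two distinct disks in $\mathcal D$ are at $\tilde M$-distance at least $\delta$. Because $\tilde \gamma$ does not backtrack in the disk tree, any path in $\tilde M$ from $p_i$ to $p_j$ must meet every intermediate $\tilde D_k$, and so has $\tilde M$-length at least $(|j-i|-2)\delta$. To convert this into a quasi-geodesic inequality for the arclength parametrization, I would then show that the arcs of $\gamma \setminus m$ have uniformly bounded length $L$, so that the arclength along $\tilde \gamma$ from $p_i$ to $p_j$ is at most $L(|j-i|+1)$; combined with the lower bound this yields $d_{\tilde M}(\tilde \gamma(s), \tilde \gamma(t)) \geq c|s-t|-C$. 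Since $\tilde M \hookrightarrow \BH^3$ is a quasi-isometric embedding by geometric finiteness of $\Gamma$ (as in the proof of Fact \ref{incompressible-qg}), this gives the $\BH^3$-quasi-geodesic property; the $\tilde M$-quasi-geodesic statement and the non-homoclinicity then follow immediately since $d_{\tilde M} \geq d_{\BH^3}$ and $d_{\tilde M}(\tilde\gamma(s),\tilde\gamma(t))\to\infty$ as $|s-t|\to\infty$.

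The main obstacle is establishing the uniform arc-length bound. I would argue by contradiction: if arcs $\alpha_n$ of $\gamma \setminus m$ lying in a fixed complementary region $R$ had lengths $L_n \to \infty$, then extracting a tangent-vector limit at midpoints using compactness of the unit tangent bundle of $\partial M$ produces a bi-infinite geodesic $\beta$ in the closure of $R$, disjoint from the interior of $m$, and $\beta$ is a leaf of the geodesic lamination $\overline \gamma$. One rules out that $\beta$ enters a cusp of $R$ or spirals onto a component of $\partial R$, as each would produce an infinite subray of $\gamma$ staying close to $\beta$ and hence eventually disjoint from $m$, contradicting tight position. The closure $\overline \beta$ therefore contains a minimal sublamination $\mu \subset R$ disjoint from $m$, itself a sublamination of $\overline\gamma$. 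Since leaves of a geodesic lamination are disjoint, $\gamma$ is a leaf of $\overline\gamma$ disjoint from $\mu$, so some end of $\gamma$ is asymptotic to $\mu$, yielding an infinite-length subray of $\gamma$ eventually trapped in a small neighborhood of $\mu$ and therefore disjoint from $m$ -- once more contradicting tight position.
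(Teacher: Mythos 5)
Your overall strategy is the same as the paper's: lift the disks bounded by $m$, use tight position to show $\tilde\gamma$ crosses each lifted disk at most once, use the uniform separation of the disks for the lower bound and a uniform length bound on the complementary arcs for the upper bound. But one step as written is wrong. You obtain the lower bound only for $d_{\tilde M}$ and then transfer it to $d_{\BH^3}$ by asserting that $\tilde M \hookrightarrow \BH^3$ is a quasi-isometric embedding ``by geometric finiteness.'' That is not what the proof of Fact \ref{incompressible-qg} gives (it shows $\tilde S \hookrightarrow \tilde M$ is a q.i.\ embedding for an essential boundary subsurface), and the claim itself is false whenever $\partial M$ contains tori: $\tilde M$ is $CH(\Gamma)$ with horoballs deleted, and the paper points out explicitly in \S\ref{alternate} that this inclusion is \emph{not} a quasi-isometric embedding -- one only gets the exponential comparison $d_{\tilde M}\le e^{d_{\BH^3}/2}d_{\BH^3}$, which preserves boundedness but not quasigeodesity. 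The repair is the observation the paper uses in place of your transfer: each component of $\tilde D$ separates the convex hull $CH(\Gamma)$ itself, so the $\BH^3$-geodesic joining $\tilde\gamma(s)$ to $\tilde\gamma(t)$ (which stays in $CH(\Gamma)$ by convexity) must cross every intermediate disk, and the $\epsilon$-separation of the disks then bounds $d_{\BH^3}$ from below directly. In the handlebody case there are no deleted horoballs, $\tilde M=CH(\Gamma)$ is convex, and your step is harmless; but the Fact is stated (and used) for general $M$.

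On the upper bound, you diverge from the paper: it disposes of the bounded length of the arcs of $\gamma\setminus m$ in one line, noting that by simplicity these disjoint arcs fall into finitely many homotopy classes rel $m$, whereas you run a compactness/limiting argument. Your route is a legitimate alternative and is genuinely where the second clause of tight position (``every infinite length segment meets $m$'') enters; note there are no cusps to worry about since the relevant boundary components are closed hyperbolic surfaces. However, your final step -- that an end of $\gamma$ accumulating onto a minimal $\mu$ disjoint from $m$ must eventually be \emph{trapped} in a neighborhood of $\mu$, hence disjoint from $m$ -- is asserted rather than proved, and it is the crux of that argument (one has to rule out that $\gamma$ makes infinitely many excursions into and out of complementary regions of $\mu$, e.g.\ by bounding how deeply a complete geodesic disjoint from $\mu$ can penetrate the spikes of a complementary ideal polygon without being asymptotic to a leaf of $\mu$). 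As it stands that step is a sketch, not a proof.
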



\begin {proof}
Intersecting with $m$ breaks $\gamma$ into a union of  finite arcs. By simplicity of $\gamma$,  these arcs fall into only finitely many homotopy classes rel $m$, and  there is a universal upper bound $L=L(\gamma,m)$ on their lengths.  Let $D$ be  a  disc with boundary $m$ and let $\tilde D$  be the entire preimage of $D$ in $\tilde M$. Tightness means that the path $\tilde \gamma $  intersects infinitely many components of $\tilde D$, and intersects no single component more than once. 

In the notation of Example \ref{explicitmetric}, we have that $\tilde M \subset \BH^3 $ is obtained from $CH(\Gamma)$ by deleting an equivariant collection of horoballs. Each  component of $\tilde D$  separates $CH(\Gamma)$, so if $\gamma'$  is a segment of $\gamma$,  any geodesic in $\BH^3$ joining the endpoints of  $\gamma'$ must  intersect each of the discs that $\gamma'$ intersects. 
Hence, if $\epsilon>0$  is the minimum distance between any two components of $\tilde D$,  then $\tilde \gamma $ is a $(L/\epsilon,L)$-quasigeodesic. 
\end {proof}

\begin{figure*}
	\centering
\includegraphics{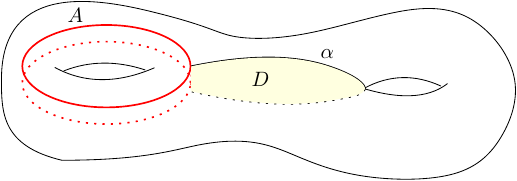}
\caption{Surgering $A$ along $\alpha $ gives a meridian $c$.}
\label{essmer}
\end{figure*}

We now describe how to create  systems of meridians with respect to which a given lamination is in tight position.

\begin {definition}[Surgery] Suppose that $\lambda $ is a  geodesic lamination on $\partial M$ and $m=\sqcup_{i=1}^n m_i$  is a  geodesic meridian multi-curve on $\partial M$, and $\beta$  is an $m$-wave in $\lambda $ whose interior is disjoint from $m$. Then the pair of points $\partial \beta$ separates some component $m_i$ of $m$ into two arcs $m_i^1,m_i^2$, both of which are homotopic to $\beta$ rel endpoints in $M$.  We perform a  \emph {$\lambda $-surgery} on $m$  by replacing $m_i^1$ (say) with $\beta$, thus constructing a new multicurve $m':= (\beta \cup m_i^2) \sqcup \bigsqcup_{j\neq i} m_j$. \end{definition}

This notion of surgery appears in many other references, e.g.\ \cite{Biringerautomorphisms,casson1985algorithmic,Kleineidamalgebraic,Lecuireplissage}. We summarize its elementary properties here:

\begin{fact}  \label{easy facts} Suppose that $\lambda $  is a geodesic lamination.
	\begin{enumerate}
		\item If $m$ is a meridian and $\lambda $ has an $m$-wave, it also has an $m$-wave whose interior is disjoint from $m$,  so a $\lambda $-surgery can be performed.
\item Any curve $m'$ obtained by $\lambda$-surgery on a meridian $m$ as above is a meridian.
\item If $m$ is a \emph{cut  system} for $M$, i.e.\ a multi-curve of meridians  bounding discs that cut $M$  into balls and $3$-manifolds with incompressible boundary, then some $\lambda $-surgery on a component of $m$ is another cut system. 
	\end{enumerate}
\end{fact}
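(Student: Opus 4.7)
Part (1) will use a lifting argument. I plan to lift $\beta$ to an arc $\tilde\beta \subset \partial\tilde M$ whose endpoints lie on a common lift $\tilde m_0$ of a component of $m$ (this is exactly the $m$-wave condition: the homotopy disc between $\beta$ and an arc of $m$ lifts to a disc in $\tilde M$). Among all sub-arcs of $\tilde\beta$ whose endpoints lie on a common lift of $m$, I take one $\tilde\beta'$ minimizing the number of interior intersections with the preimage of $m$. The claim is that this minimum is zero. If it were positive, minimality would force every interior intersection of $\tilde\beta'$ to lie on a lift of $m$ distinct from the endpoint lift and from every other interior-intersection lift --- otherwise a proper sub-arc would have strictly fewer interior intersections yet still have endpoints on a common lift, contradicting minimality. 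Fixing any such interior intersection on a lift $\tilde m_1$, the disc $\tilde D_1$ it bounds in $\tilde M$ separates $\tilde M$ (since $\tilde M$ is simply connected), and because $\tilde m_0$ is disjoint from $\tilde m_1$ in $\partial \tilde M$, both endpoints of $\tilde\beta'$ lie on the same side of $\tilde D_1$. But by minimality $\tilde\beta'$ crosses $\tilde m_1$ transversely exactly once, a contradiction. Projecting $\tilde\beta'$ back down gives the $m$-wave with interior disjoint from $m$, which permits a $\lambda$-surgery.

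Part (2) will combine two observations. First, $m_i' := \beta \cup m_i^2$ is nullhomotopic in $M$: since $\beta \simeq m_i^1$ rel endpoints in $M$, we have $m_i' \simeq m_i^1 \cup m_i^2 = m_i$ in $M$, and $m_i$ bounds a disc in $M$. So $m_i'$ bounds a disc in $M$. Second, $m_i'$ is an essential simple closed curve on $\partial M$: it is simple because the interior of $\beta$ is disjoint from $m \supset m_i^2$, so $\beta$ and $m_i^2$ meet only at their shared endpoints; and it is essential because if $\beta \cup m_i^2$ bounded a disc on $\partial M$, then $\beta$ would be homotopic rel endpoints to $m_i^2$ in the hyperbolic surface $\partial_{\chi<0}M$, forcing $\beta = m_i^2$ by uniqueness of geodesic representatives --- a degenerate case (the wave is a sub-arc of $m$) which we tacitly exclude. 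Disjointness of $m_i'$ from the other $m_j$ follows since both $\beta$ (in its interior) and $m_i^2$ are disjoint from each such $m_j$, so the new multicurve $m'$ is a disjoint union of meridians.

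Part (3) is the main obstacle, so my plan is to build a new cut-system disc and then carefully compare complements. Let $E \subset M$ be a disc with boundary $\beta \cup m_i^1$ (produced by the wave property). I will put $E$ in general position with the existing discs $D_j$, remove circles of intersection using irreducibility of $M$, and observe that any arc of $E \cap D_j$ with $j \neq i$ would have endpoints in $(\beta \cup m_i^1)\cap m_j = \emptyset$, so $E\cap D_j = \emptyset$. An innermost-arc argument reduces $E \cap D_i$ to the shared boundary arc $m_i^1$. Smoothing the union $D_i \cup E$ then produces an embedded disc $D_i'$ with boundary $m_i'$, disjoint from each $D_j$, $j\neq i$. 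Finally I must verify that cutting $M$ along $\{D_j:j\neq i\}\cup\{D_i'\}$ produces the same decomposition into balls and incompressible-boundary pieces as the original cut system. For this, let $N$ be $M$ cut along the original $D_j$'s and let $N_+$ be the component of $N$ into which $E$ pushes off $D_i$. Then $E$ sits as an embedded disc in $N_+$; if $N_+$ is a ball there is nothing to check, and if $\partial N_+$ is incompressible then $\partial E$ must be inessential on $\partial N_+$, so bounds a disc $D^\partial\subset\partial N_+$ with $E\cup D^\partial$ cobounding a ball $B\subset N_+$. This ball $B$ realizes the equivalence between the old cut along $D_i$ and the new cut along $D_i'$ (or equivalently, for the other of the two surgery choices, along $E$ itself), so the cut-system property is preserved. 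The hardest ingredient will be the careful bookkeeping to show that $B$ indeed conjugates the two cuts, giving rise to the same collection of complementary pieces, and that at least one of the two possible side-choices for the surgery is the one consistent with this ball.
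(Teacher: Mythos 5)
Your treatments of (1) and (2) are correct and essentially the paper's own arguments: for (1) the paper also lifts the wave to $\partial \tilde M$ and extracts an outermost subarc, using that each component of the preimage of $m$ separates (you separate with the lifted discs in $\tilde M$ instead, which is the same mechanism), and for (2) the paper likewise combines the nullhomotopy of $m'$ in $M$ with the fact that two geodesic arcs on a hyperbolic surface cannot cobound a bigon. Your extra remarks on simplicity and disjointness from the other $m_j$ are fine.

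The genuine gap is in (3), precisely at the step you defer as ``the hardest ingredient.'' (The paper offers no proof of (3) beyond ``it is not difficult to see,'' so you were right to attempt one, but what you have is a plan that stops before the decisive point.) Here is the issue concretely. Both of $\beta$'s endpoints approach $m_i$ from the same side (otherwise $\beta\cup m_i^1$ would meet $D_i$ transversely an odd number of times and could not be nullhomotopic in $M$), so \emph{both} surgered curves $\beta\cup m_i^1$ and $\beta\cup m_i^2$ bound discs $F_1,F_2$ properly embedded in the single piece $N_+$, and $F_1\sqcup F_2$ is the frontier in $N_+$ of a ball $V$, a regular neighborhood of (scar of $D_i$)$\,\cup\,\beta$ in $N_+$. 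Writing $N_+=W_1\cup_{F_1}V\cup_{F_2}W_2$, the pieces of the new system $\{D_j\}_{j\neq i}\cup\{F_2\}$ are the untouched old pieces, $W_2$, and the manifold obtained by gluing $N_-$ to $V\cup W_1$ along the disc $D_i$. The danger sits in that last piece: gluing two $3$-manifolds along a boundary disc yields \emph{compressible} boundary whenever neither is a ball, because the gluing disc itself becomes a compressing disc. So it does not suffice to know that $E$ is boundary-parallel in $N_+$; you must show that, for the surgery you keep, the side of the kept disc containing the scar of $D_i$ is a \emph{ball}. The argument that does this is the following dichotomy, absent from your sketch: both $F_1$ and $F_2$ are boundary-parallel in $N_+$, so one side of $F_1$ (namely $W_1$ or $V\cup W_2$) is a ball and the other is homeomorphic to $N_+$, and likewise for $F_2$. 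If $W_1$ is a ball, then $V\cup W_1$ is a ball, so keeping $F_2$ works ($W_2\cong N_+$ and $N_-$ glued to a ball along a disc is $N_-$); if $W_1$ is not a ball, then $V\cup W_2$ is a ball, hence so is $W_2$, and keeping $F_1$ works by the symmetric computation. This also shows your assertion that ``if $N_+$ is a ball there is nothing to check'' is an overstatement: even then one must identify the new pieces as $N_-$ glued to a ball and a complementary ball; it happens that both surgeries succeed in that case, but that is a verification, not a tautology.
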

\begin {proof}
	For (1),  suppose that $\lambda$ has an $m$-wave $\beta$. Let $\tilde m$ be the entire preimage of $m$  in the cover $\partial \tilde M$, and lift $\beta$  to an arc $\tilde \beta$  starting and ending on some fixed component $m_0 \subset \tilde m$.  Since each component of $\tilde m$ separates $ \partial \tilde M$,  there is some ``outermost'' subarc $\tilde \beta '$ that has endpoints on the same component of $\tilde m$, and that has interior disjoint from $\tilde m$. This $\tilde \beta '$  projects to an $m$-wave of $\lambda$  whose interior is disjoint from $m$.

For (2), note that if $m':=\beta \cup m_2$  is obtained by $\lambda$-surgery on $m$, as above, then $m,m'$ are homotopic in $M$, and hence $m'$ is nullhomotopic. Also, if $m'$ is inessential in $\partial M$, then $\beta$ is homotopic on $\partial M$ to $m_2$, implying that $\lambda$ and $m$ were not in minimal position, a contradiction since they are both geodesic. Hence $m'$ is a meridian.

For (3), consider an $m$-wave in $\lambda$ whose interior is disjoint from $m$ and say that $\partial\beta\subset m_1$. Then $\partial\beta$ separates $m_1$ into two arcs $m_1^1,m_1^2$. It is not difficult to see that either $(\beta \cup m_1^1) \sqcup \bigsqcup_{j\neq 1} m_j$ or $(\beta \cup m_1^2) \sqcup \bigsqcup_{j\neq 1} m_j$ is a cut system.
\end {proof}

The  following lemma is a  modification of  a result of Kleineidam--Souto \cite[Lemmas 7 and 8]{Kleineidamalgebraic}  that is essential  for everything below.

\begin {lemma}[No waves, or a sequence of meridians]\label {tightcuts}
 Suppose $\lambda$  is a geodesic lamination on  $S=\partial M$  and $m$  is a meridian multi-curve. Then either 
\begin {enumerate}
\item there exists a finite sequence of $\lambda $-surgeries on $m$ that terminates in some meridian multi-curve $m'$ where $\lambda $ has no $m'$-waves,
\item  $S(\lambda)$ contains a sequence of meridians $(\gamma_i)$ such that $i(\lambda,\gamma_i) \to 0$,  with respect to every transverse measure on $\lambda $. \end{enumerate}
\end {lemma}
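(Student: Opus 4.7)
The plan is to iteratively perform $\lambda$-surgeries on $m$. Set $m_0 := m$. At each step, if $\lambda$ has an $m_k$-wave, use Fact~\ref{easy facts}(1) to find one with interior disjoint from $m_k$ and perform a $\lambda$-surgery, obtaining a new meridian multicurve $m_{k+1}$ by Fact~\ref{easy facts}(2). If this process terminates at some $m_N$ admitting no $m_N$-wave in $\lambda$, we are in case~(1) with $m' := m_N$; so we may assume the process is infinite and aim to produce case~(2).

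The fundamental monotonicity is that, for every transverse measure $\mu$ on $\lambda$, each surgery replaces a transverse arc $m_i^1 \subset m_k$ by a wave arc $\beta_k$ that lies in a single leaf of $\lambda$. Since leaves of $\lambda$ are pairwise disjoint, $\beta_k$ carries zero $\mu$-mass, so
\[
   i(\mu, m_{k+1}) \;=\; i(\mu, m_k) \;-\; \mu(m_i^1) \;\leq\; i(\mu, m_k).
\]
Hence $(i(\mu, m_k))_k$ is non-increasing, and the increments $\mu(m_i^1)$ are summable; in particular they tend to $0$.

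To extract case~(2), I would track the modified components $\gamma_k$ of $m_{k+1}$, each of which is a meridian built from the wave arc $\beta_k \subset \lambda \subset S(\lambda)$ joined to a subarc of the preceding multicurve. By exploiting the fact that the cone of transverse measures on $\lambda$ is finite-dimensional (there are only finitely many minimal components), I would choose the surgery at each step so as to make maximal joint progress on a spanning set of extremal measures; after passing to a subsequence, this produces meridians with $i(\mu, \gamma_k) \to 0$ for every transverse measure $\mu$ on $\lambda$ simultaneously. To see that the $\gamma_k$ may be taken in $S(\lambda)$, I would argue that as $k \to \infty$ the wave arcs $\beta_k$ Hausdorff-accumulate onto a sublamination of $\lambda$ filling a subsurface of $S(\lambda)$, while the non-wave residue inherited from earlier multicurves becomes negligible from the measured-lamination viewpoint; an isotopy pushing this residue across complementary regions of $S(\lambda)$ in $S$ then places $\gamma_k$ inside $S(\lambda)$ for large $k$.

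The main obstacle I anticipate is exactly this last step: realizing the meridians $\gamma_k$ honestly inside $S(\lambda)$, rather than merely near it, and controlling the intersection uniformly across all transverse measures at once. Unlike the monotonicity, which is essentially formal, this requires the topological structure of $S(\lambda) \subset S$ and a careful choice of surgery sequence, following the outline of Kleineidam–Souto's \cite[Lemmas~7 and~8]{Kleineidamalgebraic}.
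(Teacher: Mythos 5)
There is a genuine gap, and it lies exactly where you flag it: your iteration scheme cannot produce case (2). The monotone decrease of $i(\mu,m_k)$ under surgery only tells you the decrements are summable; the sequence $i(\mu,m_k)$ can perfectly well stabilize at a positive value, and the individual surgered components $\gamma_k=\beta_k\cup m_i^2$ retain an arc $m_i^2$ of the previous multicurve that may carry essentially all of the remaining transverse measure. So nothing in the construction forces $i(\mu,\gamma_k)\to 0$, and the appeal to extremal measures and "maximal joint progress" does not supply a mechanism for it. The meridians of case (2) are not obtained by iterating surgeries at all.

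The paper's dichotomy is instead on the number of homotopy classes (rel $m$, through arcs on $S$) of $m$-waves of $\lambda$. If there are finitely many, each surgery strictly decreases this count, so the process terminates and (1) holds. If there are infinitely many, one picks non-homotopic waves $\alpha_i,\alpha_{i+1}$ whose endpoints on $m$ are very close (possible by compactness of $m$, with a one-sided approach condition when endpoints accumulate on a closed leaf), joins corresponding endpoints by short geodesic arcs $\beta_i^0,\beta_i^1$, and takes the closed curve $\beta_i^0\cup\alpha_i\cup\beta_i^1\cup\alpha_{i+1}$. This curve is essential on $S$ because the waves are non-homotopic rel $m$, is nullhomotopic in $M$ because each wave is homotopic rel endpoints into $m$ (which bounds disks), lies in $S(\lambda)$ because the waves are arcs of leaves of $\lambda$ and the connectors are short, and meets $\lambda$ only along $\beta_i^0,\beta_i^1$, whose lengths tend to zero; the Loop Theorem then yields simple meridians $\gamma_i$ with $i(\gamma_i,\lambda)\to 0$ for every transverse measure. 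In particular, the difficulty you anticipate about placing $\gamma_k$ inside $S(\lambda)$ evaporates in this construction, since the curve is built from leaves of $\lambda$ rather than from remnants of $m$.
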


Here, $(2)$ makes sense even when $\lambda$  admits no transverse measure of full support. Note that if $\lambda $ is a minimal lamination and $\partial S(\lambda) $ is incompressible, then $(2)$  implies that  $\lambda $ is an  intrinsic limit of meridians.

\begin {proof}[Proof of Lemma \ref{tightcuts}]
The two cases depend on whether $\lambda$ contains infinitely many  homotopy  classes of $m$-waves, or not. Here, our homotopies are through  arcs on $S$,  keeping their endpoints on $m$.

If there are only finitely many classes of $m$-waves in $\lambda$, then a finite sequence of  $\lambda $-surgeries converts $m$ into a  multi-curve $m'$ such that  $\lambda$ has no $m'$-waves,  as each surgery decreases the number of waves by at least one. 
If there are infinitely many homotopy classes  of $m$-waves in $\lambda$, then we can choose a sequence of parameterized $m$-waves $\alpha_i : [0,1] \longrightarrow \BR$ such that
\begin {enumerate}
\item the two sequences  of endpoints $(\alpha_i(0))$ and $(\alpha_i(1))$  both converge, and if either sequence converges into a simple closed curve $\gamma \subset \lambda$, then it approaches $\gamma$  from only one side,
\item no $\alpha_i $ and $\alpha_j $ are homotopic keeping their endpoints on $m$, for $i\neq j$.
\end {enumerate}
 To construct the desired sequence of meridians, let $\beta^0_i$ be the shortest geodesic on $S$ from $\alpha_i(0)$ to $\alpha_{i+1}(0)$,  and define $\beta^1_i$ similarly.  For large $i $, the union $\beta^0_i \cup \alpha_i \cup \beta^1_i \cup \alpha_{i+1}$  is an essential closed curve in $S(\lambda) $  that is nullhomotopic in $M$.  It may not be simple, since $\beta^0_i$ and $\beta^1_i$  may overlap, but it has at most one self intersection. So by the Loop Theorem \cite{Jacolectures}, one of the three simple closed curves obtained by surgery on it is a meridian $\gamma_i$. 

Now,  the fact that the endpoints can approach a simple closed curve in $\lambda$ only from one side  implies that for large $i $, the curves $\gamma_i$  do not intersect any simple closed curve contained in $\lambda$.  Since $\gamma_i$  only intersects $\lambda $ along the arcs $\beta^0_i$ and $\beta^1_i$, whose  hyperbolic lengths converge to zero,  it follows that $i(\gamma_i,\lambda) \to 0$  for any transverse measure on $\lambda$.
\end{proof}

 Here is an important application of Lemma \ref {tightcuts}. 

\begin{lemma}[Quasigeodesic or a sequence of meridians]\label {limitquasi}
 Suppose $\lambda \subset \partial_{\chi<0} M$ is a minimal geodesic lamination and that $\partial S(\lambda)$ is incompressible in $M$. Let  $h \subset S(\lambda)$  be a  simple geodesic ray or biinfinite geodesic that is disjoint from $\lambda$ or  contained in $\lambda$. Then either 
\begin{enumerate}
\item any lift $\tilde h \subset\partial \tilde M$ of $h$ is a quasi-geodesic in $\tilde M$.
\item $S(\lambda)$ contains a sequence of meridians $(\gamma_i)$ such that $i(\lambda,\gamma_i) \to 0$,  with respect to every transverse measure on $\lambda $. 
\end{enumerate}
In particular, if $h$ is homoclinic, then $\lambda$  satisfies (2).
\end{lemma}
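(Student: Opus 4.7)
The plan is to split on whether $S(\lambda)$ contains a meridian of $M$. If it does not, then $S(\lambda)$ itself is incompressible in $M$, so Fact~\ref{incompressible-qg} applied to $S(\lambda)$ gives a quasi-isometric embedding $\tilde S(\lambda)\hookrightarrow \tilde M$; any lift $\tilde h$ is a geodesic inside $\tilde S(\lambda)$ for the pulled-back hyperbolic metric and is therefore automatically a quasi-geodesic in $\tilde M$, giving~(1).

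Now suppose $S(\lambda)$ contains a meridian $m$. I would form the geodesic lamination $\mu := \lambda \cup \bar h$; since $h\subset S(\lambda)$ is either disjoint from or contained in the minimal lamination $\lambda$, the leaves of $\bar h$ do not cross those of $\lambda$, so $\mu$ really is a lamination, and evidently $S(\mu)=S(\lambda)$. Apply Lemma~\ref{tightcuts} to the pair $(\mu,m)$. If its conclusion~(2) holds, we obtain meridians $\gamma_i\subset S(\lambda)$ with $i(\mu,\gamma_i)\to 0$ with respect to every transverse measure on $\mu$; extending any transverse measure $\nu$ on $\lambda$ to $\mu$ by giving zero weight to $\bar h\setminus \lambda$ yields $i(\nu,\gamma_i)=i(\mu,\gamma_i)\to 0$, which is exactly conclusion~(2) of the present lemma.

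Otherwise Lemma~\ref{tightcuts} produces a meridian $m'\subset S(\lambda)$ such that $\mu$ has no $m'$-waves; the remaining task is to verify $h$ is in tight position with respect to $m'$, whereupon Fact~\ref{tightquasi} gives~(1). Absence of $m'$-waves for $h$ is inherited from $\mu$. For the infinite-intersection clause, note that $\lambda$ fills $S(\lambda)$ and $m'$ is essential there, so $m'$ meets $\lambda$ transversely (unless $m'$ is itself a leaf of $\lambda$, which forces $\lambda=m'$ and makes~(2) trivial); minimality of $\lambda$ then forces every leaf of $\lambda$ to meet $m'$ infinitely often, handling the case $h\subset\lambda$ directly. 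When $h$ is disjoint from $\lambda$, minimality again forces $\bar h\supseteq \lambda$, so each end of $h$ accumulates on every leaf of $\lambda$, and one uses a local analysis near the transverse intersections $m'\cap\lambda$ inside the ideal polygon complementary region of $\lambda$ containing $h$ to show the ends of $h$ themselves cross $m'$ infinitely often. Controlling this last step—ruling out that a diagonal of the ideal polygon meets only finitely many of the arcs of $m'$ cutting through it—is the main technical obstacle. Finally, the ``in particular'' clause is the contrapositive: a homoclinic geodesic is not a quasi-geodesic in $\tilde M$, so~(1) fails and~(2) must hold.
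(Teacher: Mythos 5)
Your route is essentially the paper's: assume (2) fails, surger via Lemma~\ref{tightcuts} to a meridian system with no waves of $\lambda\cup\bar h$, and conclude with Fact~\ref{tightquasi} or Fact~\ref{incompressible-qg}. The one structural difference is harmless and arguably cleaner: you surger a single meridian inside $S(\lambda)$, while the paper surgers a cut system for all of $M$ (using Fact~\ref{easy facts}(3)) and then splits on whether the result meets $\lambda$; since any meridian in $S(\lambda)$ is nonperipheral there (as $\partial S(\lambda)$ is incompressible) and $\lambda$ fills $S(\lambda)$, your $m'$ automatically meets $\lambda$, so the paper's second branch collapses into your initial dichotomy on compressibility of $S(\lambda)$.

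The step you leave open is, however, a genuine gap, and you have partly misdiagnosed where the difficulty sits. When $h$ is disjoint from $\lambda$ it lies in a complementary region of $\lambda$ in $S(\lambda)$, which is an ideal polygon or a crown. If an end of $h$ exits through a spike, it is asymptotic to a leaf $\ell$ of $\lambda$; then the step you call the main obstacle is routine: $\ell$ crosses $m'$ infinitely often by minimality, the angles of intersection of the compact geodesic multicurve $m'$ with $\lambda$ are bounded below, and since $d(h(t),\ell)\to 0$ the end of $h$ must cross $m'$ near all but finitely many of those crossings. What your argument does not cover is an end of $h$ spiraling onto a closed boundary geodesic of a crown region, i.e.\ onto a component $c$ of $\partial S(\lambda)$. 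There your claim that minimality forces $\bar h\supseteq\lambda$ is false, that end of $h$ need not meet $m'$ at all, and tight position genuinely fails, so Fact~\ref{tightquasi} cannot be invoked for $h$. Such an end must be handled separately --- it is asymptotic in $\partial\tilde M$ to a lift of the incompressible curve $c$, hence quasigeodesic --- and for a biinfinite $h$ one still has to check that gluing a tight end to such a spiraling end yields a global quasigeodesic (equivalently, that the two ends of $\tilde h$ do not return boundedly close in $\tilde M$). To be fair, the paper's own proof asserts tight position of $\lambda\cup h$ in one sentence and does not discuss this case either; but as written your verification is both incomplete and, in the crown case, aimed at a false statement.
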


\begin {proof}
Assume that (2) does not hold. Given a cut system $m$ for $M$, Lemma \ref{tightcuts} and Fact \ref{easy facts} (3) say that we can perform $\lambda $-surgeries until we obtain a new cut system $m$  such that $\lambda \cup h$  has no $m$-waves. If $m$ intersects $\lambda $, then $\lambda \cup h$  is in tight position with respect to $m$,   so (1) follows from Fact \ref{tightquasi}.   Therefore, we can assume $m$ does not intersect $\lambda $.  Up to isotopy,  we can also assume that $S(\lambda)$  does not intersect $m$. Since $\partial S(\lambda)$ is assumed to be a collection of incompressible curves, it follows that $S(\lambda)$ is itself incompressible, so (1) follows from Fact \ref{incompressible-qg}. \end {proof}

 We now come to the central definition of the section.

\begin {definition}
	A minimal geodesic lamination $\lambda \subset \partial_{\chi < 0} M$  is an \emph{intrinsic limit of meridians} if there is a transverse measure\footnote{It is currently unknown whether the particular  transverse measure  matters:  we  might suspect that a  measured lamination is a projective limit of meridians if and only if the same is true for any other measured lamination with the same support, but  there could also very well be a counterexample.} on $\lambda$ and a sequence of meridians $(\gamma_i)$ contained in $S(\lambda)$ such that $\gamma_i \to \lambda$ in $\PML(S (\lambda))$. 
\end {definition}

Using Lemma \ref{limitquasi},  we can prove the following proposition, which gives several equivalent characterizations  of intrinsic limits. 

\begin {prop}[Intrinsic limits]\label {intrinsiclimits}
 Suppose $\lambda\subset S=\partial M$ is a minimal geodesic lamination and $\partial S(\lambda)$  is incompressible.  The following are equivalent:
\begin {enumerate}
\item  $\lambda $ is an  {intrinsic limit of  meridians},
\item  given (some/any)  transverse measure on $\lambda $, there is a sequence of meridians $(\gamma_i)$  in $S(\lambda)$ such that $i(\gamma_i,\lambda)\to 0,$ 
\item there is a biinfinite homoclinic geodesic in $S(\lambda) $  that is either a leaf of $\lambda$, or is disjoint from $\lambda$,
\item  given any  transverse measure on $\lambda $, there is a sequence of  essential (possibly non-simple) closed curves $(\gamma_i)$  in $S(\lambda)$  such that each $\gamma_i $ is nullhomotopic  in $M$, and $i(\gamma_i,\lambda)\to 0.$
\end {enumerate}
\end {prop}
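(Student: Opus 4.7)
The plan is to establish the cycle $(1)\Rightarrow(3)\Rightarrow(2)\Rightarrow(1)$ together with the equivalence $(2)\Leftrightarrow(4)$. The ``some/any'' quantification in (2) and (4) then comes for free, since $(3)\Rightarrow(2)$ will produce meridians satisfying the intersection condition against every transverse measure on $\lambda$.

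For $(2)\Rightarrow(1)$: fix a transverse measure $\mu$ and meridians $\gamma_i$ with $i(\gamma_i,\mu)\to 0$; rescaling by any continuous norm on $\mathcal{ML}(S(\lambda))$ and passing to a subsequential limit $\mu'\neq 0$ in $\mathcal{ML}(S(\lambda))$, continuity of intersection gives $i(\mu',\mu)=0$, and the minimality of $\lambda$ and its filling $S(\lambda)$ then force $\mu'$ to be supported on $\lambda$; hence $\gamma_i\to[\mu']$ in $\PML$. The step $(3)\Rightarrow(2)$ is a direct invocation of Lemma~\ref{limitquasi}: any lift $\tilde h\subset\partial\tilde M$ of the homoclinic geodesic $h$ cannot be a $\tilde M$-quasigeodesic, so alternative (1) of that lemma fails and (2) must hold. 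The equivalence $(2)\Leftrightarrow(4)$: $(2)\Rightarrow(4)$ is trivial, while $(4)\Rightarrow(2)$ applies the loop theorem to a thin regular neighborhood $N_i\subset S(\lambda)$ of the essential nullhomotopic curve $\gamma_i$, whose fundamental group contains $[\gamma_i]$ in the kernel of $\pi_1 N_i\to\pi_1 M$; this produces an essential simple closed curve $\gamma_i'\subset N_i$ bounding an embedded disk in $M$, and the thinness of $N_i$ controls $i(\gamma_i',\mu)$ in terms of $i(\gamma_i,\mu)$.

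The main step is $(1)\Rightarrow(3)$. Given meridians $\gamma_i\to[\mu]$ in $\PML(S(\lambda))$, I would pass to a subsequence so that the $\gamma_i$ Hausdorff-converge to a geodesic lamination $\lambda^*\supseteq\lambda$; the components of $\lambda^*\setminus\lambda$ are finitely many isolated leaves, each disjoint from $\lambda$ (possibly spiraling onto it). Each $\gamma_i$ lifts to a simple closed loop $\tilde\gamma_i\subset\partial\tilde M$ bounding an embedded disk $\tilde D_i\subset\tilde M$. After a diagonal reparametrization, the lifts $\tilde\gamma_i$ converge in $C^0_{\mathrm{loc}}$ to a biinfinite geodesic $\tilde h\subset\partial\tilde M$ whose projection $h$ is a leaf of $\lambda^*$, hence either a leaf of $\lambda$ or disjoint from $\lambda$. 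To show that $\tilde h$ is homoclinic, I expect to need a bounded-diameter estimate saying that, because $i(\gamma_i,\lambda)\to 0$, the disks $\tilde D_i$ lie in uniformly bounded regions of $\tilde M$; that would force the endpoints $\tilde\gamma_i(\pm R)$ to remain within a uniform $\tilde M$-distance as $i\to\infty$, so passing to the limit yields $d_{\tilde M}(\tilde h(-R),\tilde h(R))$ uniformly bounded in $R$---exactly the homoclinic condition. Justifying this bounded-diameter statement (presumably the \emph{Bounded Diameter Lemma} named in the paper's preamble) is the main technical obstacle, since naively using only the periodicity $\tilde\gamma_i(s+\ell_i)=\tilde\gamma_i(s)$ fails: as $\ell_i\to\infty$, the two parameters identified by periodicity drift out of the range where local $C^0$-convergence to $\tilde h$ is controlled.
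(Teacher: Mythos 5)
Your cycle $(1)\Rightarrow(3)\Rightarrow(2)\Rightarrow(1)$ plus $(2)\Leftrightarrow(4)$ matches the paper's organization, and three of the four implications are essentially the paper's arguments: $(2)\Rightarrow(1)$ by rescaling and passing to a limit in $\mathcal{ML}(S(\lambda))$, $(3)\Rightarrow(2)$ by quoting Lemma~\ref{limitquasi}, and $(4)\Rightarrow(2)$ by Loop Theorem surgery with control on intersection number (the paper uses Stallings' version and surgery at self-intersections; your regular-neighborhood phrasing is equivalent; note that the hypothesis that $\partial S(\lambda)$ is incompressible is what guarantees the resulting simple meridians are non-peripheral in $S(\lambda)$, which you need in order to feed them back into $(2)\Rightarrow(1)$).

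The genuine gap is in $(1)\Rightarrow(3)$. The statement you need is exactly the hard direction of Casson's criterion: \emph{any Hausdorff limit of meridians contains a homoclinic leaf}. The paper does not prove this from scratch; it cites \cite[Th\'eor\`eme B.1]{Lecuireplissage} (the result goes back to Otal's thesis), and the proof there is a substantial argument occupying an appendix. Your proposed route --- that the disks $\tilde D_i$ lie in uniformly bounded regions of $\tilde M$, forcing $d_{\tilde M}(\tilde h(-R),\tilde h(R))$ to be bounded uniformly in $R$ --- is not merely unjustified but false as stated: taking $\gamma_i=T_c^{i}(m)$ for a fixed meridian $m$ and a non-meridian curve $c$ crossing it, the lifts $\tilde\gamma_i$ travel a distance on the order of $i\cdot\length(c)$ along lifts of the axis of $c$ before closing up, so the diameters of the $\tilde D_i$ are unbounded even though the curves bound embedded disks. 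The correct conclusion is only that \emph{some} leaf of the Hausdorff limit admits parameters $s_i,t_i$ with $|s_i-t_i|\to\infty$ and $d_{\tilde M}(\tilde h(s_i),\tilde h(t_i))$ bounded, and extracting that leaf requires the full argument of \cite{Lecuireplissage}. The fix is simply to cite that result, as the paper does, rather than attempt the symmetric-parameter bound.
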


Note that when we say $\partial S(\lambda)$ is incompressible, we mean that no closed curve that is a {boundary component} of $S(\lambda)$ is nullhomotopic in $M$. This condition is mainly here to make statements and proofs easier. For instance, without this assumption our proof of (4) $\implies$ (2) may produce peripheral meridians, but peripheral meridians can't be used in (2) $\implies$ (1).

\begin {proof}
$(2) \implies (1)$.  Fix some transverse measure on $\lambda$. By $(2)$,  $$i(\gamma_i,\lambda) / \length(\gamma_i) \to 0,$$ so after passing to a subsequence  we can assume that $(\gamma_i)$  converges to a measured lamination $\mu$ in $S(\lambda)$ that  does not intersect $\lambda$  transversely. As $\lambda$ fills $S(\lambda)$, $\mu$ is supported on $\lambda $.

$(1) \implies (3)$.  After passing to a subsequence,  we can assume that $(\gamma_i)$ converges in the  Hausdorff topology to some lamination, which must then be  an extension of $\lambda $ by finitely many leaves. $(3)$ follows from an unpublished criterion of Casson, see Lecuire \cite[Th\'eor\`eme B.1]{Lecuireplissage} for a proof,  that states that any Hausdorff limit of meridians has a homoclinic leaf.

$(3) \implies (2)$. This is an immediate corollary of Lemma \ref{limitquasi}.

$(4) \iff (2)$.  The direction  $\Leftarrow$ is immediate, so  suppose $(\gamma_i)$  is a sequence of essential closed curves  in $S (\lambda) $  that are nullhomotopic  in $H $ and $i(\lambda,\gamma_i)\to 0$.  By Stallings' version of the Loop Theorem,  for each $i $ there is a meridian $\gamma_i'$ that is obtained from $\gamma_i$  by surgery at the self intersection points.  Such surgeries can only decrease the intersection number with $\lambda $, so $(2)$ follows.
\end {proof}

 We will also need the following criterion in the next section.

\begin{lemma}[Intrinsic limits of annuli]
 Suppose $\lambda \subset \partial_{\chi<0} M$ is a minimal lamination such that $S(\lambda)$ is compressible but $\partial S(\lambda)$ is incompressible, and that  there is a sequence $(A_i)$ of essential  embedded annuli  in $(M, S(\lambda))$  with $i (\partial A_i,\lambda)\to 0$.  Then $\lambda $ is an intrinsic limit of meridians.\label {annualilimit}
\end{lemma}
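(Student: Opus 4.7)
Plan: By Proposition~\ref{intrinsiclimits}, it suffices to produce a sequence of meridians $\gamma_i\subset S(\lambda)$ with $i(\gamma_i,\lambda)\to 0$. If $\lambda$ is a single closed geodesic, then compressibility of its annular neighborhood $S(\lambda)$ forces $\lambda$ itself to be a meridian and the constant sequence $\gamma_i:=\lambda$ works trivially. So assume $\lambda$ is non-closed; then its transverse measures are non-atomic. Fix a compression disk $D_0\subset M$ with $\partial D_0=m_0\subset S(\lambda)$, which exists since $S(\lambda)$ is compressible; note that $i(m_0,\lambda)>0$ because $\lambda$ fills $S(\lambda)$ and $\partial S(\lambda)$ is incompressible.

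I would construct $\gamma_i$ as the boundary of an embedded disk in $M$ obtained by surgering $A_i$ against $D_0$. After arranging $A_i\cap D_0$ in general position with intersection number minimized (circles of intersection being removable by innermost-disk arguments since $M$ is irreducible), $A_i\cap D_0$ consists of arcs with endpoints in the finite set $m_0\cap\partial A_i$. A careful outermost-arc surgery along an arc $\eta$ produces an embedded disk in $M$ whose boundary $\gamma_i\subset\partial M$ lies in $\partial A_i\cup m_0$ and is built from an arc $m'\subset m_0$ lying between two consecutive points of $m_0\cap\partial A_i$ together with either (Case I) a sub-arc $\eta'\subset\partial A_i$ when $\eta$ has both endpoints on a single boundary circle of $A_i$, or (Case II) two copies of $m'$ plus sub-arcs of $\alpha_i$ and $\beta_i$ when $\eta$'s endpoints lie on different circles. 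In either case, $i(\gamma_i,\lambda)\leq i(\partial A_i,\lambda)+2\,i(m',\lambda)$.

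The first term vanishes by hypothesis. For the second, observe that $\partial A_i$ converges projectively in $\PML(S(\lambda))$ to a measure $\mu_\infty$ supported on the filling lamination $\lambda$: essentiality of $A_i$ rules out $\partial A_i$ being peripheral in $S(\lambda)$ in the limit, so $\mu_\infty\neq 0$ and $\length(\partial A_i)\to\infty$. Since $\mu_\infty$ has positive geometric intersection with the fixed meridian $m_0$, we also have $|m_0\cap\partial A_i|=i(m_0,\partial A_i)\to\infty$; hence some arc $m'\subset m_0$ between consecutive intersection points has $\length(m')\leq 2\length(m_0)/|m_0\cap\partial A_i|\to 0$. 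Non-atomicity of the transverse measure on $\lambda$ implies uniform continuity of its mass in the sense that arcs of vanishing length have vanishing $\lambda$-mass, so $i(m',\lambda)\to 0$, and consequently $i(\gamma_i,\lambda)\to 0$.

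The main obstacle will be ensuring both that the surgery can be arranged to use this \emph{shortest} available arc $m'$ --- for which one may need to first clear intermediate arcs inside the chosen sub-disk of $D_0$ by a sequence of innermost-arc surgeries --- and that the resulting loop $\gamma_i$ is genuinely essential on $\partial M$ (hence a true meridian in $S(\lambda)$) rather than a nullhomotopic boundary. Both are standard case analyses, using essentiality of $A_i$, incompressibility of $\partial S(\lambda)$, and minimality of the intersection pattern.
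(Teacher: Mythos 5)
Your reduction to Proposition \ref{intrinsiclimits}(2) is fine, but the surgery construction has a genuine gap exactly at the point you flag as ``the main obstacle,'' and I don't believe it is a standard case analysis that can be deferred. An outermost arc $\eta$ of $A_i\cap D_0$ in $D_0$ hands you a \emph{specific} gap $m'\subset m_0$ between consecutive points of $m_0\cap\partial A_i$, namely one cut off by an outermost chord of the non-crossing matching that the arcs of $A_i\cap D_0$ induce on the points of $m_0\cap\partial A_i$. You have no control over which gap this is. If the matching is nested (the first point paired with the last, the second with the second-to-last, etc.), there are only two outermost chords no matter how large $|m_0\cap\partial A_i|$ is, and the two gaps they cut off can each carry a definite fraction of the total transverse mass $i(m_0,\lambda)$; the pigeonhole bound $i(m',\lambda)\le 2\,i(m_0,\lambda)/|m_0\cap\partial A_i|$ applies to the \emph{shortest} gap, which need not be an endpoint pair of any arc of $A_i\cap D_0$. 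Your proposed repair --- pick the shortest gap and ``clear intermediate arcs by innermost surgeries'' --- doesn't obviously work: each such surgery modifies $A_i$ (possibly cutting the annulus into a disk, or into pieces whose boundaries now contain long sub-arcs of $m_0$), so the hypothesis $i(\partial A_i,\lambda)\to 0$ is not preserved through the clearing process. A secondary issue: your claim that $|m_0\cap\partial A_i|\to\infty$ needs $\partial A_i$ to converge projectively to something supported on $\lambda$ rather than on $\partial S(\lambda)$, which does not follow from essentiality of $A_i$ alone.

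The paper avoids all of this with a one-line mechanism you should compare against: since $A_i$ is an \emph{embedded} essential annulus, the Dehn twist $T_{A_i}$ along it extends to a homeomorphism of $M$, hence carries meridians to meridians. Fixing one meridian $m\subset S(\lambda)$ and taking $\gamma_i=T_{A_i}^{n_i}(m)$ with $n_i$ growing fast, one gets meridians with $i(\gamma_i,\lambda)/\length(\gamma_i)\to 0$, so a subsequence converges in $\PML(S(\lambda))$ to a measured lamination with zero intersection with $\lambda$, which must be supported on $\lambda$ because $\lambda$ fills $S(\lambda)$. This uses the annuli only through the induced homeomorphisms of $M$ and needs no surgery, no control of $A_i\cap D_0$, and no essentiality check for the resulting curves. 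If you want to salvage a surgery proof you would need an argument that some outermost gap has small $\lambda$-mass, and I do not see one.
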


\begin {proof}
Pick a meridian $m  \subset S(\lambda)$. For each $i$, let $T_i : M \longrightarrow M$ be the Dehn twist along the annulus $A_i$. Then for any sequence $n_i \in \BZ$, the curves $T_i^{n_i}(m)$ are meridians, and if $n_i$ grows sufficiently fast, then $$i(T_i^{n_i}(m),\lambda)/\length(T_i^{n_i}(m)) \to 0.$$ Hence, after passing to a subsequence $T_i^{n_i}(m)$ converges to a lamination $\lambda'$ supported in $S(\lambda)$ with zero intersection number with $\lambda$, implying $\lambda'$ and $\lambda$ have the same support, so $\lambda$  is an  intrinsic limit of meridians.
\end {proof}

\section {Limits of homoclinic rays}

In this section we characterize the laminations onto which pairs of disjoint mutually homoclinic rays can accumulate.

\begin{theorem}[Mutually homoclinic rays]\label {travelers}
Let $M$ be a compact orientable hyperbolizable $3$-manifold and equip $\partial_{\chi<0} M$ with an arbitrary hyperbolic metric. Let $h_\pm$ be two disjoint, mutually homoclinic simple geodesic rays on  $\partial_{\chi<0} M$ that accumulate onto (possibly equal) minimal laminations $\lambda_\pm$, and that the multicurve $\partial S(\lambda_\pm)$ is incompressible in $M$. Then one of the following holds:
\begin{enumerate}
\item one of $\lambda_+$ or $\lambda_-$ is an intrinsic limit of meridians,
\item $ h_+$ and $h_-$ are asymptotic on $\partial_{\chi<0} M$, and either 
\begin{enumerate}
	\item {any} two mutually homoclinic lifts $\tilde h_\pm$ to $\partial \tilde M$ are asymptotic on $\partial \tilde M$, or
	\item $\lambda:=\lambda_-=\lambda_+$ is a simple closed curve that is homotopic in $M$ to a nontrivial power $\gamma^n, \ n>1$ of some closed curve $\gamma$ in $M$,
\end{enumerate}

\item $h_\pm$ are not asymptotic on $\partial_{\chi<0} M$, and there is an essential (possibly nontrivial) interval bundle $B \subset M$ such that $\lambda_\pm$ each fill a component of $\partial_H B$, and $\lambda_\pm$ are essentially homotopic through $B$, as in \S \ref{laminterval}.
\end{enumerate}
\end{theorem}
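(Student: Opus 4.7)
The plan is to first dispose of alternative (1) and then split on whether some pair of mutually homoclinic lifts of $h_\pm$ to $\partial\tilde M$ can be placed in a common lift of $S(\lambda_\pm)$. Assume $\lambda_\pm$ are not intrinsic limits of meridians, so by Lemma~\ref{limitquasi} every lift of $h_\pm$ to $\partial\tilde M$ is an $\tilde M$-quasi-geodesic. Fix lifts $\tilde h_\pm$ witnessing the mutual homoclinicity of $h_\pm$, let $\tilde S_\pm\subset\partial\tilde M$ be the lifts of $S(\lambda_\pm)$ containing them, and let $\Gamma_\pm$, $\Lambda_\pm$ be the corresponding stabilizers and limit sets. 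The space $\tilde M$, being a convex subset of $\BH^3$ with an equivariant family of horoballs removed, is Gromov hyperbolic; the standard fellow-travelling lemma for quasi-geodesic rays then forces $\tilde h_\pm$ to converge to a common boundary point $\xi\in\Lambda_+\cap\Lambda_-$.

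Suppose first that $\tilde S_+=\tilde S_-$, so that $S(\lambda_+)=S(\lambda_-)$. The ``moreover'' clause of Fact~\ref{incompressible-qg} then forces this particular pair $\tilde h_\pm$ to be asymptotic on $\partial\tilde M$; projecting, $h_+$ and $h_-$ are asymptotic on $\partial_{\chi<0}M$ and $\lambda:=\lambda_+=\lambda_-$. If every pair of mutually homoclinic lifts of $(h_+,h_-)$ likewise sits in a common lift of $S(\lambda)$, Fact~\ref{incompressible-qg} gives (2a). Otherwise some pair $\tilde h'_\pm$ of mutually homoclinic lifts lies in distinct lifts $\tilde S'_\pm$ of $S(\lambda)$; the first paragraph together with Theorem~\ref{windowsthm} shows that $\Delta=\Gamma(\tilde S'_+)\cap\Gamma(\tilde S'_-)$ is nontrivial and its limit set contains the common endpoint of $\tilde h'_\pm$. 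Since $\tilde h'_\pm$ spiral onto lifts of $\lambda$, that endpoint must be an endpoint of a leaf of $\tilde\lambda$, so $\lambda$ is a simple closed curve and a conjugate of $[\lambda]$ lies in $\Delta$. Theorem~\ref{charsubthm} combined with Fact~\ref{comps of char} then places $\Delta$ inside the fundamental group of a solid-torus or thickened-torus component of the characteristic submanifold of $(M,S(\lambda))$; the fact that two distinct lifts of $S(\lambda)$ both meet the horizontal boundary of this component forces $\lambda$ to wrap $n>1$ times around the core $\gamma$, which is (2b).

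Suppose instead that $\tilde S_+\neq\tilde S_-$. Then $h_+$ and $h_-$ cannot be asymptotic on $\partial_{\chi<0}M$, for otherwise some pair of their lifts would be asymptotic on $\partial\tilde M$ and so lie in a common $\tilde S$. Theorem~\ref{windowsthm} applied to $\tilde S_\pm$ produces a nontrivial $\Delta=\Gamma_+\cap\Gamma_-$, (possibly degenerate) subsurfaces with geodesic boundary $C_\pm\subset S(\lambda_\pm)$ with $\pi_1 C_\pm$ corresponding to $\Delta$, and an essential homotopy from $C_-$ to $C_+$ in $M$. I then upgrade the fellow-travelling of the single pair $\tilde h_\pm$ to an essential homotopy between the whole laminations: because $h_\pm$ accumulate onto $\lambda_\pm$, long subarcs of $\tilde h_\pm$ closely track arcs of leaves of $\tilde\lambda_\pm$, and the bounded $\tilde M$-tracks between $\tilde h_\pm$ persist (via Arzel\`a--Ascoli and Hausdorff compactness of laminations) to yield leaves of $\tilde\lambda_+$ and $\tilde\lambda_-$ at bounded $\tilde M$-distance sharing endpoints in $\partial_\infty\tilde M$. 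Passing to $\Delta$-equivariant limits produces an essential homotopy from $\lambda_-$ to $\lambda_+$ in $(M,\partial M)$, and Proposition~\ref{intervalbundle} then supplies the essential interval bundle $B$ demanded by (3).

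The main obstacle is the upgrade in the last case, from fellow-travelling of one pair of rays to a bounded homotopy between the entire minimal laminations $\tilde\lambda_\pm$; this will hinge on combining the minimality of $\lambda_\pm$, the quasi-geodesic property supplied by Lemma~\ref{limitquasi} applied to leaves of $\tilde\lambda_\pm$, and the $\Delta$-equivariance implicit in Theorem~\ref{windowsthm}. A secondary subtlety lies inside the first case: distinguishing (2b) from (2a) requires the characteristic-submanifold theory to recognize that multiple non-asymptotic mutually homoclinic lifts can occur only when $\lambda$ is a proper power in $\pi_1 M$.
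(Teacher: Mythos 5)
Your reduction of case (1) via Lemma~\ref{limitquasi} and your treatment of the ``distinct lifts'' branch (Theorem~\ref{windowsthm} producing $C_\pm$, then upgrading to an essential homotopy of the laminations and invoking Proposition~\ref{intervalbundle}) track the paper's argument for the incompressible case reasonably well, and your use of the characteristic submanifold to isolate alternative (2)(b) is in the right spirit. But there is a genuine gap in your first branch, and it is exactly where the bulk of the actual proof lives. When $\tilde S_+=\tilde S_-$ you invoke the ``moreover'' clause of Fact~\ref{incompressible-qg} to conclude that $\tilde h_\pm$ are asymptotic on $\partial\tilde M$. That clause requires $S(\lambda)$ to be \emph{incompressible}: its proof uses that the lift $\tilde S$ is simply connected, so that bounded intrinsic distance forces asymptoticity. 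If $S(\lambda)$ is compressible, the component $\tilde S$ of the preimage is a quasi-isometrically embedded but non--simply-connected hyperbolic surface, and two mutually homoclinic rays in it need not be asymptotic. Assuming ``$\lambda$ is not an intrinsic limit of meridians'' does not rescue this: Lemma~\ref{limitquasi} only makes each individual lift a quasi-geodesic of $\tilde M$; it does not make the pair asymptotic in the intrinsic metric of $\partial\tilde M$. Handling precisely this configuration ($\lambda_-=\lambda_+$ not a simple closed curve, $S(\lambda)$ compressible) is the paper's case (D): one puts $\lambda$ in tight position with respect to a meridian $m$, proves a uniform lower bound $\epsilon$ on the distances $d_i$ along $m$ between corresponding intersection points of the two rays unless they are asymptotic, establishes an unlinking property for these intersection points using the planarity of $cl(\partial\tilde M)$, and then runs a delicate surgery argument producing embedded essential annuli with small intersection number with $\lambda$, so that Lemma~\ref{annualilimit} forces $\lambda$ to be an intrinsic limit of meridians after all. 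None of this is visible in your proposal.

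Two secondary issues. First, in the ``distinct lifts'' branch you apply Theorem~\ref{windowsthm} without checking that $S(\lambda_\pm)$ are incompressible, which that theorem requires; the paper's case (B) shows separately that if $\lambda_-\neq\lambda_+$ and one of $S(\lambda_\pm)$ is compressible, then that lamination is an intrinsic limit of meridians (again via a tight-position argument, not via Lemma~\ref{limitquasi} alone). Second, in distinguishing (2)(b) from (2)(a) your inference ``the common endpoint lies in $\Lambda_\Delta$ and $\tilde h'_\pm$ spiral onto lifts of $\lambda$, hence $\lambda$ is a simple closed curve'' is too quick: a point of $\Lambda_\Delta$ need not be fixed by a nontrivial element of $\Delta$, and a non-closed minimal lamination has many leaf-endpoints in the limit set. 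The paper instead shows that the deck transformation $\gamma$ carrying one lift to the other satisfies $\gamma^2\in\Delta$, so the rays are asymptotic to the axis of $\gamma^2$ and hence spiral onto a closed geodesic, and then identifies $\gamma$ as a root of the holonomy of $\lambda$ to obtain (2)(b).
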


The proof of Theorem \ref{travelers} is given in \S \ref{travelerspf}.

\medskip

One can construct examples of mutually homoclinic rays in each of cases (1)--(3). For concreteness, suppose that $M$ is a handlebody.  For (1), pick two meridians $\lambda_-,\lambda_+$ on $M$ and let $h_\pm$ spiral onto $m_\pm$. One can also produce similar examples by letting $\lambda_\pm$ be arbitrary laminations in disjoint subsurfaces $S(\lambda_\pm)$ that are spheres with at least 4 boundary components, all of which are compressible in $M$, and letting $h_\pm$ accumulate onto $\lambda_\pm$. For (2) (a), take $\lambda$ to be any simple closed curve on $\partial M$ that is essential in $M$ but has no nontrivial roots in $\pi_1 M$, and let $h_\pm$ spiral around $\lambda$ in the same direction. We'll discuss 2 (b)  in Remark \ref{21remark}.  For (3), write $M = S \times [-1,1]$ where $S$ is a surface with boundary, let $\lambda$ be a lamination on $S$, and let $h_\pm$ be corresponding leaves of $\lambda_\pm := \lambda \times \{\pm 1\}$. Then $\tilde M \cong \tilde S \times [-1,1]$, so there are lifts of $h_\pm$ that are mutually homoclinic. One can also construct similar examples of (3) where the interval bundle $B$ is twisted. 

In case (1), we expect it is possible that $S(\lambda_-)$ is incompressible, say, while $\lambda_+$ is an intrinsic limit of meridians. For instance, suppose $C$ is a compression body with connected, genus-at-least-two interior boundary $\partial_- C$, and exterior boundary $\partial_+C$. Let $f : C \longrightarrow C$ be a homeomorphism such that $f|_{\partial_+ C}$ and $f|_{\partial_- C}$ are both pseudo-Anosov, with attracting laminations $\lambda_+,\lambda_-$, respectively. We expect that there are rays $\ell_\pm \subset \lambda_\pm$ that are mutually homoclinic. But $\lambda_+$ is an intrinsic limit of meridians, while $S(\lambda_-)$ is incompressible.

The assumption that $\partial S(\lambda_\pm)$ is incompressible is necessary in Theorem \ref{travelers}. For instance, suppose $M$ is a compression body with exterior boundary a genus $3$ surface $S$, where the only meridian on $S$ is a single separating curve $\gamma$. Let $T$ be the component of $S \setminus \gamma$ that is a punctured genus $2$ surface. Then there are distinct minimal geodesic laminations $\lambda,\lambda' \subset T$, each of which fills $T$, that are properly homotopic in $M$: just take distinct laminations that are identified when we cap off the puncture of $T$ to get a closed genus $2$ surface. Corresponding ends of corresponding leaves of $\lambda,\lambda'$ are mutually homoclinic rays that accumulate onto $\lambda,\lambda'$, respectively, but none of (2)--(3) hold. One could write down a version of Theorem \ref{travelers} that omits the assumption that $\partial S(\lambda_\pm)$ is incompressible, but the conclusion would be relative to capping off $S(\lambda_\pm)$, and the statement would be more complicated.

\begin{figure*}
\centering
\includegraphics{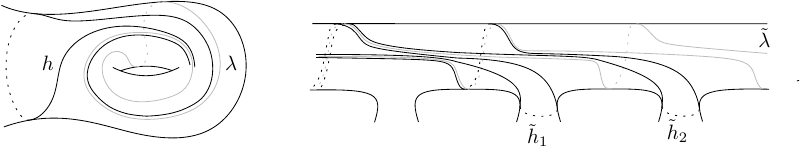}
\caption{An example of (2) (b) in Theorem \ref{travelers}, see Remark \ref{21remark}.}\label{case5}
\end{figure*}

\begin{remark} \label{21remark}  The reader may be wondering about (2) (b) in Theorem \ref{travelers}, and how it differs from (2) (a). A relevant example of two asymptotic rays that have mutually homoclinic nonasymptotic lifts to $\partial \tilde M$ is given in Figure~\ref{case5}. On the left we have a solid torus that is a boundary-connect-summand of $M$, which (say) is a handlebody. The biinfinite geodesic $h$ is homoclinic and its two ends are mutually homoclinic rays that both spiral onto a simple closed curve $\lambda$, the $(2,1)$-curve on the solid torus. Then $\lambda$ is homotopic to the square of the core curve of the solid torus. Although the two ends of $h$ are asymptotic on $\partial M$, any lift $\tilde h $ in $\partial \tilde M$ will have ends that are mutually homoclinic, but nonasymptotic. On the right, we have drawn the preimage $\tilde \lambda$ of $\lambda$, and two lifts $\tilde h_1,\tilde h_2$ of $h$. Note that one end of $\tilde h_1$ is asymptotic to an end of $\tilde h_2$. 

When $M$ is a compression body, Casson-Gordon prove in \cite[Theorem 4.1]{casson1987reducing} that any simple closed curve $\lambda\subset \partial M$ that has a nontrivial root in $\pi_1 M$ lies on the boundary of a solid torus that is a boundary connect summand of $M$, exactly as in Figure \ref{case5}. When $M$ has incompressible boundary, such $\lambda$ come from components of the characteristic submanifold of $M$, see \S \ref{sec:characteristic}, that are either solid tori or twisted interval bundles over nonorientable surfaces.
\end{remark}

Here is a slightly more refined version of Theorem \ref{travelers} that applies to homoclinic biinfinite geodesics on $\partial_{\chi < 0} M$.

\begin{corollary}[Homoclinic biinfinite geodesics]\label{travelers-biinfinite}
Suppose that $M$ is as in Theorem \ref{travelers}, that $h $ is a homoclinic biinfinite simple geodesic on some component $S \subset \partial_{\chi < 0} M$, that $h_\pm$ are the two ends of $h$, that $h_\pm$ limit onto $\lambda_\pm$, and that $\partial S(\lambda_\pm)$  is incompressible in $M$. 

Then one of (1)--(3) in Theorem \ref{travelers} holds. Moreover, in case (2), if $\lambda:=\lambda_\pm$ is not an intrinsic limit of meridians then either
\begin{enumerate}
\item[(i)] after reparametrizing $h$, there is some $s$ such that $h(-s)$ and $h(s)$ are joined by a geodesic segment $c$   with $h \cap int(c) =\emptyset$,  such that $c$, $ h |_{(-\infty,-s)} $ and $ h |_{[s,\infty)} $ bound an embedded geodesic triangle $\Delta \subset S$ with one ideal vertex, and  $c \cup h([-s,s])$ is a meridian in $M$, or
\item[(ii)] $\lambda$  is a simple closed curve on $S$, the two ends of $h$ spiral around $\lambda$ in the same direction, and any neighborhood of the union $h \cup \lambda \subset S$ contains a meridian. 
\end{enumerate}
And in case (3), we can choose the interval bundle $B$ such that $h$ contains a subarc $\alpha \subset h$ that is a compression arc for $B$.
\end{corollary}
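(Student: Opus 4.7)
The plan is to apply Theorem~\ref{travelers} to the two ends $h_+,h_-$ of $h$ and then refine each of its three conclusions into the stronger forms required by the corollary. Since $h$ is homoclinic and biinfinite, either one of the two rays $h_\pm$ is itself homoclinic, in which case Lemma~\ref{limitquasi} immediately yields that the corresponding $\lambda_\pm$ is an intrinsic limit of meridians (conclusion (1)), or the two disjoint rays $h_\pm$ are mutually homoclinic on $\partial M$ and Theorem~\ref{travelers} applies directly.

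In case (3), the theorem produces an essential interval bundle $B$ with $\lambda_\pm$ filling components of $\partial_H B$ and essentially homotopic through $B$. To upgrade this to the assertion that $h$ itself contains a compression arc, I would lift $B$ to a component $\tilde B\subset\tilde M$ and use the essential homotopy through $\tilde B$ to identify the endpoint sets of lifts $\tilde\lambda_\pm$ on $\partial\BH^3$. The rays $\tilde h_\pm$ accumulate onto $\tilde\lambda_\pm$ inside the two horizontal boundary components of $\tilde B$, and the bounded $\tilde M$-distance between them forces the intermediate segment of $\tilde h$ joining the two accumulation regimes to stay close to a vertical fiber of $\tilde B$. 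Projecting, a subarc $\alpha\subset h$ with endpoints on $\partial(\partial_H B)$ is then homotopic rel endpoints in $M$ to a fiber of $B$, hence is a compression arc.

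In case (2), I would assume that $\lambda:=\lambda_+=\lambda_-$ is not an intrinsic limit of meridians and split by whether $\lambda$ is a simple closed curve around which the two ends of $h$ spiral in the same direction. In that subcase, Remark~\ref{21remark} together with the Casson--Gordon description puts $\lambda$ on the boundary of a solid torus that is a boundary-connect-summand of $M$ or on a twisted interval-bundle component of the characteristic submanifold (see \S\ref{sec:characteristic}); twisting appropriately in any neighborhood of $h\cup\lambda$ then yields meridians, giving conclusion (ii). Otherwise, the asymptotic lifts $\tilde h_+$ and $g\cdot\tilde h_-$ supplied by (2)(a) of Theorem~\ref{travelers} share a common endpoint $\xi\in\partial\tilde S$ for some deck transformation $g\in\pi_1 S$. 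For large $s$ the geodesic segment $\tilde c$ in $\tilde S$ from $\tilde h(s)$ to $g\cdot\tilde h(-s)$ projects to a segment $c\subset S$, and the closed curve $c\cup h([-s,s])$ represents the element $g\in\pi_1 S$. This $g$ is trivial in $\pi_1 M$ (so the loop is a meridian) because the homoclinic data of $\tilde h$ provides a bounded-length path in $\tilde M$ witnessing this triviality, and the hypothesis that $\lambda$ is not an intrinsic limit of meridians rules out the degenerate alternatives. Embeddedness of the ideal-vertex triangle $\Delta\subset S$ follows from standard convexity in $\tilde S$, giving conclusion (i).

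The hard part will be the verification in case (2) that the deck transformation $g$ is actually a meridian in $M$, rather than some other essential element of $\pi_1 S$. This is precisely where the non-intrinsic-limit hypothesis must be used in an essential way, coupled with a tight-position analysis in the spirit of Lemma~\ref{tightcuts}, to rule out the possibility that $c\cup h([-s,s])$ bounds a higher-genus compressible surface or is merely essential in $M$. A secondary subtlety is in case (3), where one must ensure that the compression arc lies inside $h$ itself rather than being merely homotopic into $h$; this requires precise control over where $\tilde h$ crosses the frontier of $\tilde B$ as it transits between the two horizontal boundary components.
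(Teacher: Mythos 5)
Your overall strategy --- reduce to Theorem \ref{travelers} via Lemma \ref{limitquasi} and then refine each of its three cases --- matches the paper, and the reduction itself is fine. But each refinement has a gap. In case (ii), your route through Remark \ref{21remark} and Casson--Gordon only applies when $\lambda$ has a nontrivial root in $\pi_1 M$ (i.e.\ case (2)(b)), whereas (ii) must also be proved when $\lambda$ is a primitive simple closed curve; and even in the rooted case, ``twisting appropriately'' does not obviously place a meridian inside an \emph{arbitrary} neighborhood $U$ of $h\cup\lambda$. The actual argument is a one-liner: $h$ is a homoclinic biinfinite geodesic contained in $U$, so $U$ cannot be incompressible by Fact \ref{incompressible-qg}, hence contains a meridian. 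In case (i), the step you flag as ``the hard part'' --- showing $c\cup h([-s,s])$ is nullhomotopic in $M$ --- does not need a tight-position analysis; it follows directly from the assertion in (2)(a) that \emph{any} two mutually homoclinic lifts are asymptotic \emph{on $\partial\tilde M$}. Applied to the two ends of a homoclinic lift $\tilde h\subset\partial\tilde M$, this says the ideal triangle closes up already in $\partial\tilde M$, so $\tilde c\cup\tilde h([-s,s])$ is a closed loop in $\tilde M$ and its projection is nullhomotopic in $M$. Your version, which works only at the level of $\BH^2\longrightarrow S$ and invokes a ``bounded-length path witnessing triviality,'' conflates bounded distance with nullhomotopy.

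In case (3), the claim that the middle segment of $\tilde h$ is ``forced to stay close to a vertical fiber of $\tilde B$'' is not an argument, and you yourself note that you have not controlled where $h$ crosses the frontier --- but that control is exactly the content of the proof, not a secondary subtlety. The paper's resolution: since the two ends of $h$ limit onto minimal laminations filling $S_\pm$ and $h$ is simple, $h$ meets $\partial S_-\cup\partial S_+$ at most twice (and cannot lie entirely in $\partial_H B$ by Fact \ref{incompressible-qg}), so $h$ decomposes as two rays in $S_\pm$ plus a single arc $\alpha\subset h$ with interior outside $\partial_H B$. Fact \ref{compressionfact} then gives a dichotomy: either $\alpha$ is a compression arc (done), or $\alpha$ has both endpoints on one component $c$ of $\partial(\partial_H B)$ and is homotopic rel endpoints in $M$ into $c$; in the latter case, replacing $\alpha$ by the corresponding arc of $c$ produces a homoclinic path inside the incompressible surface $\partial_H B$, whose ends would then be asymptotic by Fact \ref{incompressible-qg}, contradicting the non-asymptotic hypothesis of case (3). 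Your sketch omits this second alternative entirely, and without ruling it out you cannot conclude that the subarc of $h$ you identify is a compression arc.
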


\begin {figure}
\centering
\includegraphics{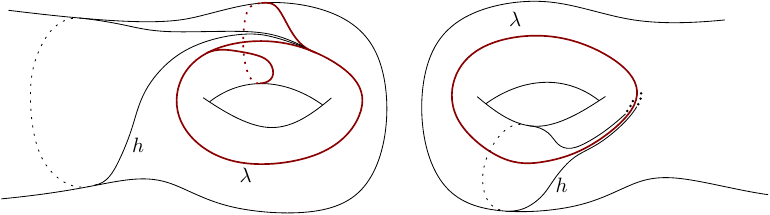}
\caption {Homoclinic geodesics $h$ as in cases (i) and (ii) in Corollary \ref{travelers-biinfinite}, respectively.}
\label {asymptoticfig}
\end {figure}

\begin{proof}
Let $\tilde h$ be a homoclinic lift of $h$ on $\partial \tilde M$. By Lemma \ref{limitquasi}, either one of $\lambda_\pm$ is an intrinsic limit of meridians in $M$, in which case we're in case (1) and are done, or both ends of $\tilde h$ are quasi-geodesic in $\tilde M$. Since $\tilde h$ is homoclinic, it follows that its two ends are  mutually homoclinic, so we're in the setting of Theorem~\ref{travelers} and one of (2)-(3) holds. 

\medskip

Assume we're in case (2) of Theorem~\ref{travelers}, and set $\lambda:=\lambda_\pm$. Assume first that $\lambda:=\lambda_\pm$ is a simple closed curve.  Since the two ends of $h$ are asymptotic, they spiral around $\lambda$ in the same direction. Let $U$ be a neighborhood of $h \cup \lambda $ on $\partial_{\chi<0} M$. Then $h$ is a homoclinic geodesic contained in $U$, so Lemma \ref{incompressible-qg} implies that $U$ is compressible as desired in (ii).

Now suppose that $\lambda$ is \emph{not} a simple closed curve, in which case we're in case (2) (a) of Theorem~\ref{travelers}. We show (i) holds. Let's start by constructing the desired geodesic triangle. Parametrize $h$, pick a universal covering map $\BH^2 \longrightarrow S$ and lift $h$ to a $\hat h$  in $\BH^2$, and let $$\xi = \lim_{t\to +\infty} \hat h(t) \in \partial_\infty \BH^2.$$ Since the two ends of $h$ are asymptotic on $S$, there is a deck transformation  $\gamma : \BH^2 \longrightarrow \BH^2$ such that 
$\xi = \lim_{t\to -\infty } \gamma \circ \hat h(t).$ It follows that if we use a particular arc-length parametrization of $h$, we may assume that for each $t \in \BR$, the points $\hat h(t),\gamma \circ \hat h(-t)$ lie on a common horocycle tangent to $\xi$. Fix some large $s$ such that the geodesic segment $\hat c$ joining $\hat h(s)$ and $\gamma  \circ \hat h(-s)$ is shorter than the injectivity radius of $S$, and therefore projects to a simple geodesic segment $c$ in $S$.

Let $\hat \Delta \subset \BH^2$ be the triangle bounded by $\hat c$ and the two rays $\hat h([s,\infty))$ and $\gamma \circ \hat h((-\infty,s])$.  Let $g : \BH^2 \longrightarrow \BH^2$ be a deck transformation. We claim that $g \circ \hat h(\BR) \cap int(\hat \Delta) = \emptyset$. If not, then since $\hat \Delta$ has geodesic sides, two of which are disjoint from $g \circ \hat h$, it follows that one of the two endpoints of $g \circ \hat h$ is $\xi$. If it's the positive endpoint, then $g$ fixes $\xi$, and the axis of $g$ projects to a (simple) closed curve on $S$, around which the two ends of $h$ spiral, contradicting that $\lambda$ isn't a simple closed curve. If the negative endpoint of $g \circ \hat h$ is $\xi$, then $g \circ \gamma^{-1}$ fixes $\xi$ and we get a similar contradiction.

Next, we claim that we have $g(int(\hat \Delta)) \cap int(\hat \Delta)=\emptyset$ as long as $g\neq id$. Suppose that for some $g\neq id$ the intersection is nonempty. Then  $g(\xi) \neq \xi$, since otherwise we have a contradiction as in the previous paragraph. The previous paragraph implies that the sides of the triangles $g(\hat \Delta) , \hat \Delta$ that are lifts of rays of $h$ do not intersect the interior of the other triangle. So, the only way the interiors of $g(\hat \Delta) , \hat \Delta$ can intersect is if $\hat c$ and $g(\hat c)$ intersect. However, this does not happen since we chose $s$ large enough so that $\hat c$ projects to a simple geodesic segment in $S$.

The previous two paragraphs imply that $\hat \Delta$ projects to an embedded geodesic triangle $\Delta$ in $S$ whose interior is disjoint from $h$, as desired in (i). By construction, $c$ and $h([-s,s])$ are simple geodesic segments and, since $g \circ \hat h(\BR) \cap int(\hat \Delta) = \emptyset$ for any $g\neq id$, they are disjoint. It follows that $c \cup h([-s,s])$ is an essential simple closed curve on $S$. Note that $c \cup h([-s,s])$ is an essential simple closed curve on $S$, since it is the concatenation of two geodesic segments with disjoint interiors. We need to show it is nullhomotopic in $M$. Now, if $\tilde h$ is a lift of $h$ to $\partial \tilde M$, its ends are mutually homoclinic, and hence are asymptotic on $\partial \tilde M$ by the assertion in case (2) of Theorem \ref{travelers}. Therefore, after choosing compatible lifts, the projection $\hat \Delta \longrightarrow
\Delta$ factors through a geodesic triangle $\tilde \Delta \subset \partial \tilde M$ bounded by $\tilde h([s,\infty))$, $\tilde h((-\infty,-s])$ and a geodesic segment $\tilde c$. The curve $c \cup h([-s,s])$ is the projection of the closed curve $\tilde c \cup \tilde h([-s,s]) \subset \tilde M$,  and therefore is nullhomotopic in $M$.

\medskip

Now assume we are in case (3). Let $S_\pm$ be the components of $\partial_H B$ containing $\lambda_\pm$. We may assume that $h$ is in minimal position with respect to $\partial S_\pm$. Since $h$ is simple and the ends of $h$ limit onto minimal laminations that fill $S_\pm$, we have that $h$  intersects $\partial S_- \cup \partial S_+$ at most twice. Furthermore, in the case that $S_-=S_+$, the homoclinic geodesic $h$ cannot be contained entirely in the incompressible surface $S_\pm$, by Fact \ref{incompressible-qg}. So, $h$ is the concatenation of two rays in $S_\pm$ and an arc $\alpha$ such that $int(\alpha)$ lies outside $S_\pm$. 

Let $X\subset S$ be the union of $S_\pm$ and a regular neighborhood of $\alpha$. Since $h $ is homoclinic, there is a meridian on $X$ by Fact~\ref{incompressible-qg}. If the two endpoints of $\alpha$ lie on different boundary components of $\partial_H B$, then $\alpha$ is a compression arc for $B$ by Fact~\ref{compressionfact}. So, we may assume that the two endpoints of $\alpha$ lie on the same boundary component $c $ of $ \partial_H B$. Fact~\ref{compressionfact} then says that $\alpha$ is homotopic rel endpoints in $M$ to an arc of $c$. So, if we make a new path $h' \subset \partial_H B$ from $h$ by replacing $\alpha$ with that arc of $c$, then $h'$ is still homoclinic, so it cannot be boundedly homotopic to a geodesic in $\partial_H B$ by Fact~\ref{incompressible-qg}, which implies that its ends $h_\pm$ are asymptotic, a contradiction to the assumption in (3).
\end{proof}

\subsection{Proof of Theorem \ref{travelers}}
\label{travelerspf}
The proof proceeds in a few cases. As in Example \ref{explicitmetric}, we identify $M$ with the convex core $CC(N)$ of a geometrically finite hyperbolic $3$-manifold $N=\BH^3 / \Gamma$, and we identify the universal cover $\tilde M$ with the preimage of $CC(N)$ in $\BH^3$, which is the convex hull of the limit set of $\Gamma$. Note that the closure of $\tilde M$ in $\BH^3 \cup \partial \BH^3$ is a ball.

 There are four cases to consider:

\begin{enumerate}
	\item[(A)] Both $\lambda_\pm$ are simple closed curves. We show that either (1) or (2) holds.
	\item[(B)] Both $\lambda_\pm$ are distinct, in which case the surfaces $S(\lambda_\pm)$ are disjoint, but one of these surfaces is compressible, say $ S (\lambda_+) $. We show (1).
	\item[(C)] At least one of $\lambda_\pm$ is not a simple closed curve, and both $S(\lambda_\pm)$ are incompressible. We show (2) (a)  or (3) holds.
	\item[(D)] $\lambda_- = \lambda_+$, which is not a simple closed curve, and $S(\lambda_\pm)$ is compressible. We show either (1) or (2) (a) holds.
\end{enumerate}
(A) and (B) above are the easiest. Our proof in case (C) involves a hyperbolic geometric interpretation of the characteristic submanifold of a pair,  as discussed in \S 3 of 
\cite{lecuire2004structures} and in Walsh~\cite{Walshbumping}; our argument is a bit more complicated than theirs, since we have to deal with accidental parabolics.  In case (D), our argument adapts and fills some gaps in a surgery argument of Kleineidam--Souto \cite{Kleineidamalgebraic} and Lecuire~\cite[Appendix C]{Lecuireplissage}. 

\medskip

\noindent \it Proof of (A). \rm  Assume that both $\lambda_\pm$ are simple closed curves.  If one of $\lambda_\pm$ is a meridian, we are in case (1) and are done. So, we may assume that both $\lambda_\pm$ are incompressible in $M$. If $\lambda_-\neq \lambda_+$, then we are in case (3) by the Annulus Theorem. So we may assume the two curves are the same, and write $\lambda:=\lambda_\pm$. 

We claim that $h_\pm$ spiral around $\lambda$ in the same direction, so that they are asymptotic on $\partial M$. Suppose not, and pick mutually homoclinic lifts $\tilde h_\pm$ in $\tilde M$. Then $\tilde h_-$ and $\tilde h_+$ are asymptotic to lifts $\tilde \lambda$ and $ \alpha(\tilde \lambda)$ of $\lambda$, where $\alpha \in \Gamma $ is a deck transformation. Any lift of $\lambda$ is a quasi-geodesic in $\tilde M$, and hence in $\BH^3$, so $\tilde h_\pm$ are quasi-geodesic rays, and therefore have well-defined endpoints in $\partial \BH^3$, which must be the same since the two rays are mutually homoclinic. Since $h_\pm$ spiral around $\lambda$ in opposite directions, this means that $\alpha \in \Gamma$ takes one endpoint of $\tilde \lambda$ in $\partial \BH^3$ to the other endpoint of $\tilde \lambda$. Since $\tilde \lambda$ is stabilized by a loxodromic isometry in $\Gamma$, and $\Gamma$ is torsion-free and discrete, this is impossible.

Suppose we are not in case (2) (a), so there are mutually homoclinic lifts $\tilde h_\pm$ that are not asymptotic on $\partial \tilde M$. As in the previous paragraph, we may assume that $\tilde h_-$ and $\tilde h_+$ are asymptotic to lifts $\tilde \lambda$ and $\alpha(\tilde \lambda)$ for some deck transformation $\alpha \in \Gamma$. Since $\tilde h_\pm$ are not asymptotic, $\tilde \lambda \neq \alpha(\tilde \lambda)$. As before, $\alpha $ fixes the common endpoint of $\tilde h_\pm$ in $\partial \BH^3$, which is a fixed point of the cyclic group $\langle \beta \rangle \subset \Gamma$ of loxodromic isometries fixing $\tilde \lambda$. As $\Gamma$ is discrete and torsion-free, and $\alpha \not \in \langle \beta \rangle $, we have that $\alpha$ is a root of $\beta$ or $\beta^{-1}$ in $\Gamma$, and (2) (b) follows.

\medskip

\noindent \it Proof of (B). \rm  Suppose that $\lambda_\pm$ are distinct, in which case the surfaces $S(\lambda_\pm)$ are disjoint, but that one of these surfaces is compressible, say $ S (\lambda_+) $. We claim that $\lambda_+$ is an intrinsic limit of meridians, in which case (1) holds and we are done. If not, take a meridian $m \subset S(\lambda_+) $ and apply  Lemma \ref {tightcuts}. We obtain a new meridian  $m' \subset S(\lambda_+) $  such that $\lambda_+$ has no $m$-waves. Since $\lambda_+$ fills $S(\lambda_+)$ and the boundary components $\partial S(\lambda_\pm)$  are incompressible, it follows that $\lambda_+$ is in tight position with respect to $m$. So after possibly restricting the domains, $h_+$  is in tight position with respect to $m'$, while $h_-$  never intersects $m'$.  This  contradicts the fact that $h_\pm$  are mutually homoclinic, since if $\tilde h_\pm$ are homoclinic lifts in $\tilde M$, for large $t$ the point $\tilde h_+(t)$ is  separated from  the image of $\tilde h_-$ by  arbitrarily many lifts of $m'$. 

\medskip

\noindent \it Proof of (C). \rm  Assume that at least one of $\lambda_\pm$ is not a simple closed curve, and that $S_\pm := S(\lambda_\pm)$ are incompressible.  Note that $S_\pm$ are equal or have disjoint interiors. We want to prove that we're in case (2) or (3).  Lift $h_\pm$ to mutually homoclinic rays $\tilde h_\pm \subset \partial \tilde M$. Fact~\ref{incompressible-qg} implies that each inclusion $\tilde S_\pm \hookrightarrow \tilde M$ is a quasi-isometric embedding, so if $\tilde S_- = \tilde S_+$, then the two mutually homoclinic rays $\tilde h_\pm$ are actually asymptotic on $\partial \tilde M$. If this is true for all lifts $\tilde h_\pm$, we are in case (2) (a) and are done. So, we can assume below that $\tilde S_- \neq \tilde S_+$. Note that it may still be that $\lambda_-=\lambda_+$ and $S_-=S_+$.

Let $\Gamma_\pm \subset \Gamma$ be the stabilizer of $\tilde S_\pm$ and let $\Lambda_\pm \subset \partial\BH^3$ be the limit set of $\Gamma_\pm$. Since $\Gamma_\pm$ acts cocompactly on $\tilde S_\pm$, the inclusion $\tilde S_\pm \hookrightarrow \BH^3$ extends continously to a map $\partial_\infty \tilde S_\pm \longrightarrow \Lambda_\pm \subset \partial \BH^3$, by the main result of \cite{mj2017cannon}. In particular, $\tilde h_\pm$ have well defined endpoints in $\partial \BH^3$, and since they're mutually homoclinic, they have the same endpoint $\xi \in \Lambda_- \cap \Lambda_+ \subset \partial \BH^3$. 

We now apply Theorem \ref{windowsthm}. Since $\xi \in \Lambda_- \cap \Lambda_+$, using the notation of Theorem \ref{windowsthm}, the rays $\tilde h_\pm$ are either eventually contained in the convex hulls $\tilde C_\pm \subset \tilde S_\pm$, or are asymptotic onto their boundaries. But $\tilde C_\pm$ project to (possibly degenerate) generalized subsurfaces $C_\pm$ with geodesic boundary in $S_\pm$, and the rays $h_\pm$ limit onto filling laminations in $S_\pm$, so it follows that actually $C_\pm=S_\pm$, and that there is a homotopy from $S_-$ to $S_+$ in $M$ that is the projection of a homotopy from $\tilde S_-$ to $\tilde S_+$. Since one of $\lambda_\pm$ is not a simple closed curve, this means they are \emph{both} not simple closed curves and the (a priori degenerate) subsurfaces with geodesic boundary $S_\pm$ are not simple closed geodesics.

Let $S_\pm' \subset int(S_\pm)$ be obtained by deleting small collar neighborhoods of $\partial S_\pm$, so that $S_\pm'$ are both actually embedded, still contain $\lambda_\pm$, and are either disjoint or equal. Since $S_\pm'$ are incompressible and homotopic in $M$, Theorem~\ref{charsubthm} implies that they bound an essential interval bundle $B\subset M$. Moreover, the fact that the homotopy from $S_-$ to $S_+$ is the projection of a homotopy from $\tilde S_-$ to $\tilde S_+$ means that we can assume that there is a component $\tilde B \subset \tilde M$ of the preimage of $B$ that intersects $\partial \tilde M$ in $\tilde S_\pm'$. Note that $\tilde B$ is invariant under $\Delta =\Gamma_- \cap \Gamma_+$, since any element of $\Delta$ preserves $\tilde S_\pm'$, and hence $\tilde B$.

We claim that $\lambda_\pm$ are essentially homotopic through $B$. By Fact~\ref{homotopy involutoin}, it suffices to show that if $\sigma$ is the canonical involution of $B$, as described in  \S \ref{intervalbundle}, then $\sigma(\lambda_\pm) $ is isotopic to $\lambda_\mp$ on $S_\mp'$. Well, $\sigma$ lifts to a $\Delta$-equivariant involution $\tilde \sigma$ of $\tilde B$ that exchanges $\tilde S_-'$ and $\tilde S_+'$, where here $\Delta = \Gamma_-\cap \Gamma_+$. By equivariance, $\tilde \sigma$ extends continuously to the identity on $\Lambda_\Delta$, so in particular its extension fixes $\xi$, and hence $\tilde \sigma(h_\pm)$ is properly homotopic to $h_\mp$ on $S_\mp$, which implies $\tilde \sigma(\lambda_\pm)$ is isotopic to $\lambda_\mp$ as desired.

If $h_\pm$ are not asymptotic on $\partial M$, then we are in case (3) and are done. So, assume $h_\pm$ are asymptotic. Then there is some $\gamma \in \Gamma$ such that $\gamma(\tilde h_-) \subset \tilde S_+'$ and is asymptotic to $\tilde h_+$. This $\gamma$ fixes the endpoint $\xi\in\partial \BH^3$ of $\tilde h_\pm$. Moreover, $\gamma(\tilde B) $ is a component of the preimage of $B$ that contains $S_+'$, and therefore equals $\tilde B$. So, $\gamma$ exchanges $\tilde S_\pm'$, and therefore $\gamma^2 \in \Delta$. But then $\tilde h_\pm$ are asymptotic to the axes of $\gamma^2 \actson \tilde S_\pm$, implying that $ h_\pm$ accumulate onto simple closed curves in $\partial M$, contradicting our assumption in (C).

\medskip

\noindent \it Proof of (D). \rm Assume that $\lambda_-=\lambda_+$, write $\lambda=\lambda_\pm$ for brevity, assume that $\lambda$ is not a simple closed curve, and that $S(\lambda)$ is compressible.  We want to prove that  either $\lambda $ is an intrinsic limit of meridians, or $h_\pm$ are asymptotic, as are any pair of mutually homoclinic lifts $\tilde h_\pm$.

 If $\lambda$ is an intrinsic limit of meridians, we are done, so since $S(\lambda)$ is compressible with incompressible boundary, by Lemma \ref{tightcuts}  we can choose a meridian $m \subset S(\lambda)$ with respect to which $\lambda $ is in tight position.  Let $\tilde m$ be its  full preimage in $\partial \tilde M$, and let $\tilde h_\pm$ be \emph{any} pair of mutually homoclinic lifts in $\partial \tilde M$. Truncating if necessary, we can assume that $h_\pm$ are in tight position with respect to $m$, and hence the lifts $\tilde h_\pm$ are quasigeodesic rays in $\BH^3$, by Fact~\ref{tightquasi}.  Since they are mutually homoclinic, $\tilde h_-$ and $\tilde h_+$ converge to the same point $\xi \in \partial_\infty \BH^3$, and tightness further implies that after restricting to appropriate subrays, $\tilde h_-$ and $\tilde h_+$ intersect exactly the same components of $\tilde m$, in the same order. Reparametrizing, we have
$$\tilde h_\pm : [0,\infty) \longrightarrow  \partial \tilde M, \ \ \tilde h_+ (i), \tilde h_-(i) \in \tilde m_i, \ \forall i\in \BN,$$
where each $\tilde m_i$ is a component of $\tilde m$, and where $\tilde h_\pm(t) \not \in \tilde m$ when $t\not \in \BN$. Let $$d_i := d_{\tilde m} \big ( \tilde h_+ (i) , \tilde h_- (i) \big )$$ be the distance along $\tilde m$  between $\tilde h_+ (i)$ and $ \tilde h_-(i)$.

\medskip

\begin{claim}\label{case1}
There is some  uniform $\epsilon>0$,  independent of the particular chosen lifts $\tilde h_\pm$, such that either
\begin{enumerate}
	\item $\tilde h_\pm$ are asymptotic on $\partial \tilde M$, and hence $h_\pm$ are asymptotic on $\partial M$, or
\item $\liminf_i d_i \geq \epsilon$.
\end{enumerate}
\end{claim}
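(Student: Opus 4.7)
The plan is to prove the contrapositive: if $\liminf_i d_i$ can be made arbitrarily small by a choice of lifts, then $\tilde h_\pm$ must be asymptotic on $\partial \tilde M$, after which a standard compactness argument extracts the single uniform $\epsilon$. First I would exploit tight position together with accumulation onto $\lambda$: after truncation, the arcs of $\tilde h_\pm$ between consecutive lifts $\tilde m_i,\tilde m_{i+1}$ have length bounded by some $L$ and fall into finitely many $\Gamma$-orbits of homotopy classes rel $\tilde m$. For each such class $c$, the map sending the starting point on $\tilde m_i$ to the endpoint on $\tilde m_{i+1}$ along the geodesic arc in class $c$ is smooth with a locally uniform Lipschitz constant $K$, by compactness of the set of configurations modulo $\Gamma$.

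Next I would set up the compactness argument. Suppose for contradiction that the claim fails: for every $\epsilon>0$ there exist non-asymptotic mutually homoclinic lifts $\tilde h_\pm$ with $\liminf_i d_i<\epsilon$. Choose a sequence of such pairs and integers $i_k$ with $d_{i_k}\to 0$, and apply deck transformations $\gamma_k\in\Gamma$ so that $\gamma_k\tilde m_{i_k}=\tilde m_\ast$ is a fixed lift. By Arzela--Ascoli and the uniform length bound $L$, extract subsequential limits $\tilde\ell_\pm$ of the translated rays $\gamma_k\tilde h_\pm|_{[i_k,\infty)}$; since $d_{i_k}\to 0$, both limits start at a common point $y\in \tilde m_\ast$. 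Passing to a further subsequence, the sequence of homotopy classes of successive arcs for $\gamma_k\tilde h_+$ and for $\gamma_k\tilde h_-$ agree in each fixed position (they are forced to agree because the starting points on $\tilde m_\ast$ are arbitrarily close and the class depends continuously on the starting point), and both limit rays enter the same side of $\tilde m_\ast$ by mutual homoclinicity; hence $\tilde\ell_+$ and $\tilde\ell_-$ cross the same sequence of lifts of $\tilde m$ at coinciding points and therefore coincide as geodesic rays.

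Finally I would translate coincidence of the limits back into asymptoticity of the original rays. Locally uniform convergence $\gamma_k\tilde h_\pm\to \tilde\ell$ on each compact interval $[0,N]$ yields $d_{i_k+j}\to 0$ for every fixed $j$, and shows that $\tilde h_+$ and $\tilde h_-$ are Hausdorff-close on the segments $[i_k,i_k+N]$ for $k$ large. The main difficulty is extending this closeness on arbitrarily long \emph{initial} segments $[i_k,i_k+N_k]$ with $N_k\to\infty$ to a genuine asymptotic statement as $t\to\infty$: the Lipschitz bound $d_{i+1}\le K d_i$ could in principle amplify $d$ exponentially until it exceeds the threshold where the class stabilization argument breaks down. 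I expect the resolution uses simplicity of the rays (they cannot cross, so their relative order on successive $\tilde m_i$ is preserved) combined with the mutual homoclinicity to obtain a monotonicity statement preventing $d_i$ from growing past the stability threshold; alternatively, a diagonal extraction applied to the contradicting sequence should produce a single pair of lifts that is genuinely asymptotic on $\partial\tilde M$, directly contradicting the non-asymptotic hypothesis and yielding the uniform $\epsilon$.
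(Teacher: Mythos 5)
Your proposal takes a genuinely different route from the paper, and it has a real gap. The paper's proof of this claim leans entirely on the standing hypothesis of case (D) that $\lambda$ is \emph{not} an intrinsic limit of meridians: if $d_i,d_j<2d$ for suitable $i<j$, one concatenates $\tilde h_+([i,j])$, $\tilde h_-([i,j])$ and the two short arcs $b_i,b_j\subset\tilde m$ into a loop; the non-asymptotic hypothesis is used only to choose $j$ so that this loop is essential on $\partial\tilde M$, and then it projects to an essential curve in $S(\lambda)$ that is nullhomotopic in $M$ and has intersection number with $\lambda$ controlled by $d$. Proposition \ref{intrinsiclimits}~(4) supplies a uniform positive lower bound on such intersection numbers, and that lower bound is exactly where the uniform $\epsilon$ comes from. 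Your argument never invokes this hypothesis, which is a strong sign something is missing: when $\lambda$ \emph{is} an intrinsic limit of meridians, essential nullhomotopic curves with tiny intersection with $\lambda$ do exist, and there is no reason to expect the dichotomy to hold by soft compactness alone.

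Concretely, the gap is the one you flag yourself at the end. The Arzel\`a--Ascoli step only gives that $\gamma_k\tilde h_+$ and $\gamma_k\tilde h_-$ are close on windows $[i_k,i_k+N]$ for each fixed $N$; asymptoticity of the original pair requires $d_i\to 0$ (or boundedness) for \emph{all} large $i$, and nothing in the setup prevents $d_i$ from repeatedly dipping below $\epsilon$ and then growing again. Neither proposed fix closes this: a monotonicity statement for $d_i$ is not available (simplicity and the unlinking of crossings constrain the combinatorics but not the metric quantity $d_i$), and a diagonal extraction would at best produce a \emph{different} pair of mutually homoclinic rays that are asymptotic, whereas the claim must hold for the particular chosen lifts and, for the application, one needs the alternative "$\liminf_i d_i\ge\epsilon$" with $\epsilon$ independent of the pair. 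There is also a smaller unjustified step: two geodesic rays emanating from the same point of $\tilde m_\ast$ on the same side need not coincide unless their initial directions agree, and agreement of the homotopy classes of successive arcs rel $\tilde m$ does not force the crossing points (or directions) to coincide. The intersection-number argument of the paper avoids all of these issues at once.
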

\begin{proof}
	Let's  assume that $\tilde h_+$ and $\tilde h_-$ are not asymptotic on $\partial \tilde M$, and write $d = \liminf_i d_i $. Fix some transverse measure on $\lambda$. If $d$ is small, we will construct meridians $\gamma \subset S(\lambda)$ with very small intersection number with $\lambda$. Since $\lambda$ is not an intrinsic limit of meridians, there is some fixed lower bound for such intersection numbers, which will give a contradiction for small $d$.

Suppose $d$ is small and pick $0<<i<j$  such that $$d_i,d_j < 2d,$$
 let $b_i$ be the (unique) shortest path on $\tilde m$ from $\tilde h_-(i)$ to $\tilde h_+(i)$, and define $b_j$ similarly. Let $\tilde  \gamma_{ij}$ be the loop on $\partial \tilde M$ obtained by concatenating  the four segments $\tilde h_+([i,j]), \tilde h_-([i,j]), b_i $ and $b_j$ in the obvious way. 

 We first claim that after fixing $i$, it is possible to choose $j$ such that $\tilde \gamma_{ij}$ is homotopically essential on $\partial \tilde M$.  Assume not, let $\tilde S \subset \partial \tilde M$ be the component containing $\tilde h_\pm$, fix a universal covering map $$\BH^2 \longrightarrow \tilde S,$$ and lift the rays $\tilde h_\pm |_{[i,\infty)}$  to rays $$\mathfrak h_\pm : [i,\infty) \to \BH^2$$ in such a way that $b_i$ lifts to a segment connecting $\mathfrak h_-(i)$ to $\mathfrak h_+(i)$. Now, there are infinitely many $j>i$ with $d_j<2d$. For each such $j$, we know that $\tilde \gamma_{ij}$ is homotopically inessential on $\partial \tilde M$, so the points $\mathfrak h_-(j)$ to $\mathfrak h_+(j)$ are at most $2d$ away from each other in $\BH^2$. This gives a sequences of points exiting the rays $\mathfrak h_\pm$ that are always at most $2d$ apart, so $\mathfrak h_-$ is asymptotic to $\mathfrak h_+$. Hence, $\tilde h_-$ is asymptotic to $\tilde h_+$, contrary to our assumption.

We now fix large $i,j$ such that $d_i,d_j<2d$ and $\tilde \gamma:=\tilde \gamma_{ij}$ is homotopically essential on $\partial \tilde M$. Then $\tilde \gamma$ projects to a homotopially essential loop $\gamma \subset \partial M$ that is homotopically trivial in $M$.  Note that if $i,j$ are chosen large enough and $d$ is small, then $\gamma \subset S(\lambda)$.  Furthermore, since the segments $b_i,b_j$ are the only parts of $\tilde \gamma$ that intersect $\lambda$, and these segments have  hyperbolic length less than $2d$, the intersection number $i(\gamma,\lambda)$ is small when $d$ is small. (Recall that $\lambda $ is a minimal lamination that is not a simple closed curve, so no leaves have positive weight, and hence hyperbolic length can be compared to intersection number.) But $\lambda$  is not an intrinsic limit of meridians, so Proposition \ref{intrinsiclimits} (4) says that there is some positive lower bound for the intersection numbers of $\lambda$ with essential curves that are nullhomotopic in $M$. Hence, we get a contradiction if $d$ is small.
\end{proof}

\smallskip

Suppose we have two pairs $\{a,b\}$ and $\{c,d\}$ of points in $m$, all four of which are distinct. We say the two pairs are \emph{unlinked} in $m$ if in the induced cyclic ordering on $\{a,b,c,d\}\subset m$, $a$ is adjacent to $b$ and $c$ is adjacent to $d$, and we say that the two pairs are \emph {linked} otherwise.

\begin{claim}\label {unlinked}
If $i,j\in \BN$, $i< j$, then the pairs $\{h_+(i),h_-(i)\}$ and $\{h_+(j),h_-(j)\}$ are unlinked in $m$. \end{claim}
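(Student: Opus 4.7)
The plan is to argue by contradiction: suppose that for some $i<j$ the pairs $\{h_+(i),h_-(i)\}$ and $\{h_+(j),h_-(j)\}$ are linked on $m$. First I would establish the topology in $\partial\tilde M$. The lifts $\tilde h_\pm$ are disjoint embedded rays, because any intersection would project to an intersection of the disjoint rays $h_\pm\subset\partial M$. Since $m$ is a meridian, each component $\tilde m_k$ of its preimage is a simple closed curve bounding a compressing disk in $\tilde M$, and therefore separates the component of $\partial\tilde M$ containing it. Tightness plus the fact that $\tilde h_\pm$ are quasigeodesic rays (Fact~\ref{tightquasi}) both converging to the common ideal point $\xi\in\partial\BH^3$ forces $\tilde m_i$ and $\tilde m_j$ to be nested, with $\tilde m_j$ on the $\xi$-side of $\tilde m_i$, so they cobound an annulus $A\subset\partial\tilde M$. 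The disjoint arcs $\tilde h_+([i,j])$ and $\tilde h_-([i,j])$ are spanning arcs of $A$, dividing $A$ into two quadrilateral disks $P_1,P_2$.

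Next I would encode the linking combinatorially. Because $m$ is nullhomotopic in $M$, each projection $\pi|_{\tilde m_k}:\tilde m_k\to m$ is a homeomorphism of circles. For each $r\in\{1,2\}$, the boundary $\partial P_r$ comprises $\tilde h_+([i,j])$, an arc of $\tilde m_j$ projecting to an arc $\beta_j^r\subset m$ from $h_+(j)$ to $h_-(j)$, $\tilde h_-([i,j])^{-1}$, and an arc of $\tilde m_i$ projecting to an arc $\beta_i^r\subset m$ from $h_-(i)$ to $h_+(i)$; the collections $\{\beta_i^1,\beta_i^2\}$ and $\{\beta_j^1,\beta_j^2\}$ exhaust the respective complementary pairs in $m$. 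Because each $P_r$ is an embedded disk in $\tilde M$, the projected loop $\ell_r:=\pi(\partial P_r)$ is nullhomotopic in $M$. The linking assumption forces that for each $r$, both $\beta_i^r$ and $\beta_j^r$ must contain a point of the opposite pair in their interior.

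Finally, I would use the linking-induced crossings to extract meridians with vanishingly small $\lambda$-intersection. Each $h_\pm$, being a simple geodesic accumulating on the minimal lamination $\lambda$, is either a leaf of $\lambda$ or disjoint from it, so the transverse intersection $i(h_\pm([i,j]),\lambda)=0$ and hence $i(\ell_r,\lambda)=i(\beta_i^r,\lambda)+i(\beta_j^r,\lambda)$. Cutting $\ell_r$ at the forced interior crossings in $\beta_i^r,\beta_j^r$ produces shorter essential loops that remain nullhomotopic in $M$; iterating this surgery using all intermediate intersection points $h_\pm(k)$ for $i<k<j$, while invoking Claim~\ref{case1} to keep the distances $d_k$ bounded below by $\epsilon$, should yield essential curves $\gamma_{ij}\subset S(\lambda)$, each nullhomotopic in $M$, with $i(\gamma_{ij},\lambda)\to 0$ as $i,j\to\infty$. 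By Proposition~\ref{intrinsiclimits}(4) this would force $\lambda$ to be an intrinsic limit of meridians, contradicting the standing assumption. The main obstacle will be this last step: a naive surgery only bounds $i(\ell_r,\lambda)\leq i(m,\lambda)$, which is bounded but not small, so the iterative surgery has to be carried out carefully to drive the intersection numbers all the way to zero while preserving essentialness on $\partial M$ and nullhomotopy in $M$.
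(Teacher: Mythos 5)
Your setup is reasonable up to a point, but the contradiction mechanism at the end does not work, and the failure is not just a technical obstacle to be overcome by "careful iteration" --- it is structural. First, a single linked pair $i<j$ hands you only \emph{finitely many} closed curves $\ell_r$ (two, or finitely many after surgering at the finitely many intermediate points $h_\pm(k)$, $i<k<j$), whereas Proposition~\ref{intrinsiclimits}(4) requires a \emph{sequence} of essential nullhomotopic curves with $i(\cdot,\lambda)\to 0$. Worse, Claim~\ref{case1} works against you here rather than for you: in the regime where Claim~\ref{unlinked} is invoked we have $d_k\geq\epsilon$ for all $k$, so each of the arcs $\beta_i^r,\beta_j^r\subset m$ has length at least $\epsilon$, and since $\lambda$ is minimal and fills $S(\lambda)$ every subarc of $m$ of length $\geq\epsilon$ has intersection with $\lambda$ bounded \emph{below} by a positive constant. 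So $i(\ell_r,\lambda)$ is bounded away from $0$ no matter how you surger, and no contradiction with "$\lambda$ is not an intrinsic limit of meridians" can be extracted this way. Note also that the claim is asserted for \emph{every} pair $i<j$, so even a hypothetical "intersection numbers tend to zero as $i,j\to\infty$" argument would only give an eventual statement. A smaller point: $\tilde m_i$ and $\tilde m_j$ cobound an annulus only in the sphere $cl(\partial\tilde M)\subset\BH^3\cup\partial\BH^3$, not in $\partial\tilde M$ itself (the region between them in $\partial\tilde M$ contains the circles $\tilde m_k$, $i<k<j$, and much else); this is fixable but must be said, since the whole argument lives in the compactification.

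The paper's proof is purely a separation argument on the sphere $cl(\partial\tilde M)$ and never projects down to loops in $M$. The key device you are missing is the deck transformation $\gamma$ carrying $\tilde m_j$ to $\tilde m_i$: one compares the arcs $\alpha_\pm=\tilde h_\pm|_{[i,j]}$ with their translates $\beta_\pm=\gamma\circ\tilde h_\pm|_{[i,j]}$, so that all four relevant points sit on the single circle $\tilde m_i$, where linking in $m$ becomes linking on $\tilde m_i$. The argument then splits according to whether the four velocity vectors at times $i$ and $j$ point to the same side of $m$ or not. In the "opposite sides" case, the translated arcs $\beta_\pm$ land back in the annulus $A$ between $\tilde m_i$ and $\tilde m_j$, start on the circle $\gamma(\tilde m_i)$, which is trapped in one of the two rectangles of $A\setminus(\alpha_+\cup\alpha_-)$, and unlinking follows from pure topology. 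In the "same side" case one instead compactifies the full rays; linked endpoints of two disjoint arcs in a disk force an intersection, which can only occur at infinity, giving $\gamma(\xi)=\xi$; the contradiction is then that $h_+$ would spiral onto a simple closed curve (using the infinite family of translates $\gamma^k\circ\tilde h_+$ together with the argument of Claim~\ref{case1}), contradicting that $\lambda$ is not a simple closed curve. So the dynamical input enters only in the second case and in a quite different way from what you propose.
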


For an example where the pairs are \emph{linked}, see Figure \ref{spiral}.  The proof below works  in general whenever $h_\pm$ are simple geodesic rays on $\partial M$ in tight position with respect to $m$, where neither $h_+$ nor $h_-$ spirals onto a simple closed curve.

\begin{figure*}
	\centering
\includegraphics{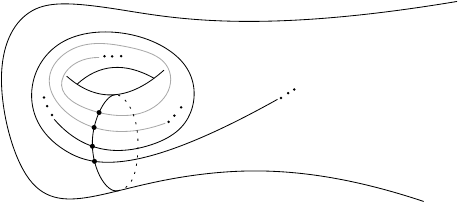}
\caption{Two rays spiraling onto a simple closed curve (which is not allowed below), where the points in Claim \ref{unlinked} are linked.}\label{spiral}
\end{figure*}

\begin {proof}
The essential observation used in the proof is that the closure $$cl( \partial \tilde M) \subset \BH^3 \cup \partial \BH^3$$ is homeomorphic to a sphere: indeed, the closure of $\tilde M$ in $\BH^3\cup \partial \BH^3$ is a ball, since $\tilde M \subset \BH^3$ is convex with nonempty interior, and the closure of the boundary is the boundary of the closure. We obtain the unlinking property above by exploiting separation properties of arcs and curves on $cl(\partial \tilde M)$.

Since $h_\pm$ are in tight position with respect to $m$, both lifts $\tilde h_\pm$ cross  $\tilde m_i$ exactly once. Since $\tilde h_\pm$ limit to the same point in $\partial \BH^3$, they must then cross $\tilde m_i$ in the same direction. In other words, the tangent vectors $h_+(i)',h_-(i)'$ point to the same side of $m$. The same statement holds for $j$. This allows us to break into the following two cases:
\begin{itemize}
\item[(a)] the arcs $h_\pm |_{[i,j]}$ start and end on the same side of $m$, i.e.\ the vectors $h_\pm(i)'$ point to the opposite side of $m$ as the vectors $h_\pm(j)'$, or  
\item[(b)] the arcs $h_\pm |_{[i,j]}$ start and end on different sides of $m$, i.e.\ all four velocity vectors $h_\pm(i)',h_\pm(j)'$ point to the same side of $m$,

\end{itemize}
see Figure \ref{sidesfig}.

\begin{figure}
\centering
\includegraphics{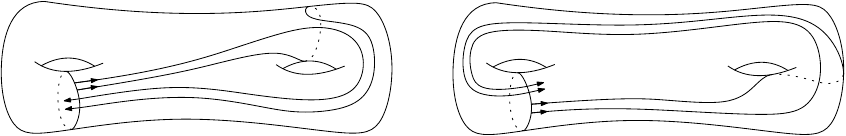}
\caption{The cases (a) and (b) in the proof of Claim \ref{unlinked}.}
\label{sidesfig}
\end{figure}
First, assume we're in case (a). Let 
$$\alpha_\pm := \tilde h_\pm |_{[i,j]},$$ which we regard as oriented arcs in $\partial \tilde M$ {starting} on $\tilde m_i$ and ending on $\tilde m_j$. Let $\gamma : \tilde M \longrightarrow \tilde M$ be the deck transformation taking $\tilde m_j$ to $\tilde m_i$. Then the arcs $$\beta_\pm :=\gamma \circ \tilde h_\pm |_{[i,j]}$$ start on $\gamma(\tilde m_i)$ and end on $\gamma(\tilde m_j)=\tilde m_i$, and since we're in case (a) they end on the \emph{same side} of $\tilde m_i$ as the arcs $\alpha_\pm$ start. Note that $\gamma(\tilde m_i) $ is not $ \tilde m_i$ or $\tilde m_j$. Indeed, if $\gamma(\tilde m_i)=\tilde m_i$ then we'd have $\gamma=id$, contradicting that $\gamma(\tilde m_j)= \tilde m_i$. And if $\gamma(\tilde m_i)=\tilde m_j$, then $\gamma^2$ would leave $\tilde m_i$ invariant, implying that $\gamma^2=id$, which is impossible since $\pi_1 M$ has no torsion.

We claim that \emph{the {interiors} of the arcs $\beta_\pm$ do not intersect $\tilde m_i$ or $\tilde m_j$, and the arcs $\alpha_\pm$ do not intersect $\gamma(\tilde m_i)$}. Indeed, the interiors of $\beta_\pm$ don't intersect $\tilde m_i$ because the arcs $\beta_\pm$ end on $\tilde m_i$ and intersect each component of $\tilde m$ at most once, by tight position. The interiors of $\beta_\pm$  don't intersect $\tilde m_j$ because any arc from $\tilde m_i$ to $\tilde m_j$ intersect at least $j-i+1$ components of $\tilde m$ (counting $\tilde m_i$ and $\tilde m_j$), while any proper subarc of $\beta_\pm$ intersects at most $j-i$ components of $\tilde m$. Here, for the $j-i+1$ bound we are using tight position of $h_\pm$, the definitions of $\tilde m_i,\tilde m_j$, and the fact that each component of $\tilde m$ separates $\partial \tilde M$. The fact that the arcs $\alpha_\pm$ don't intersect $\gamma(\tilde m_i)$ is similar: any arc from $m_i$ to $\gamma(\tilde m_i)$ must pass through at least $j-i+1$ components of $\tilde m$, while any proper subarc of $\alpha_\pm$ intersects at most $j-i$ components, and $\alpha_\pm$ do not end on $\gamma(\tilde m_i)\neq \tilde m_j$.

Let $A \subset cl(\partial \tilde M)\cong S^2$ be the annulus that is the closure of the component of $cl(\partial \tilde M) \setminus (\tilde m_i \cup \tilde m_j)$ that contains the side of $\tilde m_i$ on which the arcs $\alpha_\pm$ start and the arcs $\beta_\pm$ end. Then $\alpha_\pm$ are two disjoint arcs in $A$  that join $\tilde m_i $ to $\tilde m_j$, and therefore $\alpha_\pm$ separate $A$ into two rectangles.  The component $\gamma(\tilde m_i)$ on which the arcs $\beta_\pm$ start is contained in the interior of one of these two rectangles. Therefore, the two arcs $\beta_\pm$ must lie in the same component of $A \setminus (\alpha_+ \cup \alpha_-).$ Looking at endpoints, this means the pairs $\{\tilde h_+(i),\tilde h_-(i)\} $ and $\{\gamma \circ \tilde h_+(j),\gamma \circ \tilde h_-(j)\}$ are unlinked in $\tilde m_i$, and the claim follows.

\medskip

Now assume that we're in case (b). The curve $\tilde m_i$ separates $\partial\tilde M$, and we let $X\subset \partial\tilde M$ be the closure of the component of $\partial \tilde M \setminus \tilde m_i$ into which the velocity vectors $\tilde h_\pm'(i)$ and $(\gamma \circ \tilde h_\pm)'(j)$ all point. The closure $$ cl(X)  \subset \BH^3 \cup \partial \BH^3$$ is homeomorphic to a disk, since $cl(\partial \tilde M)$ is a sphere. As before, we let $\gamma : \tilde M \longrightarrow \tilde M$ be the deck transformation taking $\tilde m_j$ to $\tilde m_i$.  Then the rays $$\alpha_\pm := \tilde h_\pm([i,\infty)), \ \ \ \beta_\pm:=\gamma \circ \tilde h_\pm([j,\infty))$$  are all contained in $X$. Note that $\alpha_\pm$ both limit to a point $\xi\in \partial \BH^3$, while $\beta_\pm$ limit to $\gamma(\xi) \in \partial \BH^3$. 

The union $\alpha_-\cup \alpha_+$  compactifies to an arc in $cl(X)$, since the two rays limit to the same point in $\BH^3$. The same is true for $\beta_-\cup \beta_+$. \emph{Hoping for a contradiction, suppose that the points in the statement of the claim are linked.} Then the pairs of endpoints of $\alpha_- \cup \alpha_+$ and $\beta_- \cup \beta_+$ are also linked on $\tilde m_i = \partial cl(X)$. We now have two arcs on the disk $cl(X)$ with linked endpoints on $\partial cl(X)$, so the two arcs must intersect. As $\alpha_\pm,\beta_\pm$ are all disjoint, the only intersection can be on $\partial \BH^3$, so their endpoints at infinity must all agree, i.e.\ $\gamma(\xi)=\xi$.

 Since $\gamma(\xi)=\xi$, all the rays $ \gamma^k \circ \tilde h_+$ limit to $\xi$, where $k\in \BZ$. Hence, all these (quasi-geodesic) rays are pairwise mutually homoclinic, and for each pair $k,l$, the rays $ \gamma^k \circ \tilde h_+$ and $ \gamma^l \circ \tilde h_+$ eventually intersect the same components of $\tilde m$, in the same order, although their initial behavior may be different.  In analogy with the setup of Claim \ref{case1}, let $d_{k,l}$ be the liminf of the distances from $\gamma^k \circ \tilde h_+$ to $\gamma^l \circ \tilde h_+$ along the components of $\tilde m$ that they both intersect. 

We claim that there are $k,l$  such that $d_{k,l} < \epsilon$, where $\epsilon$ is the constant from Claim \ref{case1}. Indeed, for $N> \length(m)/\epsilon$, it is impossible to pack $N$ points at least $\epsilon$ apart in any component of $\tilde m$. So if we let $k$ range over a set $F \subset \BZ$ of size $N$, whenever a component of $\tilde m$ intersects all $\gamma^k \circ \tilde h_+, \ k\in S$, two such intersections must be within $\epsilon$ of each other. There are infinitely many such components of $\tilde m$ and $F$ is finite, so we can pick $k,l \in S$ such that $\gamma^k \circ \tilde h_+$ and $\gamma^l \circ \tilde h_+$ are within $\epsilon$ on infinitely many such components.

Finally, $ \gamma^k \circ \tilde h_+$ and $ \gamma^l \circ \tilde h_+$ are mutually homoclinic lifts of $\tilde h_+$, and $d_{k,l} < \epsilon$, so the exact same argument as in Claim \ref{case1}  shows that $ \gamma^k \circ \tilde h_+$ and $ \gamma^l \circ \tilde h_+$ are asymptotic on $\partial \tilde M$. It follows that $ h_+$ spirals onto a (simple) closed curve in $\partial M$ in the homotopy class of (a primitive root of) $\gamma^{l-k}$. (Indeed, $\gamma^{l-k}$ lifts to a deck transformation of the universal cover $\BH^2 \longrightarrow \partial M$, and the axis of this deck transformation is asymptotic to suitably chosen lifts of both $ \gamma^k \circ \tilde h_+$ and $ \gamma^l \circ \tilde h_+$.) This is a contradiction, though, since  $ h_+$ limits onto $\lambda$, which is not a simple closed curve.
\end {proof}

Assume now that our mutually homoclinic rays $\tilde h_\pm$ are not asymptotic on $ \partial \tilde M$, in which case we're in case (2) of the theorem and are done. By Claim~\ref{case1}, there is some $\epsilon>0$ such that $d_{\tilde m_i}( \tilde h_+(i),\tilde h_-(i)) \geq \epsilon$ for all $i$. We will show that  $\lambda $ is an intrinsic limit of annuli, in the sense of  Lemma \ref {annualilimit}, which says that then $\lambda$ is an intrinsic limit of meridians.

The proof is an adaptation and correction of a surgery argument of Lecuire \cite[Affirmation C.3]{Lecuireplissage}. As there are two gaps\footnote{The first gap is that the sentence \emph{``Quitte \`a extraire, la suite $(gh^{-1})^{2n} g(\tilde l^1)$ converge vers une g\'eod\'esique $\tilde  \gamma \subset p^{-1}(\alpha_1)$ dont la projection $l \subset \partial M$ est une courbe ferm\'e.''}\  at the end of the proof of Affirmation C.3 isn't adequately justified; this is fixed in Claim \ref{case1}. The second  is that the assumption $d(l_+^2(y_i),l_+^2(y_j))<\varepsilon'$  in the statement of Affirmation C.3 is never actually verified, and does not come trivially from a compactness argument.  This is fixed in Claim \ref{affclaim}.} in Lecuire's earlier argument, we give the proof in full detail below, without many citations of \cite{Lecuireplissage}.

\begin{claim} \label{affclaim}
Given $0<\delta<\epsilon$, there are choices of $i<j$  such that  either
\begin{enumerate}
\item[(I)] The points $h_+(i)$ and $h_+(j)$ bound a segment $I_+ \subset m$ of length less than $\delta$, and similarly with $-$ instead of $+$. The four velocity vectors $h_+'(i), h_+'(j), h_-'(i), h_-'(j)$ all point to the same side of $m$, and the four segments $h_+([i,j])$, $h_-([i,j])$, $I_+$ and $I_-$ have disjoint interiors. So, the curves $\gamma_\pm:= h_\pm([i,j]) \cup I_\pm \subset \partial M$ are simple and disjoint.
\item[(II)] The points $h_+(i)$ and $h_-(j)$ bound a segment $I_+ \subset m$ of length less than $\delta$, and  similarly the points $h_-(i)$ and $h_+(j)$ bound a segment $I_- \subset m$ of length less than $\delta$. The four velocity vectors $h_+'(i), h_+'(j), h_-'(i), h_-'(j)$ all point to the same side of $m$, and the four segments $h_+([i,j])$, $h_-([i,j])$, $I_-$ and $I_+$ have disjoint interiors. So, the curve $\gamma\subset \partial M$ obtained by concatenating all four segments is simple.

\end{enumerate}
\end{claim}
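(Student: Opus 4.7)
The plan is to combine pigeonhole on the transverse direction of $h_\pm$ at $m$ with compactness of the symmetric square $\mathrm{Sym}^2(m)$, and then verify the disjointness conditions using Claim \ref{unlinked}. The main technical obstacle will be excluding intermediate intersection points from the interiors of the short arcs $I_\pm$.

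For the common-side condition, I will recall from the proof of Claim \ref{unlinked} that for each individual $i$ the vectors $h_+'(i)$ and $h_-'(i)$ already point to the same side of $m$; since there are only two sides, pigeonhole yields an infinite subsequence on which this common side is fixed, so for any pair $(i,j)$ drawn from the subsequence all four vectors $h_\pm'(i), h_\pm'(j)$ lie on one side. For the closeness condition and case split, I will look at the unordered pairs $\Pi_i := \{h_+(i), h_-(i)\}$ in the compact space $\mathrm{Sym}^2(m)$ and extract a convergent subsequence $\Pi_{i_n} \to \Pi_\infty$. For any $\eta > 0$ and $n$ large, consecutive terms $\Pi_{i_n}, \Pi_{i_{n+1}}$ are $\eta$-close in $\mathrm{Sym}^2(m)$, and lifting the closeness back to $m \times m$ gives a choice of matching of the $\pm$-labels: the identity matching produces the configuration of case (I), and the swap matching produces case (II). Passing to a further sub-subsequence on which the matching is constant, choosing $\eta < \delta$, and letting $I_\pm$ be the short arc of $m$ between the matched endpoints guarantees $|I_\pm| < \delta$.

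Finally, for the disjointness: by Claim \ref{unlinked} the pairs $\{h_+(i), h_-(i)\}$ and $\{h_+(j), h_-(j)\}$ are unlinked on $m$, so combined with the $\eta$-closeness the four points sit in a cyclic configuration on $m$ that places $I_+$ and $I_-$ into distinct complementary arcs of the four-point configuration, making their interiors disjoint. Since $h_\pm$ are in tight position with respect to $m$, the segments $h_\pm([i_n, i_{n+1}])$ cross $m$ only at the discrete set $\{h_\pm(k) : i_n \leq k \leq i_{n+1}\}$, so interior interaction with $I_\pm$ can only occur at intermediate intersection points $h_\pm(k)$ with $i_n < k < i_{n+1}$ that happen to fall inside $I_\pm$. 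Ruling these out is the delicate step, corresponding precisely to the gap fixed relative to Lecuire's earlier argument; my plan is to refine the subsequence extraction so that between any two consecutive close returns $i_n, i_{n+1}$, the intervening pairs $\Pi_k$ are forced to stay outside a prescribed neighborhood of $\Pi_\infty$ in $\mathrm{Sym}^2(m)$, exploiting the lower bound $\liminf_i d_i \geq \epsilon$ from Claim \ref{case1} to prevent the $+$ and $-$ coordinates of a single $\Pi_k$ from being close to $\Pi_\infty$ simultaneously, which would then force one of the four disjointness conditions to fail.
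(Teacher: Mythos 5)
Your route is genuinely different from the paper's: you extract recurrence of the pairs $\Pi_k=\{h_+(k),h_-(k)\}$ in $\mathrm{Sym}^2(m)$ and take consecutive returns to a small neighborhood $V$ of a limit pair $\Pi_\infty=\{a,b\}$, which has the pleasant feature of producing \emph{both} short intervals $I_\pm$ at once, whereas the paper works near a point $p\in\ell\cap m$ for a non-boundary leaf $\ell$ of $\lambda$, gets the length bound on $I_+$ and the same-side condition from a ``closest approach from each side'' bookkeeping, and then needs a second argument (infinitely many leaves $\ell^n$, disjointness of the resulting intervals $I_-^n$, and compactness of $m$) just to make $I_-$ short. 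However, as written your argument has a genuine gap at exactly the step you flag as delicate. Excluding the intervening pairs $\Pi_k$ from $V$, together with the bound $\liminf_i d_i\geq\epsilon$ of Claim \ref{case1}, does \emph{not} rule out the configuration in which, for some $i<k<j$, the single point $h_+(k)$ lies in $int(I_+)$ while $h_-(k)$ is far from both $I_+$ and $I_-$: such a $\Pi_k$ is not close to $\Pi_\infty$ in $\mathrm{Sym}^2(m)$ (so it survives your extraction), and its two coordinates are not close to the same point of $\Pi_\infty$ (so the $\epsilon$-bound says nothing), yet it destroys the disjointness of $h_+([i,j])$ and $I_+$. Indeed, your stated mechanism --- that the $\epsilon$-bound prevents the two coordinates of $\Pi_k$ from being close to $\Pi_\infty$ simultaneously --- is a misstatement: one coordinate near $a$ and the other near $b$ is exactly what happens at $k=i,j$ and is perfectly compatible with $d_k\geq\epsilon$.

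The missing ingredient is a \emph{second} application of Claim \ref{unlinked}, now to the pairs $(i,k)$ and $(j,k)$ for intermediate $k$: unlinkedness of $\Pi_k$ with both $\Pi_i$ and $\Pi_j$, combined with the cyclic position of the four points $h_\pm(i),h_\pm(j)$, forces $h_-(k)$ either into $int(I_+)$ (excluded by $d_k\geq\epsilon$) or into $int(I_-)$ (excluded because then $\Pi_k\in V$). This is the analogue of the paper's double use of unlinking after property (b), and without it your disjointness claim does not follow. A second, smaller issue is an order-of-quantifiers tension: you pass to a subsequence to fix the common side of $m$ \emph{before} taking consecutive returns to $V$, but then indices between two consecutive returns of the sub-subsequence may still have $\Pi_k\in V$, and such $k$ can place $h_+(k)$ in $int(I_+)$ with $h_-(k)\in int(I_-)$ without contradicting anything you have established. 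The paper sidesteps this by getting the same-side condition from a pigeonhole among \emph{three} nearby crossing points rather than from a preliminary subsequence extraction; you would need some analogous device to make the two extractions compatible.
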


See Figure \ref{surgery} for a very useful picture. Note that in the picture, the velocity vectors of all paths intersecting $m$ point to the same side of $m$, i.e.\ `up', and all 4-tuples of points are unlinked as in Claim \ref{unlinked}.

\begin{figure}[t]
	\centering
\includegraphics {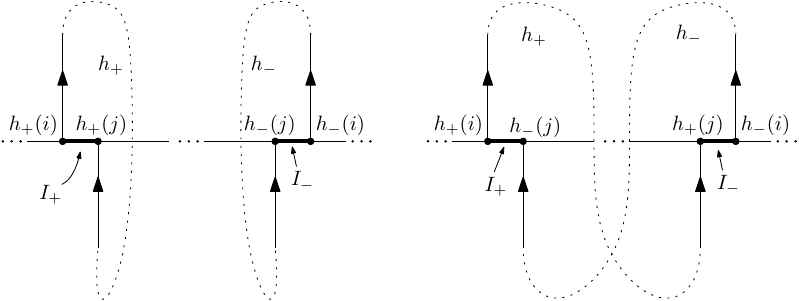}
\caption {Above, the horizontal curve is always $m$.}
\label{surgery}
\end{figure}

\begin{proof}

Start by fixing a circular order on $m$. Define `the right' to be the direction in $m$ that  is increasing with respect to the circular order, and define `the left' similarly.
Since $\lambda$ is minimal and not a simple closed curve, it has infinitely many leaves $\ell$ that are not  boundary leaves. Fix some such $\ell$, making sure that $h_\pm \not \subset \ell$ if the given rays happen to lie inside the lamination $\lambda$. The ray $h_+$  accumulates onto both sides of $\ell$, so if we fix $p \in \ell \cap m$, the set $h_+(\BN)$ accumulates onto $p$ from both sides, and similarly with $-$ instead of $+$. Fix an interval $J \subset m$ of length $\delta$ centered at $p$, and write $J=J_l \cup J_r$ as the union of the closed subintervals to the `left' and to the `right' of $p$. Note that $p \not \in h_\pm(\BN)$, so each intersection of $h_\pm$ with $J$ lies in exactly one of $J_l$ or $J_r$.

Let's call an index $i$ \emph{left-closest} if either $h_+(i)$  or $h_-(i)$ lies in $J_l$ and is closer to $p$ than any previous $h_\pm(k), \ k<i$, that lies in $J_l$. \emph{Right-closest} is defined similarly using $J_r$, and we call an index $i$ \emph{closest} if it is either left or right closest. Note that since $\delta<\epsilon$ we can never have both $h_+(i),h_-(i)$ in $J$ simultaneously, so no $i$ is both right-closest and left-closest at the same time. Since there are infinitely many $i$ of both types, at some point there will be a transition where some $i_l$ is left-closest, some $i_r>i_l$ is right-closest, and there are no closest indices in between. 

Let $i_c$ be the smallest closest index that is bigger than $i_r$. (Here, $c$ stands for `center', since the corresponding point on $J$ will lie between the points we get from  the indices $i_l$ and $i_r$.) We now have \emph{three} points in $J$, so two of the corresponding velocity vectors point to the same side of $m$. Let $i,j \in \{i_l,i_r,i_c\}$ be the two corresponding indices, and for concreteness, \emph{let's assume for the moment that $h_+(i)$ and $h_+(j)$ are the corresponding points in $J$,} deferring a discussion of the other cases to the end of the proof. Note that since the rays $h_\pm$ are mutually homoclinic and are in tight position with respect to $m$, the velocity vectors $h_-'(i)$ and $h_-'(j)$ point to the same sides of $m$ as $h_+'(i)$ and $h_+'(j)$, respectively, and so all four vectors point to the same side. That is, \begin{enumerate}
\item[(a)]  $h_+'(i), h_+'(j), h_-'(i), h_-'(j)$ all point to the same side of $m$.
	\item[(b)] the segment $I_+ \subset J$ bounded by $h_+(i)$ and $h_+(j)$ contains no element $h_+(k)$ or $h_-(k)$ where $k$ is between $i$ and $j$.
\end{enumerate}

Let $I_-\subset m \setminus J$ be the segment that is bounded by the points $h_-(i)$ and $h_-(j)$. \emph{Suppose for a moment that we knew that $I_-$ had length less than $\delta$.}  Then for each $k$, it is impossible that \emph{both} $h_+(k)$ or $h_-(k)$ lie in $I_-$, as we're assuming that corresponding intersections of $h_\pm$ with $m$ stay at least $\epsilon>\delta$ apart. In particular, if $k$ is between $i,j$ and we apply the unlinking condition of Claim \ref{unlinked} twice, once to $i,k$ and once to $j,k$, we get from this and (b) above that \emph{neither} element $h_+(k)$ or $h_-(k)$ is contained in $I_-$. So, the four segments $h_+([i,j])$, $h_-([i,j])$, $I_+$ and $I_-$ have disjoint interiors, and we're in the situation of case (I) in the claim, as desired.

As constructed above, however, there is unfortunately no reason to believe that the interval $I_-$ has length less than $\delta$. To rectify this, recall that $\lambda$ actually has infinitely many non-boundary leaves $\ell^n$.  For each such $\ell^n$ and $p^n\in \ell^n \cap m$, we can repeat the above construction using constants $\delta^n\to 0$, producing points (say) $h_+(i^n),h_+(j^n)$ that lie within the length $\delta^n$-interval $J^n \ni p^n$  and that satisfy properties (a) and (b) above.  Choose the sequence $p^n \in \ell^n \cap m$ so that it is monotonic in the circular order induced on $m$, and let $\delta^n \to 0$ fast enough so that the associated intervals $I_+^n$ are all disjoint, so that in the circular order on $m$ we have
$$h_+(i^1) < h_+(j^1) < h_+(i^2) < h_+(j^2) < \cdots < h_+(i^n),$$
and where each $I_+^n$ is the interval $[h_+(i^n),h_+(j^n)]$, rather than the complementary arc on $m$ that has endpoints $h_+(i^n),h_+(j^n)$. Then Claim \ref{unlinked} implies that
$$h_-(i^1) > h_-(j^1) > h_-(i^2) > h_-(j^2) > \cdots > h_-(i^n).$$
Discarding finitely many $n$, we can assume all the point $h_+(i^n),h_+(j^n)$ lie in an interval $U\subset m$ of length less than $\delta$. Since the points $h_-(i^n),h_-(j^n)$ are at least $\epsilon>\delta$ away from the corresponding $+$ points, they all lie in $m\setminus U$. Then since the interval $I_-^n$ is defined to be disjoint from $I_+^n \subset U$,  we must have $I_-^n=[h_-(j^n), h_-(i^n)]$, rather than the other interval with those endpoints. It follows that at least for large $n$, all the intervals $I_-^n$ are disjoint. Since $m$ as compact, we can then pick some $n$ where $I_-^n$  has length less than $\delta$, as desired.  Therefore, we are in case (I) in the statement of the claim, and are done.

\medskip

In the argument above we have simplified the notation by assuming that we have points $h_+(i^n),h_+(j^n) \in J^n$ satisfying conditions (a) and (b), which put us in case (I) at the end. Up to exchanging $+,-$, the only other relevant case is when, our chosen points are $h_-(i^n),h_+(j^n) \in J^n$.  After passing to a subsequence in $n$, if we are not in the case already addressed, then we may assume that our chosen points are $h_-(i^n),h_+(j^n) \in J^n$ for all $n$. And after exchanging $+$ with $-$ and passing to a further subsequence, we may assume $$h_-(i^1) < h_+(j^1) < h_-(i^2) < h_+(j^2) < \cdots < h_-(i^1)$$
 in the circular order on $m$, and that the interval $I_+^n = [h_-(i^n), h_+(j^n)]$. Everything from then on works exactly as above: if we set $I_-^n $ to be the interval bounded by $h_+(i^n), h_-(j^n)$  that is disjoint from $ I_+^n$, then for some $n$ we have that the length of $I_-^n$ is less than $\delta$, and it is easy to verify that we are in case (II) of the claim.
\end{proof}

We now finish the proof of Theorem \ref{travelers}. Suppose we are in case (I) of Claim~\ref{affclaim}. Then the two simple closed curves $\gamma_\pm$ drawn on the left in Figure~\ref{surgery} are the projections to $M$ of the paths in $\tilde M$ obtained by concatenating the arcs $\tilde h_\pm|_{[i,j]}$ with lifts $\tilde I_\pm \subset \tilde m_j$ of the intervals $I_\pm \subset m$, see Figure \ref{liftsurgery}. We can homotope one path to the other in $\tilde M$ while preserving the fact that the endpoints are points on $\tilde m_i,\tilde m_j$ that differ by the unique deck transformation taking $\tilde m_i$ to $\tilde m_j$. So projecting down, the simple closed curves $\gamma_\pm$ are freely homotopic in $M$, and hence bound an annulus $A \hookrightarrow M$. See the left part of Figure \ref{liftsurgery}.

There is a uniform lower bound (depending on $\lambda,m$) for the angle at any intersection point of any leaf of $\lambda$ with $m$, and the points $h_\pm(i)$ are at least $\epsilon$ away from each other in $m$.  This implies that there is a uniform lower bound for the Hausdorff distance between $h_\pm|_{[i,j]}$ on $\partial M$. As long as the bound $\delta$ on the lengths of $I_\pm$ is small enough, the geodesics in the homotopy classes of $\gamma_\pm$ stay very close to $h_\pm|_{[i,j]}$, and are therefore distinct. So, the curves $\gamma_\pm$ are not homotopic in $\partial M$, and hence bound an \emph{essential} annulus $A \hookrightarrow M$.

\begin{figure}
\centering
\includegraphics{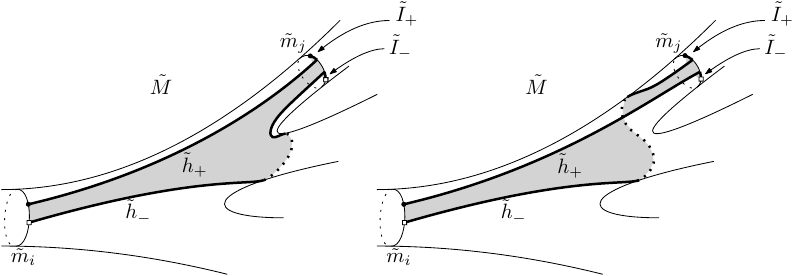}
\caption{On the left, the two paths drawn in heavy ink project to the two simple closed curves $\gamma_\pm$ in Claim \ref{affclaim} (I), shown on the left in Figure \ref{surgery}. The shaded region is a rectangle embedded in $\tilde M$ that projects to an embedded annulus $A \hookrightarrow M$ with boundary $\gamma_-\cup \gamma_+$. On the right, the union of the two paths projects to the simple closed curve $\gamma$ on $M$ of Claim \ref{affclaim} (II), and the shaded region projects to a M\"obius band $B \hookrightarrow M$ with boundary $\gamma$. }
\label{liftsurgery}
\end{figure}

Choosing $i,j$ to be large and $\delta$ to be very small, $\partial A$  is contained in $S(\lambda)$ and its intersection number with $\lambda $ is small.  Hence, $\lambda $ is an intrinsic limit of annuli, in the sense of Lemma \ref{annualilimit}, so we're done.

Case (II) is similar. Here, the single simple closed curve $\gamma$ described in Claim \ref{affclaim} (II)  bounds a M\"obius band $B \hookrightarrow M$, see the right side of Figure~\ref{liftsurgery}. Since $\partial M$ is orientable, $B$ is not boundary parallel, and hence by JSJ theory the   boundary of a regular neighborhood of $B$ is an essential annulus $A \hookrightarrow M$ whose boundary consists of two disjoint curves that are both homotopic to $\gamma$ on $\partial M$. As in case (I), we can make the intersection number of $\partial A$ with $\lambda$ arbitrarily small, so $\lambda $ is an intrinsic limit of annuli, and we are done.



 \begin {remark}
The proof (D) above is quite delicate.   Most of this delicacy comes from Claims \ref{case1} and \ref{affclaim}, which are needed to ensure that the annuli approximating $\lambda$  that are produced immediately afterward are \emph{embedded}.  But while we are able to prove these claims using arguments involving the planarity of the closure of $\partial \tilde M$ in $\BH^3 \cup \partial_\infty\BH^3$, one would not have to worry about these annuli being embedded if there was a strong `Annulus Theorem' guaranteeing that any essential singular annulus in an irreducible $3$-manifold $M$ can be surgered to give an essential embedded annulus. If this were true, Claims \ref{case1} and \ref{affclaim} could be replaced by a one paragraph compactness argument. Here, a \emph{singular annulus} is a map $f: (A,\partial A) \longrightarrow (M,\partial M)$ where $A = S^1 \times [0,1]$. We say $f$ is \emph{essential} if it is not  homotopic rel $\partial A$  into $\partial M$.

\begin {figure}[t]
\centering
\includegraphics{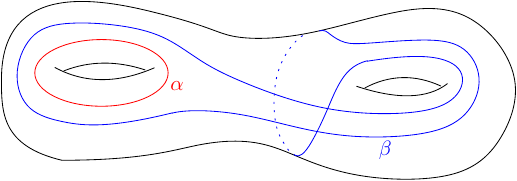}
\caption{The two curves $\alpha $ and $\beta$ are homotopic through the handlebody pictured, and therefore bound an essential singular  annulus.  The only annuli  one can produce from surgery are inessential, but one can surger to obtain  the  `obvious' disc  in the picture that separates the handlebody into two solid tori.}
\label {disc}	
\end {figure}

Such an annulus theorem follows from the JSJ decomposition when $M$ has incompressible boundary.  When $M$ has compressible boundary, there is a similar theorem as long as the original singular annulus has a spanning arc that is not homotopic rel $\partial$ into $\partial M$, see Cannon-Feustel \cite{cannon1976essential}. However, our proof above does not provide such annuli, and indeed such annuli do not exist in compression bodies (the $M$ of most interest to us), since \emph{any} proper arc in a compressionbody $M$ is homotopic rel $\partial$ into $\partial M$. 

In fact, in a general $M$, one \emph{cannot} always surger essential singular annuli to produce embedded essential annuli. For instance, the two curves in Figure~\ref{disc} bound an essential singular annulus that cannot be surgered to give an embedded essential annulus. However, in that example, one \emph{can} surger to get a meridian in the handlebody, so maybe an essential singular annulus can always be surgered to give either a meridian or an essential embedded annulus? This also turns out not to be true. Suppose $P$  is a pair of pants and let $M=P\times [0,1]$, which is homeomorphic to a genus two handlebody. If $\gamma \longrightarrow P$ is an immersed figure-$8$  whose image forms a spine of $P$, then  the singular annulus $\gamma \times [0,1] \longrightarrow P \times [0,1]$  is essential, but the three embedded annuli that one can obtain from it  by surgery are all inessential, and no meridian can be created by surgery either. However,  we expect this is the \emph{only} counterexample. The first author of this paper has spent considerable time trying to prove this with a tower argument,  but pushing  down the tower is very  subtle, since if the obvious constructions  fail, one has to characterize the figure $8$ example.
\end {remark}

\section{Hausdorff limits of meridians}
\label {cassonsec}
Let $M$ be an orientable hyperbolizable compact $3$-manifold, equip $\partial_{\chi<0}M $ with a hyperbolic metric. In \cite[Theorem B.1]{Lecuireplissage}, Lecuire showed that every lamination $\lambda$ on $\partial_{\chi<0}M$ that is a Hausdorff limit of meridians contains a homoclinic leaf that is a homoclinic geodesic. The converse is not true:  certainly in order to be a Hausdorff limit of meridians $\mu$ needs to be connected, and as explained in Figure \ref{hlim} in the introduction there are even connected laminations that contain homoclinic leaves but are not  Hausdorff limits of meridians.

We say that two laminations $\mu_1,\mu_2$ are \emph {commensurable} if they contain a common sublamination $\nu$ such that  for both $i$,  the difference $\mu_i \setminus \nu$  is the union of finitely many leaves.   $\mu_1$ and $\mu_2$ are \emph {strongly commensurable} if they contain a common $\nu$ such that for both $i$, the difference $\mu_i \setminus \nu$  is the union of finitely many leaves, none of which are simple closed curves.

\begin{theorem}[Hausdorff limits of meridians]\label{hlimits}
Suppose that $S \subset \partial_{\chi<0} M$ is a connected subsurface with geodesic boundary, such that $\partial S$ is  incompressible, and that the disc set $\mathcal D(S,M)$ is `large', i.e.\ it has infinite diameter in the curve graph $C(S)$.  Let $\lambda$ be a geodesic lamination in $int(S)$ that is a finite union of minimal laminations, and assume that the following does \emph{not} hold:
\begin{itemize}
\item[$(\star)$] $S$ is a closed, genus two surface,  there is a separating meridian $\mu$ that does not intersect $\lambda$ transversely, and $\lambda$ intersects transversely the two nonseparating meridians disjoint from $\mu$.
\end{itemize} Then $\lambda$ is strongly commensurable to a Hausdorff limit of meridians in $S$ if and only if $\lambda$ is strongly commensurable to a lamination containing a homoclinic leaf, and this happens if and only if one of the following holds:
\begin{enumerate}
\item $\lambda$ is disjoint from a meridian on $S$,
\item some component of $\lambda $ is an intrinsic limit of meridians, or
\item there is an essential (possibly nontrivial) interval bundle $B \subset M$ over a compact surface $Y$ that is not an annulus or M\"obius band, and there are components $\lambda_\pm \subset \lambda$ that each fill a component of $\partial_h B$ (possibly the same component, if $\partial_h B$ is connected), such that $\lambda_\pm$ are essentially homotopic through $B$, as in \S \ref{laminterval}, and there is a compression arc $\alpha$ for $B$ that is disjoint from $\lambda $.
\end{enumerate}
\end{theorem}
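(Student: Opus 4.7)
The plan is to prove the cycle (A) $\Rightarrow$ (B) $\Rightarrow$ (C) $\Rightarrow$ (A). Before beginning, observe that since $\lambda$ is a finite union of minimal components and a minimal non-simple-closed-curve lamination has no isolated leaves, strong commensurability of $\lambda$ with some lamination $\lambda''$ reduces to the statement $\lambda \subset \lambda''$ with $\lambda'' \setminus \lambda$ a finite union of non-simple-closed isolated leaves. The implication (A) $\Rightarrow$ (B) is then immediate from \cite[Th\'eor\`eme B.1]{Lecuireplissage}: every Hausdorff limit of meridians contains a homoclinic leaf.

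For (B) $\Rightarrow$ (C), I would take the homoclinic leaf $h \subset \lambda''$ and let $\lambda_\pm \subseteq \lambda$ be the minimal components of $\lambda$ onto which its two ends accumulate (these coincide with the minimal sublaminations of $\lambda''$, since we only added isolated leaves). If some component of $\partial S(\lambda_\pm)$ is a meridian, then it is disjoint from $\lambda$ because the subsurfaces $S(\lambda_i)$ are essentially disjoint, yielding (C)(1). Otherwise $\partial S(\lambda_\pm)$ is incompressible and one of two alternatives holds: either a single ray of $h$ is itself homoclinic, in which case Lemma \ref{limitquasi} forces the corresponding $\lambda_\pm$ to be an intrinsic limit of meridians and (C)(2) holds; or the two rays of $h$ are mutually homoclinic and Corollary \ref{travelers-biinfinite} applies. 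Case (1) of that corollary gives (C)(2) directly. Case (3) gives (C)(3) provided $h \notin \lambda$ so that the compression arc $\alpha \subset h$ is disjoint from $\lambda$; otherwise $h \subseteq \lambda_i$ and Lemma \ref{limitquasi} produces (C)(2). In case (2)(i), a $\BH^2$-geodesic argument applied to the embedded triangle $\Delta$ shows that no leaf of $\lambda$ can properly enter $\Delta$ through $c$ (such a leaf would be a geodesic arc with two endpoints on the geodesic $c$), so when $h \notin \lambda$ the meridian $c \cup h([-s,s])$ can be taken disjoint from $\lambda$ after a surgery producing (C)(1), while $h \in \lambda$ falls again to Lemma \ref{limitquasi}. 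Case (2)(ii), where $\lambda_\pm$ is a simple closed curve $\lambda_0$ and the two ends of $h$ spiral around it in the same direction, is where the exceptional hypothesis $(\star)$ enters: outside $(\star)$, either $\lambda_0$ is itself a meridian (giving (C)(1)) or the annulus between the two lifts of $\lambda_0$ in $\partial \tilde M$ together with the remaining components of $\lambda$ is rich enough to produce either a disjoint meridian off $\lambda$ as in (C)(1), or an essential interval bundle over a base bigger than an annulus or M\"obius band carrying a compression arc, yielding (C)(3).

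For (C) $\Rightarrow$ (A), I would construct case by case explicit sequences of meridians Hausdorff-converging to laminations strongly commensurable to $\lambda$. In (C)(1), the disjoint meridian $m$ combined with the largeness of $\mathcal D(S,M)$ provides pseudo-Anosov elements of $\Map(S,M)$ (via the discussion following Proposition \ref{intrinsiclimits}), and compositions of Dehn twists along $m$ with high powers of such pseudo-Anosovs produce meridians whose Hausdorff limits can be arranged to contain $\lambda$; the exclusion of $(\star)$ rules out the obstruction illustrated in Figure \ref{counterex}. In (C)(2), Proposition \ref{intrinsiclimits} supplies meridians in $S(\lambda_i)$ projectively converging to $\lambda_i$ for the intrinsic-limit component, and one enlarges each such meridian through band sums guided by approximations of the remaining minimal components to obtain meridians in $S$ with the desired Hausdorff limits. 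In (C)(3), Claim \ref{mc} converts any two-sided essential loop on the base $Y$ of $B$ together with the compression arc $\alpha$ into a meridian on $S$; choosing a sequence of loops on $Y$ whose preimages on $\partial_H B$ projectively approach the pushed-down lamination $\bar\lambda$ provided by Fact \ref{homotopy involutoin} yields meridians Hausdorff-approaching $\lambda_\pm$, and since $\alpha$ is disjoint from $\lambda$ the resulting limit is strongly commensurable to $\lambda$.

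The main technical obstacle is case (2)(ii) of Corollary \ref{travelers-biinfinite} in the (B) $\Rightarrow$ (C) direction, where the exceptional hypothesis $(\star)$ must be invoked precisely because the spiraling configuration admits genuine counterexamples exactly when $(\star)$ holds; extracting the correct conclusion in the remaining situations requires careful surgery arguments in the spirit of Lecuire \cite[Appendix C]{Lecuireplissage} and Kleineidam--Souto \cite{Kleineidamalgebraic}, together with the construction-side bookkeeping in (C)(1) $\Rightarrow$ (A) ensuring that the assembled meridians converge in Hausdorff topology, not merely projectively, to a lamination containing $\lambda$.
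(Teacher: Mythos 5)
Your overall architecture --- the cycle through Lecuire's homoclinic-leaf theorem for (A)$\Rightarrow$(B), Corollary \ref{travelers-biinfinite} for (B)$\Rightarrow$(C), and explicit meridian constructions for (C)$\Rightarrow$(A) --- is the same as the paper's, and your preliminary reduction of strong commensurability to ``$\lambda''\supset\lambda$ plus finitely many isolated non-closed leaves'' is correct. However, there are two genuine problems.

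First, you have misdiagnosed where the exceptional hypothesis $(\star)$ is needed. In case (2)(ii) of Corollary \ref{travelers-biinfinite} no appeal to $(\star)$ and no interval bundle is necessary: a small regular neighborhood $T$ of $h\cup\lambda_\pm$ is a punctured torus or a pair of pants that contains a meridian, so a boundary component of $T$ is a meridian not intersecting $\lambda$ transversely, and (C)(1) holds outright. The role of $(\star)$ is entirely in the implication (C)(1)$\Rightarrow$(A): when $S$ is closed of genus two and the only meridian missed by $\lambda$ is separating with both complementary pieces compressible punctured tori, the lamination genuinely fails to be a Hausdorff limit (this is exactly the hypothesis in Lemma \ref{promotinghlimitslem} and the discussion following the theorem statement). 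Your parenthetical that excluding $(\star)$ ``rules out the obstruction in Figure \ref{counterex}'' is the right instinct, but your main narrative locates the difficulty in the wrong implication and invents an argument (``the annulus between the two lifts of $\lambda_0$ \dots is rich enough'') that is not needed and would not work as stated.

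Second, (C)$\Rightarrow$(A) is where most of the work lies, and your sketch does not engage with it. Composing twists along the disjoint meridian $m$ with high powers of the pseudo-Anosovs supplied by largeness of $\mathcal D(S,M)$ cannot work: those pseudo-Anosovs have attracting laminations with no relation to $\lambda$, so the Hausdorff limits of their orbits will not contain $\lambda$; and when the only available meridian $\mu$ is separating there is no band sum at all. The paper's Lemma \ref{promotinghlimitslem} and Proposition \ref{promotinghlimits} overcome this by finding a second disjoint meridian $\mu'$, passing to the compression bodies obtained by compressing $\mu$ and $\mu\cup\mu'$, and twisting along pairs of curves $\alpha_i,\beta_i$ that bound \emph{essential annuli} in those compression bodies while Hausdorff-approximating the components of $\lambda$ not already produced --- none of which appears in your outline. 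Relatedly, in case (C)(3) your meridians $m(c_i)$ from Claim \ref{mc} converge only to a lamination commensurable to $\lambda_-\cup\lambda_+$; you never explain how the remaining minimal components of $\lambda$ are absorbed into the limit, which is precisely what Proposition \ref{promotinghlimits} is for.
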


Recall from Proposition \ref{discsetcurve} that when $\mathcal D(S,M)$ does not have infinite diameter in $C(S)$, it is either empty, consists of a single separating meridian, or consists of a single non-separating meridian $m$ and all separating curves that are band sums of $m$. In these cases, it is obvious what the Hausdorff limits of meridians are. For instance, in the last case a finite union $\lambda$ of minimal laminations in $S$ is strongly commensurable to a Hausdorff limit of meridians if and only if either $\lambda=m$ or $\lambda \subset S \setminus m$. For the `if'  direction, note that if $\lambda \subset S \setminus m$ then it can be approximated by an arc with endpoints on opposite sides of $m$, and doing a band sum with $m$ gives a curve that approximates $\lambda$. For the `only if' direction, just note that all meridians are either equal to $m$ or are contained in $S \setminus m$.

The case $(\star)$ above really is exceptional. In that case, (1) holds, and at least when $\mu \subset \lambda$ we have that $\lambda$ contains a homoclinic leaf, but  $\lambda$ is \emph{not} commensurable to a Hausdorff limit of meridians. Indeed, let $T_\pm\subset S \setminus \mu$  be the two components of $S \setminus \mu$ and hoping for a contradiction, take a sequence of meridians $(m_n)$ that Hausdorff converges to $\lambda'\supset \lambda$. We can assume after passing to a subsequence that $m_n$ has an $\mu$-wave in $T_+$ (say) for all $n$. Since $T_+$ is a compressible punctured torus, there is a \emph{unique} homotopy class rel $\mu$ of $\mu$-wave in $T_+$, so $\lambda'$ contains a leaf $\ell$ that either intersects $T_+$ in an arc in this homotopy class, or is contained in $T_+$ and is obtained by spinning an arc in this homotopy class around $\mu$. But then $\ell$ intersects nontrivially every nonperipheral minimal lamination in $T_+$ other than the unique nonseparating meridian $\mu_+$ of $T_+$, so $\lambda$ is disjoint from $\mu_+$, contrary to assumption.

\subsection{The proof of Theorem \ref{hlimits}}

Most of the proof of Theorem \ref{hlimits} is contained in the following results. Assume that $S\subset \partial_{\chi<0} M$ is a connected subsurface with geodesic boundary, $\partial S$ is incompressible, and $\mathcal D(S,M)$ is large.

\begin{lemma}\label{promotinghlimitslem}
Suppose $\lambda \subset S$ is a lamination, there is a meridian $\mu$ that does not intersect $\lambda$ transversely, and that if $S$ is a closed surface of genus $2$ then $\mu$ is nonseparating. Then $\lambda$ is strongly commensurable to a Hausdorff limits of meridians on $S$.
\end{lemma}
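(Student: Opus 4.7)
The plan is to construct a sequence of meridians $m_n$ on $S$ whose Hausdorff limit is strongly commensurable to $\lambda$, using the given meridian $\mu$ and the largeness of $\mathcal D(S,M)$.

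First, consider the case in which $\mu$ is a leaf of $\lambda$. Here I will write $\lambda=\mu\cup \lambda_0$ with $\lambda_0$ disjoint from $\mu$, and approximate each minimal component of $\lambda_0$ by long simple geodesic arcs $\alpha_n\subset S\setminus\mu$ with endpoints on opposite sides of $\mu$. The band sum $m_n:=\partial N(\mu\cup\alpha_n)$ is then a meridian, obtained as the boundary of a regular neighborhood of $\mu$ joined with $\alpha_n$. Since $m_n$ consists of two parallel copies of $\alpha_n$ joined by arcs that nearly exhaust $\mu^{\pm}$, it Hausdorff-converges to $\lambda_0\cup\mu=\lambda$, giving strong commensurability trivially.

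In the more subtle case in which $\mu$ is disjoint from $\lambda$ and is not a leaf of $\lambda$, the naive band sum produces meridians whose Hausdorff limit contains $\mu$ as an isolated simple closed curve not in $\lambda$, which violates strong commensurability. To remedy this, the plan is to construct meridians $m_n$ that use $\mu$ only transiently --- either disjoint from $\mu$ altogether, or crossing $\mu$ at only a bounded number of points --- with long arcs in $S\setminus\mu$ approximating $\lambda$ in the Hausdorff topology. Such meridians can be produced by combining $\mu$ with other meridians provided by the large disk set $\mathcal D(S,M)$, possibly through iterated band sums and Dehn twists along meridians; as $n\to\infty$, the pieces of $m_n$ near $\mu$ shrink to a bounded set of crossing points, so the Hausdorff limit becomes $\lambda$ together with at most finitely many non-simple-closed-curve isolated leaves, yielding the required strong commensurability.

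The main obstacle I expect is constructing these ``few-crossing'' meridians rigorously and controlling their Hausdorff limit to exclude $\mu$ from the limit as a whole leaf. The hypothesis that $\mathcal D(S,M)$ is large provides the flexibility needed, and the exclusion of the exceptional case $(\star)$ is exactly what ensures this construction does not fail: in the exceptional configuration where $S$ is a closed genus-two surface with $\mu$ separating, the two components of $S\setminus\mu$ are once-punctured tori admitting no meridians of $M$ in their interiors, so no alternative meridians are available and the only Hausdorff limits of meridians containing $\lambda$ necessarily contain $\mu$ as an isolated leaf.
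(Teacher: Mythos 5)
Your first paragraph is essentially the paper's argument in the nonseparating case (the paper implements the band sum as $B(\mu, T_{c_i}^{k}(\alpha))$ for curves $c_i$ Hausdorff-converging to $\lambda$, which makes the control of the geodesic representative's Hausdorff limit cleaner, but the idea is the same). Note, however, that "an arc with endpoints on opposite sides of $\mu$" only exists when $\mu$ is nonseparating, so even your first case silently assumes nonseparating $\mu$.

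The genuine gap is the separating case, which you defer to "iterated band sums and Dehn twists along meridians." This cannot work as stated: twisting meridians along other meridians produces sequences whose Hausdorff limits stay in the closure of the disk set, whereas $\lambda$ here is an \emph{arbitrary} lamination disjoint from $\mu$ with no a priori relation to $\mathcal D(S,M)$. What is actually needed is a supply of mapping classes that (a) extend to some fixed subcompression body of $M$, so that they carry meridians to meridians, and (b) have iterates that drag a fixed meridian onto $\lambda$. The paper produces these as follows: using largeness of $\mathcal D(S,M)$ and the genus-two exclusion, it finds a second \emph{separating} meridian $\mu'$ disjoint from $\mu$; it then works in the compression bodies $C \subset C'$ obtained by compressing $\mu$ and $\{\mu,\mu'\}$. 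In the component $T$ of $S\setminus\mu$ containing $\mu'$ one takes curves $\alpha_i,\beta_i$ cobounding a pair of pants with $\mu$ and converging to $\lambda\cap T$; since $\mu$ compresses, $\alpha_i$ and $\beta_i$ bound an essential annulus in $C$, so $T_{\alpha_i}\circ T_{\beta_i}^{-1}$ extends to $C'$. In the other component one uses multitwists $T_{d_i}$ along curves converging to the rest of $\lambda$, which extend to $C'$ because each $d_i$ bounds an annulus with the interior boundary of $C'$. Applying high powers of these to a seed meridian of $C'$ that crosses every component of $\lambda$ gives the desired sequence. None of this mechanism — the auxiliary meridian $\mu'$, the compression bodies, or the annulus twists — appears in your proposal, and without it the separating case is unproved. (Your closing remarks about $(\star)$ concern the theorem, not this lemma; the relevant point for the lemma is that the genus-two hypothesis is exactly what guarantees the existence of $\mu'$.)
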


The proof of Lemma \ref{promotinghlimitslem} uses some ideas that the first author developed with Sebastian Hensel, whom we thank for his contribution.

 \begin{proof}
 We may assume that $\lambda$ is a finite union of minimal components. It suffices to assume $\mu$ is not a leaf of $\lambda$, as long as we prove the conclusion both for such a $\lambda$ and for $\lambda\cup \mu$.

\medskip

Assume first that $\mu$ is nonseparating in $S$. Let $(c_i)$ be a sequence of simple closed curves on $S$ that Hausdorff-converges to $\lambda$. One can do this by constructing for each component $\lambda_0 \subset \lambda$ a simple closed geodesic approximating $\lambda_0$, by taking an arc that runs along a leaf of $\lambda_0$ for a long time, and then closing it up the next time it passes closest to its initial endpoint in the correct direction. Let $\alpha$ be a simple closed curve on $S$ that intersects $\mu$ once, and intersects all the components of $\lambda$. For each $k$, let $\gamma_i^k$ be the geodesic homotopic to the `band sum' of $\mu$ and $T_{c_i}^k(\alpha)$, where $T_{c_i}$ is the twist around $c_i$ and a band sum of two curves intersecting once is the boundary of a regular neighborhood of their union. Note that $\gamma_i^k$ is a meridian for all $i,k$. If $(k_i)$ is a sequence that increases quickly enough, $(\gamma_i^{k_i})$ converges to a lamination strongly commensurable to $\lambda$. And if we pick a meridian $\beta$ on $S$ that intersects both $\mu$ and $\lambda$, then $T_\mu^i \circ T_{\gamma_i^{k_i}}^i(\beta)$ Hausdorff converges to a lamination strongly commensurable to $\lambda\cup \mu$. 

Now suppose $\mu$ is separating. We claim that there is another separating meridian in $S$ that is disjoint from $\mu$. Let $m$ be a maximal multicurve of meridians in $S$ that contains $\mu$ as a component. Since $\mathcal D(S,M)$ is large, $m\neq \mu$. If $m$ has a separating component distinct from $\mu$, we are done. So, suppose we have a nonseparating component $m_0 \subset m$. We can make a (separating) band sum of $m_0$ that is disjoint from $\mu$ unless $m_0$ lies in a punctured  torus component of $S \setminus \mu$. So, we assume the latter is true. Since $\mathcal D(S,M)$ is large, it cannot be that $m=\mu \cup m_0$, since then all meridians are disjoint from $m_0$. So, there is another component $m_1$ of $m$, which we can assume is also nonseparating. This $m_1$ must lie on the opposite side of $\mu$ from $m_0$, and as before we're done unless the component of $S\setminus \mu$ containing $m_1$ is also a punctured torus. But in this case, $S$ is a genus two surface contrary to assumption.

Let $T \subset S \setminus \mu$ be a component that contains a nonperipheral separating meridian, which we  call $\mu'$. Let $V$ be the other component. Write $\lambda = \lambda_T \cup \lambda_{V}$, where $\lambda_T\subset T$ and $\lambda_{V} \subset S \setminus T$. Let $C$ be the compression body with exterior boundary equal to the component of $\partial M$ that contains $S$, that one obtains by compressing the meridian $\mu$. 

We claim that there are sequences of simple closed curves $(\alpha_i)$, $(\beta_i)$ in $T$ such that the following two properties hold:
\begin{itemize}
\item $(\alpha_i)$ and $(\beta_i)$ both Hausdorff converge to { a geodesic lamination strongly commensurable to} $\lambda_T$,
\item for all $i$, $\alpha_i$ and $\beta_i$ bound an essential annulus in $C$.
\end{itemize}
To construct these sequences, start by picking a simple closed curve $\alpha$ in $T$ such that $\alpha$ and each component of $\lambda_T$ together fill $T$. Let $\beta$ be a simple closed curve on  $T$ such that $\alpha, \beta,\mu$ bound a pair of pants in $T$.  In $C$, we can compress the boundary component $\mu$ of this pair of pants, so $\alpha,\beta$ bound an annulus in $C$. Moreover, this annulus is essential, since otherwise $\alpha,\beta$ bound an annulus in $T$, implying $T$ is torus with the one boundary component $\mu$, contradicting the fact that there is a separating nonperipheral meridian in $T$. Then find a sequence $(c_i)$ of simple closed curves in $T$ that Hausdorff converge to { a geodesic lamination strongly commensurable to} $\lambda_T$, take $k_i$ to be a fast increasing sequence and set $\alpha_i=T_{c_i}^{k_i}(\alpha)$ and $\beta_i:=T_{c_i}^{k_i}(\beta)$. Since $\alpha $ fills with every component of $\lambda_T$, the curve $\beta$ intersects every component of $\lambda_T$. It follows that  $(\alpha_i)$ and $(\beta_i)$ Hausdorff converge to { a geodesic lamination strongly commensurable to} $\lambda_T$. And since each $c_i$ is nonperipheral in $T$, each component of $c_i$ bounds an annulus in $C$ with a curve on the interior boundary of $C$, so the twist $T_{c_i}$ extends to $C$, implying that $\alpha_i,\beta_i$ bound an annulus in $C$ as desired above.

Now let $C'$ be the compression body obtained by compressing both $\mu$ and $\mu'$, so $C \subset C' \subset M$. Note that since both curves are separating and are disjoint, Proposition \ref{discsetcurve} says that $\mathcal D(S,C')$ is large, so we can pick a meridian $m\in \mathcal D(S,C')$ that intersects $\mu$ and every component of $\lambda$. Fix a sequence of geodesic muticurves $(d_i)$ in $V$ that Hausdorff converges to $\lambda_{V}$. As with the twists $T_{c_i}$ in $C$, the twists $T_{d_i}$ extend to $C'$. And the compositions $T_{\alpha_i} \circ T_{\beta_i}^{-1}$ extend to $C'$ because the curves bound annuli in $C \subset C'$. We then define $\gamma_i:=(T_{\alpha_i}\circ T_{\beta_i}^{-1})^{k_i} \circ T_{d_i}^{k_i}(m)$ for some fast increasing $k_i\to \infty$. These $\gamma_i$ are all meridians and Hausdorff converge to a lamination strongly commensurable to $\lambda$. To obtain $\lambda\cup \mu$ instead, hit $\gamma_i$ with high powers of twists around $\mu$.
 \end{proof}

Here is a more powerful version of Lemma \ref{promotinghlimitslem}. The idea of the proof is more or less the same, but more complicated.

\begin{prop}[Promoting Hausdorff limits] \label{promotinghlimits}
 Suppose that $\nu,\eta$ are disjoint geodesic laminations on $S$ that are finite unions of minimal components, and set $\lambda=\nu\cup \eta$. Suppose also that no component of $\nu$ is a meridian.

Let $X$ be the union of the subsurfaces with geodesic boundary that are filled by the components of $\nu$. Suppose that there are disjoint, nonhomotopic meridians $\mu,\mu'$ on $S$ that are disjoint from $\eta$, and a sequence of homeomorphisms $$f_i : S \longrightarrow S, \ \ f_i|_{S \setminus int(X)}=id$$ such that $\mu_i:=f_i(\mu)$ and $\mu_i':=f_i(\mu')$ are both sequences of meridians that Hausdorff converge to laminations strongly commensurable to $\nu$. Then $\lambda=\nu \cup \eta$ is strongly commensurable to a Hausdorff limit of meridians in $S$.
\end{prop}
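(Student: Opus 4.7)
\emph{Plan.} The plan is to construct, for each large $i$, a meridian $\sigma_i\subset S$ that Hausdorff-approximates $\mu_i\cup\mu_i'\cup\eta$, and then extract a Hausdorff-convergent subsequence whose limit is strongly commensurable with $\lambda=\nu\cup\eta$. The mechanism is the band sum: since $\mu_i$ and $\mu_i'$ are disjoint, nonhomotopic meridians, for any simple arc $\beta\subset S$ with endpoints on $\mu_i\cup\mu_i'$ and interior disjoint from $\mu_i\cup\mu_i'$, the curve $\partial N(\mu_i\cup\beta\cup\mu_i')$ is simple, essential, and nonperipheral in $S$ (the last using that $\partial S$ is incompressible), and is a meridian in $M$, obtained by tubing the disks bounded by $\mu_i,\mu_i'$ together along a pushoff of $\beta$. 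My task is to choose $\beta=\beta_i$ so that the resulting meridian is Hausdorff-close to $\mu_i\cup\mu_i'\cup d$ for a multicurve $d$ approximating $\eta$.

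\emph{Construction of $\beta_i$.} First, pick a sequence $(d_j)$ of geodesic simple closed multicurves in $S\setminus\mathrm{int}(X)$ that Hausdorff-converges to $\eta$; since $\eta$ is a finite union of minimal components each supported in a component of $S\setminus\mathrm{int}(X)$ and is disjoint from $\mu,\mu'$, this can be done with $d_j$ disjoint from $\mu\cup\mu'$ for $j$ large, hence disjoint from $\mu_i,\mu_i'$ outside $\mathrm{int}(X)$ since $f_i=\id$ there. Because $(\mu_i)$ and $(\mu_i')$ Hausdorff-converge to laminations strongly commensurable with $\nu$, and $\nu$ fills $X$, there is a sequence $\epsilon_i\to 0$ such that both $\mu_i$ and $\mu_i'$ are $\epsilon_i$-dense in $X$. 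For each large $i$, with a paired index $j(i)\to\infty$ chosen slowly, I will build $\beta_i$ as a concatenation of three arcs: a short arc of length at most $\epsilon_i$ inside $X$ from a point $p_i\in\mu_i$ near $\partial X$ to some $q_i\in\partial X$, routed through a small complementary component of $X\setminus(\mu_i\cup\mu_i')$ so that its interior is disjoint from $\mu_i\cup\mu_i'$; then the image $T_{d_{j(i)}}^{k(i)}(\alpha_i)$ of a short auxiliary arc $\alpha_i\subset S\setminus\mathrm{int}(X)$ from $q_i$ to some $q_i'\in\partial X$ that crosses $d_{j(i)}$ transversely, under a high power of the Dehn twist about $d_{j(i)}$; and a short arc of length at most $\epsilon_i$ inside $X$ from $q_i'$ to a point $p_i'\in\mu_i'$, chosen analogously.

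\emph{Taking the limit; main obstacle.} Setting $\sigma_i:=\partial N(\mu_i\cup\beta_i\cup\mu_i')$, the Hausdorff support of $\sigma_i$ lies in a small neighborhood of $\mu_i\cup\mu_i'\cup\alpha_i\cup d_{j(i)}$ together with the short capping arcs. After passing to a subsequence, $\mu_i\cup\mu_i'$ Hausdorff-converges to a lamination $\nu^{\ast}$ strongly commensurable with $\nu$, the multicurves $d_{j(i)}$ converge to $\eta$, the capping arcs shrink to zero length, and the bounded-length arcs $\alpha_i$ in $S\setminus\mathrm{int}(X)$ can be arranged (after subsequencing and small rerouting) to Hausdorff-converge to a finite union of simple geodesic arcs in $S\setminus\mathrm{int}(X)$. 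The resulting Hausdorff limit of $(\sigma_i)$ is a geodesic lamination containing $\nu\cup\eta$ and differing from it by at most finitely many isolated non-closed-curve leaves, hence strongly commensurable with $\lambda$. The main obstacle is to ensure that these residual contributions from $\alpha_i$ and the capping arcs produce no simple closed curve components in the commensurability class of the limit; this will use that $\alpha_i$ has bounded length in $S\setminus\mathrm{int}(X)$ and crosses $d_{j(i)}$ transversely (so any accumulation is transverse to $\eta$), and that the capping arcs decay in length while remaining trapped inside small components of $X\setminus(\mu_i\cup\mu_i')$, so neither family can close up into a simple closed curve in the limit.
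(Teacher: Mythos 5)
Your band-sum mechanism is sound as far as it goes: the boundary of a regular neighborhood of $\mu_i\cup\beta_i\cup\mu_i'$ does contain an essential, nonperipheral simple closed curve that is a meridian (tube the two disks along $\beta_i$), for \emph{any} embedded arc $\beta_i$ from $\mu_i$ to $\mu_i'$ with interior disjoint from $\mu_i\cup\mu_i'$, and the Hausdorff limit of these meridians will contain $\nu$. The decisive gap is in the claim that $\beta_i$ can be chosen so that the limit also contains all of $\eta$. Your arc has its interior in $S\setminus(\mu_i\cup\mu_i')$, so it is trapped in the closure of a \emph{single} complementary component $U$ of $S\setminus(\mu_i\cup\mu_i')$ whose closure meets both $\mu_i$ and $\mu_i'$. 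The approximating multicurves $d_{j(i)}$ are disjoint from $\mu_i\cup\mu_i'$, so each of their components lies entirely in one complementary component; any component lying outside $U$ can never be crossed by $\beta_i$, the twist $T_{d_{j(i)}}^{k(i)}$ acts trivially on $\beta_i$ near that component, and the corresponding minimal piece of $\eta$ is simply absent from the Hausdorff limit. This happens already in very ordinary configurations: if $\mu_i$ separates $S$ and $\mu_i'$ lies in one side $P_1$, then $U$ is the annular-type region of $P_1$ between the two curves, and every component of $\eta$ living in $S\setminus P_1$, or in the far side of $P_1\setminus\mu_i'$, is unreachable. Nothing in the hypotheses rules this out ($\mu,\mu'$ are only assumed disjoint from $\eta$, not from the subsurfaces it fills), so the construction as written does not prove the statement.

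This is exactly the point where the paper's proof has to do real work, and why it splits into cases. When $\mu$ is nonseparating it band-sums $\mu_i$ with $T_{c_i}^k(f_i(\alpha))$ for a \emph{closed} curve $\alpha$ meeting $\mu$ once and meeting every component of $\eta$ — a closed curve, unlike your arc, can be routed through every complementary region. When $\mu,\mu'$ are both separating, no single band sum suffices; instead one compresses $\mu_i$ and $\mu_i'$ to form compression bodies $C_i\subset C_i'$ and applies to a fixed meridian high powers of twists that \emph{extend to $C_i'$} (compositions $T_{\alpha_i}\circ T_{\beta_i}^{-1}$ along curves cobounding essential annuli in $C_i$ on one side of $\mu_i$, and multitwists $T_{c_i}$ about curves bounding annuli with $\partial_-C_i'$ on the other side), which is how the components of $\eta$ on both sides of the meridians get picked up while meridians stay meridians. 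Your proposal contains no substitute for this mechanism. There are also smaller unverified points (that the short capping arcs can be kept embedded and disjoint from $\mu_i\cup\mu_i'$ when these curves are nearly dense in $X$, and that the residual leaves coming from $\alpha_i$ are finite in number and not closed), but the missing treatment of the components of $\eta$ cut off by $\mu_i\cup\mu_i'$ is the essential defect.
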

 
Before proving the proposition, we record the following application.

\begin{corollary}\label{hauslimitcor}
Suppose that $\lambda$ is a geodesic lamination on $S$ that is a finite union of minimal components. If either
\begin{itemize}
	\item some component $\nu \subset \lambda$ that is not a simple closed curve is an intrinsic limit of meridians,
	\item there are (possibly equal) components $\lambda_\pm \subset \lambda$, neither of which is a simple closed curve, and where each fills a component of the horizontal boundary (possibly the same component if $\partial_h B$ is connected) of an essential interval bundle $$(B,\partial_H B) \hookrightarrow (M,S),$$ where $\lambda_\pm$ are essentially homotopic through $B$, and where there is a compression arc $\alpha$ for $B$ that is disjoint from $\lambda$, 
\end{itemize}
then $\lambda$ is strongly commensurable to a Hausdorff limit of meridians.
\end{corollary}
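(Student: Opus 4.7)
The plan is to deduce both cases from Proposition~\ref{promotinghlimits}. In each case I will pick a decomposition $\lambda = \nu \cup \eta$ into disjoint sublaminations, with $\nu$ playing the role of the meridian-approximable component, and then construct the data $\mu,\mu',f_i$ required by the proposition: disjoint nonhomotopic meridians $\mu,\mu' \subset S$ disjoint from $\eta$, together with homeomorphisms $f_i$ of $S$ supported in $X=S(\nu)$ whose images $f_i(\mu),f_i(\mu')$ are meridians Hausdorff-converging to laminations strongly commensurable to $\nu$. Before starting I may assume $\partial S(\nu)$ is incompressible, since otherwise some boundary component of $S(\nu)$ is a meridian on $S$ disjoint from $\lambda$ and Lemma~\ref{promotinghlimitslem} already applies.

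In the intrinsic-limit case, let $\nu$ be the component of $\lambda$ that is an intrinsic limit of meridians and set $\eta = \lambda \setminus \nu$. Proposition~\ref{intrinsiclimits} supplies a sequence of meridians $c_i \subset S(\nu)$ converging to $\nu$ in $\PML(S(\nu))$. Since $\nu$ is minimal and not a simple closed curve, the $c_i$ escape every bounded region of the curve complex $\mathcal C(S(\nu))$, so by Proposition~\ref{discsetcurve} the set $\mathcal D(S(\nu),M)$ is large and in particular contains two disjoint nonhomotopic meridians $\mu,\mu' \subset S(\nu)$, automatically disjoint from $\eta$. I then take $f_i = T_{c_i}^{k_i}$ for a fast-growing sequence $k_i$; each Dehn twist around a meridian extends to a twist along the disk it bounds, hence preserves meridians, so $f_i(\mu), f_i(\mu')$ are meridians and converge to $\nu$ in $\PML(S)$. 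Passing to a subsequence yields Hausdorff limits whose nonminimal leaves spiral into $\nu$ and therefore are not simple closed curves, so the limits are strongly commensurable to $\nu$ and Proposition~\ref{promotinghlimits} applies.

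In the interval-bundle case, set $\nu = \lambda_- \cup \lambda_+$ (or $\nu = \lambda_\pm$ when $\lambda_- = \lambda_+$) and $\eta = \lambda \setminus \nu$, so that $X = S(\nu)$ resolves to $\partial_H B$. By Fact~\ref{homotopy involutoin} there is a geodesic lamination $\bar\lambda \subset Y$ with $\nu$ isotopic to $\pi^{-1}(\bar\lambda)$. Choosing two disjoint nonhomotopic two-sided essential loops $c,c' \subset Y$ and applying Claim~\ref{mc} with the compression arc $\alpha$, I build two disjoint nonhomotopic meridians $\mu = m(c), \mu' = m(c')$ inside a regular neighborhood of $\alpha \cup \partial_H B$; this neighborhood can be shrunk to be disjoint from $\eta$. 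For $f_i$, take a sequence of pseudo-Anosov maps $\phi_i$ on $Y$ whose attracting laminations converge to $\bar\lambda$ in $\PML(Y)$ (density of pseudo-Anosov attractors), lift a suitable power $\phi_i^{k_i}$ through the bundle projection to a homeomorphism of $\partial_H B$, isotope it to be the identity on $\partial(\partial_H B)$, and extend by the identity to a homeomorphism $f_i: S \to S$. Since the bundle lift extends through $B$ and hence to $M$, the curves $f_i(\mu) = m(\phi_i^{k_i}(c))$ and $f_i(\mu') = m(\phi_i^{k_i}(c'))$ are meridians; a diagonal choice of $k_i$ makes them converge in $\PML(S)$ to $\pi^{-1}(\bar\lambda) = \nu$, and passing to Hausdorff subsequences yields the required limits.

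The main technical delicacy is verifying that the Hausdorff limits in the interval-bundle case are genuinely strongly commensurable to $\nu$: since the compression arc $\alpha$ is fixed while each $f_i(\mu)$ traverses a neighborhood of it, a priori the ``corrections'' near $\alpha$ could accumulate onto short closed geodesics not contained in $\nu$. I would handle this by arguing that the geodesic representative of $m(\phi_i^{k_i}(c))$ has length dominated by its portion on $\partial_H B$ approximating the lifted attractor, so the corrections stay in a uniform compact neighborhood of $\alpha$; compactness in the Hausdorff topology on geodesic laminations then forces the additional limit leaves to be a finite set, and a length/spiraling analysis exploiting the hypothesis that $\lambda_\pm$ are not simple closed curves excludes any closed-curve leaves from this finite set.
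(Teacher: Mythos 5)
Your overall strategy is the paper's: both bullets are fed into Proposition~\ref{promotinghlimits} with the same decomposition $\lambda=\nu\cup\eta$ and the same choice of $X$. Where you differ is in how the data $(\mu,\mu',f_i)$ is produced. The paper first manufactures the meridian sequences ($\mu_i$ straight from the definition of intrinsic limit in the first case; $\mu_i=m(c_i)$ for two-sided simple closed curves $c_i\to\bar\lambda$ on $Y$ in the second) and only then extracts a fixed pair $\mu,\mu'$ and homeomorphisms $f_i$, by passing to a subsequence on which all the pairs of disjoint curves have the same topological type modulo pure mapping classes. You instead fix $\mu,\mu'$ at the outset and build the $f_i$ explicitly — powers of twists about meridians $c_i\to\nu$ in the first case, lifts of pseudo-Anosov powers in the second. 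In the first case your version is fine and arguably more concrete.

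The genuine gap is in the interval-bundle case when $B$ is twisted, so that $Y$ is non-orientable; this case is explicitly allowed ($\lambda_-=\lambda_+$ filling the connected $\partial_HB$). On a non-orientable surface, pseudo-Anosov attractors — and even simple closed curves — are \emph{not} dense in $\PML(Y)$: the closure of the set of two-sided simple closed curves is a proper closed subset, and the closure of the pA attractors sits inside it. So ``density of pseudo-Anosov attractors'' does not produce maps $\phi_i$ whose attractors converge to $\bar\lambda$. The paper handles exactly this point by observing that $\bar\lambda$, being minimal and filling, has no closed one-sided leaves, and then invoking \cite[Theorem 1.2]{erlandsson2021mapping} to approximate it by two-sided nonperipheral simple closed curves; you would need that input (and then either revert to curves, or separately justify that such $\bar\lambda$ lie in the closure of the pA attractors of $Y$). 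Two smaller remarks: your opening reduction ``if $\partial S(\nu)$ is compressible, apply Lemma~\ref{promotinghlimitslem}'' is not available as stated, since that lemma excludes separating meridians on a closed genus-two $S$ — fortunately the reduction is unneeded; and the claim that your bundle lift ``extends to $M$'' is both dubious and unnecessary, since $f_i(\mu)=m(\phi_i^{k_i}(c))$ is a meridian directly by Claim~\ref{mc}. The Hausdorff-versus-projective convergence issue you flag at the end is real, but the paper is no more detailed there than you are.
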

\begin{proof}
Suppose some component $\nu \subset \lambda$ that is not a simple closed curve is an intrinsic limit of meridians. Setting $X:=S(\lambda)$ we can take $(\mu_i)$ to be any sequence of meridians in $X$ that Hausdorff converges to a lamination strongly commensurable to $\nu$. Moreover, since $\nu$ fills $X$ and is a limit of meridians, the disc set $\mathcal D(X,M)$ is large, so for each $i$ there is some meridian $\mu_i'$ disjoint from $\mu_i$. Since there are only finitely many topological types of pairs of disjoint curves in $X$ up to the pure mapping class group of $X$, after passing to a subsequence we can assume that all $\mu_i,\mu_i'$ are of the form in the proposition. The desired conclusion follows.

In the second case, we let $X$ be the subsurface with geodesic boundary obtained by tightening $\partial_H B$ and set $\nu=\lambda_-\cup \lambda_+$. Write the interval bundle as $\pi: B \longrightarrow Y$, where $Y$ is a compact surface with boundary. We can assume without loss of generality that $\alpha$ is a strict compression arc, i.e.\ that it is homotopic rel endpoints to a fiber $\pi^{-1}(y),y\in \partial Y$. Note that since $\lambda_\pm$ are not simple closed curves, $Y$ is not an annulus or M\"obius band. 

Since $\lambda_\pm$ are essentially homotopic through $B$, Fact \ref{homotopy involutoin} says that if our reference hyperbolic metrics are chosen appropriately, we have that $\lambda_-\cup \lambda_+= (\pi|_{\partial_HB})^{-1}(\bar \lambda)$ for some geodesic lamination $\bar \lambda$ on $Y$. Since $\lambda_\pm$ together fill $\partial_HB$, the lamination $\bar \lambda$ is minimal and fills $Y$. So in particular, it has no closed, one-sided leaves, and therefore if we pick a nonzero transverse measure on $\bar \lambda$, we have that $\bar \lambda$ is the projective limit of a sequence of two-sided nonperipheral simple closed curves $(c_i)$ in $Y$, by Theorem 1.2 of \cite{erlandsson2021mapping}. Homotope the $c_i$ on $Y$ to based simple loops at $y \in \partial Y$, let $m(c_i)$ be the associated compressible curves on $S$ constructed in Claim \ref{mc}, and let $\mu_i$ be the geodesic meridians on $S$ in their homotopy classes. Then $(\mu_i)$ Hausdorff converges to a lamination strongly commensurable to $\lambda_-\cup\lambda_+$. After passing to a subsequence, we can assume that all the $c_i$ differ by pure homeomorphisms of $Y$, in which case the meridians $\mu_i$ are as required in Proposition \ref{promotinghlimits}, for some $\mu,f_i$.  Note that since our compression arc is assumed to be disjoint from $\lambda$, all the $\mu_i$ are disjoint from $\eta := \lambda \setminus \lambda_\pm$, and hence so is our $\mu$. We create disjoint meridians $\mu_i'$ similarly, by taking some $c_i'$ on $Y$ disjoint from $c_i$, and letting $\mu_i'$ be the geodesic meridian homotopic to $m(c_i')$. It then follows from Proposition \ref{promotinghlimits} that $\lambda$  is strongly commensurable to a Hausdorff limit of meridians as desired.
\end{proof}

We now prove the proposition.

\begin{proof}[Proof of Proposition \ref{promotinghlimits}]
Assume that $\mu,\mu'$ are disjoint meridian on $S$ that are disjoint from $\eta$, that $f_i : S \longrightarrow S$ are homeomorphisms that are the identity outside of $X$, and that $\mu_i:=f_i(\mu)$ and $\mu_i':=f_i(\mu')$ are sequences of meridians that Hausdorff converge to laminations strongly commensurable to $\nu$.

We want to show that $\nu \cup \eta$ is strongly commensurable to a Hausdorff limit of meridians on $S$. We now basically repeat the argument in Lemma \ref{promotinghlimitslem}, so the reader should make sure that they understand that argument before continuing here.

\medskip

Suppose $\mu$ (say) is nonseparating in $S$.
Choose a simple closed curve $\alpha$ on $S$ that intersects $\mu$ once and intersects essentially each component of $\eta$.  Let $\alpha_i := f_i(\alpha)$, and note that $\alpha_i$ intersects $\mu_i$ once, and also intersects essentially each component of $\eta$. Let $(c_i)$ be a geodesic multi-curve that Hausdorff converges to $\eta$, and let $\gamma_i^k$ be the geodesic homotopic to the `band sum' 
\begin{equation}
B(\mu_i,T_{c_{i}}^k(\alpha_i)) = T_{c_i}^k(B(\mu_i,\alpha_i)) = T_{c_i}^k \circ f_i( B(\mu,\alpha)),\label{bands!}
\end{equation} where here $B(\cdot,\cdot)$ takes in two simple closed curves that intersect once and returns the boundary of the regular neighborhood of their union. If one of the inputs in a band sum is a meridian, then so is the output, so $\gamma_i^k$ is a meridian for all $i,k$. The given equalities  are true at least for large $i$. The first equality holds because $\mu$ is disjoint from $\eta$, $f_i=id $ on the subsurfaces filled by the components of $\eta$, and therefore $\mu_i$ is disjoint from $c_i$ for large $i$. The second equality is obvious from the definitions of $\mu_i,\alpha_i$.

Let $(k_i)$ be a fast increasing sequence. After passing to a subsequence, we can assume that $(\gamma_i^{k_i})$ Hausdorff converges to a lamination $\lambda$. \it We claim that $\lambda$ is strongly commensurable to $\nu \cup \eta$. \rm

First, using the second term in \eqref{bands!}, if $k_i$ is huge with respect to $i$, then $c_i$ is contained in a small neighborhood of $\gamma_i^{k_i}$, and so since $(c_i)$ Hausdorff converges to $\eta$, \it we have $\lambda \supset \eta$.\rm

We claim that each $\gamma_i^{k_i}$ essentially intersects each component $X_0 \subset X$. If not, then from the third term in \eqref{bands!} it follows that $B(\mu,\alpha)$ is disjoint from $X_0$. But $\mu$ essentially intersects $X_0$, since otherwise the Hausdorff limit of the $\mu_i$ will not contain the associated component $\nu_0 \subset \nu$. So, $\mu,\alpha$ and $X_0$ all lie in the punctured torus $T\subset S$ bounded by $B(\mu,\alpha)$. But since $\alpha$ intersects every component of $\eta$, we have that $\eta$ intersects $T$ as well, in a collection of arcs disjoint from $\mu$. Since $X_0$ is disjoint from $\eta$, $X_0=\mu$, so $\nu_0=\mu$ is a meridian, contrary to our standing assumption.

It now follows that the Hausdorff limit $\lambda $ essentially intersects each component of $X$. Since $\gamma_i^{k_i}$ is disjoint from $\mu_i$ and $(\mu_i)$ Hausdorff converges to a lamination containing $\nu$, The laminations $\lambda,\nu$ cannot intersect transversely. Since  each component of $X$ is filled by a component of $\nu$, \it we have $\lambda \supset \nu$.\rm

Finally, if $Y$ is the union of all the subsurfaces with geodesic boundary that are filled by the components of $\eta$, then as $f_i=id$ outside $X$ and $c_i \subset Y$ for large $i$, the intersection of $\gamma_i^{k_i}$ with $S \setminus (X \cup Y)$ is properly homotopic to the intersection of  $B(\mu,\alpha)$, which is independent of $i$. It follows that \it $\lambda \setminus (\nu \cup \eta)$ is a finite collection of non-closed leaves, \rm so we are done.

\medskip 

We can now assume that both $\mu,\mu'$ are separating, so that $\mu_i,\mu_i'$ are also separating for all $i$. Let $T_i \subset S \setminus \mu_i$ be the component containing $\mu_i'$, and let $V_i $ be the other component. Note that $T_i$ is not a punctured torus, since it contains a nonperipheral separating curve. Since $\partial T_i \cap \eta =\emptyset$, we have $$\eta = \eta_{T} \sqcup \eta_{V},$$ 
where the first term is the intersection of $\eta$ with $T_i$, and the second term is defined similarly. Note that since $f_i=id$ on $S\setminus X$, all the $\mu_i$ induce the same two-element partition of the components of $S \setminus X$, so at least after passing to a subsequence the decomposition of $\eta$ above is actually independent of $i$, which is why we have omitted the $i$ in the notation. 

Let $C$ be the compression body whose exterior boundary is the component of $\partial M$ containing $S$, and which is obtained by compressing the curve $\mu$. Let $C'$ be similarly obtained by compressing both $\mu$ and $\mu'$, so that $C \subset C' \subset M$. Since $C'$ admits two nonhomotopic disjoint separating meridians, the disc set $\mathcal D(S,C)$ is large by Proposition \ref{discsetcurve}, so we can pick a meridian $m\in \mathcal D(S,C)$ that intersects every component of $\nu\cup \eta$, as well as $\mu,\mu'$. Let $C_i\subset C_i' \subset M$ be the compression bodies obtained by compressing $\mu_i,\mu_i'$. Then $f_i$ extends to a map $C' \longrightarrow C_i'$, implying that $m_i:=f_i(m)$ is a meridian in $C_i'$.

As in the proof of Lemma \ref{promotinghlimitslem}, we can pick sequences $(\alpha_i),(\beta_i)$ of simple closed curves in $T_i$ such that $(\alpha_i)$ and $(\beta_i)$ both Hausdorff converge to $\eta_T$, and where $\alpha_i,\beta_i$ bound an essential annulus in $C_i$ for all $i$. As in the lemma, $T_{\alpha_i} \circ T_{\beta_i}^{-1}$ extends to $C_i'$. Let $(c_i)$ be a sequence of multicurves in $V_i$ that Hausdorff converges to $\eta_V$. Each component of $c_i$ bounds an annulus in $C_i'$ with a curve on the interior boundary of $C_i'$, and hence the multitwist $T_{c_i}$ extends to a homeomorphism of $C_i'$.  For any given $k$, set $$\gamma_i^{k} := (T_{\alpha_{k}} \circ T_{\beta_{k}}^{-1})^k \circ T_{c_{k}}^k(m_i).$$

We claim that for fast increasing $k_i$, the curves $\gamma_i^{k_i}$ Hausdorff converge to a lamination that is strongly commensurable to $\nu \cup \eta$ as desired. This is proved using the same types of arguments we employed in the nonseparating case above. In particular, recall that $m$ was selected to intersect all components of $\nu \cup \eta$. Since $f_i$ is supported on subsurfaces filled by components of $\nu$, all the $m_i=f_i(m)$ intersect all components of $\nu \cup \eta$, and hence for large $k_i$ they intersect $\alpha_{k_i},\beta_{k_i}$. So, $\gamma_i^{k_i}$ is twisted many times around $\alpha_{k_i},\beta_{k_i}$, and hence its Hausdorff limit contains $\nu$. Similarly, the $m_i$ intersect $c_{k_i}$ for large $i$. Since $c_k$ lies in $V_k$, it is disjoint from $\alpha_k\subset T_k$ and $\beta_k\subset T_k$,  and thus the Hausdorff limit of $\gamma_i^{k_i}$ contains $\eta$. Finally, the Hausdorff limit has no other minimal components because $\alpha_k,\beta_k,c_k$ are contained in subsurfaces filled by components of $\nu\cup \eta$, and $m_i=f_i(m)$ is constant outside this subsurfaces.
 \end{proof}

We can now start the proof of the theorem.

\begin{proof}[Proof of Theorem \ref{hlimits}]
Suppose that $\lambda \subset S$ is a lamination and $(\star)$ does not hold, so that it is not the case that $S$ is a genus two surface and $\lambda$ is disjoint from a separating meridian $\mu$, but intersects the two nonseparating meridians disjoint from $\mu$. We want to show that $\lambda$ is strongly commensurable to a Hausdorff limit of meridians if and only if it is strongly commensurable to a lamination containing a homoclinic leaf, which happens if and only if either

\begin{enumerate}
\item $\lambda$ is disjoint from a meridian,
\item some component of $\lambda$ is an intrinsic limit of meridians, or 
\item there is an essential (possibly nontrivial) interval bundle $B \subset M$ over a compact surface $Y$ that is not an annulus or M\"obius band, and there are components $\lambda_\pm \subset \lambda$ that each fill a component of $\partial_H B$, such that $\lambda_\pm$ are essentially homotopic through $B$, as in \S \ref{laminterval}, and there is a compression arc $\alpha$ for $B$ that is disjoint from $\lambda $.
\end{enumerate}

\medskip

\noindent \bf Hausdorff limit $\implies$ homoclinic leaf. \rm  Suppose first that $\lambda$ is strongly commensurable to a Hausdorff limit of meridians $\lambda'$. Then by \cite[Theorem B.1]{Lecuireplissage}, there is a homoclinic leaf $h\subset \lambda'$, so $\lambda$ is strongly commensurable to a lamination with a homoclinic leaf as desired. 

\medskip

\noindent \bf Homoclinic leaf $\implies $ (1), (2) or (3). \rm We now assume we have a homoclinic leaf $h$ in some lamination strongly commensurable to $\lambda$.

The two ends of $h$ limit onto (possibly equal) components $\lambda_\pm\subset \lambda$. If one of $S(\lambda_\pm)$ has compressible boundary, there is a meridian disjoint from $\lambda$, so we are in case (1) and are done. So, $\partial S(\lambda_\pm)$ is incompressible, and we're in the situation of Theorem \ref{travelers} and Corollary \ref{travelers-biinfinite}. We now break into cases. 

If one of $\lambda_\pm$ is an intrinsic limit of meridians, we're in case (2) and are done. If we're in case (3) of Theorem \ref{travelers} and Corollary \ref{travelers-biinfinite}, we're in case (3) of the theorem and are done, unless the given interval bundle $B\longrightarrow Y$ is over an annulus or M\"obius band. But in that case, letting $c$ be a boundary component of $Y$, we can consider the geodesic meridian $\mu$ on $S$ homotopic to the $m(c)$ constructed in Claim \ref{mc}, using the compressing arc given by Corollary \ref{travelers-biinfinite}. This $\mu$ is disjoint from $\lambda$, so we're in case (1) of the theorem.

Finally suppose that the two ends of $h$ are asymptotic on $S$, so that $\lambda_-=\lambda_+$. Let's separate further into the cases (i) and (ii) in Corollary \ref{travelers-biinfinite}. In case (i), using the notation of the corollary, the curve $c\cup h([-s.s])$ is a meridian disjoint from $\lambda$. So, we're in case (1) of the theorem. In case (ii), let  $T$ be a neighborhood of $h\cup \lambda_\pm$ that is either a punctured torus or a pair of pants, depending on whether the two ends of $h$ limit onto opposite sides of $\lambda_\pm$, or onto the same side. Because we're in case (ii), there is a meridian in $T$. Hence, one of the boundary components of $T$ is a meridian, and is disjoint from $\lambda$ so we're done.

\medskip

\noindent \bf (1), (2) or (3) $\implies$ Hausdorff limit. \rm Suppose (1), (2) or (3) holds. We want to show $\lambda$ is strongly commensurable to a Hausdorff limit of meridians. If $\lambda$ is disjoint from a meridian, then we're done by Lemma~\ref{promotinghlimitslem}. If a component of $\lambda$ is an intrinsic limit of meridians, we're done by the first part of Corollary~\ref{hauslimitcor}. In case (3) above, we're done by the second part of Corollary~\ref{hauslimitcor}.
 \end{proof}

\section{Extending partial pseudo-Anosovs to compression bodies}

Let $M $ be a compression body with exterior boundary $\Sigma$. Let $S\subset \Sigma$ be an essential subsurface such that $\partial S$ is incompressible. In this section, we prove:

\begin{theorem}[Extending partial pseudo-Anosovs]\label{BJM}
	Suppose that $f : \Sigma \longrightarrow \Sigma$ is a partial pseudo-Anosov supported on $S$. Then $f$ has a power that extends to a nontrivial subcompressionbody of $(M,S)$ if and only if the attracting lamination of $f$ is a projective limit of meridians that lie in $S$.
\end{theorem}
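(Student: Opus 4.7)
Suppose $f^n$ extends to a homeomorphism $F^n$ of a nontrivial subcompressionbody $C\subset M$. By definition $C$ is obtained by compressing a nonempty collection $\Gamma$ of disjoint meridians in $S$. Pick any $m\in\Gamma$, which bounds a disk in $C$. Since $F^n$ preserves the system of essential disks of $C$ up to isotopy, every iterate $(f^n)^k(m)$ bounds a disk in $C\subset M$, hence is an $M$-meridian; it lies on $S$ because $f$ is supported on $S$. Since $f|_S$ is pseudo-Anosov with attracting lamination $\lambda_+$, so is $f^n|_S$, and therefore $(f^n)^k(m) \to \lambda_+$ in $\PML(S)$, exhibiting $\lambda_+$ as a projective limit of meridians in $S$.

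\textbf{Backward direction.} Conversely, suppose $\lambda_+$ is a projective limit of $M$-meridians lying in $S$. I first reduce to the case $M = C(S,M)$, the characteristic compression body of Fact~\ref{charcomp}: meridians in $S$ are the same for $M$ and for $C(S,M)$, and any subcompressionbody of the latter is a subcompressionbody of the former. The argument then follows the strategy of Biringer--Johnson--Minsky~\cite{Biringerextending}, organized (as advertised in the introduction) into a topological step and a geometric step. The topological step is to show that if some positive power $n$ and some meridian $m\subset S$ satisfy $f^n(m) = m$ up to isotopy, then after replacing $m$ by its full $f$-orbit (which is finite and can be made disjoint, and whose elements are all meridians because the attracting lamination of $f$ determines a unique invariant compression structure on a neighborhood), compressing this orbit produces a nontrivial subcompressionbody of $(M,S)$ to which some power of $f$ extends. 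The geometric step is the construction of such an $(m,n)$. The idea is to endow $\mathrm{int}(M)$ with a geometrically finite hyperbolic structure in which the curves of $\partial S$ are parabolic, so that the cover of $\mathrm{int}(M)$ corresponding to $\pi_1 S$ has conformal boundary identifiable with the cusped interior of $S$. The hypothesis that $\lambda_+\in\overline{\mathcal D(S,M)}\subset \PML(S)$ means that $\lambda_+$ is unrealizable in an appropriate algebraic limit of structures obtained by iterating $f$; pulling back via a compactness/convergence argument for Kleinian groups then forces the $f$-orbit of some meridian $m_i$ from the approximating sequence to land in finitely many isotopy classes, yielding a periodic meridian.

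\textbf{Main obstacle.} The main technical difficulty is that, in contrast to the BJM setup where $S$ is an entire component of $\partial M$, here $\partial S$ is nonempty and incompressible, so the geometric step must be performed in the presence of accidental parabolics along $\partial S$. Standard Bers-slice compactness arguments do not immediately apply, and one must instead work with a Bers-type slice of geometrically finite structures having controlled parabolic locus equal to $\partial S$. The key input that makes this workable is Theorem~\ref{windowsthm} of this paper (windows from limit sets), which is proved precisely in the generality needed to accommodate accidental parabolics and which translates limit-set coincidences in $\partial\BH^3$ into essential homotopies of laminations through interval bundles inside $CC(N)$. With this tool in place, the BJM convergence argument adapts with only cosmetic changes, since once the cusps along $\partial S$ are absorbed the lamination $\lambda_+$ behaves in $\PML(S)$ exactly as the attracting lamination of a pseudo-Anosov on a closed surface does in its own $\PML$.
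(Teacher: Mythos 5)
Your forward direction is correct and is exactly the paper's (trivial) argument: iterate a meridian of the subcompression body under $f$ and observe the iterates converge to the attracting lamination. The backward direction, however, has genuine gaps. First, the tool you single out as the key input, Theorem \ref{windowsthm}, plays no role in the proof of Theorem \ref{BJM}: it is used in case (C) of the proof of Theorem \ref{travelers}, not here. What the argument actually needs on the geometric side is compactness of generalized Bers slices and Chuckrow's theorem (to extract an algebraic limit $\rho_\infty$ from the representations uniformizing $M_{n_j}=f^{-n_j}(M)$), a train-track argument showing that the \emph{repelling} lamination $\lambda^-$ is unrealizable in $N_\infty$, Thurston's result that $S$ then faces a degenerate end of $N_\infty$ with ending lamination $\lambda^-$, compactness of pleated surfaces (Lemma \ref{distanceboundslem}), and Masur--Minsky quasiconvexity of $\mathcal D(S,M)$ (Theorem \ref{quasi-convexity}). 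None of these appear in your sketch.

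Second, your topological step is not the one that carries weight. The observation that a meridian $m$ with $f^n(m)=m$ up to isotopy yields an invariant subcompression body is essentially immediate (compress $m$); the real topological content is Theorem \ref{maxcompression}: the dynamics of $f$ on the compact space $\mathrm{CBod}(S)$ of marked $S$-compression bodies produces a unique maximal subcompression body $C_f\subset M$ to which a power of $f$ extends. This is what allows the proof to run by contrapositive: if no power of $f$ extends nontrivially, then $C_f$ is trivial, so a subsequence $f^{-n_j}(M)$ converges to the trivial compression body, and in particular $S$ is incompressible in the algebraic limit $M_\infty$. Third, you never explain how a periodic meridian would actually be produced from unrealizability; in fact the paper's argument produces no periodic meridian at all. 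Instead, assuming $\lambda^+\in\Lambda(S,M)$, quasiconvexity of the disk set gives meridians $\gamma_{n_j}\in f^{-n_j}(\mathcal D(S,M))$ at uniformly bounded distance in $\mathcal C(S)$ from a fixed curve $\alpha$; by Lemma \ref{distanceboundslem} these are realized geodesically in a fixed compact subset of $N_\infty$, and by algebraic convergence they can be realized in $N_{n_j}$ for large $j$, contradicting that they are nullhomotopic there. This final step is the heart of the proof and is entirely absent from your proposal.
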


When $S=\Sigma$, this is a theorem of Biringer-Johnson-Minsky \cite{Biringerextending}. The proof of Theorem \ref{BJM} is basically the same as their proof, but we need to go through it anyway, to note the places that parabolics appear, and to deal with the fact that we are looking at subcompression bodies of $(M,S)$ rather than of $M$. Also, before starting on the bulk of the proof in \S \ref{BJMpf}, we isolate part of the argument into a separate purely topological subsection, \S \ref{sec:topologicaliteration}. This separation of the argument into distinct topological and geometric parts makes it more understandable than the original version, we think.

\subsection{Dynamics on the space of marked compression bodies}

\label {sec:topologicaliteration}
Let $\Sigma$ be a closed, orientable surface, and let $S\subset \Sigma$ be an essential subsurface. The \emph {space of marked $S$-compression bodies} is defined to be 
$$\mathrm {CBod}(S) = \{  (C,h:\Sigma \to \partial_+ C) \}/\sim, $$
where here $C$ is a compression body, $h$ is a homeomorphism, and
\begin{itemize}
	\item the multicurve $h(\partial S)\subset \partial_+C$ is incompressible,
	\item there is a multicurve on $S$ such that $h(S)$ is a cut system for $C$, i.e.\ $h(S)$ bounds a collection of disks that cut $C$ into balls and trivial interval bundles over the interior boundary components.
\end{itemize} We declare $(C_i,h_i:S \to \partial_+ C_i)$, $i=1,2$ to be equivalent (written $\sim$ as above) if there is a homeomorphism $\phi : C_1\to C_2$ that respects the boundary markings: that is, $\phi \circ h_1$ and $h_2$ are homotopic maps $S \to \partial_+ C_2$.

We write $(C_1,h_1) \subset (C_2,h_2)$ if there is an embedding $\phi: (C_1,\partial_+ C_1) \into (C_2,\partial_+ C_2)$ that respects the boundary markings. This gives a partial ordering on $\mathrm {CBod}(S)$.  We often identify $\Sigma$ with $\partial_+ C$ instead of specifying the boundary marking, and simply write $C$ for an element of $\mathrm {CBod}(S)$.  So $\mathrm{CBod}(S)$ is the set of all compression bodies with exterior boundary $\Sigma$ that one obtains by compressing curves in $S$ (without compressing boundary curves) up to the obvious equivalence.

A marked $S$-compression body $(C,h)$ has a \emph{disk set} $\mathcal D(C) \subset \mathcal C(S)$, where a simple closed curve $\gamma\in \mathcal C(S)$ lies in the disc set if $h(\gamma)$ is a meridian in $C$. In fact, the disk set $\mathcal D(C)$ determines $(C,h)$ up to equivalence, say by an argument similar to the last paragraph of the proof of Fact \ref{charcomp}. The set $\mathrm {CBod}(S) $ can then be identified with the `set of all disk sets' in $\mathcal C (S)$.  It then inherits a topology as a subset of the power set $\mathcal P( \mathcal C (S))$, wherein $D_n \to D$ if and only if for every $c\in \mathcal C(S)$, we have either $c\in D$ and $c\in D_n$ for all large $n$, or $c\not \in D$ and $c\not \in D_n$ for all large $n$.

\begin {lemma}\label {convergecontain}
If $C_n \to  C $ in $\mathrm {CBod}(S)$, then $C \subset C_n$ for large $n $.
\end {lemma}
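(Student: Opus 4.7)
The plan is to fix a cut system $\Gamma$ for $C$, and show that for all large $n$, compressing $\Gamma$ inside $C_n$ produces a subcompression body of $(C_n,S)$ whose disk set coincides with that of $C$, so that it equals $C$ as a marked compression body. By the definition of $\mathrm{CBod}(S)$, there is a multicurve $\Gamma \subset S$ that, via the boundary marking, is a cut system for $C$; in particular every component of $\Gamma$ lies in $\mathcal{D}(C)$. Since $C_n \to C$ in the topology on $\mathrm{CBod}(S)$ and $\Gamma$ is a finite collection of isotopy classes, it follows that every component of $\Gamma$ lies in $\mathcal{D}(C_n)$ for all sufficiently large $n$.

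For such $n$, form the subcompression body $C_n' \subset C_n$ of $(C_n,S)$ by compressing $\Gamma$, as in the construction of subcompression bodies in \S\ref{cbodysec}. By construction this inclusion respects the boundary markings, so it remains to show $C_n' = C$ in $\mathrm{CBod}(S)$. For this, I would identify disk sets: applying Van Kampen to the compression construction (the attached $3$-balls are simply connected) gives $\pi_1 C_n' \cong \pi_1 \Sigma / \langle\!\langle \Gamma \rangle\!\rangle$, so a curve $\gamma \in \mathcal{C}(S)$ is a meridian in $C_n'$ precisely when its homotopy class lies in $\langle\!\langle \Gamma \rangle\!\rangle \subset \pi_1 \Sigma$. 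Since $\Gamma$ is a cut system for $C$, the analogous description yields $\pi_1 C \cong \pi_1 \Sigma / \langle\!\langle \Gamma \rangle\!\rangle$, and hence $\mathcal{D}(C) = \mathcal{D}(C_n')$ as subsets of $\mathcal{C}(S)$.

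Since elements of $\mathrm{CBod}(S)$ are determined by their disk sets (as noted just before the lemma, with justification essentially the same as in the last paragraph of the proof of Fact~\ref{charcomp}), we conclude $C_n' = C$ in $\mathrm{CBod}(S)$, and composing with the inclusion $C_n' \hookrightarrow C_n$ produces the desired inclusion $C \subset C_n$ for all large $n$. The only point requiring care is the bookkeeping with boundary markings: both $C$ and $C_n'$ must be viewed as marked compression bodies via the corresponding markings of $\Sigma$, but this is automatic once we pass to disk sets, which are intrinsically defined as subsets of $\mathcal{C}(S)$. I do not expect any genuine obstacle beyond this; once the correct characterization of $C_n'$ via its cut system $\Gamma$ is in hand, the argument is a direct comparison of $\pi_1$-quotients.
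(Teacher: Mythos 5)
Your proposal is correct and is essentially the paper's argument: the paper also fixes a finite compressing system $\Gamma$ for $C$, observes that $\Gamma\subset\mathcal D(C_n)$ for large $n$ by the definition of convergence of disk sets, and concludes $C\subset C_n$. You have simply expanded the implicit final step (that compressing $\Gamma$ inside $C_n$ reproduces $C$ as a marked compression body, via the identification of both disk sets with the normal closure $\langle\!\langle\Gamma\rangle\!\rangle$ in $\pi_1\Sigma$), which is exactly the justification the paper leaves to the reader.
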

\begin {proof}
Suppose that $C $ is obtained by compressing a finite set $\Gamma$ of disjoint simple closed curves on $S $.  For large $n $, we have $\Gamma \subset \mathcal D (C_n) $, so $\mathcal C \subset C_n $.
\end {proof}

\begin {lemma}
$\mathrm {CBod}(S) $ is compact.
\end {lemma}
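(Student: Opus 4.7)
Plan:

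The ambient space $\mathcal{P}(\mathcal{C}(S))\cong \{0,1\}^{\mathcal{C}(S)}$ is compact by Tychonoff's theorem and metrizable because $\mathcal{C}(S)$ has countably many vertices, with pointwise convergence of indicator functions matching the convergence defined just before the lemma. The map $C\mapsto \mathcal{D}(C)$ is injective, essentially by the argument in Fact~\ref{charcomp}, so it will suffice to verify that the image of $\mathrm{CBod}(S)$ is closed: that is, whenever $C_n\in \mathrm{CBod}(S)$ and $D_n:=\mathcal{D}(C_n)\to D$ pointwise, there is some $C\in \mathrm{CBod}(S)$ with $\mathcal{D}(C)=D$.

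To produce such a $C$, I would consider the family
$$\mathcal{F}:=\{C'\in \mathrm{CBod}(S) : \mathcal{D}(C')\subseteq D\},$$
which is nonempty (it contains the trivial subcompressionbody) and whose elements have $\pi_1$-rank bounded above by $\mathrm{genus}(\Sigma)$, so that chains in the partial order $\subseteq$ on $\mathcal{F}$ are uniformly bounded in length. Here I am also using the easy equivalence $C_1\subseteq C_2 \iff \mathcal{D}(C_1)\subseteq \mathcal{D}(C_2)$, which comes from compressing a cut system of $C_1$ inside $C_2$. I would then pick a maximal element $C\in\mathcal{F}$; since $\mathcal{D}(C)\subseteq D$ holds by construction, the task reduces to ruling out the existence of any $v\in D\setminus \mathcal{D}(C)$.

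Suppose toward contradiction that such a $v$ exists, and fix a cut system $\Gamma\subseteq \mathcal{D}(C)\subseteq D$ for $C$. For all large $n$, $\Gamma\cup\{v\}\subseteq D_n$, so $C\subsetneq C_n$ and $v$ bounds a compressing disk in $C_n$ but not in $C$. I would then run the standard innermost-disk surgery of $v$ against disks bounded by $\Gamma$ in $C_n$, exactly as in the proof of Fact~\ref{charcomp}: at each step $v$ is expressed as a band sum of two $C_n$-meridians with strictly fewer intersections with $\Gamma$, and since a band sum of two $C$-meridians is a $C$-meridian, at least one of the two summands remains outside $\mathcal{D}(C)$. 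Iterating yields a simple closed curve $v_n^\star\subseteq \Sigma$ disjoint from $\Gamma$ with $v_n^\star \in \mathcal{D}(C_n)\setminus \mathcal{D}(C)$. The crucial observation is that, although the compressing disks used live in the varying $C_n$'s, each $v_n^\star$, regarded as an isotopy class on the fixed surface $\Sigma$, is determined by a finite choice among subarcs of $v$ and $\Gamma$—the number of possible outcomes is bounded purely in terms of $i(v,\Gamma)$, independent of $n$. Pigeonhole (together with further subsequencing) therefore produces a single $v^\star\subseteq \Sigma$ occurring as $v_n^\star$ for infinitely many $n$; by stability of membership on our subsequence, $v^\star \in D$. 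Compressing $\Gamma\cup \{v^\star\}$ then yields $C^\star\in \mathrm{CBod}(S)$ with $C\subsetneq C^\star$, and $C^\star\subseteq C_n$ for large $n$ forces $\mathcal{D}(C^\star)\subseteq D_n$ eventually and so $\mathcal{D}(C^\star)\subseteq D$. Thus $C^\star \in \mathcal{F}$, contradicting the maximality of $C$.

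The main obstacle will be the surgery step: verifying that the isotopy class of each $v_n^\star$ on $\Sigma$ is controlled by finite combinatorial data depending only on $\Sigma$, $v$, and $\Gamma$—not on the $n$-dependent three-dimensional compressing disks—so that a single $v^\star$ can be extracted by pigeonhole. Once this combinatorial independence from $n$ is in place, the rest of the argument (maximality, the equivalence between $\subseteq$ and disk-set inclusion, and the reduction to closedness in the Cantor space) is essentially formal.
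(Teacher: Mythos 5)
Your proposal is correct and follows the same overall strategy as the paper: reduce compactness to closedness of the image in the compact space $\mathcal P(\mathcal C(S))$, and handle a putative $v\in D\setminus\mathcal D(C)$ by outermost-disk surgery against $\Gamma$ together with stability of membership in the $D_n$. The difference is organizational, and the paper's version neatly sidesteps the step you flag as your main obstacle. Rather than taking a maximal element of the poset $\{C':\mathcal D(C')\subseteq D\}$ and running the \emph{full} surgery cascade for each $n$ (which forces you to pigeonhole over all possible outcomes of an iterated, $n$-dependent procedure), the paper compresses a maximal disjoint multicurve $\Gamma\subset D$ to get $C$, and then chooses $\beta\in D\setminus\mathcal D(C)$ with \emph{minimal} intersection number with $\Gamma$. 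A single surgery step then suffices: the outermost arc $c\subset\gamma$ and the two arcs $b_1,b_2\subset\beta$ are determined by a pair of intersection points of $\beta$ with $\gamma$, so there are only finitely many possibilities and one passes to a subsequence where they are constant; the two curves $c\cup b_1$, $c\cup b_2$ lie in $D$ and meet $\Gamma$ fewer times, hence lie in $\mathcal D(C)$ by minimality of $\beta$, whence $\beta\in\mathcal D(C)$ since $C$-meridians are closed under this band sum --- a contradiction. So the minimal-counterexample trick buys exactly the finiteness you were worried about; if you keep your poset-maximality framing, you should still import it, applying the pigeonhole to one surgery step at a time rather than to the terminal curve $v_n^\star$. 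Two smaller points: your claim that chain lengths in $\mathcal F$ are bounded is better justified by the ``height'' of a compression body (total genus of the interior boundary strictly drops under strict containment, as cited in the paper) than by $\pi_1$-rank, which need not strictly decrease under a non-injective surjection of compression body groups; and in the surgery step you should note, as in Theorem \ref{quasi-convexity}, that the surgered curves are essential and non-peripheral in $S$ because they are meridians and $\partial S$ is incompressible.
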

\begin {proof}
As $\mathcal P( \mathcal C (S))$ is compact, we want to show that $\mathrm {CBod}(S)$ is closed. Suppose $C_n$ is a sequence of marked compression bodies with disk sets $$D_n = \mathcal D(C_n) \subset \mathcal C(S),$$ and that $D_n \to D \subset  \mathcal C(S)$. Let $\Gamma$ be a maximal set of disjoint, pairwise non-homotopic elements of $D$. Compressing $\Gamma$  yields a marked compression body $C$. Since $\Gamma$ is finite,  $\Gamma \subset D_n$ for large $n$, so $\mathcal D (C)  \subset D_n$.  Thus, $\mathcal D (C)  \subset D$.  

It therefore suffices to show $D \subset \mathcal D(C)$. Suppose this is not the case, and pick $\beta \in D\setminus \mathcal D(C)$ such that the intersection number of $\beta$ and $\Gamma$ is minimal. By maximality of $\Gamma$, this intersection number cannot be zero. Since $\beta\in D$, if $n $ is large we have $\beta\in \mathcal D(C_n)$. By an outermost disk argument, if $\gamma\in \Gamma$ is a component that intersects $\beta$, there is an arc $c\subset \gamma$ with endpoints on $\beta$ and interior disjoint from $\beta$, that is homotopic rel endpoints in $C_n$ to the two arcs $b_1,b_2\subset \beta$ into which $\beta$ is cut by $\partial c$. Passing to a subsequence, we can assume that $c,b_1,b_2$ are independent of $n$. Then $c \cup b_1$ and $c \cup b_2$ are both meridians in $C_n$ for all large $n$, and hence lie in $D$. Since they intersect $\Gamma$ fewer times than $\beta$ does, they lie in $\mathcal D(C)$. But then $\beta$ (which is a band sum of the two curves) also lies in $\mathcal D(C)$, a contradiction.
\end {proof}

Let $f : \Sigma \longrightarrow \Sigma$ be a homeomorphism with $f=id$ on $\Sigma\setminus S$. Then $f$ acts  on $\mathrm {CBod}(S)$ by $f \cdot (C,h) = (C, h \circ f^{-1})$.  When we regard marked $S$-compression bodies as compression bodies with exterior boundary \emph{equal} to $\Sigma$, we'll just write $C$ and $f(C)$ for a marked compression body and its image. Note that $f(C)=C$ if and only if $f$ extends to a homeomorphism of $C$.

Fixing $M \in \mathrm {CBod}(S)$ and $f$ as above, let
$\mathcal A$ 
be the set of accumulation points in $\mathrm {CBod}(S)$ of the $f$-orbit of $M$, and let 
$$\mathcal A_{min}=\{C\in \CA \ | \ \nexists D \in\CA \text { such that } D \subsetneq C\} $$ be the subset consisting of all minimal elements of $\mathcal A$.  
\begin {theorem}[Existence of maximal subcompression body] \label {maxcompression}
$\mathcal A_{min}$ is a finite $f$-orbit that contains a single element $C_f$ such that $C_f \subset M $.  

Moreover, $C_f$ is the unique maximal element of $\mathrm{CBod}(S)$ such that $C_f \subset M$ and a power of $f $ extends to $C_f$. 
\end {theorem}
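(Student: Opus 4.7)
My plan is to first construct $C_f$ topologically as the maximal subcompressionbody of $M$ on which some power of $f$ extends, and then identify it with $\mathcal{A}_{\min}$. The existence and uniqueness of $C_f$ rest on two observations: $M$ has only finitely many subcompressionbodies up to isotopy (each obtained by compressing a subset of a maximal disk system of $M$), and given $C_1, C_2 \subset M$ with $f^{p_i}(C_i)=C_i$, their join $C_1 \vee C_2 \subset M$ --- the compression body whose disk set is the smallest surgery-closed subset of $\mathcal D(M)$ containing $\mathcal D(C_1)\cup \mathcal D(C_2)$ --- is fixed by $f^p$ with $p=\mathrm{lcm}(p_1,p_2)$, because the surgery-closure of an $f^p$-invariant set is $f^p$-invariant. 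Iterating joins in the finite poset of subcompressionbodies of $M$ yields a unique maximum $C_f$.

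To see that $\mathcal A$ equals the $f$-orbit of $C_f$, let $p$ be the period of $C_f$. From $C_f \subset M$ and $f^p(C_f)=C_f$ we have $C_f \subset f^{np}(M)$ for all $n$, so any accumulation point $D$ of $(f^{np}(M))_n$ contains $C_f$. Conversely, Lemma~\ref{convergecontain} gives $D \subset f^{np_i}(M)$ along a subsequence, so $f^{-np_i}(D) \subset M$; by finiteness these iterates must repeat, giving some $f$-periodic $D_0 \subset M$ with $D = f^{np_{i_0}}(D_0)$ for some $i_0$. Maximality of $C_f$ forces $D_0 \subset C_f$, and since $p \mid np_{i_0}$ we obtain $D \subset f^{np_{i_0}}(C_f)=C_f$, hence $D=C_f$. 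So $f^{np}(M) \to C_f$; splitting $n=kp+r$ into residues mod $p$ gives $f^n(M) \to f^r(C_f)$, so $\mathcal A = \{f^r(C_f) : 0 \le r < p\}$.

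Finally, distinct elements of this orbit must be pairwise incomparable: a strict inclusion $f^r(C_f) \subsetneq f^s(C_f)$ would iterate to an infinite ascending chain within the finite orbit. Hence $\mathcal A_{\min} = \mathcal A$. The unique element of this orbit contained in $M$ is then $C_f$ itself: any $f^r(C_f) \subset M$ would be an $f^p$-invariant $M$-subcompressionbody, hence $\subset C_f$ by maximality and then equal to $C_f$ by incomparability. I expect the most delicate step to be verifying that the join $C_1 \vee C_2$ is an honest $f^p$-invariant subcompressionbody of $M$: one needs the surgery-closure of a subset of $\mathcal D(M)$ to genuinely be the disk set of a subcompressionbody of $M$, and one must check that this closure commutes with the $\mathrm{CBod}(S)$-action of $f^p$ even though $M$ itself need not be $f^p$-invariant.
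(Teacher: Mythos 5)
There is a genuine gap, and it sits at the foundation of your argument: the claim that \emph{$M$ has only finitely many subcompressionbodies up to isotopy} is false, and your parenthetical justification (``each obtained by compressing a subset of a maximal disk system'') is where it goes wrong. A maximal disk system is a maximal collection of \emph{disjoint} pairwise non-isotopic meridians; most meridians of $M$ do not belong to it, and compressing different meridians produces marked compression bodies with different disk sets, hence distinct elements of $\mathrm{CBod}(S)$. Already for a genus two handlebody the infinitely many non-isotopic meridians give infinitely many distinct one-disk subcompressionbodies. This false finiteness is harmless for the \emph{existence} of $C_f$: your join construction makes the set of $f$-periodic subcompressionbodies of $M$ directed, strictly increasing chains in $\mathrm{CBod}(S)$ have bounded length, and directedness plus bounded chains already yields a unique maximum. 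But it is fatal where you write ``$f^{-np_i}(D)\subset M$; by finiteness these iterates must repeat, giving some $f$-periodic $D_0\subset M$.'' The sets $f^{-n_ip}(D)$ are infinitely many a priori distinct subcompressionbodies of $M$, so you cannot conclude that $D$ is periodic, and without that you have no way to invoke maximality of $C_f$ to get $D\subset C_f$. Note also that you are trying to prove something stronger than the theorem asserts, namely that \emph{every} accumulation point (not just every minimal one) lies in the orbit of $C_f$; the paper never establishes periodicity of individual accumulation points and does not need to.

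The paper's route around exactly this difficulty is worth internalizing: it works with $\mathcal A_{\min}$ from the start. Compactness of $\mathrm{CBod}(S)$ plus Lemma \ref{convergecontain} (which makes $\mathcal A_{\min}$ discrete, hence finite and closed) gives finiteness of $\mathcal A_{\min}$ --- a correct finiteness statement replacing your incorrect one. Then, given two periodic $C_1,C_2\subset M$ with periods $i,j$, one takes a \emph{minimal} accumulation point $C'$ of the subsequence $f^{nij}(M)$; it automatically contains $C_1$ and $C_2$, and Lemma \ref{convergecontain} lets one pull $C'$ back by a deck of $f^{-nij}$ into $M$ while staying in the $f$-invariant set $\mathcal A_{\min}$. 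Minimality then forces uniqueness of the element of $\mathcal A_{\min}$ inside $M$ and the orbit structure. If you want to salvage your architecture, you should either restrict attention to minimal accumulation points (at which point you are reproducing the paper's Lemmas \ref{lemmatwo}--\ref{lemmathree}) or find a genuinely new argument that accumulation points of $(f^{np}(M))$ are periodic; the finiteness shortcut is not available.
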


This result was proved in \cite {Biringerextending} when $S=\Sigma$.  Our proof follows the same general lines, but is topological instead of hyperbolic geometric.

We proceed with a series of lemmas.

\begin {lemma}\label {lemmaone}
$\mathcal A_{min}$ is nonempty, finite and $f $-invariant.
\end {lemma}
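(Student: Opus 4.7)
My plan is to establish the three properties of $\mathcal A_{min}$ in sequence: nonemptiness, $f$-invariance, and finiteness. I would begin by noting that $\mathcal A$ itself is nonempty because $\mathrm{CBod}(S)$ is compact, so the $f$-orbit of $M$ has at least one accumulation point.

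To produce a minimal element inside $\mathcal A$, I would verify that $\mathrm{CBod}(S)$ satisfies the descending chain condition for $\subset$. A proper inclusion $C \subsetneq C'$ must strictly enlarge a minimum-cardinality defining disk system, since the disk set strictly grows and so at least one meridian of $C'$ is not a meridian of $C$. Because any collection of disjoint, pairwise non-homotopic simple closed curves on $\Sigma$ has cardinality bounded by some constant $N = N(\Sigma)$, every strict descending chain in $\mathrm{CBod}(S)$ has length at most $N+1$. Descending from any element of $\mathcal A$ therefore terminates at a minimal element after finitely many steps, giving $\mathcal A_{min} \neq \emptyset$.

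The $f$-invariance is essentially formal. The self-map of $\mathrm{CBod}(S)$ induced by $f$ sends marking-respecting embeddings to marking-respecting embeddings, so it preserves the partial order $\subset$, and it is a homeomorphism of $\mathrm{CBod}(S)$. Since $\mathcal A$ is an orbit accumulation set it is $f$-invariant, and the minimality condition is preserved by an order-preserving homeomorphism, so $\mathcal A_{min}$ is $f$-invariant as well.

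For finiteness, I would show that $\mathcal A_{min}$ is both closed and discrete in the compact space $\mathcal A$. Suppose $C_n \in \mathcal A_{min}$ converges to some $C_\infty \in \mathrm{CBod}(S)$. Then $C_\infty \in \mathcal A$ because $\mathcal A$ is closed, and Lemma~\ref{convergecontain} yields $C_\infty \subset C_n$ for all large $n$; the minimality of each $C_n$ then forces $C_\infty = C_n$ for all large $n$. This simultaneously shows that $\mathcal A_{min}$ is discrete (no sequence of distinct minimal elements can converge) and closed in $\mathcal A$ (the limit lies in $\mathcal A_{min}$), and a closed discrete subset of the compact set $\mathcal A$ is finite. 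The only real content is the finiteness step, and the main (minor) obstacle I anticipate is pinning down the exact complexity bound for the descending chain condition; beyond that, the argument is a clean formal application of Lemma~\ref{convergecontain} together with the compactness of $\mathrm{CBod}(S)$.
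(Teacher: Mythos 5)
Your overall structure — nonemptiness of $\mathcal A$ from compactness, a descending chain condition to extract minimal elements, formal $f$-invariance, and finiteness via Lemma~\ref{convergecontain} plus compactness — is exactly the route the paper takes, and the invariance and finiteness steps are correct as you wrote them (your closed-and-discrete argument is if anything slightly more explicit than the paper's).

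The one genuine flaw is your justification of the descending chain condition. The claim that a proper inclusion $C \subsetneq C'$ ``must strictly enlarge a minimum-cardinality defining disk system'' is false. Take $\Sigma$ the boundary of a genus two handlebody $M$, let $m_2$ be a nonseparating meridian and $m_1$ a band sum of $m_2$ (so $m_1$ is separating and lies in the normal closure of $m_2$). Compressing $m_1$ gives $C_1$ and compressing $m_2$ gives $C_2$; then $C_1 \subsetneq C_2$ (indeed, compressing $\{m_1,m_2\}$ yields the same compression body as compressing $m_2$ alone, while $m_2 \notin \mathcal D(C_1)$), yet both $C_1$ and $C_2$ are defined by a single curve. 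So the cardinality of a minimal defining system does not strictly increase under strict containment, and your bound $N+1$ on chain lengths does not follow. The descending chain condition itself is true, but for a different reason: the paper invokes the \emph{height} of a compression body, a nonnegative integer that strictly decreases under strict containment (see \S 3 of \cite{Biringerautomorphisms}). Substituting that invariant for your cardinality count repairs the nonemptiness step, and the rest of your argument then goes through unchanged.
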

\begin {proof}
$\mathcal A$ is nonempty, since $\mathrm {CBod}(S)$ is compact.  This implies that $\mathcal A_{min}$ is nonempty, for example since the `height' of a compression body is nonnegative and decreases under strict containment, see \S 3 of \cite{Biringerautomorphisms}.

By Claim \ref {convergecontain}, $\mathcal A_{min}$ is discrete.  But $\mathcal A_{min}$ is closed in $\mathcal A, $ which is closed in $\mathrm {CBod}(S) $, which is compact. So, $\mathcal A_{min}$ is compact, and must be finite. Finally, $\mathcal A_{min}$ is $f$-invariant since $\mathcal A$ is and the $f $-action respects containment. 
\end {proof}

\begin {lemma}\label {lemmatwo}
Suppose that for $i=1,2$ we have $C_i\in \mathrm{CBod}(S) $ with $C_i\subset M $, and that $f^i(C_1)=C_1$ while $f^j(C_2)=C_2$. Then there is an element $C \in \mathcal A_{min} $ such that $C_1, C_2 \subset C \subset M  $.
\end{lemma}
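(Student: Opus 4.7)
The plan is to merge the two periodic subcompressionbodies of $M$ into a single one by taking a join, and then to extract an element of $\CA_{min}$ containing it by a pigeonhole argument applied to the residues of exponents modulo a suitably chosen common period.

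First I would build the join $C_0:=C_1\vee C_2$, defined as the smallest element of $\mathrm{CBod}(S)$ whose disk set contains $\mathcal D(C_1)\cup \mathcal D(C_2)$. This is well-defined and contained in $M$: choose disjoint cut systems $\Gamma_1,\Gamma_2$ for $C_1,C_2$ inside $M$ (they are all meridians in $M$) and compress $\Gamma_1\cup \Gamma_2$. Since the join is intrinsic to the pair and $f$ acts on $\mathrm{CBod}(S)$ by homeomorphism, setting $k:=\mathrm{lcm}(i,j)$ and using $f^k(C_\ell)=C_\ell$ for $\ell=1,2$ gives $f^k(C_0)=f^k(C_1)\vee f^k(C_2)=C_0$, so $C_0$ is $f^k$-invariant and still lies in $M$. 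It therefore suffices to produce $C\in\CA_{min}$ with $C_0\subset C\subset M$.

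By Lemma \ref{lemmaone}, $\CA_{min}$ is finite and $f$-invariant, so there exists $p\ge 1$ with $f^p$ fixing every element of $\CA_{min}$. Set $m:=\mathrm{lcm}(k,p)$, so $f^m$ fixes both $C_0$ and every element of $\CA_{min}$. Pick any $D\in\CA_{min}$ and extract a subsequence $f^{n_l}(M)\to D$. By pigeonhole, after passing to a further subsequence I may assume $n_l\equiv r\pmod m$ for a fixed residue $r$, and then continuity of $f^{-r}$ yields $f^{n_l-r}(M)\to f^{-r}(D)=:D'$. Since $n_l-r\in m\BZ$ and $C_0\subset f^{nm}(M)$ for every $n\in\BZ$ (because $f^m(C_0)=C_0\subset M$), the description of convergence in $\mathrm{CBod}(S)$ in terms of disk-set membership forces $C_0\subset D'$. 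Moreover $D'\in\CA_{min}$ by $f$-invariance.

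Finally, Lemma \ref{convergecontain} applied to $f^{n_l}(M)\to D$ gives $D\subset f^{n_l}(M)$ for all large $l$, and applying the homeomorphism $f^{-n_l}$ then yields $f^{-n_l}(D)\subset M$. The choice of $m$ guarantees $f^m$ fixes $D$, and since $n_l-r\in m\BZ$ one has $f^{-n_l}(D)=f^{-r}(D)=D'$. Hence $D'\subset M$, and $C:=D'$ satisfies $C_1,C_2\subset C_0\subset C\subset M$ with $C\in\CA_{min}$. The main obstacle in carrying out this plan is verifying the intrinsic $f^k$-equivariance of the join in the first step (in particular that such a join exists as a single element of $\mathrm{CBod}(S)$ rather than being ambient-dependent); after that, the argument is an arithmetic pigeonhole combined with the two basic observations in Lemmas \ref{lemmaone} and \ref{convergecontain}.
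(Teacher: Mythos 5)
Your core argument is sound and is essentially the paper's: realize an element of $\mathcal A_{min}$ as a limit of a subsequence $f^{n_l}(M)$, use pigeonhole on the exponents modulo a common period to arrange $n_l\equiv r \pmod m$, shift by $f^{-r}$ to get a limit $D'$ of $f^{n_l-r}(M)$ with $n_l-r\in m\BZ$, observe that the periodic subcompression bodies sit inside every $f^{nm}(M)$ and hence inside $D'$, and finally use Lemma \ref{convergecontain} to pull $D'$ back into $M$. (The paper compresses this by working with the subsequence $f^{nij}(M)$ from the start and not insisting that $f^m$ fix the chosen element of $\mathcal A_{min}$, but the mechanism is the same.)

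The one genuine problem is your first step. The claim that one can ``choose disjoint cut systems $\Gamma_1,\Gamma_2$ for $C_1,C_2$ inside $M$'' is false in general: for example, if $C_1,C_2\subset M$ are obtained by compressing two separating meridians $m_1,m_2$ that intersect essentially and each $C_\ell$ is small in the sense of Proposition \ref{discsetcurve} (so $\mathcal D(C_\ell)=\{m_\ell\}$), then the only cut system for $C_\ell$ is $\{m_\ell\}$ and no disjoint choice exists. So the join $C_0=C_1\vee C_2$ is not constructed by your argument, and its existence as a single well-defined element of $\mathrm{CBod}(S)$ (rather than just some minimal subcompression body containing both, which need not be unique a priori) would require a separate surgery argument in the spirit of Fact \ref{charcomp}. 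Fortunately the join is never actually needed: every place you invoke ``$C_0\subset f^{nm}(M)$'' can be replaced by the two statements ``$C_1\subset f^{nm}(M)$'' and ``$C_2\subset f^{nm}(M)$,'' each of which follows directly from $f^m(C_\ell)=C_\ell\subset M$ since $m$ is a common multiple of $i$ and $j$; the disk-set description of convergence then gives $C_1,C_2\subset D'$ separately, and the rest of your proof is unchanged. With that excision the proof is correct.
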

\begin{proof}
Every element of $\CA$ is the image under a power of $f $ of an accumulation point of the sequence $f^{nij}(M)$, so since $\mathcal A_{min}$ is $f$-invariant there is some $C' \in \mathcal A_{min}$ to which a subsequence of $f^{nij}(M)$ limits.  As $C_1,C_2 \subset f^{nij}(M)$ for all $n $, we must have $C_1, C_2  \in C'$.

By Claim \ref {convergecontain}, there is some $n $ such that $f^{nij}(M) \supset C'$.  Then $C:=f^{-nij}(C') \in \mathcal A_{min} $ is contained in $M $ and must contain $C_1, C_2$ as well.
\end {proof}

\begin {lemma}\label {lemmathree}
There is a unique element $C_f \in \mathcal A_{min} $ that is contained in $M $, and $A_{min}$ is an $f$-orbit.
\end {lemma}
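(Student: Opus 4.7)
The plan is to leverage the two previous lemmas together with the following simple observation: the elements of $\mathcal A_{min}$ are pairwise incomparable under $\subset$. Indeed, if $D_1,D_2 \in \mathcal A_{min}$ with $D_1 \subsetneq D_2$, then $D_1 \in \mathcal A$ contradicts the minimality of $D_2$. This will be the key to turning Lemma \ref{lemmatwo}, which produces a \emph{container} in $\mathcal A_{min}$, into a uniqueness statement.

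For existence of some $C_f \in \mathcal A_{min}$ with $C_f \subset M$, first pick any $C' \in \mathcal A_{min}$, which exists by Lemma \ref{lemmaone}. Since $C'$ is an accumulation point of the $f$-orbit of $M$, Claim \ref{convergecontain} gives some $n$ with $C' \subset f^n(M)$, so $f^{-n}(C') \subset M$. As $\mathcal A_{min}$ is $f$-invariant by Lemma \ref{lemmaone}, we have $f^{-n}(C') \in \mathcal A_{min}$, and we set $C_f := f^{-n}(C')$.

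For uniqueness, suppose $C_1,C_2 \in \mathcal A_{min}$ are both contained in $M$. Because $\mathcal A_{min}$ is a finite $f$-invariant set by Lemma \ref{lemmaone}, each $C_i$ is periodic, i.e.\ $f^{p_i}(C_i)=C_i$ for some $p_i\geq 1$. Lemma \ref{lemmatwo} then produces an element $C \in \mathcal A_{min}$ with $C_1,C_2 \subset C \subset M$. By the incomparability observation above, $C_1 = C = C_2$, proving uniqueness.

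Finally, to see that $\mathcal A_{min}$ is a single $f$-orbit, let $D \in \mathcal A_{min}$ be arbitrary. Since $D \in \mathcal A$, there is a subsequence $f^{n_k}(M) \to D$, and Claim \ref{convergecontain} gives $D \subset f^{n_k}(M)$ for all large $k$. Hence $f^{-n_k}(D) \subset M$, and $f^{-n_k}(D) \in \mathcal A_{min}$ by $f$-invariance. By the uniqueness just established, $f^{-n_k}(D) = C_f$, so $D = f^{n_k}(C_f)$ lies in the $f$-orbit of $C_f$. The main (conceptual) obstacle is recognizing that Lemma \ref{lemmatwo} combined with incomparability of minimal elements upgrades a ``common container'' statement to both uniqueness and single-orbit transitivity; once this is seen, the remaining arguments are short formal manipulations with Claim \ref{convergecontain} and the $f$-invariance of $\mathcal A_{min}$.
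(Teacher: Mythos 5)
Your proof is correct and follows essentially the same route as the paper: Lemma \ref{lemmatwo} plus the incomparability of elements of $\mathcal A_{min}$ gives uniqueness, and the orbit argument via Claim \ref{convergecontain} is identical. The only (harmless) variation is in the existence step, where the paper applies Lemma \ref{lemmatwo} to two copies of the trivial compression body, while you translate an arbitrary element of $\mathcal A_{min}$ into $M$ using Claim \ref{convergecontain} and $f$-invariance; your explicit verification that elements of $\mathcal A_{min}$ are periodic before invoking Lemma \ref{lemmatwo} is a detail the paper leaves implicit.
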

\begin {proof}
Applying the previous lemma to two copies of the trivial compression body $\Sigma \times I$ shows that $A_{min} $ has an element that is contained in $M $. 

So, suppose that $C,D \in A_{min} $ are both contained in $M $.  By the previous lemma, there is another element of $A_{min}$ that contains them both, which contradicts the minimality assumption unless $C = D $.  Therefore, there is a unique element $C_f \in \mathcal A_{min} $ that is contained in $M $.

To show that $A_{min}$ is an $f$-orbit, suppose that $C \in A_{min} $.  Since $C$ is an accumulation point, there is some $n $ such that $  f^n(M) \supset C$.  Then $f^{-n}(C) \subset M$, implying that $f^{-n}(C)=C_f $ by uniqueness.
\end {proof}

This finishes the proof of Theorem \ref{maxcompression}, since Lemma \ref {lemmaone} shows that a power of $f $ extends to $C_f $ and Lemmas \ref {lemmatwo} and \ref {lemmathree} imply that any subcompression body of $M$ to which a power of $f $ extends is contained in $C_f$.

\subsection{The proof of Theorem \ref{BJM}}\label{BJMsec}\label{BJMpf}
Let $S \subset \Sigma=\partial M_+$ be a compact essential subsurface, with $\partial S$ incompressible in $M$, and let $f: \Sigma \longrightarrow \Sigma$ be a pseudo-Anosov map on $S$. 

The `only if' direction of the theorem is trivial. Namely, suppose that some power $f^k$ extends to a nontrivial subcompressionbody $C$ of $(M,S)$. Pick a meridian $m\subset S$ for $C$. Then $(f^k(m))$ is a sequence of meridians in $M$ that lie in $S$, and converge to the attracting lamination of $f$.

 For the `if' direction of the theorem, assume that no nonzero power of $f$ extends to a nontrivial subcompression body of $(M,S)$. We must show that the attracting lamination $\lambda^+$ is not in the limit set $\Lambda(S,M)$. The argument is  similar to the proof of the main theorem in \cite {Biringerextending}.  As such, we will sketch the argument in places and refer to \cite {Biringerextending} for details.
 
 Consider the sequence  $M_n = f^{-n}(M)$ of marked $S$-compression bodies, where we consider the exterior boundary of each $M_n$ as identified with the surface $\Sigma$. Fix a base point $[X] \in \CT (\Sigma) $ and give the interior of each $M_n $ a geometrically finite hyperbolic metric such that  the end adjacent to the exterior boundary $\Sigma = \partial_+M$ is convex cocompact, and when its conformal boundary is identified with $\Sigma$, the conformal structure is $[X]$. 
Let $$\rho_n : \pi_1 \Sigma \longrightarrow \PSL_2 \BC, \ \ N_n:=\BH^3/\rho_n (\pi_1 \Sigma) $$ be a representation (unique up to conjugacy) uniformizing the interior of $M_n$ and compatible with our markings, in the sense that $\rho_n$ is the composition of the map $\pi_1 \Sigma \longrightarrow \pi_1 M_n\cong \pi_1 N_n$ induced by inclusion and a faithful uniformizing representation of $\pi_1 N_n$. Note that  the kernel of $\rho_n $ is $$\ker (\rho_n) = f^ { - n }_*(\ker( \pi_1 \Sigma \to \pi_1 M)). $$

By Theorem \ref {maxcompression} and the assumption that no power of $f $  extends to a nontrivial subcompression body of $M$, the only minimal accumulation point of $(M_n)$ in $\mathrm{CBod}(S)$ is the trivial compression body. So in particular, we can choose a subsequence $(M_{n_j})$ that converges to the trivial compression body.  By the compactness of generalized Bers slices (see \cite[Theorem 4.3]{Biringerextending}), we may assume after appropriate conjugations and passing to a further subsequence that $(\rho_{n_j}) $ converges algebraically to a representation $$\rho_\infty:\pi_1 \Sigma\longrightarrow\PSL_2\BC, \ \ N_{\infty} :=\BH^ 3/\rho_\infty(\pi_1 \Sigma) $$
and that $N_\infty $ can be identified with the interior of a compression body $M_\infty$ with exterior boundary $\Sigma$ in such a way that the end of $N_\infty$ adjacent to $\Sigma$ is convex cocompact with conformal structure $[X]$ and the representation $\rho_\infty$ is compatible with the marking in the same way as before.

The disk set $\mathcal D(S,M_\infty)$ consists of all simple closed curves on $S $ represented by elements $\gamma \in \pi_1 \Sigma$ with $\rho_\infty (\gamma)=1$.  By Chuckrow's Theorem (see \cite [Lemma 2.11]{Biringerextending}), $\rho_\infty(\gamma)=1$ if and only if $\rho_{n_j}(\gamma)=1$ for all sufficiently large $i $. Since $(M_{n_j})$ converges to the trivial compression body in the topology of $\mathrm{CBod}(S)$, it follows that the surface $S\subset \Sigma=\partial_+ M_\infty $ is incompressible in $M_\infty$.

\begin {claim}
The repelling lamination $\lambda  ^ - $ of $f$ is unrealizable in $N_\infty$.
\end {claim}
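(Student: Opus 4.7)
The plan is to produce a sequence of simple closed curves $c_n \subset S$ that are meridians in $M_n$ and whose projective classes converge to $\lambda^-$ in $\PML(S)$; combined with the hypothesized existence of a realization of $\lambda^-$ in $N_\infty$, this will force closed quasi-geodesics of unbounded length in $\BH^3$, an impossibility.

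First, we may assume $\mathcal D(S,M)\neq \emptyset$, since otherwise there are no meridians in $S$, hence no nontrivial subcompressionbody of $(M,S)$, and Theorem \ref{BJM} is vacuously true. Fix a meridian $m\in \mathcal D(S,M)$ and set $c_n := f^{-n}(m)$. Because $f$ is supported on $S$, we have $c_n\subset S$; because $\ker \rho_n = f^{-n}_*(\ker(\pi_1\Sigma \to \pi_1 M))$, each $c_n$ is a meridian in $M_n$, i.e.\ $\rho_n(c_n)=1$. Since $m$ is essential and non-peripheral in $S$, while $f|_S$ is pseudo-Anosov with attracting lamination $\lambda^+$ filling $S$, we have $i(m,\lambda^+) > 0$, and therefore $[c_n] \to \lambda^-$ in $\PML(S)$.

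Now suppose for contradiction that $\lambda^-$ is realized in $N_\infty$ by a pleated surface $g_\infty \colon (S, X_\infty) \to N_\infty$. By continuity of pleated surfaces under algebraic convergence (cf.\ \cite{Thurstonhyperbolic3, Canarycovering, Biringerextending}), for all large $n$ there exist pleated surfaces $g_n \colon (S, X_n) \to N_n$ inducing $\rho_n$, realizing $\lambda^-$, with $X_n \to X_\infty$ in $\CT(S)$ and $g_n \to g_\infty$ in an appropriate sense. Lifting to universal covers, $\tilde g_n \colon \BH^2 \to \BH^3$ is $\rho_n$-equivariant and restricts to an isometry on every lift of a leaf of $\lambda^-$. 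Since $[c_n] \to \lambda^-$ projectively and $\lambda^-$ fills $S$, a lift $\tilde c_n \subset \BH^2$ of $c_n$ tracks lifts of leaves of $\lambda^-$ for arbitrarily long stretches with controlled detours, so by Thurston's Uniform Injectivity Theorem the image $\tilde g_n(\tilde c_n) \subset \BH^3$ is a $(K,\epsilon)$-quasi-geodesic with constants uniform in $n$. But $\rho_n(c_n) = 1$ forces $\tilde g_n \circ \tau_n = \tilde g_n$, where $\tau_n$ is the deck transformation of $\BH^2$ corresponding to $c_n$, so $\tilde g_n(\tilde c_n)$ closes up into a loop in $\BH^3$ of length $\ell_{X_n}(c_n) \to \infty$. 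Since $\BH^3$ admits no closed $(K,\epsilon)$-quasi-geodesics of arbitrarily large length, this is the desired contradiction.

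The main obstacle is the uniformity of the quasi-geodesic constants, which is secured by Thurston's Uniform Injectivity Theorem applied to the converging pleated surfaces $g_n$; the remaining ingredients are either standard continuity properties under algebraic convergence or immediate consequences of pseudo-Anosov dynamics. This is essentially the argument of \cite[\S 4]{Biringerextending}, adapted to partial pseudo-Anosovs on the subsurface $S$ in place of the closed case.
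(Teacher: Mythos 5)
Your argument is correct and follows the same overall strategy as the paper's (which itself defers to Lemma 6.2 of \cite{Biringerextending}): push a fixed meridian $m$ backward under $f$ to get meridians of $M_n$ accumulating on $\lambda^-$, assume $\lambda^-$ is realized in $N_\infty$, transfer that realization to $N_{n}$ for large $n$ by algebraic convergence, and conclude that the curves $f^{-n}(m)$ would be realized quasi-geodesically in $N_{n}$ --- impossible since they are null-homotopic there. The difference is in the device used for the transfer step. The paper passes to the Hausdorff limit $\lambda_M = \lambda^- \cup (\text{finitely many spiraling leaves})$, invokes Brock's continuity theorem to reduce to the unrealizability of $\lambda_M$, and then uses a train track carrying $\lambda_M$ that maps ``nearly straightly'' into $N_\infty$ and hence into $N_{n_j}$; since $f^{-n_j}(m)$ is eventually carried by that train track, it is realized. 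You instead work directly with $\lambda^-$, take converging pleated surfaces $g_n$ realizing it, and argue via Thurston's Uniform Injectivity Theorem that $\tilde g_n(\tilde c_n)$ is a uniform quasi-geodesic. These are two standard implementations of the same idea --- uniform injectivity is precisely what makes the train track map nearly straightly --- and both presentations leave the technical heart (why a closed curve that merely fellow-travels $\lambda^-$, rather than being carried by it, maps quasi-geodesically with uniform constants) at the level of an assertion with a citation. Two small points: the paper only has algebraic convergence along the subsequence $(n_j)$, so your curves should be $c_{n_j}$; and your closing statement is cleaner if phrased as ``a bi-infinite $(K,\epsilon)$-quasi-geodesic in $\BH^3$ has unbounded image, while the lift of a null-homotopic loop is periodic with trivial holonomy and hence has compact image'' --- no appeal to $\ell_{X_n}(c_n)\to\infty$ is needed.
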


\begin {proof}
The proof is almost identical to that of \cite [Lemma 6.2] {Biringerextending}, so we offer a sketch and we refer the reader to their paper for details.  

Fixing an $M$-meridian $\gamma \subset S$, the sequence $f^{-n_j}(\gamma)$ converges in the Hausdorff topology to a lamination $\lambda_M$ that is the union of $\lambda  ^ - $ and finitely many leaves spiraling onto it.  It suffices by \cite [Theorem 2.3] {Brockcontinuity} to show that $\lambda_M $ is unrealizable.  So, hoping for a contradiction, assume $\lambda_M $ \emph {is} realizable; then $\lambda_M $ is carried by a train track $\tau $ that maps nearly straightly into $N_\infty $ (see \cite{Biringerextending}).  

By algebraic convergence, $\tau $ also maps nearly straightly into $N_{n_j}$ when $i $ is large.   Since $f^{-n_j}(\gamma) \to\lambda_M $, for large $i $ the curve $f^{-n_j}(\gamma)$ is carried by $\tau $.  This implies that $f^{-n_j}(\gamma)$ is geodesically realizable in $N_{n_j}$ for large $i $, contradicting the fact that it is homotopically trivial.
\end {proof}

By work of Thurston \cite [Prop 9.7.1] {Thurstongeometry}, the $\pi_1$-injective surface $S\subset \partial_+M_\infty$ is isotopic into a degenerate end of $N_\infty$ with ending lamination $\lambda  ^ -$.  In particular, the peripheral curves of  $S  $ represent cusps in $N_\infty$ and every non-peripheral curve on $S $ has hyperbolic type in $N_\infty $.  Any pair of disjoint non-peripheral simple closed curves on $S  $ can then be realized geodesically by a pleated surfaces $S  \longrightarrow N_\infty $ in the given homotopy class, and Thurston's compactness of pleated surfaces (see \cite [Lemma 6.13] {Matsuzakihyperbolic}) implies the following.

\begin{lemma}[compare with Lemma 6.3, \cite{Biringerextending}]
	\label{distanceboundslem}
Let $\alpha \subset S  $ be a simple closed curve.  Then for every $k $, there is some $K $ such that for any other simple closed curve $\beta $ in $ S  $, we have $$\dist_{\mathcal C (S )}(\alpha, \beta) \leq k \ \Longrightarrow \ \dist_{ N_\infty }(\alpha_\infty, \beta_\infty) \leq K,$$
	where $\alpha$ and $\beta_\infty $ are the geodesics in $N_\infty $ in the homotopy classes of $\alpha$ and $\beta$.
\end{lemma}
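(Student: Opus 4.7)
The plan is to induct on the curve-complex distance $k = \dist_{\mathcal C(S)}(\alpha,\beta)$, with the main input being Thurston's compactness theorem for pleated surfaces. As established in the paragraph preceding the lemma, every nonperipheral simple closed curve on $S$ is of hyperbolic type in $N_\infty$ and the peripheral curves of $S$ correspond to cusps of $N_\infty$. Consequently, any pair of disjoint nonperipheral simple closed curves $\gamma,\gamma'$ in $S$ can be taken to lie in a common maximal geodesic lamination on $S$ (equipped with any hyperbolic structure with cusps at $\partial S$), and so may be realized simultaneously as the pleating locus of a pleated surface $g_{\gamma,\gamma'}:S\longrightarrow N_\infty$ in the homotopy class of the inclusion $S \hookrightarrow \partial_+ M_\infty \hookrightarrow N_\infty$. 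In particular, $g_{\gamma,\gamma'}$ sends $\gamma$ to $\gamma_\infty$ and $\gamma'$ to $\gamma'_\infty$.

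For the inductive step, assume there exists $K_k$ such that $\dist_{N_\infty}(\alpha_\infty,\gamma_\infty) \leq K_k$ whenever $\dist_{\mathcal C(S)}(\alpha,\gamma) \leq k$. Given $\beta$ with $\dist_{\mathcal C(S)}(\alpha,\beta) \leq k+1$, choose $\gamma$ disjoint from $\beta$ with $\dist_{\mathcal C(S)}(\alpha,\gamma) \leq k$, and fix a pleated surface $g:S\longrightarrow N_\infty$ realizing $\gamma$ and $\beta$ geodesically. Pick any point $p\in\gamma\subset S$; then $g(p)\in\gamma_\infty$ lies in the compact subset $\overline{B_{N_\infty}(\alpha_\infty,K_k)}\cap \CC(N_\infty)$ by the inductive hypothesis. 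Thurston's compactness theorem for pleated surfaces (\cite[Lemma~6.13]{Matsuzakihyperbolic}) says that the collection of pleated surfaces whose domain has the fixed finite topological type of $S$ (with parabolics corresponding to $\partial S$) and whose image meets a fixed compact subset of $N_\infty$ is compact. In particular, the intrinsic, and hence extrinsic, diameter of $g(S)$ is bounded by some constant $D=D(K_k)$. Thus $\dist_{N_\infty}(\gamma_\infty,\beta_\infty) \leq D$, and the triangle inequality gives $\dist_{N_\infty}(\alpha_\infty,\beta_\infty) \leq K_k+D$, so we may take $K_{k+1}:=K_k+D$. Starting from the trivial base case $K_0=0$, this completes the induction.

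The main subtlety is the inductive bookkeeping that keeps the basepoints of the chain of pleated surfaces inside a predetermined compact subset of $N_\infty$; this is what forces us to re-anchor the compactness argument at $\gamma_\infty$ (rather than always at $\alpha_\infty$) and is the reason $K_{k+1}$ depends on $K_k$ rather than just on $\alpha$ alone. The only remaining point requiring care is when $S$ is a sporadic surface (a four-holed sphere or once-punctured torus), in which case edges of $\mathcal C(S)$ join curves that intersect minimally rather than disjointly; one handles these cases by passing to a two-step subdivision (replacing the edge $\{\gamma_i,\gamma_{i+1}\}$ by a short chain of disjoint curves on a covering surface, or by observing that the geodesic representatives of two curves with uniformly bounded intersection number lie in a uniformly bounded neighborhood of a common pleated surface of bounded diameter), after which the same inductive scheme goes through unchanged.
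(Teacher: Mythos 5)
Your argument is exactly the one the paper intends: the paper's entire proof is the sentence preceding the lemma statement, namely that disjoint nonperipheral curves in $S$ can be realized simultaneously by a pleated surface (since $S$ is isotopic into a degenerate end, so peripheral curves are cusps and nonperipheral ones are hyperbolic) and that Thurston's compactness of pleated surfaces then bounds the jump at each step of a curve-complex geodesic. Your write-up simply makes the induction on $k$ and the re-anchoring of the compact set explicit, so it is correct and essentially identical in approach.
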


Hoping for a contradiction, suppose now that $\lambda^+ \in \Lambda (S , M) $.  When regarded as an element of $\partial_\infty \mathcal C (S ) $, the support of $\lambda^+ $ is then an accumulation point of $\mathcal D(S ,M) \subset \mathcal C (S ) $.  If $\alpha \in  \mathcal C (S ) $, then for $n=1,2,\ldots $ the sequence $(f ^ n(\alpha))$ is a quasi-geodesic path that limits to $\lambda^+  \in\partial_\infty \mathcal C (S ) $, see \cite{Masurgeometry1}.   Since $\mathcal D(S ,M)$ is a quasi-convex subset (see Theorem \ref {quasi-convexity}, due to Masur-Minsky), there is a constant $C $ and for each $n $ a meridian $\gamma_i \in \mathcal D(S ,M) $ with
$$d_{\mathcal C (S ) }(f ^ n(\alpha),\gamma_i) \leq C.$$
Translating the points $f^n( \alpha)$ and $\gamma_i$ by $f^{-n}$, this becomes:
$$d_{\mathcal C (S ) }(\alpha,f^{-n}(\mathcal D(S ,M)) \leq C.$$
By Lemma \ref {distanceboundslem}, an element $\gamma_{n_j} \in f^{-n_j}(\mathcal D(S ,M))$ at distance at most $C$ from $\alpha $ in $\mathcal C (S ) $ can be geodesically realized in some fixed compact subset $A \subset N_\infty$.  Algebraic convergence implies that for sufficiently large $j $ this geodesic can be pulled back and tightened to a geodesic in $N_{n_j}$.  But by construction, $\gamma_{n_j}$ is a meridian in $M_{n_j}$, so it cannot possibly be realized geodesically in $N_{n_j}$, which is a contradiction.

\section{Extending reducible maps to compression bodies}

\label {sec:extension}
We present here a generalization of \cite [Theorem 1.1] {Biringerextending} that characterizes which (possibly reducible) mapping classes of the boundary of a $3$-manifold $M$ have powers that extend to sub-compression bodies.  

In what follows, let $M $ be a compression body with exterior boundary $\partial_+M$. Let $S\subset \partial_+M$ be an essential subsurface such that $\partial S$ is incompressible. Let $f: \partial_+ M \longrightarrow \partial_+ M$ be a homeomorphism that is `supported' in $S$, meaning that $f=id$ on $\partial_+ M\setminus S$.

\begin{definition}
We say that $f$ is \emph{pure} if there are disjoint, compact, essential $f$-invariant subsurfaces $S_i \subset S$, $i=1,\ldots, n$, such that $f=id$ on $S_{id}:=S \setminus \cup_i S_i$, and where for each $i$, if we set  $f_i: = f | _{S_i}$, then either
\begin {enumerate}
\item $S_i $ is an annulus and $f_i $ is a power of a Dehn twist, or
\item $f_i $ is a pseudo-Anosov map on $S_i$.
\end {enumerate}
\end{definition}
It follows from the Nielsen-Thurston classification, see \cite{Farbprimer}, that every $f$ has a power that is isotopic to a pure homeomorphism.

\medskip

When $f$ is pure, with associated restrictions $f_i : S_i \longrightarrow S_i$ as above, we define a geodesic lamination $\lambda=\cup_i\lambda_i$ on $S$, where $\lambda_i \subset S_i$ as follows. If $f_i$ is pseudo-Anosov, we let $\lambda_i$ be the support of the attracting lamination of $f_i$. If $f_i$ is a Dehn twist, we let $\lambda_i$ be the core curve of the annulus $S_i$. So defined, $\lambda$ is called the \emph{attracting lamination} of the pure homeomorphism $f$.

\begin{theorem}\label{extension}
Suppose that $S \subset \partial_+ M$ is  an essential subsurface such that the multicurve $\partial S$ is incompressible. Let $f : \partial_+ M \longrightarrow \partial_+ M$ be a pure homeomorphism supported in $S$. Then $f$ has a power that extends to a nontrivial subcompressionbody of $(M,S)$ if and only if either: 
	\begin{enumerate}
		\item there is a meridian in $S_{id}$, 
		\item for some $i$, the map $f_i$ has a power that extends to a nontrivial subcompressionbody of $(M,S_i)$, or
		\item there are (possibly equal) indices $i,j$ such that $S_i,S_j$ bound an essential interval bundle $B$ in $M$, such that some power of $f|_{S_i \cup S_j}$ extends to $B$, and  there is a compression arc $\alpha$ for $B$ whose interior lies in $S_{id}$. \end{enumerate}
\end{theorem}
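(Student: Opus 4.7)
The proof splits into two directions. The ``only if'' direction converts the existence of an $f^k$-invariant subcompressionbody $C$ into an orbit-of-meridians statement and applies Theorem~\ref{hlimits} to the Hausdorff limit of $(f^{nk}(m))_{n}$ for a meridian $m$ of $C$, with Theorem~\ref{BJM} converting any intrinsic-limit conclusion into case~(2). The ``if'' direction is by direct construction in each case, aided by Theorem~\ref{maxcompression} which guarantees that any $f^N$-invariant nonempty multicurve of meridians yields an $f^N$-invariant nontrivial subcompressionbody upon compression.

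\textbf{If direction.} In case~(1), a meridian $m\subset S_{id}$ is $f$-fixed, so compressing $m$ gives an $f$-invariant nontrivial subcompressionbody. In case~(2), a subcompressionbody $C_i$ of $(M,S_i)$ is automatically one of $(M,S)$, and the given extension $f_i^k\colon C_i\to C_i$ glues with $f^k$ on the collar over $\Sigma\setminus S_i$ (on which $C_i$ is a product) to extend $f^k$ to $C_i$. In case~(3), pick $N$ such that $f^N|_{S_i\cup S_j}$ extends to $B$ and, by Fact~\ref{extendfact}, covers a homeomorphism $\bar f$ of the base $Y$; combine the $f$-fixed compression arc $\alpha\subset S_{id}$ with an $\bar f^M$-invariant two-sided essential simple closed curve $c\subset Y$ (for suitable $M$) via Claim~\ref{mc} to produce an $f^{NM}$-periodic meridian $m(c)$, and compress its finite orbit. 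When $\bar f$ is pseudo-Anosov on $Y$ so no such $c$ exists, Corollary~\ref{hauslimitcor} applied to the lift of the attracting lamination of $\bar f$ still produces meridians on $\Sigma$, from which a periodic one is extracted (or one reduces directly to case~(2) via Theorem~\ref{BJM}).

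\textbf{Only if direction.} Suppose $f^k$ extends to a nontrivial subcompressionbody $C\subset M$; pick a meridian $m$ of $C$. If $m$ is isotopic into $S_{id}$ we are in case~(1). Otherwise $m$ essentially intersects some $S_\ell$, and after passing to a subsequence the meridians $f^{nk}(m)\in \mathcal D(S,M)$ Hausdorff-converge to a lamination $\lambda^+$. The pseudo-Anosov and Dehn twist dynamics of $f_\ell$ on each $S_\ell$ imply that the minimal components of $\lambda^+$ are exactly the attracting laminations $\lambda_\ell^+$ for those $\ell$ with $m\cap S_\ell$ essential, together with finitely many isolated leaves that are arcs of $m\cap S_{id}$ spiraling onto the $\lambda_\ell^+$. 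Applying Theorem~\ref{hlimits} to $\lambda^+$ (with the exceptional configuration $(\star)$, when it occurs, directly yielding case~(1) since it forces a meridian into $S_{id}$), we translate its three conclusions to the present theorem: Theorem~\ref{hlimits}(1) gives a meridian disjoint from every $\lambda_\ell^+$ and hence, by the filling property of attracting laminations, in $S_{id}$, giving our~(1); Theorem~\ref{hlimits}(2) makes some $\lambda_\ell^+$ an intrinsic limit of meridians, and Theorem~\ref{BJM} then produces our~(2); Theorem~\ref{hlimits}(3) gives an essential interval bundle $B$ bounded by the surfaces filled by some $\lambda_i^+,\lambda_j^+$, with a compression arc disjoint from $\lambda^+$ and hence (after isotopy) with interior in $S_{id}$, yielding our~(3) once we verify the extension statement.

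\textbf{Main obstacle.} The most delicate step is the final verification above: that a power of $f|_{S_i\cup S_j}$ extends to $B$. By Fact~\ref{homotopy involutoin}, the essential homotopy of $\lambda_i^+$ and $\lambda_j^+$ through $B$ means $\sigma(\lambda_i^+)$ is isotopic on $\partial_H B$ to $\lambda_j^+$, where $\sigma$ is the canonical involution of $B$. Hence $\sigma\circ f_i\circ\sigma^{-1}$ and $f_j$ are pseudo-Anosov self-maps of $S_j$ sharing an attracting measured lamination up to scaling, and the standard rigidity of pseudo-Anosovs with a common attracting measured foliation forces a common power to coincide up to isotopy; Fact~\ref{extendfact} then yields the extension to $B$. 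The case $i=j$ is analogous with $\sigma$ interpreted as the deck involution of the orientation cover of $Y$ when $B$ is twisted.
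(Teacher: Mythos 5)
Your proposal has the right overall architecture for the ``only if'' direction (pass to the Hausdorff limit of the orbit of a meridian of $C$, apply Theorem \ref{hlimits}, convert conclusion (2) via Theorem \ref{BJM}), but it breaks down at exactly the step you flag as the main obstacle. It is \emph{not} true that two pseudo-Anosovs of $S_j$ sharing an attracting (projective) lamination must have a common power that coincides up to isotopy: $\phi$ and $\phi^2$ share their attracting lamination, yet $\phi^k\neq\phi^{2k}$ for all $k\neq 0$. This is precisely the phenomenon the theorem must detect --- see the example in the introduction with $F=f\times\mathrm{id}$ on both horizontal boundary components versus $G$ acting by $f$ on one side and $f^2$ on the other: both satisfy your ``shared attracting lamination through $B$'' condition, but only $F$ has a power extending to $B$. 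The paper's proof closes this gap with a genuinely geometric input: since every meridian of $C$ meets $S_a$ and $S_b$, these are ``holes'' for $\mathcal D(C)$ in the sense of Masur--Schleimer, and \cite[Lemma 12.20]{masur2013geometry} bounds by $6$ the arc-complex distance between $f_a^i(m\cap S_a)$ and $(\sigma f_b\sigma^{-1})^i\sigma(m\cap S_b)$; if $f_a$ and $\sigma f_b\sigma^{-1}$ projected to different powers of the common pseudo-Anosov direction, their stable translation lengths on the arc complex would differ, a contradiction. (The twisted case $i=j$ is genuinely easier, since there $\sigma f_a\sigma^{-1}$ is \emph{conjugate} to $f_a$; the paper handles it with a Teichm\"uller-axis argument.) Relatedly, you omit the reduction to a minimal $f$-invariant $S'$, which the paper uses to guarantee that every meridian of $C$ meets \emph{every} component of $\lambda$; without it, the Hausdorff limit $\lambda^+$ may miss some $\lambda_{\ell'}$, and a meridian disjoint from $\lambda^+$ need not lie in $S_{id}$, so your deduction of case (1) from Theorem \ref{hlimits}(1) does not go through as written.

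The ``if'' direction of case (3) also has a gap. In the main situation $f_i,f_j$ are pseudo-Anosov, so $\bar f$ is pseudo-Anosov on $Y$ and there is no $\bar f^M$-invariant curve $c$; your fallbacks fail, because the meridians produced by Corollary \ref{hauslimitcor} converge to a filling lamination and contain no $f$-periodic member, and one cannot ``reduce to case (2)'': take $M=Y\times[-1,1]$ with $S_i=Y\times\{1\}$, $S_j=Y\times\{-1\}$ incompressible, where (3) holds but $(M,S_i)$ has no meridians at all. The paper's argument is instead a direct piecewise construction: let $S'$ be a neighborhood of $S_i\cup S_j\cup\alpha$; its characteristic compression body is the union of $B$, a solid torus (or thickened disk) attached along $\alpha$ and the corresponding arc of $\mathrm{Fr}(B)$, and a product over the rest of $\partial_+M$, and $f^k$ extends over each piece --- over $B$ by hypothesis, which is where the extension assumption in (3) is actually used.
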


Recall from \S \ref{compressionsec} that a `subcompressionbody of $(M,S)$ is a compression body obtained from $\partial_+ M$ by compressing some meridian multicurve in $S$. 

Theorem \ref{BJM} says that (2) is equivalent to asking that the attracting lamination (say) of $f$ is a projective limit of meridians in $S_i$. In (3), the condition that a power of $f|_{S_i\cup S_j}$ extends to $B$ is easier to check. Indeed, if $\sigma : \partial_H B \longrightarrow \partial_HB $ is the canonical involution, as defined in \S \ref{sec:ibundle}, then by Fact \ref{extendfact} we have that $f^k|_{\partial_HB}$ extends to $B$ exactly when $\sigma \circ f_i^k $ is isotopic to $f_j^k$. When $B$ is a twisted interval bundle, $f_i,f_j$ are both pseudo-Anosovs on $\partial_HB$ and this means that as mapping classes we have $f_j = g \circ f_i$ for some finite order $g$ commuting with both $f_i,f_j$, see e.g.\ McCarthy's thesis \cite{mccarthy1982normalizers}. When $B$ is a trivial interval bundle, $\sigma$ indentifies $S_i$ and $S_j$, and we have similarly that $f_j=g \circ \sigma(f_i)$ for some $g$ commuting with both.

\begin{proof}[Proof of Theorem \ref{extension}]
	Let's start with the `if' direction, since that's easier. If there is a meridian in $S_{id}$, then $f$ extends to the compression body obtained by compressing that meridian. Suppose (2) holds, so that some power $f_i^k$ extends to a nontrivial subcompression body $C$ of $(M,S_i)$. Then $f$ also extends to $C$, since all the $S_j$, where $j\neq i$, bound trivial interval bundles with subsurfaces of the interior boundary of $C$. So we're done. 
	
	The only interesting case is if (3) holds, so that some $S_i,S_j$ bound an essential interval bundle $B$ in $M$ such that some power of $f^k|_{S_i \cup S_j}$ extends to $B$, and  there is a compression arc $\alpha$ for $B$ whose interior lies in $S_{id}$. Here, let $S' \subset S$ be the smallest essential subsurface containing $S_i,S_j$ and $\alpha$; so, $S'$ is obtained from a regular neighborhood of the union of these three subsets of $S$ by capping off any inessential boundary components with discs. Let $C$ be the characteristic compression body of the pair $(M,S')$, as defined in Fact \ref{charcomp}. 
	
	We claim that $f^k$ extends to $C$. To see this, note that we can construct $C$ as follows. For concreteness, first assume that the boundary components of $S_i,S_j$ that contain the endpoints of $\alpha$ bound an annulus $A \supset \alpha$ on $S$. Then $S'=S_i\cup S_j \cup A$, the annulus $A$ is parallel in $M$ to component $A'\subset \mathrm{Fr}(B)$ that is an annulus with the same boundary curves as $A$, and $C$ is the union of the interval bundle $B$, the solid torus bounded by $A,A'$, and a trivial interval bundle over $\partial_+ M \setminus S'$. We can then extend $f^k$ to $C$ by letting it be the given extension of $f^k|_{S_i\cup S_j}$ on $B$, the identity on the solid torus, and the obvious fiber preserving extension of $f^k |_{\partial_+ M \setminus S'}$ to the adjacent interval bundle. The case that the boundary components of $S_i,S_j$ that contain the endpoints of $\alpha$ do not bound an annulus on $S$ is similar, except that instead of the solid torus above we take a thickened disk bounded by a rectangular neighborhood of $\alpha$ on $S\setminus (S_i\cup S_j)$, and a rectangular neighborhood of the homotopic arc on the frontier of $B$. 
	
	\medskip

We now work on the `only if' direction. Passing to a power, suppose that $f$ extends to a nontrivial subcompressionbody $C $ of $(M,S)$. We may assume that there is no proper, $f$-invariant essential subsurface $S' \subset S$ such that $f|_{S'}$ extends to a nontrivial subcompression body of $(M,S')$. If there were such a subsurface $S'$, we could replace $S$ by a minimal such $S'$, therefore reducing the argument to the minimal case we are assuming we are in above.

If $f=id $ on $S$, the fact that there is a nontrivial subcompressionbody of $(M,S)$ means there is a meridian in $S=S_{id}$, so we're in case (1) and are done. This case may seems silly, but observe that if $f$ is some complicated pure homeomorphism where there's a meridian in $S_{id}$, the associated `minimal' case that we pass to in the previous paragraph is where $S$ is an annular neighborhood of some such meridian, and $f=id $ on $S$. 

\smallskip

Assume from now on that $f$ is not the identity map of $S$.

\smallskip

We claim that every meridian $m\in \mathcal D(C,S)$ intersects every component of $\lambda$. Indeed, suppose some $\lambda_i$ is disjoint from some such $m$ and let $S'$ be the component of $S \setminus S_i$ containing $m$. Since $f$ extends to $C$ and $S' \subset \partial C_+$ is $f^k$-invariant, $f$ extends to the characteristic subcompressionbody $C'$ of the pair $(C,S')$, defined by starting with $\partial_+ M$ and compressing all meridians of $C$ that lie in $S'$, see Fact \ref{charcomp}. Since $m$ is a meridian in $C'$, this $C'$ is nontrivial, which contradicts the mimimality assumption in the first paragraph.

Pick a meridian $m\in \mathcal D(C,S)$. Since $m$ intersects all components of $\lambda$,  the sequence of meridians $m_i:=f^{i}(m)$ Hausdorff converges to a lamination $\lambda'$ strongly commensurable to $\lambda$. Applying Theorem \ref{hlimits} to the pair $(C,S)$ and using that all meridians in $C$ intersect all components of $\lambda$, we have that either:
\begin{itemize}
\item some component $\lambda_a$ is an intrinsic limit of meridians {lying in $S(\lambda_a)$}, in which case Proposition~\ref{BJM} (applied to $f_a : S_a \longrightarrow S_a$) says we're in case (2), or 
\item there are indices $a,b$ such that $S_a,S_b$ bound an essential interval bundle $B \subset C$, where $\lambda_a,\lambda_b$ are essentially homotopic in $B$, and where there is a compression arc $\alpha \subset S$ for $B$ that is disjoint from $\lambda$, and hence can be isotoped so that its interior lies in $S_{id}$.
\end{itemize} 
Let's assume we're in the last case, since otherwise we're done. We want to show that some power of  $f_a \cup f_b : \partial_H B \longrightarrow \partial_HB$ extends to $B$.

 First, suppose that $B $ is a twisted interval bundle, so that $S_a=S_b, \ f_a=f_b, \ \lambda_a=\lambda_b$. Using just the index $a$ from now on, if $\sigma$ is the canonical involution of $B$, then Fact \ref{homotopy involutoin} implies that $\sigma(\lambda_a)$ is isotopic to $\lambda_a$. Let $A \subset T(S)$ be the axis of $f_a$ on the Teichm\"uller space $T(S)$. By Theorem 12.1 of \cite{Fathitravaux} and Theorem 2 of \cite{masur1980uniquely}, we have that $A,\sigma(A)$ are asymptotic, so as they are both pseudo-Anosov axes they must be equal by discreteness of the action of the mapping class group. Since $\sigma$ has finite order, it then fixes $A$ pointwise. Now the group $\Gamma= \langle \sigma,f_a\rangle \subset \Mod(\partial_HB)$ is isomorphic to the direct product of a finite group fixing $A$ pointwise and a cyclic group of pseudo-Anosovs, so for some positive $k$ we have $\sigma \circ f_a^k = f_a^k$ in $\Mod(\partial_H B)$. By Theorem 3 of \cite{birman1973isotopies} we may isotope $f_a^k,\sigma$ so that they commute, while preserving that $\sigma^2=id$; we can then alter the bundle map $\pi : B \longrightarrow Y$ so that the new $\sigma$ is still the canonical involution. It follows that $f_a^k $ is a lift to $\partial_HB$ of a pseudo-Anosov map $g: Y \longrightarrow Y$, and hence $f_a^k $ extends to $B$ as desired, see Fact \ref{extendfact}.

Next, assume $B$ is a trivial interval bundle, with canonical involution $\sigma$ that switches $S_a,S_b$. As in the previous paragraph, we have that $\sigma(\lambda_b)$ is isotopic to $\lambda_a$, so $\Gamma = \langle f_a, \sigma \circ f_b \circ \sigma^{-1}\rangle  \subset \Mod(S_a)$ is a direct product $\Gamma = F \times \langle \phi \rangle$ of a finite group $F$ and a cyclic group generated by a pseudo-Anosov $\phi$, where if we quotient by $F$ then $f_a$ and $ \sigma \circ f_b \circ \sigma^{-1}$ both project to positive powers of $\phi$. It suffices to show that they project to the \emph{same} positive power of $\phi$, for then we are done by the same argument as in the previous paragraph. For this, recall that all meridians of $C$ intersect $S_a,S_b$, so these surfaces are `holes' for the disk set of $C$, as discussed in \cite{masur2013geometry}. So with $m_i=f^i(m)$ the sequence of meridians in $C$ constructed above, Lemma 12.20 of \cite{masur2013geometry} says that for each $i$, the distance in the arc complex of $S_a$ between $$m_i \cap S_a = f_a^i(m \cap S_a) \ \ \text{and} \ \ \sigma(m_i\cap S_b) = (\sigma \circ f_b \circ \sigma^{-1})^i \sigma(m\cap S_b) $$ is at most 6. However, if $f_a$ and $ \sigma \circ f_b \circ \sigma^{-1}$ project to different positive powers of $\phi$, their stable translation lengths on the arc complex of $S_a$ are different, which is a contradiction.
\end{proof}

\bibliographystyle{amsplain}
\bibliography{../total}

\end{document}